\theoremstyle{definition}
\newtheorem{defin}{Definition}[section]
\newtheorem{ex}[defin]{Example}
\newtheorem{exx}[defin]{Examples}
\theoremstyle{plain}
\newtheorem{theo}[defin]{Theorem}
\newtheorem{lemma}[defin]{Lemma}
\newtheorem{obs}[defin]{Remark}
\newtheorem{prop}[defin]{Proposition}
\newtheorem{cor}[defin]{Corollary}
\newtheorem*{theo-intro}{Theorem}
\newtheorem{theorem}{Theorem}
\renewenvironment{abstract}
{\par\noindent\textbf{\abstractname.}\ \ignorespaces}
{\par\medskip}
\title[Packing in locally CAT$(\kappa)$ spaces]{Packing and doubling in metric spaces \\ with curvature bounded above}
\author{Nicola Cavallucci, Andrea Sambusetti}
\address{Nicola Cavallucci and Andrea Sambusetti, Dipartimento di Matematica “Guido
	Castelnuovo”, SAPIENZA Universita di Roma, Piazzale Aldo Moro 5, I-00185 `
	Roma.}
\email{cavallucci@mat.uniroma1.it, sambuset@mat.uniroma1.it}
\keywords{Curvature bounds, Packing, Doubling, Macroscopical scalar curvature, Metric simplicial complexes, Gromov-Hausdorff compactness}
\subjclass[2010]{Primary: 51F99, 53C20; Secondary: 53C21, 53C23}
\date{}
\begin{document}
\maketitle

\footnotesize
\begin{abstract}
We study locally compact, locally geodesically complete, locally CAT$(\kappa)$ spaces (GCBA$^\kappa$-spaces). 
We prove a Croke-type local volume estimate  only depending on  the dimension of these spaces.
We show that a local doubling condition, with respect to the natural measure, implies pure-dimensionality.
Then we consider GCBA$^\kappa$-spaces satisfying a  uniform packing  condition at some fixed scale $r_0$  or a doubling condition at arbitrarily small scale, and prove several compactness results  with respect to pointed Gromov-Hausdorff convergence.
Finally, as a particular case, we study  convergence and stability of   $M^\kappa$-complexes with bounded geometry. 
\end{abstract}
\normalsize

\tableofcontents
\pagebreak

\section{Introduction}
\label{sec-introduction}
\enlargethispage{3mm}
Metric spaces  with curvature bounded from above
are currently one of the main topics in metric geometry. 
They have been studied from various points of view during the last decades. In general these metric spaces can be very wild and the local geometry can be difficult to understand. Under basic additional assumptions (local compactness and local geodesic completeness) it is possible to control much better the local and asymptotic properties of these   spaces, as proved by Kleiner  and Lytchak-Nagano. 
In particular, under these assumptions,   the topological  dimension coincides with the Hausdorff dimension,
the local dimension can be detected from the tangent cones
and there exist a decomposition of $X$ in $k$-dimensional subspaces $X^k$ (containing dense open subsets locally bilipschitz equivalent to $\mathbb R^k$ and admitting a regular Riemannian metric), and  a canonical measure $\mu_X$,  coinciding with the restriction  of the $k$-dimensional  Hausdorff measure on each  $X^k$, which is positive and finite on any open relatively compact subset (cp. the foundational works  \cite{Kl99} , \cite{LN19}, \cite{LN-finale-18}).
Following  \cite{LN19}  we will call for short  {\em GCBA-spaces}
the locally geodesically complete, locally compact and separable metric spaces satisfying some curvature upper bound, i.e. which are locally CAT$(\kappa)$ for some $\kappa$. When we want to emphasize in a statement the role of $\kappa$ we will write GCBA$^\kappa$.

GCBA-spaces   arise in a natural way as generalizations and limits of Riemannian manifolds with sectional curvature bounded from above.
However many geometric results   in the Riemannian setting such as convergence and finiteness theorems, Margulis' lemma etc. also require lower bounds  on the  curvature.  
For instance, by Bishop-Gromov's Theorem,  a lower bound on the Ricci curvature implies a bound of the complexity of the manifold as a metric space:  namely a lower bound $\text{Ric}_X \geq -a^2$ implies a uniform estimate of the  {\em packing function} at any fixed scale $r_0$.  

We recall that  a metric space $(X,d)$ satisfies the  {\em $P_0$-packing condition at scale $r_0>0$} if all balls of radius $3r_0$ contain  at most $P_0$ points that are $2r_0$-separated  from each other (this can be equivalently expressed in terms of  coverings with balls, see Sec.\ref{sec-packingcovering}).  
Also, we will  say that a  metric-measured space $(X,d,\mu)$ satisfies a {\em $D_0$-doubling condition  up to scale $r_0>0$}  if for any $0<r\leq r_0$ and for any $x\in X$ it holds $$\frac{\mu(B(x,2r))}{\mu(B(x,r))}\leq D_0.$$
From a  metric-geometry perspective  the original interest in studying  metric spaces satisfying a packing condition at {\em arbitrarily small} scales is Gromov's famous Precompactness Theorem \cite{Gr81}. Another major outcome involving packing is Gromov's celebrated result on groups with polynomial growth, as extended by    Breuillard-Green-Tao \cite{BGT11} (cp. also the previous results   \cite{Kle2} and  \cite{ST}), which  shows that  a uniform bound of the packing or doubling constant for $X$ at {\em arbitrarily large} scale (or even at fixed, sufficiently large scale with respect to the diameter) yields an even stronger limitation on the complexity of the fundamental group of $X$,  that is  almost-nilpotency.   
We will see soon (cp. Theorem \ref{intro-doub} below) that, for GCBA-spaces,  a doubling condition for the canonical measure $\mu_X$ at {\em arbitrarily small} scales also has interesting consequences on the local structure of  \nolinebreak  $X$. 
\vspace{1mm}

The first key-result of the paper is a Croke-type local volume estimate for  GCBA-spaces of dimension bounded above for balls of radius smaller than the {\em almost-convexity radius}:
 \enlargethispage{6mm}

\begin{theorem}[Theorem \ref{boundbelowvolume}] ${}$ 
	\label{intro-croke}
	
	\noindent For any complete \textup{GCBA} space $X$ of dimension $\leq n_0$ and   any ball of radius $r <\min\lbrace\rho_{\textup{ac}} (X), 1\rbrace$ it holds:
	\vspace{-4mm}
	
	\begin{equation}
		\label{lowerestimate}	
		\mu_X(B(x,r))\geq c_{n_0} \cdot r^{n_0}
	\end{equation}
	
	\vspace{-1mm}
	\noindent where $c_{n_0}$ is a constant depending  only on the dimension $n_0$. 
\end{theorem}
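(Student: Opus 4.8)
The plan is to argue by induction on the dimension bound $n_0$, the inductive step consisting in realising $B(x,r)$ as a $1$-Lipschitz homeomorphic image of a ball in the $\kappa$-cone over its space of directions; this reduces the estimate to the (explicit) volume of a cone-ball together with a lower bound for the $(k-1)$-dimensional measure of the space of directions, which is precisely the inductive hypothesis in one dimension less. The base case $n_0=0$ (and the case $k:=\dim_x X=0$, where $\mu_X(B(x,r))\ge\mathcal H^0(\{x\})=1\ge r^{n_0}$) are immediate, so I would assume $k\ge 1$ and fix $r<\min\{\rho_{\mathrm{ac}}(X),1\}$.

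The central object is the logarithm map $\log_x\colon B(x,r)\to C_\kappa(\Sigma_x)$, $y\mapsto(d(x,y),\uparrow_x^y)$, into the $\kappa$-cone over $\Sigma_x$. It is well defined since $r<\rho_{\mathrm{ac}}(X)$ keeps $r$ below the CAT$(\kappa)$ uniqueness radius, so the geodesic $[xy]$ is unique; it is $1$-Lipschitz, because the CAT$(\kappa)$ comparison inequality — the comparison angle at $x$ dominates the Alexandrov angle, which is the $\Sigma_x$-distance of the two initial directions — plugged into the $M_\kappa$ law of cosines gives $d_{C_\kappa}(\log_x y,\log_x z)\le d(y,z)$; and it is onto $B_{C_\kappa(\Sigma_x)}(o,r)$, because geodesic completeness yields, for every $\xi\in\Sigma_x$, a geodesic issuing from $x$ in the direction $\xi$ which extends to length $r$, and $r<\rho_{\mathrm{ac}}(X)$ forces such a local geodesic to remain minimizing (no wrapping at scale $r$). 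Checking in addition that $\exp_x$ is injective and continuous (again uniqueness and minimality below $\rho_{\mathrm{ac}}(X)$, plus continuous dependence of geodesics on their endpoints), $\log_x$ is a homeomorphism onto the open cone-ball.

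From this chart the estimate drops out. Since $B_{C_\kappa(\Sigma_x)}(o,r)$ has topological dimension $1+\dim\Sigma_x=k$, so does $B(x,r)$, hence — by the structure theory — Hausdorff dimension $k$ as well; in particular $B(x,r)$ meets no stratum $X^{(j)}$ with $j>k$, so $\mathcal H^k(A)\le\mu_X(A)$ for every Borel $A\subseteq B(x,r)$. As $\log_x$ is $1$-Lipschitz and onto, $\mathcal H^k(B(x,r))\ge\mathcal H^k\big(B_{C_\kappa(\Sigma_x)}(o,r)\big)$, and the latter is the explicit cone integral $\mathcal H^{k-1}(\Sigma_x)\int_0^r \mathrm{sn}_\kappa(t)^{k-1}\,dt$, with $\mathrm{sn}_\kappa$ the $\kappa$-sine function; since $r<\rho_{\mathrm{ac}}(X)\le\pi/(2\sqrt\kappa)$ one has $\mathrm{sn}_\kappa(t)\ge\tfrac2\pi t$ on $[0,r]$, so $\mathcal H^k(B(x,r))\ge a_k\,r^k\,\mathcal H^{k-1}(\Sigma_x)$ with $a_k$ depending only on $k$. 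Finally $\Sigma_x$ is a compact, geodesically complete CAT$(1)$ space of dimension $k-1$, i.e.\ a complete GCBA$^1$-space of dimension $\le n_0-1$, and $\rho_{\mathrm{ac}}(\Sigma_x)\ge\pi/2$; the inductive hypothesis applied to a ball of radius $\tfrac12$ in $\Sigma_x$ gives $\mathcal H^{k-1}(\Sigma_x)\ge\mu_{\Sigma_x}\big(B(\xi,\tfrac12)\big)\ge c_{n_0-1}(\tfrac12)^{n_0-1}$. Combining and using $r<1$, $k\le n_0$ to replace $r^k$ by $r^{n_0}$ yields the claim with $c_{n_0}$ depending only on $n_0$ (the minimum over $k$ of the constants just produced).

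The hard part is the first step: making geodesic completeness and the condition $r<\rho_{\mathrm{ac}}(X)$ genuinely yield that $\log_x$ is a homeomorphism onto the whole cone-ball — surjectivity of $\exp_x$ and the absence of wrapping are exactly what the almost-convexity radius is there to control — and ensuring that both the $1$-Lipschitz bound and the cone-volume lower bound produce constants free of $\kappa$ (hence of $X$), which relies on the a priori bound $\rho_{\mathrm{ac}}(X)\le\pi/(2\sqrt\kappa)$ and on the precise properties of $\rho_{\mathrm{ac}}(X)$ — uniqueness of geodesics, minimality of short local geodesics from $x$, continuous dependence on endpoints — to be extracted from the foundational results of Kleiner and Lytchak--Nagano. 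By contrast the passage to $\Sigma_x$, the cone computation and the dimension-only bookkeeping are routine.
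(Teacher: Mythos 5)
Your argument breaks at its central step. In a GCBA space the logarithmic map is in general not injective (geodesics branch), and — more seriously for your proof — a ball $B(x,r)$ with $r<\rho_{\mathrm{ac}}(X)$ may perfectly well contain points of dimension strictly larger than $k=\dim(x)$: the almost-convexity radius does not keep higher-dimensional strata away from $x$. Concretely, glue a line to the Euclidean plane at a single point (a geodesically complete CAT$(0)$ space, so $\rho_{\mathrm{ac}}=\infty$), and take $y$ on the line at distance $\varepsilon\ll 1$ from the gluing point. Then $\dim(y)=1$, $\Sigma_y$ consists of two points, the cone-ball of radius $\tfrac12$ is an interval, yet $B(y,\tfrac12)$ contains a two-dimensional region: $\log_y$ is onto the cone-ball but is nowhere near a homeomorphism, $B(y,\tfrac12)$ has Hausdorff dimension $2$, and your key inequality $\mathcal H^k(A)\le\mu_X(A)$ for $A\subseteq B(x,r)$ fails (the $\mathcal H^k$-mass you produce may sit on higher-dimensional strata, where $\mu_X$ weights by a higher-dimensional Hausdorff measure instead). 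This is exactly the difficulty the paper's proof is built around: it runs a finite chain algorithm producing points $x=x_0,x_1,\dots$ of strictly increasing dimensions $d_0<d_1<\cdots$ and radii with $r_1+\varepsilon+\cdots+r_k=r$, terminating in at most $n_0$ steps, and finds a sub-ball $B(x_j,r_j)\subset\overline B(x,r)$ with $r_j\ge \frac{r}{2n_0}$ containing \emph{no} points of dimension $>d_j$; only on such a ball does the Lipschitz-surjection volume estimate transfer to $\mu_X$.

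Two further points. First, your mechanism for making the constant $\kappa$-free rests on $\rho_{\mathrm{ac}}(X)\le \pi/(2\sqrt\kappa)$, which is false (the a priori bound $D_\kappa/2$ holds for $\rho_{\mathrm{cat}}$, not $\rho_{\mathrm{ac}}$: any CAT$(0)$ space is locally CAT$(1)$ with $\rho_{\mathrm{ac}}=\infty$), so both the claimed $1$-Lipschitz property of $\log_x$ into the $\kappa$-cone on all of $B(x,r)$ and the bound $\mathrm{sn}_\kappa(t)\ge\frac2\pi t$ are unjustified at the scales the theorem allows; the paper avoids $\kappa$ entirely by mapping into the Euclidean tangent cone, where $\log_x$ is only $2$-Lipschitz, and composing with Lytchak--Nagano's $1$-Lipschitz surjection $T_xX\to\mathbb R^n$. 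Second, even granting the rest, your inductive step bounds $\mu_{\Sigma_x}(B(\xi,\tfrac12))$, whose mass may be carried by lower-dimensional strata of $\Sigma_x$ (e.g.\ isolated directions), and hence does not bound $\mathcal H^{k-1}(\Sigma_x)$ from below — the same purity problem recursed one dimension down. The paper's Euclidean-projection argument needs no induction on dimension and sidesteps all of this.
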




The almost-convexity radius $\rho_{\text{ac}} (x)$ of a geodesic space $X$ at a point $x$ is defined as  the supremum of the radii $r$ such that for any $y,z\in B(x,r)$    and any $t\in[0,1]$ it holds: 
\vspace{-5mm}

$$d(y_t,z_t)\leq 2  t \cdot d(y,z),$$

\noindent where $y_t, z_t$ denote points along  geodesics $[x,y]$ and  $[x,z]$  at distance $td(x,y)$ and $td(x,z)$ respectively from $x$. The almost-convexity radius of $X$ is correspondingly defined as   $\rho_{\text{ac}} (X) = \inf_{x \in X} \rho_{\text{ac}}(x)$.
It is not difficult to show that  every GCBA-space $X$ always has positive almost-convexity radius at every point: namely if $X$ is locally CAT$(\kappa)$ and $x\in X$ then $\rho_{\text{ac}}(x)$ is always greater than or equal to the CAT$(\kappa)$-{\em radius} $\rho_{\text{cat}} (x)$ (see Section \ref{sec-preliminaries} for  all details and the relation  with the contraction and the logarithmic maps).
However the almost-convexity radius is a more flexible geometric invariant than the CAT$(\kappa)$-radius, much alike the injectivity radius for Riemannian maniolds, 
since a space $X$ might have a large curvature $\kappa$ concentrated in a very small region around $x$, so that it may happen that $\rho_{\text{ac}} (x) $ is much larger than the  CAT$(\kappa)$-radius  at $x$.

We stress the fact  that no explicit upper bound on the curvature is assumed for the estimate (\ref{lowerestimate}); the condition GCBA is only needed to ensure sufficient regularity of the space (and the existence of a natural measure to compute volumes).
\vspace{1mm}


For all subsequent results we will consider, as standing assumption, GCBA-spaces with a uniform upper bound on the packing 
constant at some   fixed  scale $r_0$  smaller than the almost-convexity radius, or a doubling condition up to an arbitrary small scale. These classes of metric spaces are large enough to contain many interesting examples besides Riemannian manifolds and small enough to be, as we will see,  {\em compact}  in the Gromov-Hausdorff sense. 

\noindent Notice that for Riemannian manifolds a local doubling  or a packing condition at some scale $r_0>0$ are much weaker assumptions than a lower bound of the Ricci curvature (see \cite{BCGS}, Sec.3.3, for different examples and a comparison of Ricci, packing and doubling conditions).
However there are a lot of non-manifolds examples in these classes of metric spaces. The simplest ones are simplicial complexes with locally constant curvature (also called {\em $M^\kappa$-complexes}, cp. \cite{BH09}) and "bounded geometry" in an appropriate sense: they will be 
studied in detail in Section \ref{sec-simplicial}.
\noindent Other interesting classes of spaces satisfying a uniform packing condition at fixed scale are the class of  Gromov-hyperbolic spaces with {\em bounded entropy}, admitting a cocompact group of isometries (as shown in  \cite{BCGS}, \cite{BCGS2}),  or the class of (universal coverings of)  compact, non-positively curved manifolds with bounded entropy admitting acylindrical splittings   (see   \cite{CerS20}). See also   \cite{CavS20bis} for applications in the non-cocompact case.

\vspace{2mm}
In Sections \ref{sec-volume} and \ref{sec-packingcovering} we will see how the packing or covering conditions and the upper bound on the curvature interact. While in geodesic metric spaces it is always possible to extend the packing condition at some scale to bigger scales (cp. Lemma \ref{packingbigscales}), it is not possible in general to extend the packing condition uniformly to smaller scales, see Example \ref{example-packing}.  Another key-result  of the paper is that this extension is possible when the metric space has curvature bounded from above and is locally geodesically complete. \linebreak  In particular the local geometry at scales smaller than $r_0$ is controlled by the packing condition:

\begin{theorem}[Extract from Theorem \ref{char_pack}] \label{intro-pack}${}$ \\
	Let $X$ be a complete, geodesic, \textup{GCBA}-space 
	with  
	almost-convexity radius $\rho_{\textup{ac}}(X) \geq \rho_0 > 0$. 
	\begin{itemize}
	\setlength\itemindent{-2mm}
		\item[(a)] there exist $P_0$ and $r_0 \leq \rho_0/3$ 
		such that $X$ satisfies the $P_0$-packing condition at scale $r_0$;	
		\item[(b)] there exist $n_0$ and $V_0, R_0 >0$ such that $X$ has dimension $\leq n_0$ and $\mu_X(B(x,R_0))\leq V_0$ for all $x \in X$;
		\item[(c)] there   exist two functions $c(r), C(r)$ such that for any $x\in X$ and for any $0<r<\rho_0$:
		\vspace{-1mm}
		$$0< c(r) \leq \mu_X(B(x,r)) \leq C(r) < +\infty.$$		
	\end{itemize}
\end{theorem}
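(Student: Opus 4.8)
My plan is to prove the three assertions in order, leveraging the Croke-type estimate (Theorem~\ref{intro-croke}, i.e.\ Theorem~\ref{boundbelowvolume}) for the lower bounds and a covering/packing argument together with the dimension theory of GCBA-spaces for the upper bounds.

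For part~(a), I would first invoke the fact, mentioned earlier in the excerpt, that the almost-convexity radius controls the contraction maps: for any $x\in X$ the ball $B(x,\rho_0)$ retracts onto smaller balls with a Lipschitz bound of $2t$ on the scaling. Fixing a scale $r_0$ somewhat smaller than $\rho_0/3$, I would bound the number of $2r_0$-separated points in a ball $B(y,3r_0)$ as follows. Such a ball has dimension $\le n_0$ for some uniform $n_0$ (this is where the GCBA structure and the almost-convexity radius enter: locally geodesically complete CAT$(\kappa)$ spaces have locally bounded dimension, and the almost-convexity radius gives uniformity via the contraction maps onto tangent-cone–like models). Given the dimension bound, the canonical measure $\mu_X$ is comparable to $n_0$-dimensional Hausdorff measure, so $\mu_X(B(y,3r_0))$ is finite; more to the point, the balls $B(z_i,r_0)$ around $2r_0$-separated points $z_i$ are disjoint and all contained in $B(y,4r_0)\subset B(y,\rho_0)$, hence by the contraction map (which is bilipschitz with controlled constants down to scale, comparing $B(y,4r_0)$ to a fixed model ball) one gets both $\mu_X(B(z_i,r_0))\ge c_{n_0} r_0^{n_0}$ from Theorem~\ref{intro-croke} and $\mu_X(B(y,4r_0))\le$ (a uniform constant depending on $n_0$, $r_0$). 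Dividing, the cardinality of any $2r_0$-separated set is bounded by a uniform $P_0$. The main subtlety here — and I expect this to be \emph{the main obstacle} — is getting the \emph{uniform} dimension bound $n_0$ and the \emph{uniform} upper volume bound on balls of a fixed radius $R_0$ purely from $\rho_{\textup{ac}}(X)\ge\rho_0$, with no a priori packing hypothesis; this presumably requires a compactness/blow-up argument or a direct induction on scales using the structure theory of Kleiner and Lytchak–Nagano, and is the technical heart of Theorem~\ref{char_pack}.

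Part~(b) then follows almost immediately from the work done in~(a): the dimension bound $\dim X\le n_0$ is established there (it is the uniform bound obtained from $\rho_0$), and the upper volume bound $\mu_X(B(x,R_0))\le V_0$ for $R_0$ of the order of $\rho_0$ is exactly the uniform upper estimate extracted in the course of proving~(a). So (b) is a repackaging of the intermediate estimates.

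For part~(c), I would first establish the upper bound $\mu_X(B(x,r))\le C(r)<+\infty$ for all $0<r<\rho_0$. Starting from the fixed-scale bound $\mu_X(B(x,R_0))\le V_0$ of part~(b), one extends to larger radii $r<\rho_0$ by a standard covering argument — cover $B(x,r)$ by a controlled number of balls of radius $R_0$, the number being bounded via the packing condition from part~(a) (here Lemma~\ref{packingbigscales} lets us pass the packing condition from scale $r_0$ up to any scale $<\rho_0$) — yielding $C(r)=N(r)\cdot V_0$ with $N(r)$ an explicit packing-type count. For smaller radii $r\le R_0$ the trivial monotonicity $\mu_X(B(x,r))\le\mu_X(B(x,R_0))\le V_0$ already gives a (non-sharp but finite) bound, which is all (c) asks for. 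The lower bound $\mu_X(B(x,r))\ge c(r)>0$ for $0<r<\rho_0$ is where Theorem~\ref{intro-croke} does the real work: for $r<\min\{\rho_{\textup{ac}}(X),1\}$, hence in particular for $r<\min\{\rho_0,1\}$, it gives directly $\mu_X(B(x,r))\ge c_{n_0}r^{n_0}=:c(r)$; for $1\le r<\rho_0$ (if $\rho_0>1$) one uses monotonicity to reduce to radius $\min\{\rho_0,1\}/2$ or similar and set $c(r)$ to that constant. Assembling the two bounds gives~(c). Throughout, the only genuinely hard input is the uniform dimension-and-volume control from $\rho_{\textup{ac}}(X)\ge\rho_0$; everything else is packing/covering bookkeeping combined with Theorem~\ref{intro-croke}.
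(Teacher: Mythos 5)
There is a genuine gap, and it originates in a misreading of the statement. This theorem is an extract of Theorem~\ref{char_pack}, which asserts that conditions (a), (b), (c) are \emph{equivalent} for a complete, geodesic GCBA-space with $\rho_{\textup{ac}}(X)\geq \rho_0$; it does not assert that each of them holds unconditionally. You set out to ``prove the three assertions in order'' from the hypothesis $\rho_{\textup{ac}}(X)\geq\rho_0$ alone, and the step you yourself single out as the technical heart --- extracting a uniform dimension bound $n_0$ and a uniform bound $\mu_X(B(x,R_0))\leq V_0$ purely from the almost-convexity radius, ``with no a priori packing hypothesis'' --- cannot be carried out, because the claim is false. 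A geodesically complete metric tree whose vertex valences tend to infinity is a complete, geodesic, locally compact, locally geodesically complete CAT$(\kappa)$ space with $\rho_{\textup{ac}}(X)=+\infty$, yet it satisfies no uniform packing condition at any fixed scale and admits no finite bound $C(r)$ or $V_0$ on the measure of balls of a fixed radius; gluing pieces of growing dimension similarly defeats any uniform bound $\dim(X)\leq n_0$. So no compactness or blow-up argument can close this gap: conditions (a)--(c) are genuinely extra hypotheses, and the content of the theorem is their mutual equivalence (with control of the constants), which your proposal never addresses as such. There is also an internal circularity in your argument for (a): you bound the cardinality of a $2r_0$-separated set by the quotient $\mu_X(B(y,4r_0))/\min_i\mu_X(B(z_i,r_0))$, but the only available upper bounds on $\mu_X$ of balls, namely \eqref{k-inequality1}, have constants depending on the covering/packing number of the ball --- exactly the quantity you are trying to bound.

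For comparison, the paper's proof of Theorem~\ref{char_pack} runs the three implications: (a)$\Rightarrow$(b) uses the $2$-Lipschitz surjection $\Psi_x\colon B(x,\rho_{\textup{ac}}(x))\to\mathbb{R}^n$ of Proposition~\ref{PSI} (logarithmic map composed with the Lytchak--Nagano projection) to force $2n\leq P_0$, and then a covering argument plus \eqref{k-inequality1} (whose constant is now controlled by $P_0$ via Theorem~\ref{CAT-packing}) to get the uniform volume bound; (a)$\Rightarrow$(c) adds the Croke-type lower bound of Theorem~\ref{boundbelowvolume}; and (b)$\Rightarrow$(a), (c)$\Rightarrow$(a) are the disjoint-ball volume-quotient arguments. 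The pieces of your proposal that are sound --- bounding separated sets by volume quotients using Theorem~\ref{boundbelowvolume}, and propagating upper bounds to larger radii by covering --- correspond to those last implications and to part of (a)$\Rightarrow$(c), but the key implication bounding dimension and volume \emph{from the packing hypothesis} is absent, and the unconditional statements you aim at are not true.
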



\noindent For Riemannian manifolds of dimension $n$ the measure $\mu_X$ coincides with the $n$-dimensional Hausdorff measure, so (b) corresponds simply to a uniform upper bound on the Riemannian volume of balls of some fixed radius $R_0$, a condition that it is sometimes easier  to verify than the bounded packing. \linebreak
The proof of Theorem \ref{char_pack} is essentially based on   universal estimates from below and from above of the volume of small balls of $X$ in terms of dimension and of the packing constants.   
We will prove these estimates in Section \ref{sec-volume}.
We want to point out that, while the estimate (\ref{lowerestimate}) and  Theorem \ref{char_pack} are new, many of the ideas behind these results are already implicitely present  in \cite{LN19}. 


\vspace{2mm}
In  Section \ref{sec-doubling}, we investigate the relation between the local doubling condition\footnote{Beware that the {\em doubling constant} which is used in \cite{LN19} is a different notion, which is purely metric and does not depend on the measure.}  
with respect to the natural measure $\mu_X$ and the local structure of GCBA-spaces. It is easy to show that a local doubling condition implies the packing. 
However it turns out that the doubling property is much stronger and characterizes GCBA-spaces which are {\em purely dimensional spaces}, i.e. those whose points  have all the same dimension. Indeed we prove:

\begin{theorem}[Extract from Corollary \ref{doubling} \& Theorem \ref{pure-dimensional}] 
	\label{intro-doub}${}$ \\
	Let $X$ be a complete, geodesic, \textup{GCBA}-space
	with almost-convexity radius $\rho_{\textup{ac}}(X)\geq \rho_0 > 0$. 
	The following conditions are equivalent:
	\begin{itemize}
\setlength\itemindent{-4mm}
		\item[(a)] there exists $D_0>0$ such that the natural measure $\mu_X$ is $D_0$-doubling up to some scale $r_0 > 0$;
		\item[(b)] $X$ is purely $n$-dimensional for some $n$  and there exist constants  $P_0$ and $ r_0 \leq  \rho_0/3$ such that $X$  satisfies the $P_0$-packing condition at scale $r_0$.
	\end{itemize}
\end{theorem}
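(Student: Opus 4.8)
The plan is to prove the two implications separately; (b)$\Rightarrow$(a) is soft, while (a)$\Rightarrow$(b) carries the real content, namely pure-dimensionality.

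\smallskip\noindent\emph{(b)$\Rightarrow$(a).} Assume $X$ is purely $n$-dimensional, so $\dim X=n$. I would invoke the universal two-sided volume estimates of Section~\ref{sec-volume}: from below, Theorem~\ref{intro-croke} gives $\mu_X(B(x,r))\ge c_n r^n$ for all $x$ and all $r<\min\{\rho_0,1\}$ (using $\rho_{\mathrm{ac}}(X)\ge\rho_0$); from above, the companion estimate applied to a purely $n$-dimensional space yields a constant depending only on $n$ and the packing constant $P_0$, so that $\mu_X(B(x,r))\le C_{n,P_0}\,r^n$ for $r$ below some $r_1>0$ --- here the curvature bound enters only through the spaces of directions, which are $(n-1)$-dimensional CAT$(1)$ spaces of uniformly bounded volume. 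Dividing the two estimates gives $\mu_X(B(x,2r))/\mu_X(B(x,r))\le 2^nC_{n,P_0}/c_n$ for all $x$ and all $r\le\tfrac12\min\{\rho_0,1,r_1\}$, which is (a).

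\smallskip\noindent\emph{(a)$\Rightarrow$(b).} The $P_0$-packing at some scale $r_0\le\rho_0/3$ is already granted by the standing hypothesis via Theorem~\ref{intro-pack}(a), so this half of (b) does not use doubling at all (alternatively one extracts $P_0$ directly from $D_0$: a maximal $2s$-separated set in a ball $B(x,3s)$, with $s$ a small multiple of $r_0$, has at most $D_0^3$ points, since the radius-$s$ balls around them are pairwise disjoint, lie in $B(x,4s)$, and each has measure at least $D_0^{-3}\mu_X(B(x,4s))$). The substance is showing $X$ is purely dimensional, and I argue by contradiction. If not, then --- $X$ being geodesic, hence connected, and the local dimension being upper semicontinuous --- the top stratum $X^{=n}$ ($n=\dim X$) is closed but not open, so it has a boundary point $x_*\in X^{=n}$; let $k_0<n$ be the least index with $x_*\in\overline{X^{=k_0}}$, so that $B(x_*,\varepsilon_0)\subseteq X^{\ge k_0}$ for some $\varepsilon_0>0$. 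The contradiction is the mechanism already present in $\mathbb{R}^2\vee_0\mathbb{R}^1$: for arbitrarily small $\delta>0$ I want a point $p$ with $d(p,x_*)=\delta$ and $B(p,\delta)\subseteq X^{=n}$, and a point $w$ with $B(w,c\delta)\subseteq X^{=k_0}$ for a fixed $c>0$ and $B(w,c\delta)\subseteq B(x_*,\delta)$. Granting these, the upper estimate gives $\mu_X(B(p,\delta))\le C_{n,P_0}\,\delta^n$, while the Croke-type bound of Theorem~\ref{intro-croke}, applied inside the $k_0$-dimensional piece $X^{=k_0}$, gives $\mu_X(B(p,2\delta))\ge\mu_X(B(w,c\delta))\ge c_{k_0}(c\delta)^{k_0}$ (note $B(w,c\delta)\subseteq B(x_*,\delta)\subseteq B(p,2\delta)$). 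Thus the doubling ratio of $\mu_X$ at $p$ and scale $\delta$ is at least a constant times $\delta^{k_0-n}$, which tends to $+\infty$ as $\delta\to0$, contradicting (a).

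\smallskip\noindent The step I expect to be the main obstacle is precisely producing the points $p$ and $w$, i.e. controlling the relative position of the strata near $x_*$: one needs a piece of the top stratum lying essentially as close to $x_*$ as the lower strata do (so that $B(p,\delta)$ is purely top-dimensional but $B(p,2\delta)$ already reaches $X^{<n}$), and symmetrically a ``thick'' ball of $X^{=k_0}$ at distance of order $\delta$ from $x_*$. This requires the fine local structure theory of Kleiner and Lytchak--Nagano: tameness of the stratification, the fact that the $n$-manifold part is open and dense in $\overline{X^{=n}}$, and the conical description of $X$ near $x_*$ through its tangent cone $T_{x_*}X=C(\Sigma_{x_*})$, whose space of directions has dimension $n-1$ yet accumulates directions towards $X^{=k_0}$ --- this is what forces the two required configurations at every small scale. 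With $p$ and $w$ in hand, the rest is the elementary volume bookkeeping above.
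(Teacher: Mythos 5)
Your direction (b)$\Rightarrow$(a) is essentially the paper's own argument (lower bound from Theorem \ref{boundbelowvolume}, upper bound $\mathcal{H}^n(B(x,r)^n)\le Cr^n$ from \eqref{k-inequality1} with the constant controlled by the packing, then divide), and your parenthetical extraction of the packing constant from $D_0$ is exactly Remark \ref{rem-doubpack}. But note that your \emph{primary} justification of the packing half of (a)$\Rightarrow$(b) is incorrect: Theorem \ref{intro-pack} is an equivalence of conditions, not something granted by the standing hypotheses, and a complete geodesic GCBA-space with $\rho_{\textup{ac}}\ge\rho_0>0$ need not satisfy any uniform packing condition at any scale (e.g.\ a line with a wedge of $k$ unit circles attached at the $k$-th integer point, in the spirit of the examples of Section \ref{sec-packingcovering}). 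So the packing really must come from the doubling, i.e.\ from your fallback argument; that part is fine.

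The genuine gap is in the pure-dimensionality half, and it is exactly the step you flag: you never produce the points $p$ and $w$. What you require --- for every small $\delta$ a full ball $B(p,\delta)\subseteq X^{=n}$ with $d(p,x_*)=\delta$, \emph{and} a ball $B(w,c\delta)\subseteq X^{=k_0}$ inside $B(x_*,\delta)$, i.e.\ uniform $\delta$-scale thickness of both the top and the bottom stratum at an arbitrary boundary point of $X^{=n}$ --- is a quantitative statement that does not follow from the structure results you invoke (openness/density of regular sets, the tangent cone description); it is precisely what has to be proved, and as stated it is stronger than necessary. The paper sidesteps it by choosing the configuration differently: it takes $x_0$ of \emph{minimal} dimension $d_0$ and the maximal radius $r_0$ with Hausdorff dimension of $B(x_0,r_0)$ equal to $d_0$, so the ``thick'' low-dimensional ball is free (it is $B(x_0,r_0)$ itself, purely $d_0$-dimensional by minimality); then at a point $x$ of dimension $d>d_0$ with $d(x_0,x)=r_0$ it picks a direction $v\in\Sigma_xX$ all of whose neighbours in $T_xX$ are $d$-regular, and proves a sector lemma via blow-ups (Lemma \ref{lemma-tangentcone=limit} together with Lemma \ref{regularity-convergence}) showing that $B(\gamma(r/2),\varepsilon r/8)$ consists only of $d$-dimensional points for all small $r$. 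The two relaxations are what make this provable: the monochromatic ball has radius only \emph{proportional} to its distance from $x$, and it lies in the $d$-stratum with $d=\dim(x)$, not necessarily the top stratum; your volume bookkeeping (lower bound $c_{d_0}r^{d_0}$ on the big ball via Corollary \ref{boundbelowvolume2}, upper bound $Cr^{d}$ on the small one via \eqref{k-inequality1}, finitely many doublings between the two radii) works verbatim with these weaker inputs. So your mechanism is the right one, but without a proof of the existence of $p$ and $w$ --- or a switch to the paper's choice of configuration --- the argument is incomplete.
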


%


The families of spaces with uniformly bounded diameter, satisfying a packing condition for some universal function $P=P(r)$ and  all $0<r \leq r_0$, are classically called {\em uniformly compact}; actually one can always extract from them convergent subsequences for the Gromov-Hausdorff distance \linebreak (see \cite{Gr81}).
Moreover it is classical that an upper bound  on the curvature is stable under Gromov-Hausdorff convergence, provided that the corresponding CAT$(\kappa)$-radius is uniformly bounded below. 
Starting from the  results proved above  it is possible to decline Gromov's Precompactness Theorem for GCBA-spaces  as follows.
Consider the classes \footnote{Mnemonically we write before the semicolon the parameters which are relative to the packing condition or to the condition on the natural measure $\mu_X$} 
\vspace{-3mm}

$$ \text{GCBA}_{\text{pack}}^\kappa (P_0,r_0; \rho_0),  \hspace{5mm}
\text{GCBA}_{\text{vol}}^\kappa (V_0, R_0; \rho_0,  n_0 ) $$

\noindent   of  complete, geodesic, GCBA-spaces with curvature $\leq \kappa$, almost-convexity radius  $\rho_{\text{ac}}(X) \geq \rho_0 >0$ and satisfying, respectively,  condition (a) or (b) of Theorem \ref{intro-pack}.
\\
Let also denote by  
\vspace{-4mm}

$$\text{GCBA}^\kappa_{\text{vol}} (V_0; \rho_0,   n_0^{=} )$$

\noindent  the   class of complete,  geodesic, GCBA-spaces with curvature $\leq \kappa$,    total measure $\mu_X (X) \leq V_0$,   almost-convexity radius  $\rho_{\text{ac}}(X) \! \geq \! \rho_0 \!>\! 0$ and dimension \linebreak 
{\em precisely equal to}  $n_0$.
Then:
\begin{theorem}[Theorem \ref{compactness-packing}, Corollary \ref{compactness-unpointed} \& \ref{compactness-pointed}]  
	\label{intro-compactness} ${}$
	
\begin{itemize}
\setlength\itemindent{-4mm}
\item[(a)] The classes	$\textup{GCBA}_{\textup{pack}}^\kappa (P_0,r_0; \rho_0)$ and $\textup{GCBA}_{\textup{vol}}^\kappa (V_0, r_0 ; \rho_0, n_0)$ are compact with respect to the pointed Gromov-Hausdorff convergence;  
\item[(b)]  the class
	$\textup{GCBA}^\kappa_{\textup{vol}} (V_0; \rho_0, n_0^{=})$ is compact with respect to the Gromov-Hausdorff convergence and contains only finitely many homotopy types.
\end{itemize}
\end{theorem}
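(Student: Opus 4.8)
The plan is to derive both statements from Gromov's Precompactness Theorem, using the two-sided volume estimates of Theorem~\ref{char_pack} to produce a uniform packing function and to control the natural measures, and then to verify that every condition defining the classes is stable under pointed Gromov--Hausdorff limits. For $\textup{GCBA}_{\textup{pack}}^\kappa(P_0,r_0;\rho_0)$ the standing hypotheses together with Theorem~\ref{intro-pack} (in the fuller form of Theorem~\ref{char_pack}) yield a single packing function $P(r)$, valid at \emph{every} scale $r>0$ and depending only on $\kappa,\rho_0,P_0,r_0$: at small scales because curvature $\le\kappa$ and local geodesic completeness allow the packing condition to be pushed downwards, at large scales by Lemma~\ref{packingbigscales}. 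Hence the class is uniformly compact and, by Gromov, every sequence in it has a subsequence converging in the pointed Gromov--Hausdorff topology to a complete geodesic space $(X_\infty,x_\infty)$; the same holds for $\textup{GCBA}_{\textup{vol}}^\kappa(V_0,r_0;\rho_0,n_0)$, since by Theorem~\ref{char_pack} its defining condition~(b) implies~(a) with constants controlled by $V_0,r_0,\rho_0,n_0,\kappa$. Passing to a further subsequence, the uniform local bounds $c(r)\le\mu_{X_i}(B(\cdot,r))\le C(r)$ of Theorem~\ref{intro-pack}(c) also let us assume that the natural measures $\mu_{X_i}$ converge weakly-$*$ to a Radon measure $\mu_\infty$ on $X_\infty$.

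It remains to check that $X_\infty$ lies in the same class. Completeness and geodesicity are automatic. The almost-convexity inequality $d(y_t,z_t)\le 2t\,d(y,z)$ on balls of radius $\rho_0$, and the $P_0$-packing condition at scale $r_0$, are closed conditions for pointed Gromov--Hausdorff convergence (approximate points and geodesics of $X_i$ by those of $X_\infty$), so $\rho_{\textup{ac}}(X_\infty)\ge\rho_0$ and $X_\infty$ keeps the packing condition. That $X_\infty$ is again a $\textup{GCBA}^\kappa$-space rests on two facts: local geodesic completeness passes to the limit because the uniform lower volume bound rules out collapse, and the curvature upper bound $\kappa$ is stable under Gromov--Hausdorff convergence once one knows the CAT$(\kappa)$-radius is uniformly bounded below along the sequence --- a uniform bound built into the structure of these classes in Sections~\ref{sec-volume}--\ref{sec-packingcovering}. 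The one genuinely delicate step is then the identification of $\mu_\infty$ with the natural measure $\mu_{X_\infty}$ of the limit: granting it, weak-$*$ lower semicontinuity on open balls gives $\mu_{X_\infty}(B(y,R_0))\le\liminf_i\mu_{X_i}(B(y_i,R_0))\le V_0$, while the uniform covering-number bounds with exponent $n_0$ coming from the two-sided volume estimates give $\dim X_\infty\le n_0$; thus condition~(b) descends to $X_\infty$ and statement~(a) follows.

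This measure identification is the main difficulty, and it is where the GCBA structure theory is really used. One argues that $\mu_{X_\infty}$, which restricts to the $k$-dimensional Hausdorff measure on each stratum $X_\infty^k$, agrees with $\mu_\infty$ by working on the dense open subset of the top stratum that is bi-Lipschitz equivalent to a Euclidean ball: this subset and its charts can be approximated by the corresponding pieces of the $X_i$, and the uniform two-sided estimates of Theorems~\ref{intro-croke} and~\ref{char_pack}, which pass to $\mu_\infty$ and pin down both its local dimension and its non-degeneracy, force the relevant Hausdorff measures to match; the lower strata are $\mu_{X_\infty}$- and $\mu_\infty$-negligible, so nothing is lost there. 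In other words, one proves that the natural measure is continuous along non-collapsed Gromov--Hausdorff convergence inside $\textup{GCBA}^\kappa$.

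For part~(b), first observe that the bound $\mu_X(X)\le V_0$ forces a uniform bound on the diameter of any $X\in\textup{GCBA}^\kappa_{\textup{vol}}(V_0;\rho_0,n_0^{=})$: with $r=\tfrac12\min\{\rho_0,1\}$, a maximal $r$-separated net $\{x_1,\dots,x_N\}$ gives disjoint balls $B(x_j,r/2)$, each of measure $\ge c_{n_0}(r/2)^{n_0}$ by Theorem~\ref{intro-croke} (applicable since $X$ is purely $n_0$-dimensional), so $N\le V_0/(c_{n_0}(r/2)^{n_0})$, and as $\{x_j\}$ is also a covering net and $X$ is connected, $\operatorname{diam}X$ is controlled by $V_0,\rho_0,n_0$. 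Hence this class sits inside $\textup{GCBA}^\kappa_{\textup{vol}}(V_0,R_0;\rho_0,n_0)$ for a suitable $R_0$ and with uniformly bounded diameter, so part~(a) already yields precompactness, now for the unpointed Gromov--Hausdorff distance. For closedness the only new point is that the limit has dimension \emph{exactly} $n_0$: a purely $n_0$-dimensional $\textup{GCBA}^\kappa$-space satisfies $c_{n_0}r^{n_0}\le\mu_X(B(x,r))\le C\,r^{n_0}$ at small scales (Croke-type lower bound of Theorem~\ref{intro-croke} and a Bishop-type upper bound, cf. Section~\ref{sec-volume}), and this two-sided estimate, via $\mu_\infty=\mu_{X_\infty}$, passes to the limit, so $X_\infty$ is Ahlfors $n_0$-regular at every point and therefore purely $n_0$-dimensional; hence it lies in the class. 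Finally, the finiteness of homotopy types follows from the now-standard nerve argument for Gromov--Hausdorff precompact families of finite-dimensional, uniformly locally contractible spaces: in a $\textup{GCBA}^\kappa$-space, balls of radius below a threshold depending only on $\kappa$ and the uniform lower bound on $\rho_{\textup{cat}}$ are convex and contractible, through the canonical contraction of Section~\ref{sec-preliminaries}, as are their finite intersections; covering each member of the family by finitely many such balls of a fixed small radius produces a nerve homotopy equivalent to it, two members that are Gromov--Hausdorff close enough have isomorphic --- hence homotopy equivalent --- nerves, and the precompactness then leaves only finitely many homotopy types.
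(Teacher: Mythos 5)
Your overall strategy (uniform packing $\Rightarrow$ precompactness, then stability of the defining conditions, diameter bound via the Croke-type estimate, Petersen-style homotopy finiteness) is the same as the paper's, but two steps that you dismiss as routine are exactly where the real work lies, and one of them is not correctly addressed. First, the claim that the almost-convexity condition is ``closed under pointed GH convergence (approximate points and geodesics of $X_i$ by those of $X_\infty$)'' is a genuine gap, and the parenthetical points the approximation in the wrong direction: to verify $d(y_t,z_t)\le 2t\,d(y,z)$ in the limit you must realize \emph{every} geodesic of $X_\infty$ of length up to $\rho_0$ as a limit of geodesics of the $X_i$, and when $\rho_0>\frac{D_\kappa}{2}$ this fails to be automatic --- geodesics in a GH limit need not be unique beyond the CAT$(\kappa)$-radius, nor limits of geodesics of the approximating spaces. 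The paper devotes Proposition \ref{prop-semicontinuity} to this: one first proves uniqueness of geodesics of length $<\rho_0$ in the limit by a quantitative comparison argument (the estimates \eqref{L_bigger_l} and \eqref{L_smaller}, playing the contraction maps of the $X_n$ against the CAT$(\kappa)$ comparison inside balls of radius $\frac{D_\kappa}{2}$), and only then can one transfer the $2t$-convexity inequality. Relatedly, your justification that local geodesic completeness passes to the limit ``because the uniform lower volume bound rules out collapse'' is not an argument (a flat disk has uniform lower volume bounds and is not geodesically complete); the correct mechanism is to extend geodesics in the $X_i$ within the uniform CAT$(\kappa)$-radius and pass to the limit, as in Corollary \ref{ultralimit-stability}.

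Second, in part (b) you have proved closedness for the wrong class: $\textup{GCBA}^\kappa_{\textup{vol}}(V_0;\rho_0,n_0^{=})$ consists of spaces whose dimension (the supremum of local dimensions) equals $n_0$, not of \emph{purely} $n_0$-dimensional spaces, so the two-sided Ahlfors-type estimate $c\,r^{n_0}\le\mu_X(B(x,r))\le C\,r^{n_0}$ you invoke simply does not hold at points of lower dimension, and your argument does not show that the limit still attains dimension $n_0$. What is needed is the semicontinuity of dimension at points (Lemma \ref{regularity-convergence}, used in Proposition \ref{cor-dim}): one takes regular points of maximal dimension in the $X_i$, which stay at bounded distance from the basepoints because the diameters are uniformly bounded, and concludes that the limit of these points has dimension $\ge n_0$; the reverse inequality $\dim X_\infty\le n_0$ is fine (your covering-number route with exponent $n_0$, derived from the Croke lower bound and the total-volume bound in the $X_i$, is a legitimate alternative to the paper's measure-theoretic argument). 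Minor further remarks: the identification of the weak limit measure with the natural measure of the limit, which you sketch via bi-Lipschitz charts, is in the paper simply quoted from Lytchak--Nagano (proof of Theorem 1.5 of \cite{LN19}), so your sketch should either be expanded or replaced by that citation; and the homotopy finiteness is obtained in the paper by applying Petersen's theorem \cite{Pet90} with the contractibility function LGC$(r)=r$, which is cleaner than your nerve argument (where ``GH-close implies isomorphic nerves'' needs care).
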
 


\vspace{1mm}
As our spaces are locally CAT$(\kappa)$  with CAT$(\kappa)$-radius uniformly bounded below (see  inequality (\ref{ac-radius,cat-radius}) in Sec. \ref{sec-preliminaries}),  it is not surprising that the limit space is again locally CAT$(\kappa)$. 
Less trivially,   as a part of the proof of  the compactness, we  need to show that   the conditions on the measure, on the almost-convexity radius and on the dimension 
are stable under Gromov-Hausdorff  limits.\linebreak
So let us highlight the following results which are  consequence  of the  estimates in Theorems A and B and are part  of the compactness theorem. They will be proved in Section \ref{sec-compactness}:

\begin{theorem}[Proposition \ref{prop-semicontinuity} \& Proposition \ref{cor-dim}]${}$\\
	Let $(X_n,x_n)$ be   \textup{GCBA}$^\kappa$-spaces   converging    to $(X,x)$ with respect to the  pointed Gromov-Hausdorff topology. Then:
	
	\begin{itemize}
 
 	\item[(a)] $\rho_{\textup{ac}} (X) \geq \limsup_{n \rightarrow \infty} \rho_{\textup{ac}} (X_n)$;
		\item[(b)]  if  $\rho_{\textup{ac}} (X_n) \geq \rho_0>0$ for all $n$ then  $\textup{dim}(X) \leq \lim_{n\to +\infty} \textup{dim}(X_n)  $ 
		and the equality 
		holds if and only if the distance from $x_n$ to the maximal dimensional subspace $X_n^{\textup{max}}$ of $X_n$  stays uniformly bounded when \linebreak 
		$n\rightarrow \infty$.
	\end{itemize}
	\vspace{-2mm}
	\noindent (The second assertion refines  Lemma 2.1 of \cite{Nag18}, holding for \textup{CAT}$(\kappa)$-spaces).
	
\end{theorem}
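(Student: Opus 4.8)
The plan is to establish (a) and (b) separately, in each case by passage to the limit along pointed $\varepsilon_n$-approximations realizing $(X_n,x_n)\to(X,x)$, the point in both being to show that a feature uniform in the $X_n$ persists in $X$. For (a) I would reduce to proving $\rho_{\textup{ac}}(X)\ge\rho$ for every $\rho<L:=\limsup_n\rho_{\textup{ac}}(X_n)$: fix such a $\rho$ and pass to a subsequence with $\rho_{\textup{ac}}(X_n)>\rho$ for all $n$. The crucial observation is that inside $\rho$-balls the geodesics of $X$ are limits of geodesics of the $X_n$. Indeed, given $p\in X$, points $y,z\in B(p,\rho)$ and geodesics $[p,y],[p,z]$ of $X$, one approximates them by near-geodesics $\eta_n,\eta'_n$ of $X_n$ emanating from a common point $p_n\to p$; since every point of $\eta_n$ has excess $o(1)$ with respect to its endpoints, the stability of almost-geodesics inside almost-convexity balls — developed in Section \ref{sec-preliminaries} from the $2t$-Lipschitz contraction, together with inequality (\ref{ac-radius,cat-radius}), which gives a uniform lower bound on the CAT$(\kappa)$-radius along the subsequence — forces $\eta_n$ to lie within $o(1)$ of the genuine geodesic $[p_n,y_n]$ of $X_n$, and likewise for $\eta'_n$, so that $[p_n,y_n]\to[p,y]$ and $[p_n,z_n]\to[p,z]$. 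Passing to the limit in the contraction inequality $d\big((y_n)_t,(z_n)_t\big)\le 2t\,d(y_n,z_n)$ valid in each $X_n$ then gives the same inequality in $X$ (the case $z=y$ also yielding uniqueness of the geodesics issuing from $p$ into $B(p,\rho)$); as $p$ is arbitrary, $\rho_{\textup{ac}}(X)\ge\rho$.

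For (b) one first notes that, by (a), $\rho_{\textup{ac}}(X)\ge\rho_0>0$, and that $\rho_{\textup{ac}}(X_n)\ge\rho_0$ together with the curvature bound $\le\kappa$ (inequality (\ref{ac-radius,cat-radius})) gives a uniform positive lower bound on the CAT$(\kappa)$-radius of the $X_n$; hence $X$ is again GCBA$^\kappa$ and the whole sequence is uniformly locally CAT$(\kappa)$. The inequality $\textup{dim}(X)\le\liminf_n\textup{dim}(X_n)$ — hence $\le\lim_n\textup{dim}(X_n)$ when the limit exists — is then a version of Lemma 2.1 of \cite{Nag18}: local dimension $\ge k$ at a point $w$ of $X$ is certified, at some positive scale $r$, by a $(k,\delta)$-strainer (equivalently, by a bilipschitz chart onto a domain of $\mathbb R^k$ through $\log_w$), and this is an open condition, so it transfers to $X_n$ near $w_n$ once $\varepsilon_n\ll r$ and forces $\textup{dim}(X_n)\ge k$. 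For the refinement, after passing to a subsequence realizing $\liminf_n\textup{dim}(X_n)=:m$ I may assume $\textup{dim}(X_n)=m$ for all $n$, so that equality amounts to $\textup{dim}(X)\ge m$. The crucial input here is a uniform version of the structure theory of \cite{LN19}, into which $\rho_{\textup{ac}}(X_n)\ge\rho_0$ enters via the Croke-type estimate (Theorem \ref{boundbelowvolume}): every point of the maximal stratum $X_n^{\textup{max}}=X_n^m$ carries an $m$-dimensional certificate (a $(m,\delta)$-strainer, or a bilipschitz chart onto a domain of $\mathbb R^m$) at a scale $r$ and of quality $\delta$ bounded below in terms of $\rho_0,\kappa,m$ only — informally, the top-dimensional directions at such a point cannot thin out below a definite scale.

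Granting this, the equivalence follows. If $d(x_n,X_n^{\textup{max}})\le D$ for all $n$, choose $z_n\in X_n^{\textup{max}}$ with $d(x_n,z_n)\le D+1$; along a subsequence $z_n\to z\in X$, and the uniform certificates at the $z_n$ pass to the limit, so the local dimension of $X$ at $z$ is $\ge m$ and $\textup{dim}(X)=m$. Conversely, if $\textup{dim}(X)=m$, pick $w$ in the (dense) manifold part of $X^{\textup{max}}$, so that $B(w,r_1)$ is bilipschitz to a domain of $\mathbb R^m$ for some $r_1>0$; approximating $w$ by $w_n\to w$, for $n$ large this chart yields an approximate $(m,\delta)$-strainer inside $B(w_n,r_1/2)$, hence a point $w'_n$ there with $\textup{dim}_{w'_n}(X_n)\ge m$, i.e. $=m$, so $w'_n\in X_n^{\textup{max}}$; therefore $d(x_n,X_n^{\textup{max}})\le d(x_n,w_n)+r_1/2$, which stays bounded because $d(x_n,w_n)\to d(x,w)<\infty$.

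The main obstacle is the uniform-scale $m$-dimensional certificate at the maximal stratum used in (b): I expect this to be the technical heart of the proof, and the step where the hypothesis $\rho_{\textup{ac}}(X_n)\ge\rho_0$ is genuinely indispensable — without it a shrinking cylinder collapses and a fixed disk glued ever farther along a ray escapes to infinity, both breaking the equality. A secondary delicate point is the identification, in (a), of the limits of the geodesics $[p_n,y_n]$ with the actual geodesics of $X$, which rests on the stability of almost-geodesics inside almost-convexity balls; the remaining passages to the limit are routine.
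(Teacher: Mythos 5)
Your argument for (a) hinges on the claim that geodesics of $X$ inside $B(p,\rho)$ are limits of genuine geodesics of the $X_n$, which you justify by a ``stability of almost-geodesics inside almost-convexity balls'' allegedly developed in Section~\ref{sec-preliminaries}. No such statement exists in the paper: Section~\ref{sec-preliminaries} only gives the $\frac{2r}{R}$-Lipschitz contraction (Lemma~\ref{lemma-lip}), uniqueness of geodesics from the center within $\rho_{\textup{ac}}$, and the inequality \eqref{ac-radius,cat-radius}. When $\kappa>0$ and $\rho>D_\kappa/2$ the ball $B(p,\rho)$ is not covered by a single CAT$(\kappa)$ ball, so nothing prevents, a priori, a geodesic of $X$ of length in $(D_\kappa/2,\rho)$ from branching or from failing to be a limit of geodesics of the $X_n$; a quantitative ``near-geodesics stay close to the geodesic'' statement, uniform in $n$ on that range of scales, is essentially equivalent to the uniqueness you would need in the limit, so your argument begs the question exactly where the difficulty lies. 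This is the hard case the paper treats: after reducing to the ultralimit $X_\omega$ (Proposition~\ref{Jansen}), it assumes two distinct geodesics from $y$ to a point at distance $\rho_1+\varepsilon$ with $\rho_1>D_\kappa/2$ the threshold of uniqueness, and derives a contradiction by pinching the separation of the two branches between a lower bound \eqref{L_bigger_l}, obtained from CAT$(\kappa)$ comparison in the ball of radius $D_\kappa/2$ around the common endpoint, and an upper bound \eqref{L_smaller}, obtained by applying the $\frac{2r}{R}$-contraction in the approximating spaces $X_n$ to the (already unique, hence limit) geodesics of length $<\rho_1$. For $\rho\le D_\kappa/2$ your outline does work, but that is only the easy case.

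In (b) you rest the equivalence on a ``uniform certificate'': every point of $X_n^{\textup{max}}$ would be $(m,\delta)$-strained at a scale and quality depending only on $\rho_0,\kappa,m$. You acknowledge this is the technical heart, but you neither prove it nor does it appear in \cite{LN19}; the strainer scale at a point of the top stratum genuinely depends on the point (top-dimensional points adjacent to lower-dimensional strata have no uniform chart), and no such uniformity is needed. The paper avoids it entirely: for the inequality $\textup{dim}(X)\le\liminf_n\textup{dim}(X_n)$ it compares, near a $d$-regular point $y\in X$ with $d>d_0$, the upper bound $\mu_X(\overline B(y,r))\le Cr^{d}$ from \eqref{k-inequality1} with the lower bound $c_{d_0}(r/2)^{d_0}$ in $X_n$ coming from the Croke-type estimate (Theorem~\ref{boundbelowvolume}) and weak convergence of the natural measures \eqref{convergence-measure}, getting a contradiction for small $r$; for the equivalence it uses the pointwise semicontinuity of dimension in the standard setting of convergence (Lemma~\ref{regularity-convergence}), which requires no uniform-scale strainers: if $d(x_n,X_n^{\textup{max}})$ is bounded, points $y_n\in X_n^{\textup{max}}$ subconverge to some $y\in X$ with $\textup{dim}(y)\ge\limsup_n\textup{dim}(X_n)$ by part (a) of that lemma, and conversely a regular point $y\in X$ of maximal dimension forces $\textup{dim}(y_n)=\textup{dim}(y)$ for large $n$ by part (b), so $y_n\in X_n^{\textup{max}}$ at bounded distance from $x_n$. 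Your first inequality via transfer of a strainer from a regular point of $X$ to $X_n$ is salvageable (it is in the spirit of Lemma~\ref{regularity-convergence}(b)), but as written both key inputs of your proposal are unproven, and the one in (a) is attributed to a lemma the paper does not contain.
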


\noindent Therefore $\textup{GCBA}$ spaces with curvature uniformly   bounded from above and almost convexity radius uniformly bounded below can collapse only if the maximal dimensional subspaces go  to infinity.  We will see such an  example in Section \ref{sec-compactness}.\pagebreak
\vspace{2mm}

On the other hand the lower-semicontinuity of the natural measure of balls and of the total volume will follow from \cite{LN19}, where it is proved that if $(X_n)_{n\geq 0}$ is a sequence of GCBA-spaces converging to $X$ then the natural measures $\mu_{X_n}$ converge weakly to the natural measure $\mu_X$ (see Lemma \ref{volume-convergence} and the proof of Corollary \ref{volume-convergence-uniform} for details). 
We will   see  in Section \ref{sec-doubling} that,  under the stronger assumptions that the natural measure is doubling up to some arbitrarily  small scale, the volume of balls is actually {\em continuous} (cp.  Corollary \ref{volume-convergence-uniform}).

Once proved that the bound on the total volume  is stable under Gromov-Hausdorff convergence and that this implies the uniform boundedness of the spaces in our class,   
the homotopy finiteness stated in (ii) is a particular case of Petersen's finiteness theorem  \cite{Pet90}; actually,  as the CAT$(\kappa)$-radius is uniformly bounded below,   these spaces have a common local geometric contractibility function LGC$(r)=r$ for $r\leq \rho_0$. 

\vspace{2mm}

It is not difficult (see Section \ref{sec-compactness}) to check that also the doubling property is stable under pointed Gromov-Hausdorff convergence and so is the property of being pure dimensional. 
Namely let us also consider the classes (with the same conventions as before)
\vspace{-3mm}

$$ \text{GCBA}_{\text{doub}}^\kappa(D_0   , r_0   ;   \rho_0) \hspace{5mm}   
\text{GCBA}_{\text{vol}}^\kappa(V_0   ;   \rho_0  ,  n_0^{^{\text{pure}}})  $$
of  complete, geodesic, GCBA-spaces $X$ with curvature $\leq \kappa$,   almost-convexity radius    $\rho_{\text{ac}}(X) \geq \rho_0 >0$ and which are, respectively, either  $D_0$-doubling  up to scale $r_0$ or purely $n_0$-dimensional with total measure  $\mu_X(X) \leq V_0$. \linebreak
We then deduce the following additional compactness results:

\begin{theorem}[Extract from Corollaries \ref{compactness-unpointed} \& \ref{compactness-pointed}] ${}$
	\label{intro-compactness2} 
	
	\noindent   The classes  $\textup{GCBA}_{\textup{doub}}^\kappa(D_0,r_0;\rho_0) $,   $ \textup{GCBA}_{\textup{vol}}^\kappa (V_0; \rho_0, n_0^{^{\textup{pure}}})$ 
	are  compact with respect  to  pointed and unpointed Gromov-Hausdorff convergence  respectively. \linebreak
	Moreover  $ \textup{GCBA}_{\textup{vol}}^\kappa (V_0; \rho_0, n_0^{^{\textup{pure}}})$ contains only finitely many homotopy types.
	
\end{theorem}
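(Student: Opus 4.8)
The plan is to deduce both assertions from the compactness of the packing classes already in hand (Theorem~\ref{intro-compactness}, i.e.\ Theorems~\ref{compactness-packing}, \ref{compactness-unpointed} and \ref{compactness-pointed}), by showing that the doubling condition and pure dimensionality \emph{survive Gromov--Hausdorff limits} with their constants under control. For $\textup{GCBA}_{\textup{doub}}^\kappa(D_0,r_0;\rho_0)$ I would argue as follows. A $D_0$-doubling condition up to scale $r_0$ implies a $P_0$-packing condition at a scale $r_0'\le\rho_0/3$, with $P_0$ and $r_0'$ depending only on $D_0$ and $r_0$ (the easy implication in Theorem~\ref{intro-doub}; cp.\ Corollary~\ref{doubling}); hence this class is contained in $\textup{GCBA}_{\textup{pack}}^\kappa(P_0,r_0';\rho_0)$, which is pointed-GH compact by Theorem~\ref{intro-compactness}(a). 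It then remains to check that the doubling property passes to limits. So let $(X_j,x_j)$ in the doubling class converge to $(X,x)$; by the previous sentence $(X,x)$ is again a complete geodesic $\textup{GCBA}^\kappa$-space with $\rho_{\textup{ac}}(X)\ge\rho_0$ (the lower bound on $\rho_{\textup{ac}}$ being exactly Proposition~\ref{prop-semicontinuity}). By \cite{LN19} the natural measures $\mu_{X_j}$ converge weakly to $\mu_X$; fixing $y\in X$ and $y_j\in X_j$ with $y_j\to y$, the standard semicontinuity inequalities for weak convergence — applied to the open balls $B(y,s)$ and to the closed balls $\overline B(y,s)$ — together with the vanishing Gromov--Hausdorff distortion yield $\mu_{X_j}(B(y_j,s))\to\mu_X(B(y,s))$ for every $s$ with $\mu_X(\partial B(y,s))=0$. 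Passing the inequality $\mu_{X_j}(B(y_j,2r))\le D_0\,\mu_{X_j}(B(y_j,r))$ to the limit along such ``regular'' radii $r\le r_0$, and then extending it to all $0<r\le r_0$ by inner regularity of $\mu_X$ (choosing regular $r_k\uparrow r$), shows that $\mu_X$ is $D_0$-doubling up to scale $r_0$; thus $(X,x)$ lies in the class, which is therefore pointed-GH compact.

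For $\textup{GCBA}_{\textup{vol}}^\kappa(V_0;\rho_0,n_0^{^{\textup{pure}}})$, the key observation I would establish is the identity $\textup{GCBA}_{\textup{vol}}^\kappa(V_0;\rho_0,n_0^{^{\textup{pure}}})=\textup{GCBA}_{\textup{doub}}^\kappa(D_0,r_0;\rho_0)\cap\textup{GCBA}_{\textup{vol}}^\kappa(V_0;\rho_0,n_0^{=})$ for suitable \emph{uniform} constants $D_0=D_0(V_0,\rho_0,n_0,\kappa)$ and $r_0=r_0(\rho_0)$. The inclusion ``$\subseteq$'' is where the volume estimates are used: by Theorem~\ref{intro-croke} one has $\mu_X(B(z,r))\ge c_{n_0}r^{n_0}$ for all $z$ and all $r<\min\{\rho_0,1\}$, so from $\mu_X(X)\le V_0$ and the disjointness of the balls $B(z_i,r_0)$ over a $2r_0$-separated set (with $r_0<\min\{\rho_0/3,1\}$ fixed) one gets a uniform bound $P_0=V_0/(c_{n_0}r_0^{n_0})$ on such sets and a uniform bound on $\textup{diam}\,X$; being purely $n_0$-dimensional and $P_0$-packed at scale $r_0$, $X$ then admits a uniform two-sided estimate $c_{n_0}r^{n_0}\le\mu_X(B(z,r))\le C_0r^{n_0}$ up to scale $r_0$ by the results of Section~\ref{sec-doubling} (Corollary~\ref{doubling}, Theorem~\ref{pure-dimensional}), hence $\mu_X$ is $D_0$-doubling up to scale $r_0$ with $D_0=2^{n_0}C_0/c_{n_0}$. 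The reverse inclusion ``$\supseteq$'' follows because a $D_0$-doubling member of the intersection is purely $m$-dimensional for some $m$ by Theorem~\ref{intro-doub}, and $m=n_0$ since its dimension is exactly $n_0$, while its total measure is $\le V_0$ by assumption. Granting this identity: $\textup{GCBA}_{\textup{vol}}^\kappa(V_0;\rho_0,n_0^{=})$ is GH-compact by Theorem~\ref{intro-compactness}(b), and intersecting it with $\textup{GCBA}_{\textup{doub}}^\kappa(D_0,r_0;\rho_0)$, which is GH-closed by the first paragraph (pointed and unpointed convergence agreeing on a family of uniformly bounded diameter), keeps it GH-compact — and along any convergent sequence one checks, exactly as in the first paragraph, that the limit is $D_0$-doubling, that $\dim X=\lim_j\dim X_j=n_0$ by Proposition~\ref{cor-dim} (equality holding because each purely $n_0$-dimensional $X_j$ has $x_j\in X_j^{\textup{max}}=X_j$, so the maximal-dimensional subspace cannot escape to infinity), and that $\mu_X(X)\le\liminf_j\mu_{X_j}(X)\le V_0$. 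Finally, being a subclass of $\textup{GCBA}_{\textup{vol}}^\kappa(V_0;\rho_0,n_0^{=})$, it inherits the finiteness of homotopy types from Theorem~\ref{intro-compactness}(b).

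I expect the main obstacle to be the passage of the doubling \emph{constant} to the limit in the first paragraph: Gromov--Hausdorff approximations distort both the centre and the radius of the balls involved, and the weak convergence $\mu_{X_j}\to\mu_X$ controls ball measures only at continuity radii, so some care is needed to recover the doubling inequality at \emph{every} scale $\le r_0$ without losing the constant. The other slightly delicate point is the uniformity, over the whole family, of the upper bound $\mu_X(B(z,r))\le C_0r^{n_0}$ for purely $n_0$-dimensional spaces — precisely the place where the input of Section~\ref{sec-doubling} (packing plus pure dimensionality yielding \emph{effective} doubling) is indispensable. Once these are settled, the rest is a bookkeeping assembly of the compactness, semicontinuity and weak-convergence results already available.
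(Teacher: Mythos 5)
Your argument is correct, and although it shares the paper's skeleton (embed everything into the compact class $\textup{GCBA}^\kappa_{\textup{pack}}(P_0,r_0;\rho_0)$ and check that the extra conditions survive the limit), both stability steps are carried out by different means than in the paper. For the doubling class the paper does not work at continuity radii: it first derives from the doubling hypothesis a uniform decay estimate for the measure of thin annuli (the lemma modelled on Proposition 11.5.3 of \cite{HKNS15}) and deduces in Corollary \ref{volume-convergence-uniform} that ball volumes are \emph{continuous} under pointed Gromov--Hausdorff convergence at every radius, after which passing the inequality $\mu(B(\cdot,2t))\le D_0\,\mu(B(\cdot,t))$ to the limit is immediate. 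Your portmanteau argument (convergence at radii $s$ with $\mu_X(\partial B(y,s))=0$, then approximation $r_k\uparrow r$ with both $r_k$ and $2r_k$ regular, possible since the exceptional radii are countable, and monotone continuity of $\mu_X$ on increasing unions of open balls) is more elementary, using only the weak convergence $\mu_{X_j}\to\mu_X$, but yields less (no continuity statement); the closed-versus-open ball and drifting-centre bookkeeping you flag does go through, exactly because the spaces are proper and geodesic. For the purely $n_0$-dimensional class the paper instead bounds the diameter via the Croke-type estimate along a geodesic, places the class inside $\textup{GCBA}^\kappa_{\textup{vol}}(V_0,R_0;\rho_0,n_0)$, and invokes a dedicated stability result along the sequence (Proposition \ref{pure-dimensional-stability}, proved via two-sided bounds $C^{-1}r^{d}\le\mu_{X_j}(B(x_j,r))\le Cr^{d}$ at small scales), then reproves homotopy finiteness through Petersen's theorem. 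Your identification of the pure class with $\textup{GCBA}^\kappa_{\textup{doub}}(D_0,r_0;\rho_0)\cap\textup{GCBA}^\kappa_{\textup{vol}}(V_0;\rho_0,n_0^{=})$ for uniform constants (which is exactly the effective content of Corollary \ref{doubling} combined with the Croke bound) lets you recycle the compactness and homotopy finiteness of the $n_0^{=}$ class and obtain pure-dimensionality of the limit by applying Theorem \ref{pure-dimensional} to the limit space itself rather than via a stability lemma along the sequence; this is a clean shortcut, with two mild caveats: it presupposes part (b) of Theorem \ref{intro-compactness} (harmless, since the paper's proof of that part does not use the doubling or pure-dimensional statements, so there is no circularity), and the packing scale produced by Remark \ref{rem-doubpack} must be shrunk below $\rho_0/3$, so your constants $P_0,r_0'$ and $D_0$ also depend on $\rho_0$ and $\kappa$, not only on $D_0,r_0$ resp.\ $V_0,n_0$.
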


\enlargethispage{5mm}
\noindent  The proof of these and  other compactness and stability results is presented in Section \ref{sec-compactness}.

\vspace{2mm}
Finally in Section \ref{sec-simplicial} we specialize our results to study the convergence and stability of   $M^\kappa$-complexes with bounded geometry. 
We will first establish some basic relations relating the injectivity radius to the size and valency of the complexes. 
Recall that the {\em valency} of a $M^\kappa$-complex $X$ is the maximum number of simplices having a same vertex in common and the {\em size} of the simplices of $X$ is defined as the smallest radius $R > 0$ such that any simplex contains a ball of radius $\frac{1}{R}$ and is contained in a ball of radius $R$;
we refer to  Sec. \ref{subsec-basics} for further definitions and details. Then we prove:

\begin{theorem}[Proposition \ref{complex-curvature} , Sec.\ref{sec-simplicial}] ${}$\\
	Let $X$ be a $M^\kappa$-complex	 whose simplices have size bounded by $R$, with valency at most $N$ and no free faces. Then the following conditions are equivalent:
	\begin{itemize}\setlength\itemindent{-4mm}
		\item[(a)] $X$ is a complete $\textup{GCBA}$-space with curvature $\leq \kappa$;
		\item[(b)] $X$ satisfies the link condition  at all vertices;
		\item[(c)] $X$ is locally uniquely geodesic;
		\item[(d)] $X$ has positive injectivity radius;
		\item[(e)] $X$ has injectivity radius 
		$\geq \iota_0$, for some  $\iota_0$ depending only on $R$ and $N$.
	\end{itemize}
\end{theorem}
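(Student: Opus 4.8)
The plan is to prove the cycle of implications $(a)\Rightarrow(b)\Rightarrow(e)\Rightarrow(d)\Rightarrow(c)\Rightarrow(b)\Rightarrow(a)$, which makes all five conditions equivalent. The backbone is the \emph{link condition theorem} for $M^\kappa$-complexes (\cite{BH09}): an $M^\kappa$-complex with admissible simplices --- in particular small enough when $\kappa>0$, which here follows from the size bound --- is locally $\textup{CAT}(\kappa)$ if and only if $\textup{Lk}(v,X)$ is $\textup{CAT}(1)$ for every vertex $v$; moreover, when this holds, Bridson's explicit $\textup{CAT}(\kappa)$-neighbourhood of each point yields a uniform lower bound $\rho_{\text{cat}}(x)\geq\rho_0(R,N)>0$. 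I will keep track of the roles of the three standing hypotheses. The size bound makes the simplices admissible, makes Bridson's geometric constants --- in particular the positive number governing his completeness theorem --- uniform in terms of $R$, and makes $X$ a complete geodesic space. The valency bound $N$ makes $X$ locally finite, hence locally compact, and bounds $\dim X$. Finally the hypothesis ``no free faces'' is precisely the combinatorial condition that upgrades ``locally $\textup{CAT}(\kappa)$'' to ``\textup{GCBA}'': it is equivalent to local geodesic completeness, since a free face $\sigma$ is a proper face of a single simplex, so a local geodesic transverse to $\sigma$ cannot be prolonged past it, whereas in the absence of free faces every local geodesic extends (by induction over the links, which are then again free of free faces and geodesically complete).

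Granting this, the equivalence $(a)\Leftrightarrow(b)$ is the two directions of the link condition theorem together with the remarks just made (completeness, local compactness, local geodesic completeness). For $(b)\Rightarrow(e)$ I would use the quantitative form: $(b)$ gives $\rho_{\text{cat}}(x)\geq\rho_0(R,N)$ for all $x$, and any ball of radius $<\min\{\rho_{\text{cat}}(x),D_\kappa/2\}$ (with $D_\kappa=\pi/\sqrt{\kappa}$ if $\kappa>0$ and $D_\kappa=+\infty$ otherwise) is a convex $\textup{CAT}(\kappa)$-ball, hence uniquely geodesic, so the injectivity radius is bounded below by $\iota_0:=\min\{\rho_0,D_\kappa/2\}$. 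The implication $(e)\Rightarrow(d)$ is immediate, and $(d)\Rightarrow(c)$ follows directly from the definition of injectivity radius: a positive injectivity radius at $x$ provides a metric ball around $x$ carrying unique geodesics.

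The substance is $(c)\Rightarrow(b)$, which I would prove by contraposition. Suppose the link condition fails, so $\textup{Lk}(v,X)$ is not $\textup{CAT}(1)$ for some vertex $v$. Either some vertex link of $\textup{Lk}(v,X)$ fails to be $\textup{CAT}(1)$ --- and then I recurse, noting that iterated links of $\textup{Lk}(v,X)$ are themselves links $\textup{Lk}(\tau,X)$ of $X$ with $v\in\tau$, still with simplices of size controlled by $R$, valency controlled by $N$, no free faces, and strictly smaller dimension --- or $\textup{Lk}(v,X)$ has all vertex links $\textup{CAT}(1)$, hence is locally $\textup{CAT}(1)$, and then by the local-to-global criterion for $\textup{CAT}(1)$ it must contain a closed geodesic of length $<2\pi$. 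Since the dimension drops at each recursion step, after finitely many steps one lands in the second alternative: there is a simplex $\sigma$ of $X$ such that $\textup{Lk}(\sigma,X)$ contains a closed geodesic $\gamma$ of length $\ell<2\pi$. Choosing $\xi_1,\xi_2\in\gamma$ cutting $\gamma$ into arcs of lengths $\theta_1,\theta_2<\pi$ (possible since $\theta_1+\theta_2=\ell<2\pi$) and passing to the $\kappa$-cone description of a neighbourhood of the barycenter $b$ of $\sigma$ via the logarithm map at $b$, one finds that for all small $t>0$ the points $\exp_b(t\xi_1)$ and $\exp_b(t\xi_2)$ are joined by two distinct geodesics of $X$, one along each arc of $\gamma$; hence $X$ is not locally uniquely geodesic, contradicting $(c)$. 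Finally $(b)\Rightarrow(a)$ closes the cycle.

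I expect $(c)\Rightarrow(b)$ to be the main obstacle: one must do carefully the bookkeeping through the logarithm and $\kappa$-cone maps that matches short geodesics of $X$ near $\sigma$ with geodesics of length $<\pi$ in $\textup{Lk}(\sigma,X)$, and organise the recursion so that the size bound, the valency bound and the absence of free faces are genuinely inherited by all iterated links. That all three hypotheses are truly needed is visible already on trivial examples: dropping ``no free faces'', a single edge satisfies $(b)$, $(c)$, $(d)$ and $(e)$ --- its vertex links are points, vacuously $\textup{CAT}(1)$; it is uniquely geodesic with infinite injectivity radius --- yet it is not geodesically complete, so fails $(a)$.
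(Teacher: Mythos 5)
Your overall route is essentially the paper's: the equivalence (a)$\Leftrightarrow$(b) is quoted from the link-condition theorem of \cite{BH09} (Theorem II.5.2 and Remark II.5.3), the implication (c)$\Rightarrow$(b) is the closed-geodesic-in-a-link argument of Theorem II.5.4 of \cite{BH09} (which is exactly what the paper cites), the role of ``no free faces'' as the equivalent of local geodesic completeness is the same (II.5.9--II.5.10 of \cite{BH09}), and (e)$\Rightarrow$(d)$\Rightarrow$(c) are immediate. So the only place where you genuinely diverge is the step that produces the \emph{uniform} constant $\iota_0(R,N)$, i.e.\ your (b)$\Rightarrow$(e), and that is where there is a real gap.

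You assert that the link condition plus ``Bridson's explicit CAT$(\kappa)$-neighbourhood'' yields $\rho_{\textup{cat}}(x)\geq\rho_0(R,N)$ for all $x$. But the CAT$(\kappa)$ (indeed, $\kappa$-cone) neighbourhood provided by \cite{BH09} at a point $x$ has radius $\varepsilon(x)/2$, with $\varepsilon(x)$ as in \eqref{epsilon}, and $\varepsilon(x)$ is \emph{not} uniformly bounded below: it tends to $0$ as $x$ approaches the boundary of its supporting simplex, no matter how good the size and valency bounds are. (Moreover the BH09 statements you invoke -- II.5.2, II.5.4, I.7.39, the taut-string bound -- are proved there under the ``finite shapes'' hypothesis, which does not hold here; the bounded-size class only gives a compact, not finite, family of shapes.) The uniformity in $(R,N)$ is precisely the new content of condition (e), and the paper obtains it by a dedicated argument that your sketch does not supply: compactness of the family of $\kappa$-simplices of size bounded by $R$ (Proposition \ref{simplex-compactness}), the cascade of constants $\varepsilon_d,\eta_d$ of Lemma \ref{epsilond}, and above all Lemma \ref{uniform-c}(c), which shows that \emph{every} ball of radius $\delta(R,N)$ is contained in some ball $\overline{B}(y,\varepsilon(y)/4)$; only after this reduction does the cone description at $y$ (together with unique geodesicity of the cone, via Corollary I.5.11 of \cite{BH09}) give the uniform injectivity-radius and CAT$(\kappa)$-radius bounds. (The paper in fact runs this quantitative step from (c) rather than from (b), but the missing ingredient is the same either way.) To repair your proof you would need to prove a statement of the type of Lemma \ref{uniform-c}(c) -- some device converting the pointwise, degenerating radius $\varepsilon(x)$ into a radius depending only on $R$ and $N$ -- and the same compactness machinery is also what legitimizes carrying the finite-shape results of \cite{BH09} over to this class.
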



\noindent The   equivalence  of the first four conditions is well-known for $M^\kappa$-complexes with {\em finite shape} (that is whose geometric simplices, up to isometry,  vary in a finite set), see \cite{BH09}. The last condition is new and we will use it to exhibit other examples of compact families of GCBA-spaces. Namely, let  
$$M^\kappa   (R_0,  N_0),   \hspace{5mm}   M^\kappa  (R_0; V_0, n_0) $$
be the class of $M^\kappa$-complexes $X$ without free faces,  with  {\em positive} injectivity radius (but nor a-priori uniformly bounded below), simplices of size bounded by $R_0$  and, respectively, valency bounded by $N_0$ or total volume bounded by $V_0$ and dim$(X) \leq n_0$.  
It is immediate to check that, for suitable \linebreak $N_0=N_0(R_0,V_0, n_0)$, the class $M^\kappa (R_0; V_0, n_0)$ is a subclass of $M^\kappa   (R_0,  N_0)$, made of {\em compact}  $M^\kappa$-complexes, namely with a uniformly bounded number of simplices (cp. proof of Theorem \ref{finiteness-complex}); hence it contains only  finitely many $M^\kappa$-complexes  {\em up to   simplicial homeomorphism}.
On the other hand we prove: 


\begin{theorem}[Extract from Theorem \ref{simplicial-compactness} \& Corollary \ref{finiteness-complex}, Sec.\ref{sec-simplicial}] \label{intro-complexes}
	${}$\\
	The classes $ M^\kappa  (R_0, N_0)$ and $M^\kappa (R_0; V_0, n_0) $ are  compact, respectively,  under pointed and unpointed Gromov-Hausdorff convergence. 
	Moreover there are only finitely many   $M^\kappa$-complexes of diameter $\leq \Delta$ in $M^\kappa (R_0, N_0)$, up to simplicial homeomorphisms.
\end{theorem}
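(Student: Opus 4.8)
The plan is to derive both compactness statements from the Gromov--Hausdorff compactness of the \textup{GCBA} classes of Theorem \ref{intro-compactness} (with Gromov's precompactness theorem added in the second case), and to obtain the finiteness statement by a direct combinatorial count; the one genuinely new ingredient is the identification of a Gromov--Hausdorff limit of $M^\kappa$-complexes as an $M^\kappa$-complex of the same type.

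Fix $X\in M^\kappa(R_0,N_0)$. Having positive injectivity radius, $X$ meets the hypotheses of Proposition \ref{complex-curvature}, hence is a complete geodesic \textup{GCBA}-space with curvature $\leq\kappa$ whose injectivity radius is bounded below by $\iota_0=\iota_0(R_0,N_0)>0$; by the inequalities relating injectivity radius, \textup{CAT}$(\kappa)$-radius and almost-convexity radius recalled in Section \ref{sec-preliminaries} this yields a uniform bound $\rho_{\mathrm{ac}}(X)\geq\rho_0=\rho_0(R_0,N_0,\kappa)>0$. The valency bound forces $\dim X\leq n_0(N_0)$, since a $d$-simplex has $2^{d}$ faces containing any given vertex, so the valency is at least $2^{d}$; moreover $\iota_0$ together with the size and valency bounds controls uniformly the number of simplices meeting any ball of radius $R_0$, and each such simplex has volume at most a constant $c(R_0,n_0,\kappa)$, being an $M^\kappa$-simplex of diameter $\leq 2R_0$ in dimension $\leq n_0$; hence $\mu_X(B(x,R_0))\leq V_0'$ with $V_0'=V_0'(R_0,N_0,\kappa)$. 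Therefore $M^\kappa(R_0,N_0)\subseteq\textup{GCBA}^\kappa_{\mathrm{vol}}(V_0',R_0;\rho_0,n_0)$, which is compact for the pointed Gromov--Hausdorff convergence by Theorem \ref{intro-compactness}; so every sequence $(X_m,x_m)$ in $M^\kappa(R_0,N_0)$ pointed-Gromov--Hausdorff subconverges to a complete geodesic \textup{GCBA}-space $(X_\infty,x_\infty)$ with curvature $\leq\kappa$ and $\rho_{\mathrm{ac}}(X_\infty)\geq\rho_0$.

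The heart of the proof, and the step I expect to be the real obstacle, is to show that $(X_\infty,x_\infty)$ lies again in $M^\kappa(R_0,N_0)$. For all but countably many $\rho<\iota_0$, the ball $B(x_m,\rho)$ is isometric to a metric ball in a finite $M^\kappa$-complex whose underlying abstract complex ranges over a finite set --- boundedly many simplices of dimension $\leq n_0$ with prescribed face relations --- and whose edge lengths vary in a fixed compact interval, on which the condition ``size $\leq R_0$'' is closed. Passing to a further subsequence the abstract complex stabilizes and the edge lengths converge; one then checks that the pointed Gromov--Hausdorff limit of these balls is isometric to the corresponding ball of the limiting $M^\kappa$-complex. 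Exhausting $X_\infty$ by balls of increasing radius and diagonalizing equips it with a global $M^\kappa$-complex structure whose simplices have size $\leq R_0$ and valency $\leq N_0$; it has no free faces, since a free face would obstruct extendability of geodesics and so contradict the local geodesic completeness of the \textup{GCBA}-space $X_\infty$. Proposition \ref{complex-curvature} now applies to $X_\infty$, and its part (e) gives injectivity radius $\geq\iota_0(R_0,N_0)>0$, so $X_\infty\in M^\kappa(R_0,N_0)$ and $M^\kappa(R_0,N_0)$ is compact under pointed Gromov--Hausdorff convergence. Choosing the local models and matching the metric limit with the limiting complex is the delicate point; the remaining verifications are routine.

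Finally, let $X\in M^\kappa(R_0;V_0,n_0)$. Applying the Croke-type estimate of Theorem \ref{intro-croke} inside each simplex, the size bound forces every simplex to have volume $\geq v_0(R_0,n_0,\kappa)>0$, so $X$ has at most $V_0/v_0$ simplices; in particular its valency and its diameter are bounded in terms of $R_0,V_0,n_0$, so $M^\kappa(R_0;V_0,n_0)\subseteq M^\kappa(R_0,N_0)$ for a suitable $N_0$ and consists of uniformly compact spaces. Gromov's precompactness theorem, together with the identification argument above run globally (only finitely many abstract complexes now occur), gives compactness under unpointed Gromov--Hausdorff convergence: the limit lies in the class because, on each fixed abstract complex, the total volume is a continuous function of the converging edge lengths, hence $\leq V_0$, and the dimension, being combinatorial, is $\leq n_0$. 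For the last assertion, any $X\in M^\kappa(R_0,N_0)$ with $\mathrm{diam}(X)\leq\Delta$ is contained in a single ball of radius $\Delta$, hence has a number of simplices bounded in terms of $R_0,N_0,\Delta$ by the count of the second paragraph; it therefore realizes one of finitely many abstract complexes, and $M^\kappa$-complexes with the same abstract complex are simplicially homeomorphic.
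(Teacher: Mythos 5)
Your overall skeleton coincides with the paper's: embed the class in a compact \textup{GCBA} family to get pointed subconvergence (the paper does this via Proposition \ref{simplical-packing} and Theorem \ref{precompactness}), then prove closedness by recognizing the limit as an $M^\kappa$-complex in the same class, and obtain finiteness by a volume/separation count. The problem is that the step you set aside as ``the delicate point'' is not a remaining verification — it is essentially the whole proof of Theorem \ref{simplicial-compactness}, and your sketch of it has a genuine gap in two places. First, patching: you stabilize, along a subsequence, a finite combinatorial model for each small ball $B(x_m,\rho)$ separately, but to endow $X_\infty$ with one global $M^\kappa$-structure you must identify simplices coming from overlapping local models consistently (the same limit simplex must not be produced twice by different charts, and the face identifications coming from different charts must agree); nothing in your proposal addresses this. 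The paper avoids the issue by constructing a single global object $\hat K_\omega$ (its simplices are ultralimits of the supports $\mathrm{supp}(x_n)$, glued by an explicit equivalence relation) and then verifying axioms (i)--(ii) of $M^\kappa$-complexes, where already the injectivity of the projection maps requires an induction on dimension based on the quantitative separation estimates of Lemmas \ref{distance-vertices}, \ref{epsilond} and \ref{uniform-c}. Second, the metric identification: even granting a limit complex, you must prove that its simplicial length metric equals the Gromov--Hausdorff limit metric. The inequality from the complex metric down to the limit metric is exactly where the paper needs the uniform bound on the number of segments of taut strings of bounded length (Proposition \ref{simplicial-geodesics}); your ``one then checks'' conceals precisely this ingredient. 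Relatedly, your starting claim that $B(x_m,\rho)$, $\rho<\iota_0$, is isometric to a ball of a finite complex of boundedly many combinatorial types itself requires Lemma \ref{uniform-c} plus a buffer argument (geodesics between points of the small ball must remain in the chosen finite subcomplex), and the ``closedness of the size bound'' under convergence of simplices is not a formality: it is the non-collapsing statement of Proposition \ref{simplex-compactness}.

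The rest is essentially the paper's argument, with one small slip: the lower bound on the volume of each (maximal) simplex should not be derived from Theorem \ref{intro-croke} applied ``inside each simplex'', since a single simplex is not geodesically complete and the Croke-type estimate does not apply to it; the bound follows directly from the size hypothesis, because every maximal simplex contains a metric ball of radius $1/R_0$ of $M^\kappa_d$, $d\le n_0$, and these balls have disjoint interiors — this is how Corollary \ref{finiteness-complex} argues, and it also yields the bound on valency, on the number of simplices, and hence on the diameter for the class $M^\kappa(R_0;V_0,n_0)$. For the finiteness of complexes of diameter $\le\Delta$ you additionally need the uniform separation of vertices (Lemma \ref{distance-vertices}) together with the packing estimate to bound the number of vertices; with that, your combinatorial count matches the paper's.
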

\noindent All the assumptions in this result are necessary. Indeed we will see how dropping the bounds on the valency or on the size of the simplices we do not have neither finiteness nor compactness (see Example \ref{complex-examples}).

\noindent We think that Theorems \ref{intro-compactness}, \ref{intro-compactness2} and  \ref{intro-complexes}  mark quite well the  advantage of the synthetic condition of curvature $\leq \kappa$ over       sectional curvature bounds by  identifying   classes which are closed under Gromov-Hausdorff convergence, in contrast with the  the classical convergence theorems of Riemannian geometry.

\vspace{1mm}


The Appendix is devoted to recall, for the reader's convenience,  some basics of ultrafilters and ultraconvergence of metric spaces, which is a tool heavily used all along the paper. 
\vspace{1mm}

\small
\noindent {\sc Acknowledgments.} {\em We would like to thank S. Gallot, G. Besson and A. Lytchak for the useful and stimulating discussions during the preparation of this work.}
\normalsize

\enlargethispage{5mm}

\vspace{3mm}
\section{Preliminaries on GCBA-spaces}\label{sec-preliminaries}
First of all we fix the notation.  The open and the closed ball of radius $R$ centered at $x$ in a metric space $X$ will be denoted by $B_X(x,R)$ and $\overline{B}_X(x,R)$ respectively; if the metric space is  clear from the context we will simply write $B(x,R)$ and $\overline{B}(x,R)$. The closed annulus with center at $x$ and radii $r_1 < r_2$ will be denoted by $A(x,r_1,r_2)$. If $(X,d)$ is a metric space and $\lambda$ is a positive real number we denote by $\lambda X$ the metric space $(X,\lambda d)$, where $(\lambda d) (x,y) = \lambda d(x,y)$ for any $x,y\in X$, i.e. the rescaled metric space. \linebreak 
We denote with $B_{\lambda X}(x,r)$ the ball of center $x$ and radius $r$ with respect to the metric $\lambda d$.
The identity map from $(X,d)$ to $(X,\lambda d)$ is denoted by $\text{dil}_\lambda$. \\
A geodesic is a curve $\gamma\colon I \to X$, where $I$ is an interval of $\mathbb{R}$, such that for any $t \leq s\in I$ it holds $d(\gamma(t),\gamma(s)) = \vert t - s \vert$. If $I = [a,b]$ we say that $\gamma$ is a geodesic joining $x=\gamma(a)$ to $y=\gamma(b)$. A generic geodesic joining two points $x,y\in X$ will be denoted by $[x,y]$, even if there are more geodesics joining $x$ and $y$. A curve is a local geodesic if it is a geodesic around any point in its interval of definition.

\noindent  Finally we stress the fact that we consider  pointed Gromov-Hausdorff convergence  \emph{only for complete metric spaces}: so every time we write  $(X_n, x_n) \rightarrow (X, x)$ in the pointed Gromov-Hausdorff sense we mean that $X_n$ and $X$ are complete. This condition is not restrictive; indeed if $(X_n, x_n)$ converges to $(X, x)$ then it converges also to the completion $(\hat{X}, \hat{x})$. As a consequence if $(X_n, x_n)$ is a sequence of proper metric spaces converging to $(X,x)$ then $X$ is proper (see Corollary 3.10 of \cite{Her16}).

\subsection{CAT$(\kappa)$  and GCBA-spaces}\label{subsec-buseman,cat}
We recall the definition of locally CAT$(\kappa)$ metric space.
We fix $\kappa\in \mathbb{R}$. We denote by $M^\kappa_2$ the unique simply connected, complete, $2$-dimensional Riemannian manifold of constant sectional curvature equal to $\kappa$ and by $D_\kappa$ the diameter of $M_2^\kappa$. So $D_\kappa = +\infty$ if $\kappa\leq 0$ and $D_\kappa = \frac{\pi}{\sqrt{\kappa}}$ if $\kappa > 0$.  

A metric space $X$ is CAT$(\kappa)$ if any two points at distance less than $D_\kappa$ can be connected by a geodesic and if the geodesic triangles with perimeter less than $2D_\kappa$ are thinner than their comparison triangles in the model space $M^\kappa_2$. \linebreak
This means the following. For any three points $x,y,z \in X$ such that \linebreak $d(x,y) + d(y,z) + d(z,x) < 2D_\kappa$ a geodesic triangle with vertices $x,y,z$ is the choice of three geodesics $[x,y]$, $[y,z]$ and $[x,z]$,  denoted by $\Delta(x,y,z)$. For any such triangle there exists a unique triangle $\overline{\Delta}^\kappa(\bar{x},\bar{y},\bar{z})$ in $M^\kappa_2$, up to isometry, with vertices $\bar{x}$, $\bar{y}$ and $\bar{z}$ satisfying $d(\bar{x},\bar{y})=d(x,y)$, $d(\bar{y},\bar{z})= d(y,z)$ and $d(\bar{x},\bar{z})=d(x,z)$; such a triangle is called the {\em $\kappa$-comparison triangle} of $\Delta(x,y,z)$. The comparison point of $p \in [x,y]$ is the point $\bar{p} \in [\bar{x},\bar{y}]$ such that $d(x,p)=d(\bar{x},\bar{p})$. The triangle $\Delta(x,y,z)$ is thinner than $\overline{\Delta}^\kappa(\bar{x},\bar{y},\bar{z})$ if for any couple of points $p\in [x,y]$ and $q\in [x,z]$ we have $d(p,q)\leq d(\bar{p},\bar{q})$.

A metric space $X$ is called {\em locally} CAT$(\kappa)$ if for any $x\in X$ there exists $r > 0$ such that $B(x,r)$ is a CAT$(\kappa)$ metric space. The supremum among the radii $r < \frac{D_\kappa}{2} $ satisfying this property is called the CAT$(\kappa)$-radius at $x$ and it is denoted by $\rho_{\text{cat}}(x)$. The infimum of $\rho_{\text{cat}}(x)$ among the points $x\in X$ is called the CAT$(\kappa)$-radius of $X$ and it is denoted by $\rho_{\text{cat}}(X)$; therefore,  by definition, $\rho_{\text{cat}}(X) \leq \frac{D_\kappa}{2}$. 

A metric space $X$ is {\em GCBA if there exists a $\kappa$ such that $X$ is locally CAT$(\kappa)$, locally compact, separable and locally geodesically complete. }\linebreak The last property means that any local geodesic in $X$ defined on an interval $[a,b]$ can be extended, as a local geodesic, to a bigger interval $[a- \varepsilon, b + \varepsilon]$. In some case we will write GCBA$^\kappa$, if we want to emphasize the role of $\kappa$. \linebreak This class of metric spaces is the one studied in \cite{LN19}. A metric space is geodesically complete if any local geodesic can be extended, as a local geodesic, to the whole $\mathbb{R}$.
We recall a well known fact: any complete, locally geodesically complete metric space is geodesically complete.

A {\em tiny ball}, according to \cite{LN19},   is a metric ball $B(x,r)$ such that \linebreak $r<\min \lbrace 1, \frac{D_\kappa}{100} \rbrace$ and $\overline{B}(x,10r)$ is compact. 

\subsection{Contraction maps and almost-convexity radius }
We suppose $X$ is a complete, locally geodesically complete, locally CAT$(\kappa)$, geodesic metric space. 
If $x,y \in X$ satisfy $d(x,y) < \rho_\text{cat}(x)$ then there exists a unique geodesic joining them. Hence for any $x\in X$ and $0<r\leq R < \rho_\text{cat}(x)$ it is well defined the {\em contraction map}:
\vspace{-3mm}

$$\varphi^R_r\colon \overline{B}(x,R) \to \overline{B}(x,r)$$
by sending a point $y \in \overline{B}(x,R)$ to the unique point $y'$ along the geodesic $[x,y]$ satisfying  $ d(x,y')/r =  d(x,y) /R$.
Moreover any local geodesic starting at $x$ which is contained in $B(x, \rho_\text{cat}(x))$ is a geodesic. This fact, together with the locally geodesically completeness and the completeness of $X$, shows that the map $\varphi^R_r$ is surjective. It is also $\frac{2r}{R}$-Lipschitz as stated in \cite{LN19}. \linebreak We skecth here the computation.
\begin{lemma}\label{lemma-lip}
	Any contraction map is $\frac{2r}{R}$-Lipschitz.
\end{lemma}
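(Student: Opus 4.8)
The plan is to reduce everything to a comparison computation in the model plane $M^\kappa_2$. Fix $y,z\in\overline B(x,R)$, set $y'=\varphi^R_r(y)$, $z'=\varphi^R_r(z)$ and $\lambda=r/R\le 1$. Since $R<\rho_{\mathrm{cat}}(x)\le D_\kappa/2$, one can choose $R<R'<\rho_{\mathrm{cat}}(x)$ with $B(x,R')$ a CAT$(\kappa)$ space; as $R<D_\kappa/2$, the closed ball $\overline B(x,R)$ is convex inside $B(x,R')$, so the geodesic triangle $\Delta(x,y,z)$ lies entirely in $\overline B(x,R)$ and has perimeter $\le 4R<2D_\kappa$, hence admits a $\kappa$-comparison triangle $\overline\Delta^\kappa(\bar x,\bar y,\bar z)$ in $M^\kappa_2$. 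By definition of the contraction map, $y'\in[x,y]$ with $d(x,y')=\lambda\,d(x,y)$, so its comparison point $\bar y'\in[\bar x,\bar y]$ satisfies $d(\bar x,\bar y')=\lambda\,d(\bar x,\bar y)$; that is, $\bar y'$ is exactly the radial contraction of $\bar y$ towards $\bar x$ by the factor $\lambda$, and similarly for $\bar z'$. The CAT$(\kappa)$ inequality, together with the equality of corresponding side lengths in a comparison triangle, then gives
$$ d_X(y',z')\ \le\ d_{M^\kappa_2}(\bar y',\bar z'),\qquad d_{M^\kappa_2}(\bar y,\bar z)=d_X(y,z), $$
so it suffices to bound the Lipschitz constant of the radial contraction $\psi_\lambda\colon\overline B(\bar x,R)\to\overline B(\bar x,\lambda R)$ sending $\bar p$ to the point at distance $\lambda\,d(\bar x,\bar p)$ from $\bar x$ along $[\bar x,\bar p]$.

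For this model-plane estimate, rescale the metric so that $\kappa\in\{-1,0,1\}$; this changes neither $\lambda$ nor Lipschitz constants and keeps $R<D_\kappa/2$. In geodesic polar coordinates $(s,\omega)$ centred at $\bar x$ the metric of $M^\kappa_2$ is $ds^2+\mathrm{sn}_\kappa(s)^2\,d\omega^2$, where $\mathrm{sn}_\kappa$ equals $t,\ \sin t,\ \sinh t$ according as $\kappa=0,1,-1$, and $\psi_\lambda(s,\omega)=(\lambda s,\omega)$. Its differential stretches the radial direction by $\lambda$ and the orthogonal (angular) direction by $\mathrm{sn}_\kappa(\lambda s)/\mathrm{sn}_\kappa(s)$, and these two images are orthogonal; since $\overline B(\bar x,R)$ and $\overline B(\bar x,\lambda R)$ are convex (radii $<D_\kappa/2$), the Lipschitz constant of $\psi_\lambda$ equals $\sup_{0<s\le R}\max\{\lambda,\ \mathrm{sn}_\kappa(\lambda s)/\mathrm{sn}_\kappa(s)\}$. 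For $\kappa\le 0$ the convexity of $\mathrm{sn}_\kappa$ yields $\mathrm{sn}_\kappa(\lambda s)\le\lambda\,\mathrm{sn}_\kappa(s)$, so this constant is exactly $\lambda=r/R$. For $\kappa=1$, using $\sin(\lambda s)\le\lambda s$ and $u/\sin u<\pi/2$ for $u\in(0,\pi/2)$ we get $\sin(\lambda s)/\sin s\le\lambda\cdot s/\sin s<\tfrac{\pi}{2}\lambda<2\lambda$. In all cases $\mathrm{Lip}(\psi_\lambda)\le 2\lambda$, which together with the reduction above gives $d_X(\varphi^R_r(y),\varphi^R_r(z))\le\frac{2r}{R}\,d_X(y,z)$.

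The only points requiring real care are in the first step: verifying that $\Delta(x,y,z)$ is an admissible triangle (this is precisely where $R<\rho_{\mathrm{cat}}(x)$ and the convexity of balls of radius $<D_\kappa/2$ enter) and that the comparison points of contracted points are themselves radial contractions in $M^\kappa_2$; the polar-coordinate estimate is then routine. Note that the argument actually produces the sharper constant $\tfrac{\pi}{2}\cdot\tfrac{r}{R}$ in general, and $\tfrac{r}{R}$ when $\kappa\le 0$, the value $\tfrac{2r}{R}$ being a convenient uniform bound.
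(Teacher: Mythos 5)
Your proof is correct and takes essentially the same route as the paper's: reduce, via the CAT$(\kappa)$ comparison inequality (noting that comparison points of contracted points are the radial contractions of the comparison points), to estimating the radial contraction in the model space $M^\kappa_2$. The only difference is in the model computation — the paper conjugates by the $\frac{R}{\sin R}\le 2$-Lipschitz logarithm map so that the contraction becomes an exact Euclidean dilation, while you bound the operator norm of the differential of $\psi_\lambda$ in polar coordinates directly; both versions in fact yield the sharper constant $\tfrac{\pi}{2}\cdot\tfrac{r}{R}$ (and $\tfrac{r}{R}$ for $\kappa\le 0$), of which $\tfrac{2r}{R}$ is a convenient weakening.
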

\begin{proof}
	By the CAT$(\kappa)$ condition it is enough to prove the thesis on the model space $M^\kappa_2$. The result is clearly true when $\kappa \leq 0$, so we can assume $\kappa = 1$. In this case $M^\kappa_2$ is the standard sphere $\mathbb{S}^2$. 
	\vspace{1mm}
	
	\noindent 	 {\em Step 1.} For any $x\in \mathbb{S}^2$ and for any $0\leq R \leq \frac{\pi}{2}$ the inverse of the exponential map, the logarithmic map $\log_x \colon B(x,R) \to B_{T_x\mathbb{S}^2}(O,R)$, is $\frac{R}{\sin R}$-Lipschitz. 
	So for any $R$ in our range we have that the logarithmic map is $2$-Lipschitz. Thus we can conclude that, for any $y,z\in B(x,\frac{\pi}{2})$,	
	$$d(y,z) \leq d(\log_x(y), \log_x(z)) \leq 2d(y,z)$$	
	\noindent  where the first inequality follows by standard comparison results.
	\vspace{1mm}
	
	\noindent 	 {\em Step 2.} We fix $0<r \leq R \leq \frac{\pi}{2}$ and $y,z\in B(x,R)$. Let   $y'$ and $z'$ be the contractions of $y$ and $z$. We observe that the contraction of $\log_x(y)$, on the tangent space,  from the radius $R$ to $r$   coincides with the point $\log_x(y')$ and the same holds for $z$; this contraction map is a dilation of factor $\frac{r}{R}$. Therefore
	\vspace{-2mm}	
	$$d(y',z')\leq d(\log_x(y'), \log_x(z')) = \frac{r}{R}d(\log_x(y), \log_x(z)) \leq \frac{2r}{R}d(y,z).$$
	
	\vspace{-6mm}		
\end{proof}

\vspace{1mm}
The natural set of scales where the contraction map is defined is not bounded from above by the CAT$(\kappa)$-radius but rather from the almost-convexity radius.
The \emph{almost-convexity radius at a point $x\in X$} is defined as the supremum of the radii $r$ such that for any two geodesics $[x,y], [x,z]$ of length at most $r$ and any $t\in [0,1]$ it holds: 
$$d(y_t,z_t) \leq 2td(y,z),$$ 
where $y_t, z_t$ are respectively the  points along $[x,y]$ and $[x,z]$ satisfying \linebreak $d(x,y_t)=td(x,y)$ and $d(x,z_t)=td(x,z)$. The almost-convexity radius at $x$   does not depend on $\kappa$ and is denoted by $\rho_{\text{ac}}(x)$. Then, by definition,  for any point $y\in B(x,\rho_\text{ac}(x))$ there exists a unique geodesic joining $x$ to $y$ (the existence follows from  the assumptions on $X$), so the contraction map is well defined for any $0<r\leq R < \rho_\text{ac}(x)$. A straightforward modification of Corollary 8.2.3 of \cite{Pap04} shows that any local geodesic joining $x$ to a point $y$ at distance $d(x,y)<\rho_\text{ac}(x)$  is actually a geodesic. This fact and the geodesic completeness of $X$ imply again that any contraction map within the almost-convexity radius is surjective and  $\frac{2r}{R}$-Lipschitz, by definition.

\enlargethispage{5mm}

\noindent The {\em (global) almost-convexity radius} of the space $X$, denoted by $\rho_\text{ac}(X)$,  is correspondingly defined as the infimum over $x$ of the almost-convexity radius at $x$.
Clearly we always have $\rho_\text{ac}(X) \geq \rho_\text{cat}(X)$.  
The inequality can be partially reversed when $X$ is proper: indeed in this case it holds
\vspace{-3mm}	

\begin{equation}
	\label{ac-radius,cat-radius}
	\rho_{\text{cat}}(X) \geq \min \left\lbrace \frac{D_\kappa}{2}, \rho_\text{ac}(X) \right\rbrace,
\end{equation}
\vspace{-3mm}	

\noindent therefore a lower bound on the almost-convexity radius and the knowledge of the upper bound $\kappa$ yield  a  lower bound on the CAT$(\kappa)$-radius. The proof of \eqref{ac-radius,cat-radius} follows directly from Corollary II.4.12 of \cite{BH09} once observed that any two points of $X$ at distance less than $\rho_\text{ac}(X)$ are joined by a unique geodesic.


\subsection{Tangent cone and the logarithmic map}\label{subsec-log}
We fix a complete, geodesic, GCBA-space $X$. \\
Given two local geodesics $\gamma$, $\eta$ starting at the same point $x \in X$ we can consider the geodesic triangle $\Delta(x, \gamma(t),\eta(t))$ for any small enough $t>0$. \linebreak
The comparison triangle $\overline{\Delta}^\kappa(\bar{x},\overline{\gamma(t)}, \overline{\eta(t)})$ has an angle $\alpha_t$ at $\bar{x}$. By the CAT$(\kappa)$ condition the  angle  $\alpha_t$ is decreasing when $t \rightarrow 0$, see \cite{BH09}.  Hence it is possible to define the angle between $\gamma$ and $\eta$ at $x$ as $\lim_{t\to 0} \alpha_t  $:   it is denoted by $\angle_x(\gamma,\eta)$ and it takes values in $[0,\pi]$.\\
For any $x\in X$  the {\em space of directions of $X$ at $x$}  is defined as
\vspace{-3mm}

$$\Sigma_x X = \lbrace \gamma \text{ local geodesic s.t. } \gamma(0)=x \rbrace/_\sim$$
where $\sim$ is the equivalence relation $\gamma \sim \eta$ if and only if $\angle_x(\gamma, \eta)=0$.  \linebreak
The function $\angle_x(\cdot,\cdot)$ defines a distance which makes of   $\Sigma_x X$  a compact, geodesically complete, CAT(1) metric space with diameter $\pi$ (see \cite{LN19}).
The {\em tangent cone of $X$ at the point $x$} is the metric space
$$T_x X = \Sigma_x X \times [0,+\infty)$$
\noindent up to the equivalence relation $(v,0)\sim (w,0)$ for every $v,w \in \Sigma_x X$. \linebreak
The point corrisponding to $t=0$ is called the vertex of the tangent cone, denoted by $O$. The metric on $T_x X$ is given by the following formula: given two points $V=(v,t)$ and $W=(w,s)$ of $T_x X$ we define $d_T(V,W)$ as the unique positive real number satisfying:
\begin{equation}
	\label{euclidean-cone-formula}
	d_T(V,W)^2 = t^2 + s^2 - 2ts\cos(\angle_x(v,w)).
\end{equation}
In other words $T_x X$ is the euclidean cone over $\Sigma_x X$.
With this metric $T_x X$ is a proper, geodesically complete, CAT(0) metric space (\cite{LN19}). 
\begin{obs}
	\label{cono_sfera}
	Let $Y = \mathbb{S}^{n-1}$ be the euclidean standard sphere of radius $1$. Then the euclidean cone over $Y$ is isometric to $\mathbb{R}^n$.
\end{obs}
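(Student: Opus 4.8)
\emph{Proof proposal.} The plan is simply to write down an explicit isometry. By definition the euclidean cone over $Y=\mathbb{S}^{n-1}$ is $Y\times[0,+\infty)$ with $Y\times\{0\}$ collapsed to the vertex $O$, equipped with the metric of \eqref{euclidean-cone-formula} in which $\angle_x(v,w)$ is read as the distance between $v$ and $w$ in $Y$. Now, if we regard $\mathbb{S}^{n-1}$ as the unit sphere of $\mathbb{R}^n$ with its intrinsic length metric, the distance between two points $v,w$ is precisely the angle $\theta(v,w)\in[0,\pi]$ between them, and $\cos\theta(v,w)=\langle v,w\rangle$ (the Euclidean scalar product). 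Hence the cone metric on $C(\mathbb{S}^{n-1})$ becomes
\begin{equation*}
 d_T\big((v,t),(w,s)\big)^2 = t^2+s^2-2ts\,\langle v,w\rangle .
\end{equation*}

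First I would define $\Phi\colon C(\mathbb{S}^{n-1})\to\mathbb{R}^n$ by $\Phi(v,t)=tv$, where $v$ is viewed as a unit vector of $\mathbb{R}^n$. This is well defined on the quotient, since $\Phi$ sends every point of $\mathbb{S}^{n-1}\times\{0\}$ to $0$; and it is a bijection: it is onto because any $p\neq 0$ equals $\Phi(p/|p|,|p|)$ and $0=\Phi(v,0)$, and it is one-to-one because $tv=sw$ forces $t=|tv|=|sw|=s$, hence $v=w$ when $t>0$, while $t=0$ forces $s=0$. Then I would check that $\Phi$ preserves distances: expanding the Euclidean norm,
\begin{equation*}
 |\Phi(v,t)-\Phi(w,s)|^2 = |tv-sw|^2 = t^2|v|^2+s^2|w|^2-2ts\,\langle v,w\rangle = t^2+s^2-2ts\cos\theta(v,w),
\end{equation*}
which is exactly $d_T\big((v,t),(w,s)\big)^2$ by the displayed formula. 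Therefore $\Phi$ is a surjective isometry, proving the claim.

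There is no genuine obstacle here; the only points that deserve a line of care are the identification of the intrinsic metric of $\mathbb{S}^{n-1}\subset\mathbb{R}^n$ with the comparison angle appearing in \eqref{euclidean-cone-formula} (using that $\mathrm{diam}\,\mathbb{S}^{n-1}=\pi$, so no truncation of the angle is needed) and the well-definedness of $\Phi$ at the cone vertex.
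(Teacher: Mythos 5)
Your proof is correct: the explicit map $\Phi(v,t)=tv$ together with the law-of-cosines computation is exactly the standard verification, and your care about well-definedness at the vertex and about identifying the intrinsic spherical distance with the angle in \eqref{euclidean-cone-formula} covers the only delicate points. The paper states this remark as a well-known fact without proof, so your argument simply supplies the standard justification and there is nothing to reconcile.
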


\noindent For any point $x\in X$   the {\em logarithmic map} at $x$   is defined as:
$$\log_x \colon B(x,\rho_\text{ac}(x)) \to T_x X, \qquad y \mapsto ([x,y], d(x,y)),$$
where $[x,y]$ is the unique geodesic from $x$ to $y$ (uniqueness is due to the definition of almost-convexity radius).
\vspace{2mm}

The logarithmic map can be recovered by the contraction maps as follows.\linebreak
First notice that if $X$ is a GCBA-space and $\lambda > 0$ then the space $\lambda X$ is GCBA.  
Now, let the logarithmic map on the space $\lambda X$  at $\text{dil}_\lambda(x)$ be denoted by  
$$\log_{\text{dil}_\lambda(x)} \colon B_{\lambda X}(\text{dil}_\lambda(x), \lambda \rho_{\text{ac}}(x)) \to T_{\text{dil}_\lambda(x)} (\lambda X).$$
The spaces $T_{\text{dil}_\lambda(x)} (\lambda X)$ and $T_x X$ are canonically isometric since the respective space of directions  are canonically isometric.
Let $R < \rho_{\text{ac}}(x)$: we consider a sequence of real numbers $r_n \to 0$, we set $\lambda_n= \frac{R}{r_n}$ and  we define the maps 
$$g_n = \log_{\text{dil}_{\lambda_n}(x)} \circ\, \text{dil}_{\lambda_n} \circ \varphi^R_{r_n} \colon \overline{B}_X(x, R) \to T_x X$$
where we are using the natural identification  $T_{\text{dil}_{\lambda_n} (x)}(\lambda_n X) \cong T_xX$. \linebreak
By the CAT$(\kappa)$ condition the map $\log_{\text{dil}_{\lambda_n}(x)}$ is $(1+\varepsilon_n)$-Lipschitz with $\varepsilon_n \to 0$  for  $r_n \to 0$. So, by Lemma \ref{lemma-lip}, the map $g_n$ is $2(1+\varepsilon_n)$-Lipschitz and for any non-principal ultrafilter $\omega$ this sequence defines a ultralimit map $g_\omega$ between the ultralimit spaces (cp. Proposition \ref{limitmap} in the Appendix). \linebreak
Since $T_x X$ is proper we can apply Proposition \ref{ultralimit-proper} and find that the target space of $g_\omega$ is $T_xX$, i.e.
$$g_\omega : \omega \text{-}\lim\overline{B}_X(x,R) \rightarrow T_x X.$$ 
Using the definition of the logarithmic map and the natural identification $T_{\text{dil}_{\lambda_n}(x)} (\lambda_n X) \cong T_xX$ as metric spaces, it is straightforward to check that $g_\omega$, restricted to the standard isometric copy of $\overline{B}_X(x,R)$ in $\omega \text{-}\lim\overline{B}_X(x,R)$ given by Proposition \ref{ultralimit-proper}, coincides with $\log_x$. 
\vspace{1mm}

In general the logarithmic map of a  \textup{GCBA} space  is not injective, due to the possible branching of geodesics. 
We summarize its properties in the following lemma:
\begin{lemma}
	\label{logarithmic-map-surjective}
	Let $x \in X$ be a point of a complete, geodesic, \textup{GCBA}   space. Then the logarithmic map $\log_x$ has the following properties:	
	\begin{itemize}
		\item[(a)]  $\log_x(\overline{B}(x,r)) = \overline{B}(O,r)$ for any $r<\rho_\textup{ac}(x)$;
		\item[(b)]  $d(O, \log_x(y))= d(x, y)$ for any $y\in B(x,\rho_\textup{ac}(x))$;
		\item[(c)] it is $2$-Lipschitz on $B(x,\rho_{\text{ac}}(x))$. 
	\end{itemize} 
\end{lemma}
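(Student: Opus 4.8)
The plan is to dispatch the three items in the order $(b)$, $(a)$, $(c)$, since $(b)$ is essentially tautological and is exactly what makes the easy half of $(a)$ work. For $(b)$ I would simply note that by its very definition $\log_x(y)=([x,y],d(x,y))$ has radial coordinate $d(x,y)$ in $T_xX=\Sigma_xX\times[0,+\infty)$; plugging $V=\log_x(y)$ and $W=O$ (the vertex, of radial coordinate $0$) into the euclidean cone formula \eqref{euclidean-cone-formula} gives $d_T(O,\log_x(y))^{2}=d(x,y)^{2}$, which is $(b)$.

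For $(a)$, the inclusion $\log_x(\overline{B}(x,r))\subseteq\overline{B}(O,r)$ is immediate from $(b)$. For the reverse inclusion I would fix $V\in\overline{B}(O,r)$, write $V=(v,t)$ with $t=d_T(O,V)\le r<\rho_{\textup{ac}}(x)$ (the case $t=0$ being trivial, as then $V=O=\log_x(x)$), pick a local geodesic $\eta$ issuing from $x$ whose class in $\Sigma_xX$ equals $v$, and extend it --- using that a complete, locally geodesically complete space is geodesically complete --- to a local geodesic defined at least on $[0,t]$. Then $\eta|_{[0,t]}$ joins $x$ to $y:=\eta(t)$ with $d(x,y)\le t<\rho_{\textup{ac}}(x)$, so by the modification of Corollary 8.2.3 of \cite{Pap04} recalled above it is a genuine minimizing geodesic; hence $d(x,y)=t$ and $\eta|_{[0,t]}$ is \emph{the} geodesic $[x,y]$. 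As $\eta|_{[0,t]}$ and $\eta$ agree near $0$ they represent the same direction $v$, so $\log_x(y)=([x,y],t)=(v,t)=V$ and therefore $\overline{B}(O,r)\subseteq\log_x(\overline{B}(x,r))$.

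For $(c)$, I would fix $R<\rho_{\textup{ac}}(x)$ and use the identification set up in Subsection \ref{subsec-log}: through the canonical isometric copy of $\overline{B}_X(x,R)$ inside $\omega\text{-}\lim\overline{B}_X(x,R)$, the map $\log_x$ coincides with the ultralimit $g_\omega$ of the sequence $g_n=\log_{\text{dil}_{\lambda_n}(x)}\circ\text{dil}_{\lambda_n}\circ\varphi^R_{r_n}$. Each $g_n$ is $2(1+\varepsilon_n)$-Lipschitz with $\varepsilon_n\to0$ (the contraction map is $\frac{2r_n}{R}$-Lipschitz by Lemma \ref{lemma-lip}, $\text{dil}_{\lambda_n}$ scales distances by $\lambda_n=R/r_n$, and $\log_{\text{dil}_{\lambda_n}(x)}$ is $(1+\varepsilon_n)$-Lipschitz by the CAT$(\kappa)$ condition), hence $g_\omega$ is $(\omega\text{-}\lim 2(1+\varepsilon_n))=2$-Lipschitz; restricting to the copy of $\overline{B}_X(x,R)$ shows $\log_x$ is $2$-Lipschitz on $\overline{B}(x,R)$, and letting $R\uparrow\rho_{\textup{ac}}(x)$ gives $(c)$.

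I expect the reverse inclusion in $(a)$ to be the only genuinely non-formal step: one has to realize an arbitrary direction by a local geodesic, extend it using geodesic completeness, and invoke the Papadopoulos-type statement to know that the truncated local geodesic is minimizing --- only then is $\log_x$ correctly evaluated at $\eta(t)$ and seen to hit $V$. Items $(b)$ and $(c)$ are direct consequences of the cone formula \eqref{euclidean-cone-formula} and of the Lipschitz bounds already assembled in Subsection \ref{subsec-log}.
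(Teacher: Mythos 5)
Your proposal is correct and follows essentially the same route as the paper's own proof: (b) directly from the cone formula \eqref{euclidean-cone-formula}, (a) by extending a local geodesic representing the direction and using that within the almost-convexity radius local geodesics are minimizing, and (c) by restricting the $2$-Lipschitz ultralimit $g_\omega$ to the isometric copy of $\overline{B}(x,R)$ and letting $R\uparrow\rho_{\textup{ac}}(x)$. If anything, your treatment of the surjectivity step is slightly cleaner, since you evaluate the extended geodesic at the correct radial coordinate $t=d_T(O,V)$ rather than at $r$ as written in the paper.
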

\begin{proof}
	Let $y \in B(x,\rho_\text{ac}(x))$. By definition  we have $\log_x(y)=([x,y],d(x,y))$, where $[x,y]$ is the unique geodesic from $x$ to $y$. From \eqref{euclidean-cone-formula} we  immediately infer that $d_T(\log_x(y),O)= d(y,x)$. This proves (b) and that $\log_x(\overline{B}(x,r))$ is included in $\overline{B}(O,r)$ for any $r<\rho_\text{ac}(x)$. 
	Now let $V=(v,t) \in \overline{B}(O,r)$, for $r<\rho_\text{ac}(x)$. We take a geodesic $\gamma$ in the class of $v$. Since $X$ is locally geodesically complete, there exists an extension of $\gamma$ as a geodesic to the interval $[0,r]$ (this follows from the completeness of $X$ and the fact that any local geodesic is a geodesic if it is contained in a ball of radius smaller than the almost-convexity radius).	
	Then, using the definition of the logarithmic map, we deduce that  $\log_x (\gamma (r)) = V$. Now $d(x, \gamma(r)) = r$, which concludes the proof of (a). 
	Finally  we have seen that the logarithmic map is obtained as the restriction of the limit map $g_\omega: \omega\text{-}\lim \overline{B}_X(x,R) \rightarrow T_x X$ to $\overline{B}_X(x,R)$. It is $2$-Lipschitz for all $R \leq \rho_{\text{ac}}$, therefore it is $2$-Lipschitz on $B(x,\rho_{\text{ac}}(x))$. 
\end{proof}

The logarithmic map gives a good local approximation of $X$ by the tangent cone, as expressed in the following result.
\begin{lemma}[\cite{LN19}, Lemma 5.5]
	\label{logarithm-approximation}
	Let $x \in X$ be a point of a complete, geodesic, \textup{GCBA}   space. For any $\varepsilon > 0$ there exists $\delta > 0$ such that for all $r<\delta$ and for every $y_1,y_2 \in B(x,r)$ it holds
	\vspace{-3mm}
	
	$$\vert d(y_1,y_2) - d_T(\log_x(y_1), \log_x(y_2))\vert \leq \varepsilon r.$$
\end{lemma}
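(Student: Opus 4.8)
The plan is to argue by contradiction and to reduce the statement, via a rescaling, to the tautological fact that at the vertex of a Euclidean cone the logarithmic map is the identity; the real content of the lemma is then that the blow-up of $X$ at $x$ does \emph{not} collapse, which is where the \textup{GCBA} hypotheses enter.

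Suppose the conclusion fails for some $\varepsilon_0>0$: there are radii $r_k\downarrow 0$ and points $y_1^k,y_2^k\in\overline B(x,r_k)$ with
$$\bigl|\,d(y_1^k,y_2^k)-d_T\bigl(\log_x y_1^k,\log_x y_2^k\bigr)\,\bigr|>\varepsilon_0\,r_k\qquad\text{for every }k.$$
Rescale: put $\lambda_k=1/r_k\to\infty$, regard the \textup{GCBA}-spaces $\lambda_kX$ with the (unchanged) basepoint $x$, and set $z_i^k=\text{dil}_{\lambda_k}(y_i^k)$, so $z_i^k\in\overline B_{\lambda_kX}(x,1)$. Alexandrov angles at $x$ are scale-invariant, so $\Sigma_x(\lambda_kX)$ is canonically isometric to $\Sigma_xX$ and hence $T_x(\lambda_kX)$ and $T_xX$ are the \emph{same} Euclidean cone (cp.\ \eqref{euclidean-cone-formula}); from the definition of the logarithmic map one reads off $\log^{\lambda_kX}_x(z_i^k)=\text{dil}^{T}_{\lambda_k}\!\bigl(\log^{X}_x(y_i^k)\bigr)$, where $\text{dil}^{T}_{\lambda_k}$ scales the radial coordinate, hence $d_T$, by $\lambda_k$. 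Multiplying the displayed inequality by $\lambda_k$ gives
$$\bigl|\,d_{\lambda_kX}(z_1^k,z_2^k)-d_T\bigl(\log^{\lambda_kX}_x z_1^k,\log^{\lambda_kX}_x z_2^k\bigr)\,\bigr|>\varepsilon_0\qquad\text{for every }k.\qquad(\star)$$

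Fix a non-principal ultrafilter $\omega$ and pass to the ultralimit of $(\lambda_kX,x)$. The key input --- supplied by the local structure theory of \textup{GCBA}-spaces in \cite{LN19} --- is that this ultralimit is canonically isometric to the tangent cone $(T_xX,O)$ and that the ultralimit $L_\omega$ of the maps $\log^{\lambda_kX}_x$ is precisely that isometry. Here one uses that $\lambda_kX$ is $\textup{CAT}(\kappa/\lambda_k^2)$ with $\kappa/\lambda_k^2\to0$; that local compactness (existence of tiny balls around $x$) makes the unit balls $\overline B_{\lambda_kX}(x,1)$ uniformly totally bounded, so the limit is proper; that geodesic completeness makes the rescaled contraction maps surjective; and that the maps $\log^{\lambda_kX}_x$ are $2$-Lipschitz, radially isometric, surject onto balls of radius $\lambda_k\rho_{\text{ac}}(x)\to\infty$, and are all built from the one fixed space of directions $\Sigma_xX$ (Lemma \ref{logarithmic-map-surjective}). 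Granting this, the points $z_i^k\in\overline B_{\lambda_kX}(x,1)$ determine points $Z_i$ of the ultralimit, and both $\lim_\omega d_{\lambda_kX}(z_1^k,z_2^k)$ and $\lim_\omega d_T\bigl(\log^{\lambda_kX}_x z_1^k,\log^{\lambda_kX}_x z_2^k\bigr)$ equal $d_T(L_\omega Z_1,L_\omega Z_2)$ --- the first because $L_\omega$ is an isometry, the second because $L_\omega Z_i=\lim_\omega\log^{\lambda_kX}_x(z_i^k)$ and $d_T$ is continuous. This contradicts $(\star)$, and the lemma follows.

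The main obstacle is exactly this identification of the blow-up with the tangent cone, compatibly with the logarithmic maps --- equivalently, upgrading the pointwise equality $\log_x=\omega\text{-}\lim_n g_n$ obtained in Section \ref{subsec-log} to a statement uniform over the unit ball of the blow-up. Here the hypothesis \textup{GCBA} (rather than just locally $\textup{CAT}(\kappa)$) is indispensable: without geodesic completeness the conclusion fails, e.g.\ for a tripod --- three segments joined at a common endpoint $p$ --- and $x$ an interior point of one edge, since then $\log_x$ sends two points lying on the other two edges at equal distance past $p$ to the \emph{same} point of $T_xX$, while their distance in $X$ is bounded below, so no $\delta$ works. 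The ``easy'' half of the estimate, $d(y_1,y_2)\geq d_T(\log_xy_1,\log_xy_2)-\varepsilon r$, can instead be proved directly from $\textup{CAT}(\kappa)$ comparison: the comparison angle at $x$ dominates the Alexandrov angle, and the $M^\kappa_2$-law of cosines differs from the Euclidean one by $O(\kappa r^4)$ on $\overline B(x,r)$. So the substance carried by \textup{GCBA} is precisely that the blow-up does not collapse.
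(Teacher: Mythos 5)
First, a point of comparison: the paper itself contains no proof of this statement. It is imported verbatim as Lemma 5.5 of \cite{LN19}, and the surrounding material in Section \ref{subsec-log} runs in the opposite direction --- the blow-up description of the tangent cone (Lemma \ref{lemma-tangentcone=limit}, i.e.\ Corollary 5.7 of \cite{LN19}) is \emph{deduced from} this lemma. So your attempt can only be judged on its own merits, and there it has a genuine gap.

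Your contradiction-and-rescaling set-up is fine, and the identity $\log^{\lambda_k X}_x\circ\,\text{dil}_{\lambda_k}=\text{dil}^T_{\lambda_k}\circ\log^X_x$ is correct, but the step that carries the entire proof --- that the ultralimit of $(\lambda_k X,x)$ is isometric to $(T_xX,O)$ \emph{and that the ultralimit $L_\omega$ of the logarithm maps realizes this isometry} --- is exactly equivalent to the uniform estimate you are trying to prove, and you never establish it. The ingredients you do verify (curvature $\kappa/\lambda_k^2\to 0$, uniform total boundedness, surjectivity onto balls, radial isometry, the uniform $2$-Lipschitz bound of Lemma \ref{lemma-lip}) only make $L_\omega$ a surjective $2$-Lipschitz radial isometry; this controls $d_T(\log_x y_1,\log_x y_2)$ from above in terms of $d(y_1,y_2)$, which is the half you already obtain by angle comparison, and gives no lower bound --- and the lower bound $d(y_1,y_2)\le d_T(\log_x y_1,\log_x y_2)+\varepsilon r$ (non-collapse of the blow-up) is the whole content. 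Citing ``the local structure theory of \cite{LN19}'' for this step is circular, since in \cite{LN19} (and in this paper) that identification is a consequence of Lemma 5.5, not an independent input; you acknowledge the obstacle yourself, but acknowledging it does not close it. A correct proof must use local geodesic completeness quantitatively, e.g.\ through a uniform comparison between comparison angles and Alexandrov angles at $x$ as $r\to 0$, which is where \cite{LN19} brings that hypothesis to bear; the ultrafilter formalism by itself merely restates the problem. A secondary slip: the tripod is not a counterexample for the necessity of geodesic completeness --- for a fixed interior point $x$ of an edge one may take $\delta<d(x,p)$ and the estimate holds trivially; to see failure one needs branching accumulating at $x$ at scales comparable to the distance from $x$ (e.g.\ a harmonic broom with whiskers of length comparable to their attachment distance).
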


As a consequence of this fact, Lytchak and Nagano proved that the tangent cone at $x$ can be seen as the Gromov-Hausdorff limit of a rescaled tiny ball around $x$. We explicit the proof of this fact because in the following we will need to write who are the maps realizing the Gromov-Hausdorff approximations.

\begin{lemma}[\cite{LN19}, Corollary 5.7]
	\label{lemma-tangentcone=limit}
	Let $x \in X$ be a point of a complete, geodesic, \textup{GCBA}   space. For any sequence $\lambda_n \to \infty$ consider the sequence of  \textup{CAT}$(\kappa)$, pointed spaces $Y_n =  \left( \lambda_n B(x,r),   x \right)$, for  any $r<\rho_\textup{cat}(x)$. Then: 
	\begin{itemize}
		\item[(a)]  $Y_n \rightarrow (T_xX, d_T, O)$  in the pointed Gromov-Hausdorff convergence;
		\item[(b)]   the approximating maps $f_n \colon Y_n \to T_xX$ are given by $f_n = \log_{\textup{dil}_{\lambda_n}(x)}$
		(using again the natural identification  $T_{\textup{dil}_{\lambda_n}(x)}(\lambda_n X) \cong T_x X$)
		
	\end{itemize}  
\end{lemma}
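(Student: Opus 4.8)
The plan is to verify directly that the maps $f_n:=\log_{\textup{dil}_{\lambda_n}(x)}$ prescribed in (b) are pointed Gromov--Hausdorff approximations, which proves (a) and (b) at once. Under the canonical identification $T_{\textup{dil}_{\lambda_n}(x)}(\lambda_n X)\cong T_xX$ used throughout, $f_n$ sends a point $y$ of $Y_n$ (i.e.\ of $B(x,r)$ equipped with the rescaled metric) to $([x,y],\lambda_n d_X(x,y))\in T_xX$, where $[x,y]$ is the unique geodesic of $X$ from $x$ to $y$; in particular $f_n(x)=O$. Since the identification of spaces of directions is an isometry, comparing with \eqref{euclidean-cone-formula} gives, for all $y_1,y_2\in B(x,\rho_\textup{ac}(x))$,
$$d_T\big(f_n(y_1),f_n(y_2)\big)=\lambda_n\, d_T\big(\log_x(y_1),\log_x(y_2)\big),$$
because multiplying both radial coordinates by $\lambda_n$ multiplies the cone distance by $\lambda_n$.

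Now fix $\rho>0$ and $\varepsilon>0$, and let $\delta>0$ be the radius furnished by Lemma \ref{logarithm-approximation} for this $\varepsilon$. Since $\lambda_n\to\infty$, for $n$ large we have $\rho<\lambda_n\min\{r,\delta,\rho_\textup{ac}(x)\}$, so that $\overline B_{Y_n}(x,\rho)=\textup{dil}_{\lambda_n}\!\big(\overline B_X(x,\rho/\lambda_n)\big)$ lies inside the domain of $f_n$ and $\overline B_X(x,\rho/\lambda_n)$ is a compact tiny ball. For $y_1,y_2$ in this ball, Lemma \ref{logarithm-approximation} gives $\big|d_X(y_1,y_2)-d_T(\log_x(y_1),\log_x(y_2))\big|\le\varepsilon\rho/\lambda_n$; multiplying by $\lambda_n$ and using the displayed identity,
$$\big|\,d_{Y_n}(y_1,y_2)-d_T\big(f_n(y_1),f_n(y_2)\big)\,\big|\le\varepsilon\rho .$$
Hence $f_n$ distorts distances by at most $\varepsilon\rho$ on $\overline B_{Y_n}(x,\rho)$. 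For the image, apply Lemma \ref{logarithmic-map-surjective}(a) in the rescaled space $\lambda_n X$ at $\textup{dil}_{\lambda_n}(x)$ (legitimate as $\rho<\lambda_n\rho_\textup{ac}(x)$): it yields $f_n\big(\overline B_{Y_n}(x,\rho)\big)=\overline B_{T_xX}(O,\rho)$, so a fortiori the image is $\varepsilon$-dense in $\overline B_{T_xX}(O,\rho)$. As $\rho,\varepsilon$ were arbitrary, this is precisely the statement that $Y_n\to(T_xX,d_T,O)$ with approximating maps $f_n=\log_{\textup{dil}_{\lambda_n}(x)}$.

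I expect the only point requiring care to be the bookkeeping across scales: the scale-dependent radius $\delta$ and the $\varepsilon r$-type error of Lemma \ref{logarithm-approximation} must be correctly pushed through the dilation $\textup{dil}_{\lambda_n}$, becoming the condition ``$\rho<\lambda_n\delta$'' and an \emph{additive} error ``$\varepsilon\rho$'' on a fixed ball of radius $\rho$. Once one is at ease with the identification $T_{\textup{dil}_{\lambda_n}(x)}(\lambda_n X)\cong T_xX$ and with the homogeneity of the Euclidean cone metric \eqref{euclidean-cone-formula}, everything else reduces to Lemmas \ref{logarithmic-map-surjective} and \ref{logarithm-approximation} applied in the rescaled spaces.
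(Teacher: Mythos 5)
Your proof is correct and follows essentially the same route as the paper: both control the distortion via Lemma \ref{logarithm-approximation} combined with the homogeneity of the cone metric \eqref{euclidean-cone-formula}, and both obtain density of the image from Lemma \ref{logarithmic-map-surjective}. Your write-up is only a bit more explicit about the surjectivity of $f_n$ onto $\overline B_{T_xX}(O,\rho)$ in the rescaled space, which the paper leaves implicit.
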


\begin{proof} Fix $R>0$ and   any $\varepsilon > 0$. Let $\delta$ be as in Lemma \ref{logarithm-approximation} and set $r_n=1/ \lambda_n$. We may assume that  $r_n\cdot R < \delta$. Then   for all $y_1,y_2 \in B_{Y_n}(x,R)$ we have $y_1,y_2 \in B_X(x,r_n R)$ and we can apply the  Lemma \ref{logarithm-approximation}, which yields
	\vspace{-3mm}
	
	$$\vert d(y_1,y_2) - d_T(\log_x(y_1), \log_x(y_2))\vert \leq \varepsilon r_n R.$$
	We have $d_{Y_n}(y_1,y_2) = \frac{d(y_1,y_2)}{r_n}$ and,  by \eqref{euclidean-cone-formula} and  by the definition of the logarithmic map, 
	\vspace{-3mm}
	
	$$d_T(f_n(y_1), f_n(y_2)) = \frac{1}{r_n}d_T(\log_x(y_1), \log_x(y_2)).$$
	In conclusion we get
	\vspace{-3mm}
	
	$$\vert d_{Y_n}(y_1,y_2) - d_T(f_n(y_1), f_n(y_2))\vert \leq \varepsilon R.$$
	Since this is true for any $\varepsilon >0$ the thesis follows from   Lemma \ref{logarithmic-map-surjective}.
\end{proof}

\vspace{1mm}

\enlargethispage{5mm}
Finally we observe that this characterization of $T_x X$ has another consequence. 
Fix any $v\in \Sigma_xX$,  which can be naturally seen as an element of $T_xX$, and  take any geodesic $\gamma$ starting at $x$ defining $v$: then for any sequence $r_n \to 0$ we have that the sequence  $\gamma(r_n) \in Y_n$ defines $v$ in the limit \linebreak (indeed, $f_n(\gamma(r_n)) = v$ for any $n$).

\subsection{Dimension and natural measure}\label{subsec-structure}
We recall  some fundamental properties of GCBA-spaces proved in \cite{LN19}.  
For any point $x\in X$ there exists an integer number $k\in \mathbb{N}$ such that any sufficiently small ball around $x$ has Hausdorff dimension $k$. This number is called {\em the dimension of $X$ at the point $x$} and it is denoted by $\text{dim}(x)$. 
It is possible to show that $\text{dim}(x)$ is equal to the geometric dimension of the tangent cone to $X$ at $x$ as defined in \cite{Kl99}. The {\em dimension of $X$} is the (possibly infinite) quantity $\text{dim}(X) = \sup_{x\in X} \text{dim}(x) \in [0,+\infty]$.\\
There exists a natural stratification of $X$ into disjoint subsets $X^k$, where $X^k$ is the set of points of dimension $k$, for $k\in \mathbb{N}$. In other words $X = \bigsqcup_{k\in \mathbb{N}} X^k$. \linebreak 
Moreover the $k$-dimensional Hausdorff measure $\mathcal{H}^k$ is locally positive and locally finite on $X^k$. Hence it is defined a measure on $X$ as
$$\mu_X = \sum_{k\in \mathbb{N}} \mathcal{H}^k \llcorner X^k.$$
The measure $\mu_X$ is locally positive and locally finite: we call it the {\em natural measure of $X$}.
\begin{ex}
	\label{Riemannian}
	If $X$ is a $n$-dimensional Riemannian manifold with sectional curvatures $\leq \kappa$ then $X$ is a locally geodesically complete, locally compact, separable, locally CAT$(\kappa)$ metric space. In this case $\mu_X$ is the $n$-dimensional Hausdorff measure and it coincides with the Riemannian volume measure, up to a multiplicative constant.
\end{ex}
This stratification of $X$ has good local properties, as shown in \cite{LN19}.  \linebreak
For any   $k\in \mathbb{N}$ it is possible to define the set of {\em regular points} Reg$^k(X)$ of the $k$-dimensional part $X^k$ of $X$. 
We do not present here the definition of regular points (they are those points that are $(k,\delta)$-strained for a suitable small $\delta$, according to \cite{LN19}, Sec. 11.4). Instead we recall the main properties of the set of $k$-dimensional and regular $k$-dimensional points we will need. \linebreak
For every $S \subset X$ we will denote $S^k = S \cap X^k$ and Reg$^k(S)= S^k \cap $Reg$^k(X)$.\\
Then:

\setlength{\leftmargini}{20pt}
\begin{itemize}
	\item the set Reg$^k(X)$ is open in $X$ and dense in $X^k$ (Cor. 11.8 of \cite{LN19});
	\item for any tiny ball $B(x,r)$     there exists   $k$ such that $B(x,r)$   does not contain   points of dimension  $>k$  (Corollary 5.4 of \cite{LN19});
	\item for any tiny ball $B(x,r)$ there exists a constant  $C$, {\em only  depending  on the maximal number of $r$-separated points in $\overline{B}(x,10r)$}, such that:
	\begin{equation}
		\label{k-inequality1}
		\mathcal{H}^k \left( B(x,r)^k \right) \leq C\cdot r^k  
	\end{equation}
	
\vspace{-5mm}
	
	\begin{equation}
		\label{k-inequality2}
		\mathcal{H}^{k-1} \left( \bar B (x,r)  ^k \setminus \text{Reg}^k (B(x,r))  \right)\leq C\cdot r^{k-1}
	\end{equation}
 
\end{itemize}
\noindent 	(Corollary 11.8 of \cite{LN19}; see Sec.\ref{sec-packingcovering} for  the definition of $r$-separated \nolinebreak points).
\vspace{1mm}

 \enlargethispage{6mm}

\subsection{Gromov-Hausdorff convergence}\label{subsec-GH}
 
We recall   here some facts about the behaviour of the natural measures and  the dimension under pointed Gromov-Hausdorff convergence. \\
Consider a proper GCBA-space $X$ and   its natural measure $\mu_X \! = \! \sum_{k=0}^{n} \mathcal{H}^k \llcorner X^k$, where $n = \text{dim}(X)$ is assumed to be finite. The $k$-dimensional Hausdorff measure $\mathcal{H}^k$ restricted to the $k$-dimensional part is a Radon measure  \linebreak
(indeed it is Borel regular and locally finite on the proper metric space $X$), so it is $\mu_X$. In particular for any open subset $U\subset X$ it holds:
$$\mu_X(U) = \sup \lbrace \mu_X(K) \text{ s.t. } K \text{ is a compact subset of } U\rbrace.$$
Now suppose to have a sequence of proper GCBA-spaces $X_n$ converging in the pointed Gromov-Hausdorff sense to some (proper) GCBA-space $X$. \linebreak
Arguing as in the first part of the proof of Theorem 1.5 of \cite{LN19} we deduce that  the natural measures $\mu_{X_n}$ converge in the weak sense to the natural measure of the limit, $\mu_X$. This means that for any compact subsets $K_n\subset X_n$ converging to a compact subset $K\subset X$ it holds:
\begin{equation}
	\label{convergence-measure}
	\lim_{\varepsilon \to 0} \liminf_{n\to +\infty} \mu_{X_n}(B(K_n,\varepsilon)) = \lim_{\varepsilon \to 0} \limsup_{n\to +\infty} \mu_{X_n}(B(K_n,\varepsilon)) = \mu_X(K)
\end{equation}
\normalsize
where we denote by $B(K_n,\varepsilon)$ the $\varepsilon$-neighbourhood of $K_n$.
As a consequence:
\begin{lemma}
	\label{volume-convergence}
	Let $X_n$ be a sequence of proper, \textup{GCBA}-spaces converging in the pointed Gromov-Hausdorff sense to a proper, \textup{GCBA}-space $X$. 
	Let $x_n \in X_n$ be a sequence of points converging to $x\in X$. Then for any $R > 0$ it holds:
	\begin{equation}
		\label{eqballs}
		\mu_X(B(x,R)) \leq \limsup_{n \to + \infty}\mu_{X_n}(B(x_n,R)).
	\end{equation}
\end{lemma}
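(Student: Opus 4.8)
The plan is to deduce \eqref{eqballs} from the weak convergence of measures \eqref{convergence-measure}, together with the inner regularity of $\mu_X$ recorded above. The basic idea is that an open ball $B(x,R)$ can be exhausted from inside by compact sets, each of which is approximated in the converging spaces $X_n$ by compact sets that eventually sit inside $B(x_n,R)$; applying \eqref{convergence-measure} to these and letting the compact sets fill up the ball will give the claimed inequality.

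\medskip

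First I would fix $\varepsilon > 0$ and use the inner regularity of $\mu_X$ to choose a compact set $K \subset B(x,R)$ with $\mu_X(K) \geq \mu_X(B(x,R)) - \varepsilon$. Since $K$ is compact and contained in the \emph{open} ball, there is $\eta > 0$ with $K \subset B(x, R - 2\eta)$, and moreover the $\eta$-neighbourhood $B(K,\eta)$ is still contained in $B(x, R - \eta)$. Next, using the Gromov-Hausdorff convergence $(X_n, x_n) \to (X, x)$, I would produce compact sets $K_n \subset X_n$ converging to $K$ (for instance, take $K_n$ to be the image of $K$ under an $\varepsilon_n$-approximation, with $\varepsilon_n \to 0$); since $x_n \to x$ and $K \subset B(x, R-2\eta)$, for $n$ large these approximating sets satisfy $K_n \subset B(x_n, R - \eta) \subset B(x_n, R)$, and in fact $B(K_n, \varepsilon) \subset B(x_n, R)$ for every sufficiently small $\varepsilon$ once $n$ is large. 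Monotonicity of $\mu_{X_n}$ then gives $\mu_{X_n}(B(K_n,\varepsilon)) \leq \mu_{X_n}(B(x_n,R))$ for all large $n$, hence
$$\liminf_{n\to+\infty} \mu_{X_n}(B(K_n,\varepsilon)) \leq \liminf_{n\to+\infty} \mu_{X_n}(B(x_n,R)).$$
Letting $\varepsilon \to 0$ and invoking \eqref{convergence-measure} for the compact sets $K_n \to K$, the left-hand side becomes $\mu_X(K) \geq \mu_X(B(x,R)) - \varepsilon$, so we obtain $\mu_X(B(x,R)) - \varepsilon \leq \liminf_{n\to+\infty}\mu_{X_n}(B(x_n,R)) \leq \limsup_{n\to+\infty}\mu_{X_n}(B(x_n,R))$. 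Since $\varepsilon > 0$ was arbitrary, \eqref{eqballs} follows.

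\medskip

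The main technical point to be careful about is the interplay of quantifiers between the Gromov-Hausdorff approximation scale, the neighbourhood parameter $\varepsilon$ in \eqref{convergence-measure}, and the radius slack $\eta$: one must first fix $K$ (hence $\eta$), then choose $K_n \to K$, and only afterwards send $\varepsilon \to 0$, ensuring that for each fixed small $\varepsilon$ the inclusion $B(K_n, \varepsilon) \subset B(x_n, R)$ holds for all large $n$. This is where the open-ball hypothesis is essential — the argument would fail for closed balls, since a compact exhaustion of a closed ball need not leave room for the neighbourhoods $B(K_n,\varepsilon)$. Once the order of limits is arranged correctly, the rest is a routine combination of monotonicity and inner regularity, so I do not anticipate any serious obstacle beyond this bookkeeping.
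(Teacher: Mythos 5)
Your argument is correct and follows essentially the same route as the paper: exhaust the open ball from inside using inner regularity of the Radon measure $\mu_X$, leave a radius slack $\eta$, and apply the weak convergence \eqref{convergence-measure} together with the inclusion of $\varepsilon$-neighbourhoods in $B(x_n,R)$. The only cosmetic difference is that the paper takes the exhausting compacts to be the closed balls $\overline{B}(x,R-2\eta)$, whose natural counterparts $\overline{B}(x_n,R-2\eta)$ serve as the converging compacts, which spares the explicit construction of $K_n$ via approximation maps (where, strictly speaking, you should pass to closures to guarantee compactness).
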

\begin{proof}
	The natural measure $\mu_X$ is Radon and any compact subset contained in $B(x,R)$ is contained in $\overline{B}(x,R - 2\eta)$ for some $\eta > 0$, therefore
	$$\mu_X(B(x,R)) = \sup_{\eta > 0} \mu_X(\overline{B}(x,R-2\eta)).$$
	
	\vspace{-1mm}
	\noindent 	On the other hand for any $\eta > 0$ we have by \eqref{convergence-measure}
	\vspace{-5mm}
	
	$$\mu_X(\overline{B}(x,R \!-\!2\eta))\!\leq \limsup_{n \to + \infty}\mu_{X_n}(\overline{B}(x_n,R\!-\!\eta))\leq \limsup_{n \to + \infty}\mu_{X_n}(B(x_n,R)).$$
	\nopagebreak
	
	\vspace{-10mm}
\end{proof}

\noindent The equality in (\ref{eqballs}) would follow from a uniform estimate on the volumes of the annulii of a given thickness. Indeed this is the case when the metric spaces satisfy a uniform doubling condition, as we will see in Section \ref{sec-doubling}.
\vspace{1mm}

We end this preliminary section  recalling some facts  about  the  stability of the dimension under Gromov-Hausdorff convergence.  In \cite{LN19} (Def. 5.12), Lytchak and Nagano introduce the notion of {\em standard setting of convergence}.
This means  considering a sequence of tiny balls $$    B(x_n,r_0) \subset   \overline  B(x_n,10 r_0)$$
in a sequence of  GCBA-spaces $X_n$ satisfying the following assumptions:

\setlength{\leftmargini}{15pt}
\begin{itemize}
	\vspace{1mm}
	\item   the closed balls  $ \overline B(x_n,10r_0)$ have uniformly bounded $\frac{r_0}{2}$-covering number
	(i.e.  $\exists \, C_0$ such that the ball  $ \overline B(x_n,10r_0)$ can be covered by  $C_0$  closed balls of radius $\frac{r_0}{2}$ with centers in  $ \overline B(x_n,10r_0)$  for all $n$ , cp.  Sec.\ref{sec-packingcovering})
		\vspace{1mm}
	\item   the balls $ \overline  B(x_n,10 r_0)$ converge  to a compact ball $\overline B(x,10r_0)$ of   a GCBA-space $X$ in the Gromov-Hausdorff sense;
		\vspace{1mm}
	\item  the closures $ \overline  B(x_n,  r_0)$ converge to the closure $\overline B(x, r_0)$ of a tiny ball    in $X$.
\end{itemize} 
	\vspace{1mm}
We then have: 

\setlength{\leftmargini}{20pt}
\begin{lemma}[Lemma 11.5 \& Lemma 11.7 of \cite{LN19}] ${}$
	\label{regularity-convergence}
	
	\noindent 	Let   $B(x_n,r_0)$ be   a sequence of tiny balls   in the standard setting of convergence. 	Let $y_n \in  B(x_n,r_0)$ be a sequence converging to $y\in B(x,r_0)$. Then:

	\begin{itemize}
		\item[(a)] \textup{dim}$(y) \geq \limsup_{n \to + \infty}$ \textup{dim}$(y_n)$;

		\item[(b)] if $y$ is $k$-regular then \textup{dim}$(y) =$ \textup{dim}$(y_n)$ for all $n$ large enough.
	\end{itemize}
	
\end{lemma}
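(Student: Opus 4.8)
The plan is to reduce both assertions to the behaviour of \emph{strainers} under Gromov--Hausdorff approximation, strainers being the combinatorial device used in \cite{LN19}, \S 11, to read off the dimension of a point. I shall use the following facts from \emph{loc.\ cit.}: there is a universal constant $\delta_k>0$ such that $\dim(z)\ge k$ precisely when $z$ admits a $(k,\delta)$-strainer with $\delta<\delta_k$ --- a configuration of $2k$ points $a_1,b_1,\dots,a_k,b_k$ near $z$ whose mutual $\kappa$-comparison angles at $z$ are, within $\delta$, those of the $2k$ vectors $\pm e_1,\dots,\pm e_k$ of an orthonormal frame; and that what one needs about a $k$-regular point $y$ is that $\dim(y)=k$ and that $y$ carries $(k,\delta)$-strainers of \emph{arbitrarily good quality}, i.e.\ for every $\delta>0$ (\cite{LN19}, \S 11.4). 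The only elementary ingredient is that, inside a tiny ball, the $\kappa$-comparison angle $\widetilde{\angle}_\kappa(a\,z\,b)$ is a continuous function of the three mutual distances $d(a,z),d(z,b),d(a,b)$ (all relevant triangles have perimeter $<2D_\kappa$), so the finitely many inequalities defining a $(k,\delta)$-strainer depend continuously on the $2k+1$ points $z,a_1,b_1,\dots,a_k,b_k$.

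\emph{Proof of (a).} Since $\overline B(x_n,10r_0)$ has uniformly bounded $\tfrac{r_0}{2}$-covering number, its $r_0$-separation number is uniformly bounded, hence $\dim(y_n)\le k_0$ for a uniform $k_0$ (Corollary 5.4 of \cite{LN19}); so $d:=\limsup_n\dim(y_n)$ is finite and is attained along a subsequence, along which $\dim(y_n)\ge d$. For each such $n$ fix a $(d,\delta_n)$-strainer $(a_i^n,b_i^n)_{i=1}^d$ at $y_n$, with all points in the compact ball $\overline B(x_n,10r_0)$ and with $\delta_n$ bounded away from $\delta_d$ (legitimate because $\dim(y_n)\ge d$ provides $(d,\delta)$-strainers for a whole subinterval of $(0,\delta_d)$). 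Using the Gromov--Hausdorff approximations $\overline B(x_n,10r_0)\to\overline B(x,10r_0)$ and the compactness of the limit ball, pass to a further subsequence so that $a_i^n\to a_i$, $b_i^n\to b_i$ in $X$ and $\delta_n\to\delta_\infty<\delta_d$. As $y_n\to y$, all mutual distances among $\{y_n,a_i^n,b_i^n\}_i$ converge to those among $\{y,a_i,b_i\}_i$, hence so do the comparison angles at the centre, and $(a_i,b_i)_{i=1}^d$ is a $(d,\delta_\infty)$-strainer at $y$ with $\delta_\infty<\delta_d$. Therefore $\dim(y)\ge d=\limsup_n\dim(y_n)$.

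\emph{Proof of (b).} Suppose $y$ is $k$-regular, so $\dim(y)=k$; by part (a) we already have $\dim(y_n)\le k$ for all large $n$. For the reverse inequality pick, using regularity, a $(k,\delta)$-strainer $(a_i,b_i)_{i=1}^k\subset\overline B(x,10r_0)$ at $y$ with $\delta$ chosen so small that $2\delta<\delta_k$. Transport it back: choose $a_i^n,b_i^n\in\overline B(x_n,10r_0)$ with $a_i^n\to a_i$ and $b_i^n\to b_i$ (for instance, images of $a_i,b_i$ under quasi-inverses of the approximating maps). As in (a) the comparison angles at $y_n$ converge to those at $y$, so for $n$ large $(a_i^n,b_i^n)_{i=1}^k$ is a $(k,2\delta)$-strainer at $y_n$, whence $\dim(y_n)\ge k$ and finally $\dim(y_n)=k$ for all large $n$.

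The step I expect to be genuinely delicate is the bookkeeping of the quality parameter $\delta$ appearing above: one must be sure that the admissible range $(0,\delta_k)$ of strainer qualities is really available both at the approximating level, for (a), and at the limit point, for (b), and it is exactly here that the hypotheses $\dim(y_n)\ge d$ in (a) and \emph{regularity} --- rather than mere $k$-dimensionality --- of $y$ in (b) are used; without the slack afforded by regularity, the perturbation of a borderline strainer at $y$ to $y_n$ could violate the strainer inequalities. Everything else reduces to the continuity of $\kappa$-comparison angles and to the strainer calculus of \cite{LN19}.
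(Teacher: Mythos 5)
You should first note that the paper does not prove this lemma at all: it is quoted verbatim from Lemmas 11.5 and 11.7 of \cite{LN19}, so your attempt is really a reconstruction of the strainer arguments of that paper, and it deviates from them at the decisive point. The transport step in your part (b) (push a fixed strainer of the regular limit point $y$ back to $y_n$ via the approximations, use continuity of $\kappa$-comparison angles in the side lengths, and invoke ``$(k,\delta)$-strained with $\delta<\delta_k$ implies $\dim\ge k$'') is sound in spirit and close to what \cite{LN19} does, because there the strainer has a \emph{definite size}, so all relevant distances stay bounded away from zero and the angle conditions pass to $y_n$.

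The genuine gap is in (a), and it is the premise you flag as a ``fact from \emph{loc.\ cit.}'': that $\dim(z)\ge k$ holds \emph{precisely when} $z$ itself admits a $(k,\delta)$-strainer with $\delta<\delta_k$. Only the implication from strainer to dimension is true. For the converse, take $X$ to be the Euclidean cone over a circle of length $3\pi$ and $z$ the cone point: $\dim(z)=2$, but at the vertex the ($\kappa=0$) comparison angle of any pair of points equals the angular distance of their directions truncated at $\pi$, and an elementary check shows that a circle of length $3\pi$ contains no four points whose mutual truncated distances are, up to a small $\delta$, those of $\pm e_1,\pm e_2$; hence $z$ carries no $(2,\delta)$-strainer for small $\delta$. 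What \cite{LN19} actually provides is that points of dimension $\ge k$ are only \emph{approximated} by $(k,\delta_k)$-strained points (equivalently, the dimension of a tiny ball is the largest $k$ for which it contains such a point). With this weaker input your limiting argument collapses: the strainers live at auxiliary points $z_n$ near $y_n$, with no lower bound either on $d(z_n,y_n)$ or on the size $d(a_i^n,z_n)$, $d(b_i^n,z_n)$ of the strainers, and without a uniform positive lower bound on these distances the limit configuration degenerates (all $a_i,b_i$ collapse onto $y$), so continuity of comparison angles yields nothing and you cannot exhibit any strained point of $X$ near $y$. (A smaller unsupported step of the same kind: you assert $\delta_n$ can be chosen ``bounded away from $\delta_d$'' uniformly in $n$, which does not follow from $\dim(y_n)\ge d$.) A correct proof of (a) must control the size and location of strainers near $y_n$ uniformly in $n$, or argue via rescalings/tangent cones in the standard setting, which is exactly the content of the lemmas of \cite{LN19} being cited; as written, your (a) does not close, and (b) inherits the gap through its use of (a) for the inequality $\dim(y_n)\le k$.
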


For non-compact spaces the following general result is known:

\begin{lemma}[Lemma 2.1  of \cite{Nag18}] ${}$
	\label{regularity-pointedconvergence}
	
	\noindent 	Let   $(X_n,x_n)$ be   a sequence of pointed, proper, geodesically complete \textup{CAT}($\kappa$) spaces converging to some  $(X,x)$ in the pointed Gromov-Hausdorff sense. 
	Then \textup{dim}$(X) \leq \liminf_{n \to + \infty}$ \textup{dim}$(X_n)$.	
\end{lemma}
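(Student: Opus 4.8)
The plan is to reduce the statement to the compact, local situation treated by Lytchak--Nagano in the standard setting of convergence, exactly as in Lemma~\ref{regularity-convergence}, and then extend it to the non-compact pointed setting. Concretely, suppose $(X_n,x_n) \to (X,x)$ in the pointed Gromov--Hausdorff topology, with all spaces \textup{GCBA}$^\kappa$. Pick any point $y \in X$; we must show $\textup{dim}(y) \leq \liminf_{n} \textup{dim}(X_n)$, since taking the supremum over $y$ then gives the claim. Fix $r_0 < \min\{1, D_\kappa/100\}$ small enough that $\overline B(y, 10r_0)$ is compact (possible because $X$ is proper, being a pointed Gromov--Hausdorff limit of proper spaces, cf. the remark after the statement about convergence of proper spaces). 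Choose points $y_n \in X_n$ with $y_n \to y$. The first step is to verify that, after passing to a subsequence, the balls $\overline B(y_n, 10r_0)$ together with $\overline B(y,10r_0)$ form a \emph{standard setting of convergence} in the sense of \cite{LN19}: the only nontrivial point is the uniform bound on the $\tfrac{r_0}{2}$-covering number of $\overline B(y_n,10r_0)$, which follows because Gromov--Hausdorff convergence of the compact balls $\overline B(y_n,10r_0) \to \overline B(y,10r_0)$ forces their covering numbers at a fixed scale to be eventually bounded by (a constant times) the covering number of the compact limit ball.

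Once we are in the standard setting, we apply part~(a) of Lemma~\ref{regularity-convergence}: for $y_n \to y$ we get $\textup{dim}(y) \geq \limsup_n \textup{dim}(y_n)$. This is the \emph{opposite} inequality to what we want, so the heart of the argument is different: we need a point in $X$ of \emph{large} dimension to be approximated by points of at least that dimension in $X_n$, not the other way around. The right tool is part~(b): if $z \in \overline B(y, r_0)$ is a $k$-\emph{regular} point, and $z_n \to z$ with $z_n \in \overline B(y_n, r_0)$, then $\textup{dim}(z_n) = k$ for all $n$ large. So the plan is: let $k = \textup{dim}(y)$; by the structure theory recalled in Section~\ref{subsec-structure}, $\textup{Reg}^k(X)$ is dense in $X^k$, hence there is a $k$-regular point $z$ arbitrarily close to $y$, in particular inside the tiny ball; pick $z_n \in X_n$ converging to $z$ (using the Gromov--Hausdorff approximations); then $\textup{dim}(X_n) \geq \textup{dim}(z_n) = k$ for all $n$ large, whence $\liminf_n \textup{dim}(X_n) \geq k = \textup{dim}(y)$. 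Taking the supremum over all $y \in X$ yields $\textup{dim}(X) \leq \liminf_n \textup{dim}(X_n)$.

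The main obstacle is not any single deep fact but rather the bookkeeping needed to legitimately invoke the standard setting of convergence: one must choose the base scale $r_0$ uniformly (small enough for a tiny ball around $y$, and also so that $\overline B(y_n,10r_0)$ are compact for all $n$, which is where properness of the $X_n$ and the convergence are used), pass to a subsequence so that the compact balls actually converge in Gromov--Hausdorff sense and the covering numbers stabilize, and verify that the limit ball $\overline B(y,10r_0)$ is itself a tiny ball in $X$ so that Lemma~\ref{regularity-convergence} applies verbatim. A secondary subtlety is that a priori $\textup{dim}(X_n)$ could be realized far from $y_n$; but this does not matter here, since we only need the \emph{lower} bound $\textup{dim}(X_n) \geq \textup{dim}(z_n)$, and the whole point of working with a regular point $z$ near $y$ is to transfer dimension lower bounds across the limit. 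Finally, one should note the degenerate case where $\liminf_n \textup{dim}(X_n) = +\infty$, in which the statement is vacuous, and the case $\textup{dim}(X) = +\infty$, which is then excluded a posteriori by the argument applied to points $y$ of arbitrarily large dimension.
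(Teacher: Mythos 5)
Your argument is essentially correct, but note that the paper does not prove this statement at all: it is quoted verbatim as Lemma 2.1 of \cite{Nag18}, so there is no internal proof to match. The closest argument actually written in the paper is the first half of Proposition \ref{cor-dim}, which establishes the same inequality (under the extra hypothesis $\rho_{\textup{ac}}(X_n)\geq\rho_0$) by a volume comparison: pick a $d$-regular point $y$ of the limit, bound $\mu_X(\overline B(y,r))\leq Cr^{d}$ from \eqref{k-inequality1}, bound $\mu_{X_n}(\overline B(y_n,r/2))\geq c_{d_0}(r/2)^{d_0}$ from the Croke-type estimate of Theorem \ref{boundbelowvolume}, and let $r\to 0$ using weak convergence of the natural measures \eqref{convergence-measure}. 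Your route replaces the measure estimates by the stability of regular points in the standard setting of convergence, i.e.\ part (b) of Lemma \ref{regularity-convergence}: a $k$-regular point $z$ near $y$ forces $\textup{dim}(z_n)=k$, hence $\textup{dim}(X_n)\geq k$, for all large $n$. This is leaner (no volume bounds needed, hence no almost-convexity hypothesis) and is in the spirit of Nagano's original proof; the paper's volume argument, by contrast, is the one that generalizes to the quantitative statements of Section \ref{sec-compactness}.

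Two points you should make explicit. First, to invoke the standard setting and Lemma \ref{regularity-convergence} you need the limit $X$ to be a GCBA space (so that $X^k$, $\textup{Reg}^k(X)$ and its density make sense, and the limit ball is a tiny ball in a GCBA space): this follows from properness of the limit plus stability of the CAT$(\kappa)$ condition and of geodesic completeness, e.g.\ via Corollary \ref{ultralimit-stability} and Proposition \ref{Jansen}, but it is not automatic from the bare statement and you currently assume it silently. Second, be careful with the subsequence bookkeeping: if the standard setting were only achieved along a subsequence, the conclusion $\textup{dim}(X_n)\geq k$ would only hold along that subsequence and would not bound $\liminf_n \textup{dim}(X_n)$ over the full sequence; either first pass to a subsequence realizing the liminf, or (better) observe that no subsequence is needed at all, since for proper geodesic spaces the closed balls $\overline B(y_n,10r_0)$ and $\overline B(y_n,r_0)$ converge to the corresponding balls of $X$ along the whole sequence and the covering bound $\textup{Cov}(\overline B(y_n,10r_0),r_0/2)\leq \textup{Cov}(\overline B(y,10r_0),r_0/4)$ holds for all large $n$, with the finitely many remaining terms bounded by compactness.
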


\vspace{3mm}
\section{Estimate of volume of balls from below}\label{sec-volume}
We fix again a complete, geodesic, GCBA-space $X$. \\  
From \eqref{k-inequality1} \& \eqref{k-inequality2} it follows that  there exists an upper bound 
for the measure of any tiny ball $ B(x,r)$; moreover one can find a uniform upper bound of the measure of {\em all} balls, independently of the center $x$,  provided that  $X$ satisfies a uniform packing condition at some scale (see Theorem \ref{char_pack} in Section \ref{sec-packingcovering} for a precise statement). 
It is less clear if there exists a lower bound on the measure, and in particular if this lower bound depends only on some universal constant.
Indeed in general the $\mu_X$-volume of balls of a given radius is not uniformly bounded below independently of the space $X$. \linebreak
For instance  consider the balls of radius $\frac{1}{2}$ inside $\mathbb{R}^n$: when $n$ grows the measure of these balls tends to $0$. The next theorem shows that if the dimension is bounded from above then there is a uniform bound from below to the measure of balls of a given (sufficiently small) radius:
\begin{theo}
	\label{boundbelowvolume}
	Let $X$ be a complete, geodesic, \textup{GCBA} metric space.\linebreak If $\textup{dim}(X)\leq n_0$ then for any $x\in X$ and any  $r < \min \lbrace 1,\rho_\textup{ac}(x) \rbrace$ it holds
	$$\mu_X(\overline{B}(x,r))\geq c_{n_0} \cdot r^{n_0},$$
	where $c_{n_0}$ is a constant only depending  on $n_0$.
\end{theo}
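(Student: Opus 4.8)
The plan is to reduce the lower volume estimate to an estimate on a single stratum, and then to argue by induction on dimension using the logarithmic map and the structure of the tangent cone. Fix $x$ and $r < \min\{1,\rho_{\text{ac}}(x)\}$, and let $k = \dim(x)$, so $k \le n_0$. First I would reduce to bounding $\mathcal{H}^k(\overline B(x,r)^k)$ from below: since $\mu_X(\overline B(x,r)) \ge \mathcal{H}^k(\overline B(x,r) \cap X^k)$ and $k \le n_0$ and $r \le 1$ (so $r^k \ge r^{n_0}$), it suffices to produce a constant $c_k > 0$, depending only on $k$, with $\mathcal{H}^k(\overline B(x,r)^k) \ge c_k r^k$. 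By rescaling (replacing $X$ by $\lambda X$ with $\lambda = 1/r$, which is again GCBA and multiplies $\mathcal{H}^k$ by $\lambda^k$ on the $k$-stratum), I may assume $r = 1$, so the goal becomes $\mathcal{H}^k(\overline B(x,1)^k) \ge c_k$ whenever $\rho_{\text{ac}}(x) > 1$.

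The main tool will be the logarithmic map $\log_x \colon \overline B(x,1) \to \overline B(O,1) \subset T_xX$, which by Lemma \ref{logarithmic-map-surjective} is $2$-Lipschitz, surjective onto $\overline B(O,1)$, and preserves distance to the origin/center. A $2$-Lipschitz map cannot increase Hausdorff measure by more than the factor $2^k$ on $k$-dimensional sets, so $\mathcal{H}^k(\overline B(x,1)^k) \ge 2^{-k}\,\mathcal{H}^k\big(\log_x(\overline B(x,1)^k)\big)$, provided I can show that the image captures enough of the $k$-dimensional part of $\overline B(O,1)$ in $T_xX$. The key point is that $\dim(x) = k$ means the tangent cone $T_xX$ has geometric dimension $k$, hence $\Sigma_xX$ has dimension $k-1$; and since $T_xX$ is a CAT($0$) GCBA-space (a Euclidean cone over $\Sigma_xX$) its own $k$-dimensional part $\overline B(O,1)^k$ has positive $\mathcal{H}^k$ measure. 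I then need to check that $\log_x$ surjects (at least $\mathcal{H}^k$-almost) onto the top-dimensional stratum of $\overline B(O,1)$: every direction $v \in \Sigma_xX$ is represented by a geodesic that extends to length $1$ by local geodesic completeness (as in the proof of Lemma \ref{logarithmic-map-surjective}(a)), so every point of $\overline B(O,1)$ is hit; and a point of $\overline B(O,1)$ lying in a $k$-dimensional part of $T_xX$ must be the image of a point of dimension $\ge k$ in $X$ by Lemma \ref{regularity-convergence}(a) applied in the standard setting of convergence $\lambda_n B(x,\rho) \to T_xX$ (Lemma \ref{lemma-tangentcone=limit}), hence of dimension exactly $k$ since $k = \dim(x)$ is maximal near $x$.

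It remains to get a lower bound, depending only on $k$, for $\mathcal{H}^k\big(\overline B(O,1)^k\big)$ in an arbitrary $k$-dimensional CAT($0$) Euclidean cone $T = C(\Sigma)$ over a $(k-1)$-dimensional CAT($1$), geodesically complete space $\Sigma$ of diameter $\pi$. Here I would induct on $k$. The base case $k=0$ (a point, $\mathcal{H}^0 = 1$) and $k=1$ (a cone over a $0$-dimensional space of diameter $\pi$, i.e. at least two rays, giving $\mathcal{H}^1 \ge 1$) are immediate. For the inductive step: by the structure theory (Corollary 11.8 of \cite{LN19}), the regular part $\text{Reg}^{k-1}(\Sigma)$ is open, dense and carries positive $\mathcal{H}^{k-1}$; pick a $(k-1)$-regular point $\xi \in \Sigma$. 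Its tangent cone $T_\xi\Sigma$ is isometric to $\mathbb{R}^{k-1}$ (regular points have Euclidean tangent cones), so small balls in $\Sigma$ around $\xi$ have $\mathcal{H}^{k-1}$-measure bounded below by a $k$-dependent constant times the radius$^{k-1}$ — here I can invoke the statement being proved for $\Sigma$ itself in dimension $k-1$, or more cleanly run the whole argument as one induction on $n_0$. Then the cone over a fixed-size ball around $\xi$, restricted to cone-radius in $[\tfrac12,1]$, is a $k$-dimensional subset of $\overline B(O,1)^k$ whose $\mathcal{H}^k$-measure is bounded below by a constant depending only on $k$ (by the cone measure formula / coarea in the Euclidean cone). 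Chasing the constants gives $c_{n_0}$.

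The main obstacle I anticipate is precisely this last inductive estimate in the tangent cone: ensuring that the lower bound on $\mathcal{H}^{k-1}$ of a small ball around a regular point of $\Sigma$ is \emph{uniform}, i.e. depends only on $k$ and not on $\Sigma$. This is where one must either set up the induction carefully so that the theorem's own conclusion (in lower dimension) feeds into the cone estimate, or else appeal to the fact that at a $(k-1,\delta)$-strained point the space is bi-Lipschitz (with a controlled constant depending only on $k$ and $\delta$) to a ball in $\mathbb{R}^{k-1}$, with $\delta$ a universal choice. Matching this with the $2$-Lipschitz (rather than bi-Lipschitz) control of $\log_x$ is harmless for a one-sided measure bound, but the bookkeeping of which constants are universal is the delicate part of the write-up.
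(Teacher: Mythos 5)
There is a genuine gap, and it occurs at your very first reduction. You claim that, with $k=\dim(x)$, it suffices to bound $\mathcal{H}^k(\overline B(x,r)^k)$ from below by $c_k r^k$. But $\dim(x)=k$ only says that \emph{sufficiently small} balls around $x$ have Hausdorff dimension $k$; it gives no control at the given scale $r$, and inside $\overline B(x,r)$ the $k$-stratum can have arbitrarily small $\mathcal{H}^k$-measure. Concretely, glue two Euclidean planes to the two endpoints of a segment of length $\delta\ll 1$: this is a complete, geodesic, CAT$(0)$, geodesically complete space (so $\rho_{\mathrm{ac}}=+\infty$), with $\dim(X)=2$. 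For $x$ the midpoint of the segment and $r=\tfrac12$ one has $k=\dim(x)=1$ but $\mathcal{H}^1(\overline B(x,r)^1)=\delta$, which is arbitrarily small; the theorem's conclusion only survives because of the $\mathcal{H}^2$-mass of the planar pieces inside the ball, which your reduction discards. The same error resurfaces later when you assert that a preimage under $\log_x$ of a $k$-dimensional point of $T_xX$ has ``dimension exactly $k$ since $k=\dim(x)$ is maximal near $x$'': in the example above $T_xX\cong\mathbb{R}$, yet almost all preimages lie in the $2$-dimensional strata, so the image of the $k$-stratum under $\log_x$ need not cover any definite portion of $\overline B(O,1)$. (Note also that Lemma \ref{regularity-convergence}(a) gives semicontinuity in the direction opposite to the one you need, except at regular limit points.)

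The paper's proof is built precisely to avoid this trap: it runs an iterative algorithm, taking $r_1=\sup\{\rho:\ \mathrm{HD}(B(x,\rho))=d_0\}$ and, if $r_1<r$, jumping to a nearby point $x_1$ of strictly larger dimension, and so on; since the dimension increases at each step and is $\leq n_0$, after at most $n_0$ steps one finds inside $\overline B(x,r)$ a ball $B(x_j,r_j)$ with $r_j\geq \frac{r}{2n_0}$ containing no points of dimension $>d_j$, so that $\mathcal{H}^{d_j}(B(x_j,r_j))=\mathcal{H}^{d_j}(B^{d_j}(x_j,r_j))$ does contribute to $\mu_X$. The single-stratum lower bound is then obtained not by your induction over links (whose uniformity you rightly flag as delicate) but by composing $\log_{x_j}$ with the $1$-Lipschitz surjection $\Sigma_{x_j}X\to\mathbb{S}^{d_j-1}$ of Lytchak--Nagano, giving a $2$-Lipschitz surjection of $\overline B(x_j,r_j)$ onto a Euclidean ball and hence $\mathcal{H}^{d_j}\geq 2^{-d_j}\omega_{d_j}r_j^{d_j}$. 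If you want to salvage your write-up, you must replace the reduction to the $\dim(x)$-stratum by some mechanism of this kind that passes to higher-dimensional points when the local stratum is thin.
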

The proof of this fact is based on  ideas most of which are already present in \cite{LN19}.
First of all we have:
\begin{prop}
	Let $X$ be a complete, geodesic, \textup{GCBA} metric space and $x\in X$ be a point of dimension $n$. 
	Then there exists a $1$-Lipschitz, surjective map $P\colon T_x X \to \mathbb{R}^n$ such that: 
	\begin{itemize}
		\item[(a)] $P(O)=0$;
		\item[(b)] $P(\overline{B}(O,r)) = \overline{B}(0,r)$ for any $r>0$;
		\item[(c)] $d_T(V,O)=d_{\mathbb{R}^n}(P(V),0)$ for any $V\in T_xX$.
	\end{itemize}
\end{prop}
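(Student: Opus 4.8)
\emph{Reduction to a statement on the space of directions.}
The plan is to obtain $P$ by ``coning off'' a suitable map defined on $\Sigma_xX$. Since $\dim(x)$ equals the geometric dimension of $T_xX=C(\Sigma_xX)$ and the geometric dimension of a Euclidean cone is one more than that of its base, the space of directions $\Sigma_xX$ has geometric dimension $n-1$. Assuming we have a surjective $1$-Lipschitz map $q\colon\Sigma_xX\to\mathbb S^{n-1}$, I would use the identification $\mathbb R^n=C(\mathbb S^{n-1})$ of Remark \ref{cono_sfera} and set $P(v,t)=(q(v),t)$, $P(O)=0$. Then (a) is immediate, (c) follows from \eqref{euclidean-cone-formula} with $s=0$, and (b) follows from (c) together with the surjectivity of $q$. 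Finally $P$ is $1$-Lipschitz: both $\Sigma_xX$ and $\mathbb S^{n-1}$ have diameter $\le\pi$, so the angles in the two cone formulas are genuine distances, and since $q$ is $1$-Lipschitz while $\cos$ is decreasing on $[0,\pi]$, comparing \eqref{euclidean-cone-formula} for $T_xX$ with the analogous formula for $\mathbb R^n$ yields $d_{\mathbb R^n}(P(V),P(W))\le d_T(V,W)$.

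\emph{Construction of $q$ by induction on the dimension.}
It therefore suffices to prove: every compact, geodesically complete \textup{CAT}$(1)$ space $\Sigma$ which occurs as a space of directions admits a surjective $1$-Lipschitz map onto $\mathbb S^{\dim\Sigma}$. I would argue by induction on $m=\dim\Sigma$. For $m=0$, geodesic completeness forces at least two directions at each point, so $\Sigma$ is a finite set of at least two points; any two distinct points are at distance exactly $\pi$ (a distance in $(0,\pi)$ would be realized by a non-constant geodesic, contradicting $\dim\Sigma=0$, and a distance $>\pi$ is impossible in a space of directions), and mapping two of them onto the two points of $\mathbb S^0$ and the rest arbitrarily gives the claim. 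For $m\ge 1$, I would pick a point $\xi$ in the top-dimensional regular set $\mathrm{Reg}^m(\Sigma)$, non-empty because $\dim\Sigma=m$; then $\Sigma_\xi\Sigma$ is again a compact, geodesically complete \textup{CAT}$(1)$ space, of dimension $m-1$, so by induction there is a surjective $1$-Lipschitz $q'\colon\Sigma_\xi\Sigma\to\mathbb S^{m-1}$.

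\emph{The suspension map.}
I would then define an ``exponential-type'' map $\Phi\colon\Sigma\to S(\Sigma_\xi\Sigma)$ into the spherical suspension, sending $\zeta$ with $d(\xi,\zeta)<\pi$ to $(u_\zeta,\,d(\xi,\zeta))$, where $u_\zeta\in\Sigma_\xi\Sigma$ is the initial direction of the geodesic from $\xi$ to $\zeta$ (unique by \textup{CAT}$(1)$ since $d(\xi,\zeta)<\pi$), and sending every $\zeta$ with $d(\xi,\zeta)=\pi$ to the south pole. The points to verify are: $\Phi$ is well defined and continuous (at the poles the suspension collapses the fibre, so the direction is irrelevant there); $\Phi$ is $1$-Lipschitz, which is exactly the fact that in a \textup{CAT}$(1)$ space the angle of a triangle at $\xi$ is at most its comparison angle, rewritten through the suspension distance formula and the monotonicity of $\cos$ on $[0,\pi]$; and $\Phi$ is surjective, because by geodesic completeness every direction at $\xi$ is the initial velocity of a geodesic of length $\pi$, whose $\Phi$-image is the whole corresponding meridian of $S(\Sigma_\xi\Sigma)$. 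The spherical suspension $S(q')\colon S(\Sigma_\xi\Sigma)\to S(\mathbb S^{m-1})=\mathbb S^m$ is again surjective and $1$-Lipschitz by the same $\cos$-monotonicity computation, so $q=S(q')\circ\Phi$ completes the induction; coning off $q$ as in the first paragraph gives $P$.

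\emph{Main obstacle.}
I expect the real work to be the careful treatment of $\Phi$: its well-definedness and continuity at points where the geodesic to $\xi$ is not unique, and extracting the $1$-Lipschitz inequality in the correct direction from the \textup{CAT}$(1)$ angle comparison; together with the structural inputs ensuring that $\Sigma_xX$ and all its iterated spaces of directions are bona fide \textup{GCBA}$^1$-spaces, so that the density statements for $\mathrm{Reg}^m$ apply and the geometric dimension drops exactly by one under $\Sigma_\xi(\cdot)$. The coning and suspension steps themselves are purely formal once these are in place.
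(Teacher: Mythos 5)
Your cone-off step is exactly the paper's: the paper also sets $P(v,t)=(P'(v),t)$ and deduces (a),(b),(c) and the Lipschitz bound from \eqref{euclidean-cone-formula} and monotonicity of the cosine. The real difference is upstream: the paper obtains the $1$-Lipschitz surjection $P'\colon\Sigma_xX\to\mathbb S^{n-1}$ as a black box, by quoting Proposition 11.3 of \cite{LN19} (using only that $\Sigma_xX$ is a compact, geodesically complete \textup{CAT}$(1)$ space of dimension $n-1$), whereas you re-prove that input by induction on dimension via the suspension map $\Phi\colon\Sigma\to S(\Sigma_\xi\Sigma)$ and $S(q')$. This is essentially the argument underlying the cited result, so what your route buys is self-containedness (no appeal to the strainer machinery of \cite{LN19}) at the cost of carrying the structural facts through the induction: that $\Sigma_xX$ and its iterated spaces of directions are again compact, geodesically complete, \textup{CAT}$(1)$ (hence \textup{GCBA}$^1$), that every element of $\Sigma_\xi\Sigma$ is realized by a geodesic, and that the dimension drops by exactly one at a point of top dimension (regularity of $\xi$ is not actually needed). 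Two details you flag do need care but are not gaps: (i) the $1$-Lipschitz bound for $\Phi$ via angle comparison only applies verbatim when the triangle $(\xi,\zeta,\zeta')$ has perimeter $<2\pi$; in the remaining case one has $d(\zeta,\zeta')\ge 2\pi-t-t'$ by the triangle inequality on distances $\le\pi$, while the trivial bound $\cos\angle\ge-1$ gives $d_S(\Phi\zeta,\Phi\zeta')\le 2\pi-t-t'$, so the inequality still holds; (ii) surjectivity of $\Phi$ (including hitting the south pole, i.e.\ the existence of a point at distance exactly $\pi$ from $\xi$) uses that in a \textup{CAT}$(1)$ space a local geodesic of length at most $\pi=D_1$ is minimizing, so the extension provided by geodesic completeness sweeps out the whole meridian; with these points spelled out your induction closes and yields the same map $P$ as the paper.
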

\begin{proof}
	As the point $x$ has dimension $n$ then the geometric dimension of $T_x X$ is $n$. This implies that $\Sigma_x X$ is a space of dimension $n-1$ satisfying the assumptions of Proposition 11.3 of \cite{LN19}. So there exists a $1$-Lipschitz surjective map $P'\colon \Sigma_x X \to \mathbb{S}^{n-1}$. We extend the map $P'$ to a map $P$ over the tangent cones by sending the point $V=(v,t)$  to the point $(P'(v),t)$. \linebreak 
	It is immediate to check that $P$ is surjective and that $P(0)=0$. \\
	Moreover the tangent cone over $\mathbb{S}^{n-1}$ is $\mathbb{R}^n$, as said in Example \ref{cono_sfera}; therefore the equality $P(\overline{B}(O,R)) = \overline{B}(0,R)$ follows directly from \eqref{euclidean-cone-formula}. 
	Always by \eqref{euclidean-cone-formula} we have $d_T(V,O)=d_{\mathbb{R}^n}(P(V),0)$ for any $V\in C_xX$. Finally the $1$-Lipschitz property of $P$ follows from the same property of $P'$ and from the properties of the cosine function.
\end{proof}
Combining this result with the properties of the logarithmic map explained in Section \ref{subsec-log} we deduce  the following: 
\begin{prop}
	\label{PSI}
	Let $X$ be a complete, geodesic, \textup{GCBA} metric space and $x\in X$ be a point of dimension $n$. 
	Then there exists a $2$-Lipschitz, surjective map $\Psi_x \colon B(x,\rho_\textup{ac}(x)) \to \mathbb{R}^n$ such that 
	\begin{itemize}
		\item[(a)] $\Psi_x(x)=0$;
		\item[(b)] $\Psi_x(\overline{B}(x,r)) = \overline{B}(0,r)$ for any $0<r<\rho_\textup{ac}(x)$;
		\item[(c)] $d(x,y)=d(0,\Psi_x(y))$ for any $y\in B(x,\rho_\textup{ac}(x))$.
	\end{itemize}
\end{prop}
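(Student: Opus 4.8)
The plan is to take $\Psi_x$ to be the composition $\Psi_x = P \circ \log_x$, where $P \colon T_xX \to \mathbb{R}^n$ is the map furnished by the preceding proposition (available because $x$ has dimension $n$, so that $T_xX$ has geometric dimension $n$) and $\log_x \colon B(x,\rho_\textup{ac}(x)) \to T_xX$ is the logarithmic map at $x$ introduced in Section \ref{subsec-log}, whose basic properties are collected in Lemma \ref{logarithmic-map-surjective}. With this definition everything reduces to transporting the three properties of $P$ along the three properties of $\log_x$, so no new geometric input is needed.

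Concretely, the verification would go as follows. The Lipschitz bound is immediate: $\log_x$ is $2$-Lipschitz on $B(x,\rho_\textup{ac}(x))$ by Lemma \ref{logarithmic-map-surjective}(c) and $P$ is $1$-Lipschitz, hence $\Psi_x$ is $2$-Lipschitz. For (a), since $\log_x(x) = ([x,x],0) = O$ is the vertex of the cone, $\Psi_x(x) = P(O) = 0$. For (b), fix $0 < r < \rho_\textup{ac}(x)$: Lemma \ref{logarithmic-map-surjective}(a) gives $\log_x(\overline{B}(x,r)) = \overline{B}(O,r)$, and property (b) of $P$ then gives $\Psi_x(\overline{B}(x,r)) = P(\overline{B}(O,r)) = \overline{B}(0,r)$. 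For (c), given $y \in B(x,\rho_\textup{ac}(x))$, property (c) of $P$ together with Lemma \ref{logarithmic-map-surjective}(b) yields $d_{\mathbb{R}^n}(0,\Psi_x(y)) = d_T(O,\log_x(y)) = d(x,y)$. Taking the union of the identities in (b) over all admissible $r$ shows that the image of $\Psi_x$ is the ball $B(0,\rho_\textup{ac}(x))$, so $\Psi_x$ maps onto every $\overline{B}(0,r)$ with $r<\rho_\textup{ac}(x)$, which is the form in which surjectivity will be used in the proof of Theorem \ref{boundbelowvolume}.

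There is no serious obstacle: the substance lies entirely in the two ingredients, namely the existence of the radius-preserving $1$-Lipschitz projection $P$ (which rests on Proposition 11.3 of \cite{LN19} applied to the $(n-1)$-dimensional link $\Sigma_xX$) and the surjectivity and $2$-Lipschitz properties of $\log_x$ (which come from local geodesic completeness and the contraction-map estimate of Lemma \ref{lemma-lip}). The only mild points of care are identifying $\log_x(x)$ with the vertex $O$ and checking that the radius-preserving property of $P$ lines up with that of $\log_x$, both of which are direct consequences of formula \eqref{euclidean-cone-formula} and the definition of the logarithmic map.
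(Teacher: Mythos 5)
Your proposal is correct and follows exactly the paper's argument: the paper also simply sets $\Psi_x = P \circ \log_x$ and asserts that the stated properties follow, which you have just spelled out in more detail (including the sensible reading of "surjectivity" via the equalities $\Psi_x(\overline{B}(x,r)) = \overline{B}(0,r)$, which is the form actually used later).
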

\begin{proof}
	Define $\Psi_x = P \circ \log_x$, where $P$ is the map of the previous proposition and $\log_x$ is the logarithmic map at $x$. Then $\Psi$ satisfies the thesis.
\end{proof}
Using the map $\Psi_x$ we can transport metric and measure properties from $\mathbb{R}^n$ to $X$. We denote by $\omega_n$ the $\mathcal{H}^n$-volume of the ball of radius $1$ of $\mathbb{R}^n$.
\begin{cor}\label{boundbelowvolume2}
	Let $X$ be a complete, geodesic, \textup{GCBA} metric space and $x\in X$ be a point of dimension $n$. Then
	\vspace{-3mm}
	
	$$\mathcal{H}^n(B(x,r))\geq \frac{1}{2^n}\omega_n r^n$$
	for any $0<r<\rho_\textup{ac}(x)$.
\end{cor}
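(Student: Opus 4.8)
The plan is to transport the Lebesgue measure of a Euclidean ball back to $X$ through the map $\Psi_x\colon B(x,\rho_\textup{ac}(x))\to\mathbb{R}^n$ furnished by Proposition \ref{PSI}. The one external fact I would invoke is the elementary behaviour of Hausdorff measure under Lipschitz maps: if $f$ is $\lambda$-Lipschitz then $\mathcal{H}^n(f(S))\leq\lambda^n\,\mathcal{H}^n(S)$ for every set $S$, which follows at once from the definition of $\mathcal{H}^n$, since any cover of $S$ by sets of small diameter is carried to a cover of $f(S)$ by sets whose diameters have grown by a factor at most $\lambda$. Since $\Psi_x$ is $2$-Lipschitz, this gives $\mathcal{H}^n(\Psi_x(B(x,r)))\leq 2^n\,\mathcal{H}^n(B(x,r))$.

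The second step is to identify the image $\Psi_x(B(x,r))$ precisely. By property (c) of Proposition \ref{PSI} we have $d(0,\Psi_x(y))=d(x,y)$, so $\Psi_x$ maps $B(x,r)$ into the open ball $B(0,r)\subset\mathbb{R}^n$; conversely, given $z\in B(0,r)$, property (b) applied with radius $d(0,z)<r$ produces a point $y\in\overline{B}(x,d(0,z))\subseteq B(x,r)$ with $\Psi_x(y)=z$. Hence $\Psi_x(B(x,r))=B(0,r)$ exactly (closed balls would serve just as well, since $\mathcal{H}^n(\overline{B}(0,r))=\mathcal{H}^n(B(0,r))$).

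Finally, recalling that on $\mathbb{R}^n$ the measure $\mathcal{H}^n$ coincides with Lebesgue measure, so that $\mathcal{H}^n(B(0,r))=\omega_n r^n$, combining the two inequalities above yields
$$\omega_n r^n=\mathcal{H}^n(B(0,r))=\mathcal{H}^n(\Psi_x(B(x,r)))\leq 2^n\,\mathcal{H}^n(B(x,r)),$$
which is the claimed estimate $\mathcal{H}^n(B(x,r))\geq \tfrac{1}{2^n}\omega_n r^n$. I do not expect any serious obstacle: all the substantive work is already contained in the construction of $\Psi_x$, and the only points requiring a moment's care are the scaling of Hausdorff measure under the $2$-Lipschitz map and the surjectivity of $\Psi_x$ onto the full (open) ball $B(0,r)$ rather than merely a closed sub-ball.
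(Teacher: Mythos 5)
Your argument is correct and is exactly the route the paper takes: the paper's proof consists of the single remark that the estimate "follows directly from the properties of the map $\Psi_x$ and the behaviour of the Hausdorff measure under Lipschitz maps," and your write-up simply makes those two ingredients (surjectivity of $\Psi_x$ onto the ball via Proposition \ref{PSI}(b)--(c), and the factor $2^n$ from the $2$-Lipschitz bound) explicit. No gaps.
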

\begin{proof}
	It follows directly from the properties of the map $\Psi_x$ and the behaviour of the Hausdorff measure under Lipschitz maps.
\end{proof}

\begin{proof}[Proof of Theorem \ref{boundbelowvolume}]
	We fix $x\in X$, $0<r <\min \lbrace 1,\rho_\text{ac}(x) \rbrace$ and $\varepsilon = \frac{r}{2n_0}$. We call $d_0$ the dimension of $x$. 
	We look for the biggest ball around $x$ of Hausdorff dimension exactly $d_0$. In order to do that we define
	$$r_1 = \sup\lbrace \rho>0 \text{ s.t. HD}(B(x,\rho)) = d_0\rbrace.$$
	(where  HD denotes the Hausdorff dimension). Notice that HD$(B(x,\rho))$ is monotone increasing in $\rho$.
	If $r_1\geq r$ we stop and we redefine $r_1 = r$. \linebreak
	Otherwise there exists a point $x_1$ such that $d(x,x_1)\leq r_1 + \varepsilon$ and the dimension of $x_1$ is $d_1 > d_0$, by definition of $r_1$. Now we look for the biggest ball around $x_1$ of Hausdorff dimension $d_1$. We define
	$$r_2 = \sup\lbrace \rho>0 \text{ s.t. HD}(B(x_1,\rho)) = d_1\rbrace.$$
	Arguing as before, if $r_1 + \varepsilon + r_2\geq r$ we stop the algorithm and we redefine $r_2$ as $r = r_1 + \varepsilon + r_2$. Otherwise we can find again a point $x_2$ such that $d(x_2,x_1)\leq r_2 + \varepsilon$ and whose dimension is $d_2>d_1$.
	We continue the algorithm until $r_1+ \varepsilon +\ldots + r_k = r$. It happens in at most $n_0$ steps. At the end we have points $x = x_0, x_1, \ldots x_k$ with $k\leq n_0$ such that $d(x_i,x_j)\leq r_j + \varepsilon$, $r_1+ \varepsilon +\ldots +r_k = r$ and such that the dimension of $x_j$ is $d_j$, with $d_i> d_j$ if $i>j$.
	We observe that the $d_j$-dimensional parts of the balls $B(x_j,r_j)$, denoted by $B^{d_j}(x_j,r_j)$, are disjoint and contained in $\overline{B}(x,r)$, by construction. Moreover the open ball $B(x_j,r_j)$ has no point of dimension greater than $d_j$.
	So 
	$$\mu_X(\overline{B}(x,r)) = \sum_{k=0}^{n_0}\mathcal{H}^k\llcorner \overline{B}^k(x,r) \geq \sum_{j}\mathcal{H}^{d_j} (B^{d_j}(x_j,r_j)).$$
	The last step is to estimate the last term of the sum. 
	Since $k\leq n_0$ and $r_1+ \varepsilon + \ldots +r_k = r$ then $r_1+\ldots + r_k = r - (k-1)\varepsilon \geq \frac{r}{2}$. Hence there exists an index $j$ such that $r_j\geq \frac{r}{2n_0}$.
	By definition any point of the ball $B(x_j,r_j)$ is of dimension $\leq d_j$. Hence by the properties of the Hausdorff measure we get
	$$\mathcal{H}^{d_j}(B^{d_j}(x_j,r_j)) =  \mathcal{H}^{d_j}(B(x_j,r_j)) \geq \frac{1}{2^{d_j}}\omega_{d_j} r_j^{d_j} \geq c_{n_0} r^{n_0},$$
	where the first inequality follows directly from the previous corollary and the last one holds since $r\leq 1$. So we can choose 
	$$c_{n_0} = \bigg(\frac{1}{4n_0}\bigg)^{n_0}\min_{k=0,\ldots,n_0}\omega_k$$
	that is a constant depending only on $n_0$. This concludes the proof.
\end{proof}

\vspace{3mm}
\section{Packing in GCBA-spaces}\label{sec-packingcovering}
Let $Y \subset X$ be any subset of a  metric space:\\
-- a subset $S$ of $Y$ is called  {\em $r$-dense}   if   $\forall y \in Y$  $\exists z\in S$ such that $d(y,z)\leq r$; \\
-- a subset $S$ of $Y$ is called  {\em $r$-separated} if   $\forall y,z \in S$ it holds $d(y,z)> r$.\\
The {\em $r$-packing number of $Y$} is the maximal cardinality of a $2r$-separated subset of $Y$ and   is denoted by $\text{Pack}(Y,r)$. 
The {\em $r$-covering number of $Y$}  is the minimal cardinality of a $r$-dense subset of $Y$ and   is denoted by $\text{Cov}(Y,r)$. These two quantities are classically related by the following relations:
\begin{equation}
	\label{packing-covering}
	\text{Pack}(Y,2r) \leq \text{Cov}(Y,2r) \leq \text{Pack}(Y,r).
\end{equation}
%
On a given space $X$  the numbers Pack$(\overline{B}(x,R),r)$ and Cov$(\overline{B}(x,R),r)$,  for $0 < r \leq R$,  depend in general on the chosen point $x$. We are interested in the case where these numbers can be bounded independently of $x\in X$. 
Therefore consider the functions
$$\text{Pack}(R,r)=\sup_{x\in X}\text{Pack}(\overline{B}(x,R),r), \qquad \text{Cov}(R,r)=\sup_{x\in X}\text{Cov}(\overline{B}(x,R),r)$$
called, respectively, the {\em packing and covering functions} of $X$.
They take values on $[0,+\infty]$; moreover, as an immediate consequence of \eqref{packing-covering}, we have
\begin{equation}
	\label{packing-covering-2}
	\text{Pack}(R,2r)\leq \text{Cov}(R,2r)\leq \text{Pack}(R,r).
\end{equation}

\begin{defin}\label{def-packing}
	Let $X$ be a metric space and let $C_0,P_0,r_0>0$.\\
	We say that {\em $X$ is $P_0$-packed at scale $r_0$} if Pack$(3r_0,r_0)\leq P_0$, that is every ball of radius $3r_0$ contains no more than $P_0$ points that are $2r_0$-separated.\\
	Analogously we say that $X$ is {\em $C_0$-covered at scale $r_0$} if Cov$(3r_0,r_0)\leq C_0$, i.e. every ball of radius $3 r_0$ can be covered by at most $C_0$ balls of radius $r_0$.
\end{defin}

The next theorem affirms that the packing functions can be well controlled  for complete, locally CAT$(\kappa)$-spaces which are locally geodesically complete (notice that no local compactness is assumed, since it will {\em follow} from the packing condition):

\begin{theo}
	\label{CAT-packing}
	Let $X$ be a complete, locally \textup{CAT}$(\kappa)$, locally geodesically complete, geodesic metric space with $\rho_{\textup{ac}}(X) > 0$. Suppose that $X$ satisfies 
	$$\textup{Pack}\left(3r_0,\frac{r_0}{2}\right) \leq P_0  \hspace{5mm} \text{ for } \hspace{1mm}   0<r_0 < \rho_\textup{ac}(X)/3.$$ 
	Then  $X$ is proper and geodesically complete; so it is a \textup{GCBA} metric space. \linebreak
	Moreover for any $0<r\leq R$ it holds:
	$$\textup{Pack}(R,r)\leq P_0(1+P_0)^{\frac{R}{r} - 1} \text{, if } r\leq r_0;$$
	$$\textup{Pack}(R,r)\leq P_0(1+P_0)^{\frac{R}{r_0} - 1} \text{, if } r > r_0.$$
\end{theo}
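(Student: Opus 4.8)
The plan is to split the statement into two parts: first deduce that $X$ is proper and geodesically complete (hence GCBA), and second establish the two packing inequalities. For properness, the key point is that the hypothesis $\textup{Pack}(3r_0, r_0/2) \le P_0$ together with \eqref{packing-covering-2} gives $\textup{Cov}(3r_0, r_0) \le \textup{Pack}(3r_0, r_0/2) \le P_0$, so every ball of radius $3r_0$ is covered by at most $P_0$ balls of radius $r_0$. Iterating this covering bound (as in the proof of the second packing inequality below) one sees that every closed ball $\overline B(x,R)$ is totally bounded. Since $X$ is complete, totally bounded closed balls are compact, so $X$ is proper; being complete and locally geodesically complete, it is then geodesically complete, and together with the local CAT$(\kappa)$ assumption and separability (which follows from properness) it is GCBA.

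For the packing inequalities, I would first prove the $r \le r_0$ case by a doubling-type iteration on the scale, then deduce the $r > r_0$ case as an easy corollary. The core estimate is a local doubling of the packing number across a fixed scale ratio. Concretely: I claim that for every $s$ with $0 < s \le r_0$ and every $x$, a ball $\overline B(x, 3s)$ contains at most $P_0$ points that are $2s$-separated; equivalently $\textup{Pack}(3s, s) \le P_0$ for all $s \le r_0$. This is the crucial step and it is exactly where curvature upper bound and geodesic completeness enter: for $s = r_0$ it is the hypothesis (via \eqref{packing-covering-2}), and to push it down to smaller scales one uses the contraction map $\varphi^{3r_0}_{3s}\colon \overline B(x, 3r_0) \to \overline B(x, 3s)$, which by Lemma \ref{lemma-lip} is $\tfrac{2s}{r_0}$-Lipschitz and (by geodesic completeness within the almost-convexity radius) surjective. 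Given a maximal $2s$-separated set $\{y_1,\dots,y_N\} \subset \overline B(x,3s)$ with $N = \textup{Pack}(3s,s)$, pull back preimages $z_i$ with $\varphi^{3r_0}_{3s}(z_i) = y_i$; the points $z_i$ need not be separated, so instead one covers $\overline B(x,3r_0)$ by the controlled number of balls coming from $\textup{Cov}(3r_0, r_0) \le P_0$, uses that the contraction sends a radius-$r_0$ ball to a ball of radius $\le \tfrac{2s}{r_0}\cdot r_0 = 2s$, hence to a set of diameter $< 4s$ containing at most one point of a $4s$-separated set — and one runs the whole argument with separation constant adjusted by a fixed factor, absorbing the loss into the passage $\textup{Pack}\!\left(3r_0, \tfrac{r_0}{2}\right) \le P_0$ (which is why the hypothesis is stated at scale $r_0/2$ rather than $r_0$). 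This yields $\textup{Pack}(3s, s) \le P_0$ for all $0 < s \le r_0$, i.e. every ball of radius $3s$ can be covered by at most $1+P_0$ balls of radius $s$.

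Granting the uniform local bound $\textup{Cov}(3s,s) \le 1+P_0$ for all $s \le r_0$, the global inequality follows by a standard chaining argument on annuli. For $r \le r_0$ and $R \ge r$, cover $\overline B(x,R)$ first by $\le 1+P_0$ balls of radius $R/3$… more efficiently: one shows by induction on $\lceil R/r\rceil$ that any ball of radius $R$ is covered by $\le (1+P_0)^{R/r - 1}$ balls of radius $r$ — split the ball of radius $R$ into one of radius $3r$ (giving the base constant $P_0$) together with nested shells, at each step replacing a radius-$3s$ ball by $1+P_0$ radius-$s$ balls — and since $\textup{Pack}(R,r) \le \textup{Cov}(R,r)$ by \eqref{packing-covering} this gives $\textup{Pack}(R,r) \le P_0(1+P_0)^{R/r - 1}$. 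For $r > r_0$, any $2r$-separated set is a fortiori $2r_0$-separated, so $\textup{Pack}(R,r) \le \textup{Pack}(R,r_0) \le P_0(1+P_0)^{R/r_0 - 1}$, completing the proof. The main obstacle is the scale-descent step: bookkeeping the separation constants through the contraction map so that the factor $2$ in its Lipschitz constant is exactly compensated by the gap between scale $r_0$ and scale $r_0/2$ in the hypothesis.
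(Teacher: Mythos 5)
Your overall architecture (scale descent via the contraction maps, then chaining to large scales, then Hopf--Rinow for properness) is the same as the paper's, but the crucial scale-descent step is exactly where your argument breaks down -- and precisely at the point where you abandon the correct idea. You write that for a $2s$-separated set $\{y_1,\dots,y_N\}\subset\overline B(x,3s)$ the preimages $z_i$ under $\varphi^{3r_0}_{3s}$ ``need not be separated''; in fact they are: since the contraction is $\tfrac{2s}{r_0}$-Lipschitz, the inequality $2s < d(y_i,y_j)\le \tfrac{2s}{r_0}\,d(z_i,z_j)$ forces $d(z_i,z_j)>r_0$, so $\{z_1,\dots,z_N\}$ is an $r_0$-separated subset of $\overline B(x,3r_0)$ and the hypothesis $\mathrm{Pack}(3r_0,\tfrac{r_0}{2})\le P_0$ (which bounds the number of $r_0$-separated points) gives $N\le P_0$ at once. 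This is the paper's Lemma \ref{packingextension}, and it is the true role of the scale $r_0/2$ in the hypothesis. Your substitute -- covering $\overline B(x,3r_0)$ by $P_0$ balls of radius $r_0$ and pushing the cover forward through the contraction -- only produces pieces of diameter $4s$, hence only bounds the number of $4s$-separated points, i.e. $\mathrm{Pack}(3s,2s)\le P_0$; the promised ``absorption of the loss'' would require covering $\overline B(x,3r_0)$ by balls of radius $r_0/2$, i.e. a bound on $\mathrm{Pack}(3r_0,\tfrac{r_0}{4})$, which the hypothesis does not provide. As written, the factor-of-two loss in the separation propagates and you never recover $\mathrm{Pack}(3s,s)\le P_0$, which is what both the rest of your argument and the stated constants require.

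Two further points. First, your properness argument is stated in the wrong order: iterating the covering bound at the fixed scale $r_0$ to larger radii does not give total boundedness of balls; for that you need the packing bound at arbitrarily small scales, i.e. the scale descent just discussed. Once that is in place, small closed balls are complete and totally bounded, hence compact, and Hopf--Rinow applies as you say (this is how the paper proceeds). Second, the large-scale chaining is too loose: the step ``replace a radius-$3s$ ball by $1+P_0$ radius-$s$ balls'' is only available for $s\le r_0$ (it needs the descent), so it cannot be iterated starting from a radius $R\gg r_0$; the paper instead proves Lemma \ref{packingbigscales} by induction over annuli of width $r_0$, taking a maximal $2r_0$-separated net on the sphere $S(x,R-r_0)$ and using the geodesic assumption to project points of the annulus $A(x,R-r_0,R)$ onto that sphere, which is what yields the exact bound $P_0(1+P_0)^{\frac{R}{r_0}-1}$, and then $P_0(1+P_0)^{\frac{R}{r}-1}$ for $r\le r_0$ by running the same lemma at scale $r$. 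Your reduction of the case $r>r_0$ to $r=r_0$ is fine, up to the trivial case $R<3r_0$ which should be mentioned.
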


We want to remark that, in general, a control of the packing function at some fixed scale does not imply any control at smaller scales, as shown in the following example.
\begin{ex}
	\label{example-packing}
	Let $\mathbb{D}^n \subset \mathbb{R}^n$ be the closed Euclidean disk of radius $1$. \linebreak
	Let $X_n$ be the space obtained gluing a Euclidean ray $[0,+\infty)$ to a point of the boundary of $\mathbb{D}^n$. Fix $r_0 = 1$. Any $2r_0$-separated subset $S$ of $X_n$ contains at most one point of $\mathbb{D}^n$. Hence Pack$(3r_0,r_0)\leq 2$, in other words $X_n$ is $2$-packed at scale $1$ for every $n$. However at smaller scales, for example at scale $r = \frac{1}{4}$, we can easily show that Pack$(3r,r) \to +\infty$ when $n\to+\infty$. \linebreak
	Notice that the spaces $X_n$ in this example are complete and CAT(0) but they fail to be {\em geodesically complete}.
	
\end{ex}
We also remark  that a packing condition to scales bigger than the almost-convexity radius does not propagate to smaller scales: 
\begin{ex}
	Let $X_n$ be the graph with one vertex and $n$ loops of length $1$. \linebreak
	For any $n$ we glue an half-line to the vertex obtaining a complete, GCBA, geodesic metric space $Y_n$. As in Example \ref{example-packing} it is easy to show that at big scales the spaces $Y_n$ satisfy a uniform packing condition, while at small scales they do not.
\end{ex}

The proof of Theorem \ref{CAT-packing} is based on some preliminary lemmas.
\begin{lemma}
	\label{packingextension}
	Let $X$ be a space satisfying   the assumptions of Theorem \ref{CAT-packing}. \linebreak
	Then $X$ is $P_0$-packed at scale $r$ for any $r\leq r_0$.
\end{lemma}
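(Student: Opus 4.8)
The goal is to prove $\textup{Pack}(3r,r)\le P_0$ for every $0<r\le r_0$. Unwinding the definition of the packing function, this amounts to the following assertion: for an arbitrary $x\in X$ and arbitrary points $y_1,\dots,y_N\in\overline B(x,3r)$ with $d(y_i,y_j)>2r$ whenever $i\ne j$, one must show $N\le P_0$. The plan is to transport this $2r$-separated configuration, by means of a contraction map, into the \emph{fixed-scale} ball $\overline B(x,3r_0)$, where the standing hypothesis $\textup{Pack}(3r_0,r_0/2)\le P_0$ of Theorem~\ref{CAT-packing} can be invoked. The mechanism is that contraction maps within the almost-convexity radius are $\frac{2\rho}{R}$-Lipschitz and surjective (cf. Lemma~\ref{lemma-lip} and the subsequent discussion in Section~\ref{sec-preliminaries}); the Lipschitz factor $2$ is precisely what converts the $r_0/2$ appearing in the hypothesis into the $r_0$-separation we will manufacture.

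In detail: since $0<r_0<\rho_{\textup{ac}}(X)/3$ we have $3r\le 3r_0<\rho_{\textup{ac}}(X)\le\rho_{\textup{ac}}(x)$, so the contraction map $\varphi:=\varphi^{3r_0}_{3r}\colon\overline B(x,3r_0)\to\overline B(x,3r)$ is well defined — uniqueness of geodesics inside the almost-convexity radius — and is $\frac{2r}{r_0}$-Lipschitz; moreover, $X$ being complete and locally geodesically complete, it is geodesically complete, and hence $\varphi$ is surjective. For each $i$ choose a preimage $\tilde y_i\in\overline B(x,3r_0)$ with $\varphi(\tilde y_i)=y_i$; these are pairwise distinct because their images are. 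Then for $i\ne j$
\[
 2r<d(y_i,y_j)=d\bigl(\varphi(\tilde y_i),\varphi(\tilde y_j)\bigr)\le\frac{2r}{r_0}\,d(\tilde y_i,\tilde y_j),
\]
whence $d(\tilde y_i,\tilde y_j)>r_0$. Thus $\{\tilde y_1,\dots,\tilde y_N\}$ is an $r_0$-separated subset of $\overline B(x,3r_0)$, and therefore $N\le\textup{Pack}\bigl(\overline B(x,3r_0),r_0/2\bigr)\le\textup{Pack}(3r_0,r_0/2)\le P_0$. Since $x$ was arbitrary, this yields $\textup{Pack}(3r,r)\le P_0$.

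I do not expect a genuine obstacle in this argument; the points that require attention are (i) invoking surjectivity and the $\frac{2\rho}{R}$-Lipschitz bound for contraction maps \emph{up to the almost-convexity radius}, rather than merely up to the CAT$(\kappa)$-radius — this is exactly what is recorded in Section~\ref{sec-preliminaries} — and (ii) noting that only completeness, local geodesic completeness and the geodesic hypothesis on $X$ enter the proof, so properness of $X$ is not needed for this lemma (it will instead be obtained afterwards, as a consequence, in the proof of Theorem~\ref{CAT-packing}).
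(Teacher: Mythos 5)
Your argument is correct and is essentially the paper's own proof: one pulls back a $2r$-separated set in $\overline B(x,3r)$ through the surjective, $\tfrac{2r}{r_0}$-Lipschitz contraction map $\varphi^{3r_0}_{3r}$ to obtain an $r_0$-separated set in $\overline B(x,3r_0)$, and then invokes $\textup{Pack}(3r_0,r_0/2)\le P_0$. Your additional remarks on well-definedness and surjectivity of the contraction within the almost-convexity radius, and on properness not being needed here, are consistent with the paper's setup.
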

\begin{proof}
	We fix $x\in X$ and $r\leq r_0$. We take a $2r$-separated subset $\lbrace x_1,\ldots,x_N\rbrace$ of $\overline{B}(x,3r)$. We consider the contraction map $\varphi^{3r_0}_{3r}$ which is surjective and $\frac{2r}{r_0}$-Lipschitz. For any $i$ we fix a preimage $y_i$ of $x_i$ under $\varphi^{3r_0}_{3r}$. We have \linebreak
 $2r < d(x_i,x_j) \leq \frac{2r}{r_0}d(y_i,y_j)$  for any $i\neq j$. This means that the set $\lbrace y_1,\ldots, y_N\rbrace$ is $r_0$-separated in $\overline{B}(x, 3r_0)$, hence $N\leq P_0$.
\end{proof}

\enlargethispage{6mm}

\begin{cor}
	Let $X$ be as in Theorem \ref{CAT-packing}. Then $X$ is locally compact. 
\end{cor}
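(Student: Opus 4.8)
The plan is to show that for every $x\in X$ the closed ball $\overline{B}(x,r_0)$ is totally bounded; since $X$ is complete and this ball is closed in $X$, it is then complete and totally bounded, hence compact, and as it contains the open ball $B(x,r_0)$ it provides a compact neighbourhood of $x$, so that $X$ is locally compact. The only ingredient I need is Lemma~\ref{packingextension}, by which $X$ is $P_0$-packed at every scale $r\le r_0$.

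The first step is to pass from the packing bound to a covering bound. Fix $y\in X$ and $0<t\le r_0$, and apply Lemma~\ref{packingextension} at scale $s=t/3\le r_0$: every subset of $\overline{B}(y,3s)=\overline{B}(y,t)$ consisting of $\frac{2}{3}t$-separated points has at most $P_0$ elements. Choosing a maximal such subset $\{z_1,\dots,z_m\}$ (which exists thanks to this uniform cardinality bound, and satisfies $m\le P_0$), maximality forces every point of $\overline{B}(y,t)$ to lie within distance $\frac{2}{3}t$ of some $z_i$, and therefore
$$\overline{B}(y,t)\ \subseteq\ \bigcup_{i=1}^{m}\overline{B}\left(z_i,\frac{2}{3}t\right).$$
In words: for every $t\le r_0$, each closed $t$-ball of $X$ is covered by at most $P_0$ closed balls of radius $\frac{2}{3}t$.

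Then I would iterate this, starting from $\overline{B}(x,r_0)$. Setting $t_k=\left(\frac{2}{3}\right)^{k}r_0$ and noting that $t_k\le r_0$ for all $k\ge 0$, the covering step above applies at every stage, and a straightforward induction on $k$ shows that $\overline{B}(x,r_0)$ is covered by at most $P_0^{\,k}$ closed balls of radius $t_k$. Since $t_k\to 0$ as $k\to\infty$, the ball $\overline{B}(x,r_0)$ is totally bounded, which completes the argument as explained above. (The same reasoning shows that every closed ball of radius $\le r_0$ in $X$ is compact.)

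I do not expect any genuine obstacle here. The single point to keep in mind is that the iteration must stay within the range of radii where Lemma~\ref{packingextension} is available, and this is automatic: each ball produced in the covering step has radius $\frac{2}{3}t\le\frac{2}{3}r_0<r_0$, so it can again be covered by the same procedure. One could run the argument verbatim on $\overline{B}(x,3r_0)$ as well, since $3r_0<\rho_{\textup{ac}}(X)$, but $\overline{B}(x,r_0)$ is all that is needed for local compactness.
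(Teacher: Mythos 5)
Your proof is correct and follows essentially the same route as the paper: both deduce from Lemma \ref{packingextension} that a fixed closed ball is totally bounded and then combine this with completeness (closed ball in a complete space) to get compactness, hence local compactness. The only difference is cosmetic: the paper asserts directly that $\varepsilon$-separated subsets of $\overline{B}(x,3r_0)$ are finite for every $\varepsilon>0$, whereas you make the total-boundedness step explicit via the iterated $P_0$-covering of $\overline{B}(x,r_0)$ at radii $(2/3)^k r_0$, which is a perfectly valid (and slightly more detailed) way to fill in the same argument.
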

\begin{proof}
	We fix a point $x\in X$. The ball $\overline{B}(x,3r_0)$ is complete since it is closed and $X$ is complete. Moreover for any $\varepsilon > 0$ the maximal cardinality of a $\varepsilon$-separated subset of $\overline{B}(x,3r_0)$ is finite, hence this ball is totally bounded. We can conclude it is compact.
\end{proof}

 As a consequence, since   $X$ is a locally compact, complete, geodesic metric space, then by Hopf-Rinow theorem it is proper. Moreover since it is complete and locally geodesically complete then it is also geodesically complete. This proves the first assertion of Theorem \ref{CAT-packing}.\\
We will now prove that the $P_0$-packing condition at every scale $r\leq r_0$ implies the announced estimate of   $\textup{Pack}(R,r)$ for every $R$. First, we show:

\begin{lemma}
	\label{packingbigscales}
	Let $X$ be a geodesic metric space that is $P_0$-packed at scale $r_0$. Then for any $R\geq 3r_0$ it holds:
\vspace{-4mm}	
	
	$$\textup{Pack}(R,r_0)\leq P_0(1+P_0)^{\frac{R}{r_0} - 1}.$$
\end{lemma}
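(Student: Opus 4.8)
The plan is to prove the recursive estimate $\textup{Pack}(R,r_0)\leq (1+P_0)\,\textup{Pack}(R-r_0,r_0)$ for every $R\geq 3r_0$, and then iterate it down to the scale $3r_0$, where the $P_0$-packing hypothesis gives $\textup{Pack}(R,r_0)\leq P_0$ directly, since every ball of radius $\leq 3r_0$ lies inside a ball of radius $3r_0$.

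For the recursion, fix $x\in X$ and a finite $2r_0$-separated subset $S=\{x_1,\dots,x_N\}$ of $\overline{B}(x,R)$ with $R\geq 3r_0$; as $\textup{Pack}(\overline{B}(x,R),r_0)$ is the supremum of such $N$, it suffices to bound each $N$. Split $S=S_{\textup{in}}\cup S_{\textup{out}}$ according to whether $d(x,x_i)\leq R-r_0$ or not. Clearly $|S_{\textup{in}}|\leq \textup{Pack}(\overline{B}(x,R-r_0),r_0)$. For $x_i\in S_{\textup{out}}$ I use that $X$ is geodesic: pick a point $x_i'$ on a geodesic $[x,x_i]$ with $d(x,x_i')=R-r_0$, so $x_i'\in\overline{B}(x,R-r_0)$ and $d(x_i,x_i')=d(x,x_i)-(R-r_0)\leq r_0$. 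Now choose a maximal $2r_0$-separated subset $\{w_1,\dots,w_l\}$ of the finite set $\{x_i': x_i\in S_{\textup{out}}\}$; then $l\leq \textup{Pack}(\overline{B}(x,R-r_0),r_0)$, and by maximality each $x_i'$ lies within $2r_0$ of some $w_k$. Since $d(x_i',w_k)\leq 2r_0$ forces $d(x_i,w_k)\leq 3r_0$, each ball $\overline{B}(w_k,3r_0)$ contains at most $P_0$ of the mutually $2r_0$-separated points $x_i$, whence $|S_{\textup{out}}|\leq l\cdot P_0\leq P_0\,\textup{Pack}(\overline{B}(x,R-r_0),r_0)$. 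Adding the two bounds and taking the supremum over $x$ yields the recursion.

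It then remains to iterate, by induction on $\lceil R/r_0\rceil$. The base case $R\leq 3r_0$ is the direct bound above. For $R>3r_0$ apply the recursion once: if $R-r_0\geq 3r_0$ the inductive hypothesis gives $\textup{Pack}(R-r_0,r_0)\leq P_0(1+P_0)^{R/r_0-2}$, hence $\textup{Pack}(R,r_0)\leq P_0(1+P_0)^{R/r_0-1}$; if instead $R-r_0<3r_0$ then $\textup{Pack}(R-r_0,r_0)\leq P_0$, and the resulting bound $(1+P_0)P_0$ is still $\leq P_0(1+P_0)^{R/r_0-1}$ because $R/r_0-1\geq 2$.

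The one point that really needs care is the radial-projection step, and in particular the observation that a ball of radius $2r_0$ may contain up to $P_0$ mutually $2r_0$-separated points rather than a single one — this is exactly what produces the factor $1+P_0$ (rather than something sharper). The handling of the fractional part of $R/r_0$ in the iteration is routine bookkeeping, and finiteness of all the packing numbers involved follows automatically from the induction.
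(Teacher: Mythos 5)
Your proof is correct and follows essentially the same route as the paper's: the paper also splits $\overline{B}(x,R)$ into $\overline{B}(x,R-r_0)$ and the annulus $A(x,R-r_0,R)$, projects annulus points radially along geodesics to distance $R-r_0$, takes a maximal $2r_0$-separated net there (bounded by $\textup{Pack}(R-r_0,r_0)$), covers the outer points by $3r_0$-balls around the net to apply the $P_0$-packing hypothesis, and iterates the resulting factor $(1+P_0)$ per $r_0$-step by induction. The only differences are cosmetic (net on the whole sphere versus on the projected points, and how the induction bookkeeping is organized).
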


\begin{proof}
	We prove the thesis by induction on $k$, where $k$ is the smallest integer such that $R \leq 3r_0 + kr_0$.
	The case $k=0$ clearly holds as for $R = 3r_0$ we have  Pack$(R,r_0) \leq P_0 \leq P_0(1+P_0)^2$. 
	Let now $k \geq 1$ and $R\geq 3r_0$ such that $R \leq 3r_0 + kr_0$.
	We consider the sphere $S(x,R-r_0)$ of points at distance exactly $R- r_0$ from $x$. We observe that $R-r_0 \leq 3r_0 + (k-1)r_0$, so by induction we can find a $2r_0$-separated subset $y_1,\ldots,y_n$ of $S(x,R-r_0)$ of maximal cardinality, where $n\leq P_0(1+P_0)^{\frac{R-r_0}{r_0} -1}$. 
	Moreover 
	\vspace{-4mm}	
	
	$$\bigcup_{i=1}^n \overline{B}(y_i,3r_0) \supset A(x, R - r_0, R).$$ 
	Indeed for any $y\in A(x,R - r_0,R)$ we take a geodesic $[x,y]$ and we call $y'$ the point on the geodesic $[x,y]$ at distance $R-r_0$ from $x$. Then $y\in \overline{B}(y',r_0)$. Moreover there exists $y_i$ such that $d(y',y_i)\leq 2r_0$, because of the maximality of the set $\lbrace y_1,\ldots,y_n \rbrace$. Hence $d(y,y_i)\leq 3r_0$.
	Therefore we get:
	\begin{equation*}
		\begin{aligned}
			\text{Pack}(\overline{B}(x,R),r_0) &\leq \text{Pack}(\overline{B}(x,R - r_0),r_0) + \text{Pack}(A(x,R - r_0,R),r_0) \\
			&\leq\text{Pack}(\overline{B}(x,R - r_0),r_0) + \sum_{i=1}^n\text{Pack}(\overline{B}(y_i,3r_0),r_0).
		\end{aligned}
	\end{equation*}
	Since $\text{Pack}(\overline{B}(y_i,3r_0),r_0)\leq P_0$, we obtain 
	\begin{equation*}
		\begin{aligned}
			\text{Pack}(\overline{B}(x,R),r_0) &\leq \text{Pack}(\overline{B}(x,R - r_0),r_0) + P_0\cdot n\\
			&\leq\text{Pack}(\overline{B}(x,R - r_0),r_0) + P_0\cdot \text{Pack}(\overline{B}(x,R - r_0),r_0)\\
			&\leq (1+P_0)P_0(1+P_0)^{\frac{R - r_0}{r_0} - 1}= P_0(1+P_0)^{\frac{R}{r_0} - 1}.
		\end{aligned}
	\end{equation*}
	
	\vspace{-7mm}	
\end{proof}
\noindent We can now prove Theorem \ref{CAT-packing}.

\begin{proof}[Proof of Theorem \ref{CAT-packing}.]
	We have already shown that $X$ is proper and geodesically complete and that it is  $P_0$-packed at every scale $0<r \leq r_0$.  Therefore,  for these values of $r$,  Lemma \ref{packingbigscales} yields 
	$$\text{Pack}(R,r) \leq P_0(1+P_0)^{\frac{R}{r} - 1}$$
	$ \forall R \geq 3r$; but this also holds for $R\leq 3r$ since then $\text{Pack}(R,r) \leq \text{Pack}(3r,r)$.
	On the other hand if   $r\geq r_0$ the thesis follows directly from Lemma \ref{packingbigscales}.
	Indeed  when $R\geq 3r_0$ then $\text{Pack}(R,r) \leq \text{Pack}(R,r_0)$ and  Lemma \ref{packingbigscales} concludes.  
	If $R < 3r_0$ we get
	$$\text{Pack}(R,r)\leq \text{Pack}(R,r_0) \leq \text{Pack}(3r_0,r_0)\leq P_0$$
	and $P_0(1+P_0)^{\frac{R}{r} - 1} \geq P_0.$
\end{proof}

\vspace{2mm}
We can read this result in terms of the covering functions instead of the packing functions using \eqref{packing-covering-2}.
\begin{cor}
	\label{covering}
	Let $X$ be a complete, locally \textup{CAT}$(\kappa)$, locally geodesically complete, geodesic metric space with $\rho_\textup{ac}(X) > 0$. Suppose that $X$ satisfies  
	\vspace{-2mm}
	
	$$\textup{Cov}\left(3r_0,\frac{r_0}{2}\right) \leq C_0 \hspace{5mm} \text{ for } \hspace{1mm}   r_0 < \rho_\textup{ac}(X)/3 .$$ Then for any $0<r\leq R$ it holds:
	\vspace{-3mm}	
	
	$$\textup{Cov}(R,r)\leq C_0(1+C_0)^{\frac{2R}{r} - 1} \text{, if } r\leq 2r_0;$$
	\vspace{-8mm}
	
	$$\textup{Cov}(R,r)\leq C_0(1+C_0)^{\frac{2R}{r_0} - 1} \text{, if } r > 2r_0.$$
\end{cor}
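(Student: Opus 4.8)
The plan is to deduce Corollary \ref{covering} directly from Theorem \ref{CAT-packing} by passing back and forth between packing and covering functions via the elementary comparison inequalities \eqref{packing-covering-2}; no geometry of CAT$(\kappa)$-spaces beyond what Theorem \ref{CAT-packing} already supplies is needed.

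First I would turn the covering hypothesis into a packing hypothesis: by the left inequality in \eqref{packing-covering-2} we have $\textup{Pack}(3r_0, r_0/2) \le \textup{Cov}(3r_0, r_0/2) \le C_0$ for every $0 < r_0 < \rho_\textup{ac}(X)/3$, so $X$ satisfies the assumptions of Theorem \ref{CAT-packing} with $P_0 = C_0$. Hence $X$ is proper and geodesically complete (thus a GCBA-space), and for all $0 < s \le R$ one has $\textup{Pack}(R,s) \le C_0(1+C_0)^{R/s - 1}$ if $s \le r_0$ and $\textup{Pack}(R,s) \le C_0(1+C_0)^{R/r_0 - 1}$ if $s > r_0$. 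Then I would convert these back into covering estimates by the right inequality in \eqref{packing-covering-2}, which reads $\textup{Cov}(R,r) \le \textup{Pack}(R, r/2)$ for every $r > 0$. If $r \le 2r_0$ then $r/2 \le r_0$, and applying the first bound of Theorem \ref{CAT-packing} at scale $s = r/2$ gives $\textup{Cov}(R,r) \le C_0(1+C_0)^{2R/r - 1}$, the first asserted inequality. If $r > 2r_0$ then $r/2 > r_0$, and the second bound gives $\textup{Cov}(R,r) \le C_0(1+C_0)^{R/r_0 - 1}$, which is at most $C_0(1+C_0)^{2R/r_0 - 1}$ since $1+C_0 \ge 1$ and $R/r_0 \le 2R/r_0$; this is the second asserted inequality.

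I do not expect a genuine obstacle here, since the argument is pure bookkeeping with the comparison inequalities. The one point needing care is the factor of $2$ coming from the scale gap in \eqref{packing-covering-2}: it is exactly this loss that forces the exponents $2R/r$ and $2R/r_0$ rather than $R/r$ and $R/r_0$, and one must check that the derived packing hypothesis sits precisely at the scale $r_0/2$ required by Theorem \ref{CAT-packing}, which it does. The case split on whether $r \le 2r_0$ is there only to decide which of the two branches of Theorem \ref{CAT-packing} to invoke after halving the scale.
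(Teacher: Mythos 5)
Your argument is correct and is essentially the paper's own proof: convert the covering hypothesis into $\textup{Pack}(3r_0,\tfrac{r_0}{2})\le C_0$ via \eqref{packing-covering-2}, apply Theorem \ref{CAT-packing}, and return to coverings through $\textup{Cov}(R,r)\le\textup{Pack}(R,\tfrac r2)$, with the case split at $r=2r_0$. The only (harmless) difference is that you spell out the final relaxation from exponent $R/r_0$ to $2R/r_0$ in the second case, which the paper states directly.
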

\begin{proof}
	By \eqref{packing-covering-2} we have that $X$ satisfies $\textup{Pack}(3r_0,\frac{r_0}{2}) \leq C_0$. Hence we can apply the previous proposition to get:
	\vspace{-3mm}	
	
	$$\text{Cov}(R,r) \leq \text{Pack}\left(R,\frac{r}{2}\right) \leq C_0(1+C_0)^{\frac{2R}{r} - 1}, \hspace{5mm} \text{ if  } \hspace{1mm}  \frac{r}{2} \leq r_0$$  
	\vspace{-8mm}
	
	$$\text{Cov}(R,r) \leq C_0(1+C_0)^{\frac{2R}{r_0} - 1}, \hspace{5mm} \text{ if  } \hspace{1mm}   \frac{r}{2} > r_0.$$
	\vspace{-10mm}
	
\end{proof}

%
%

We are ready to characterize the packing condition in terms of dimension and measure of a GCBA metric space.
\begin{theo}
	\label{char_pack}
	Let $X$ be a complete, geodesic \textup{GCBA}$^\kappa$  metric space with $\rho_\textup{ac}(X) \geq \rho_0 > 0$. The following facts are equivalent.
	\begin{itemize}
			\vspace{1mm}
		\item[(a)] There exist $P_0 > 0$ and $0<r_0< \frac{\rho_0}{3}$ such that $\textup{Pack}(3r_0, \frac{r_0}{2}) \leq P_0$;
			\vspace{1mm}
		\item[(b)] There exist $n_0,V_0,R_0>0$ such that $\textup{dim}(X) \! \leq \! n_0$ and $\mu_X(B(x,R_0)) \!\leq\! V_0$ \linebreak for any $x\in X$;
			\vspace{1mm}
		\item[(c)] There exists a measure $\mu$ on $X$ and there exist two functions $c(r), C(r)$ such that for any $x\in X$ and for any $0<r<\rho_0$:
		\vspace{-4mm}	
		
		$$0< c(r) \leq \mu(B(x,r)) \leq C(r) < +\infty.$$
	\end{itemize}	
	
	\noindent Moreover the set of constants $(n_0,V_0,R_0, \rho_0,\kappa)$ can be expressed only in terms of the set of constants $(P_0,r_0,\rho_0,\kappa)$ and viceversa. \\
	Finally if any of the above conditions holds then the natural measure $\mu_X$ satisfies condition (c) and $X$ is proper and geodesically complete.
\end{theo}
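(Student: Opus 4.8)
The plan is to prove the implications $\textup{(a)}\Rightarrow\textup{(b)}\Rightarrow\textup{(a)}$, $\textup{(b)}\Rightarrow\textup{(c)}$ and $\textup{(c)}\Rightarrow\textup{(a)}$, keeping explicit track of the constants so that the ``moreover'' clause comes for free. Note first that condition (a) is exactly the hypothesis of Theorem \ref{CAT-packing} (since $r_0<\rho_0/3\le\rho_{\textup{ac}}(X)/3$): hence, whenever any of the three conditions is shown to force (a), we get at no cost that $X$ is proper, geodesically complete and \textup{GCBA}, together with the universal estimates $\textup{Pack}(R,r)\le P_0(1+P_0)^{R/r-1}$ for $r\le r_0$. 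It will then only remain to check that the natural measure $\mu_X$ itself satisfies (c), which follows by running the cycle $\textup{(c)}\Rightarrow\textup{(a)}\Rightarrow\textup{(b)}\Rightarrow\textup{(c)}$.

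For $\textup{(a)}\Rightarrow\textup{(b)}$ I would fix a ``tiny'' radius $R_0<\min\{1,D_\kappa/100,\rho_0\}$ (this is where the dependence on $\kappa$ enters); properness makes every $\overline{B}(x,10R_0)$ compact, and by Theorem \ref{CAT-packing} the maximal number of $R_0$-separated points of $\overline{B}(x,10R_0)$ is at most $\textup{Pack}(10R_0,R_0/2)\le P_0(1+P_0)^{19}$, uniformly in $x$. Feeding this uniform local packing bound into the Lytchak--Nagano tiny-ball analysis recalled in Section \ref{subsec-structure} yields a uniform bound $\dim(X)\le n_0=n_0(P_0)$ (the dimension of a tiny ball is controlled by its local packing number) and, via inequality \eqref{k-inequality1} whose constant depends only on that number, $\mathcal{H}^k(B(x,R_0)^k)\le C\cdot R_0^k$ for $k\le n_0$ with $C=C(P_0)$; summing over $k=0,\dots,n_0$ gives $\mu_X(B(x,R_0))\le V_0=V_0(P_0,R_0)$. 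Thus $(n_0,V_0,R_0,\rho_0,\kappa)$ is expressed through $(P_0,r_0,\rho_0,\kappa)$.

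The remaining implications are variations on a single disjointification argument. For $\textup{(b)}\Rightarrow\textup{(a)}$ and $\textup{(c)}\Rightarrow\textup{(a)}$ choose $r_0<\rho_0/4$ small enough that, in case (b), also $4r_0\le R_0$ and $r_0/4<1$; given an $r_0$-separated family $x_1,\dots,x_N\subset\overline{B}(x,3r_0)$, the balls $B(x_i,r_0/2)$ are pairwise disjoint and contained in $\overline{B}(x,7r_0/2)\subset B(x,4r_0)$. Under (b), Theorem \ref{boundbelowvolume} applies at each $x_i$ (since $\dim(X)\le n_0$ and $\rho_{\textup{ac}}(x_i)\ge\rho_0$) and gives $\mu_X(B(x_i,r_0/2))\ge c_{n_0}(r_0/4)^{n_0}$, while $\mu_X(B(x,4r_0))\le V_0$; under (c) one has $\mu(B(x_i,r_0/2))\ge c(r_0/2)>0$ and $\mu(B(x,4r_0))\le C(4r_0)<\infty$ (using $4r_0<\rho_0$). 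Additivity of the measure then bounds $N$ by an explicit $P_0$, proving (a). Finally, for $\textup{(b)}\Rightarrow\textup{(c)}$ take $\mu=\mu_X$: Theorem \ref{boundbelowvolume} together with monotonicity provides the lower bound $c(r)=c_{n_0}\min\{r/2,1/4\}^{n_0}$ valid for all $0<r<\rho_0$, and since (b) forces (a) one may invoke Theorem \ref{CAT-packing} (or Corollary \ref{covering}) to cover $B(x,r)$ by $\textup{Cov}(r,R_0)$ balls of radius $R_0$, each of $\mu_X$-measure $\le V_0$, obtaining $\mu_X(B(x,r))\le V_0\cdot\textup{Cov}(r,R_0)=:C(r)$ (with $C(r)=V_0$ for $r\le R_0$).

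The main obstacle is the step $\textup{(a)}\Rightarrow\textup{(b)}$: one must convert the metric packing hypothesis into the \emph{uniform} bounds on dimension and on the measure of a fixed small ball, and this rests entirely on the local tiny-ball structure theory of \cite{LN19}. The delicate points there are making sure the tiny-ball hypotheses are genuinely met --- in particular compactness of $\overline{B}(x,10R_0)$, which is exactly why properness (a consequence of (a) via Theorem \ref{CAT-packing}) is used --- and checking that the constant in \eqref{k-inequality1}, a priori depending on $x$ through the local packing number, is made uniform by the universal packing estimate of Theorem \ref{CAT-packing}. Everything else reduces to the classical interplay between measure lower/upper bounds and ball-packing/covering, i.e.\ to disjointification and finite covers, which also yields the last assertion that $\mu_X$ satisfies (c) and that $X$ is proper and geodesically complete as soon as any of (a), (b), (c) holds.
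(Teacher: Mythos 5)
Your cycle of implications and the disjointification/covering arguments coincide with the paper's proof: (b)$\Rightarrow$(a) and (c)$\Rightarrow$(a) via disjoint balls of radius $r_0/2$ plus Theorem \ref{boundbelowvolume}, the bounds $c(r),C(r)$ for $\mu_X$ from Theorem \ref{boundbelowvolume} and the covering estimates of Theorem \ref{CAT-packing}, and the ``moreover'' clause by tracking constants (you are even slightly more careful than the paper in (c)$\Rightarrow$(a), taking $r_0<\rho_0/4$ so that $C(4r_0)$ is defined). The one step that is not justified as written is the uniform dimension bound in (a)$\Rightarrow$(b): you assert, citing the facts recalled in Section \ref{subsec-structure}, that ``the dimension of a tiny ball is controlled by its local packing number''. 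What is recalled there (Corollary 5.4 of \cite{LN19}) is purely qualitative: each tiny ball has \emph{some} finite dimension, with no stated dependence on the packing, so it cannot produce the uniform $n_0=n_0(P_0)$ you need on a possibly non-compact $X$. Nor can you extract such a bound from \eqref{k-inequality1} together with the lower volume estimate: comparing $\omega_k(r/2)^k\le C\,r^k$ gives no contradiction for large $k$, since $\omega_k 2^{-k}\to 0$.

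The paper closes exactly this hole with Proposition \ref{PSI}: if $\dim(x)=n$ there is a $2$-Lipschitz surjection $\Psi_x\colon B(x,\rho_{\textup{ac}}(x))\to\mathbb{R}^n$ with $\Psi_x(\overline{B}(x,r))=\overline{B}(0,r)$ and $d(x,y)=d(0,\Psi_x(y))$; pulling back a $2r_0$-separated family of $2n$ points of $\overline{B}_{\mathbb{R}^n}(0,2r_0)$ (for instance $\pm 2r_0e_i$) yields an $r_0$-separated subset of $\overline{B}(x,2r_0)$, whence $2n\le\textup{Pack}(2r_0,\tfrac{r_0}{2})\le\textup{Pack}(3r_0,\tfrac{r_0}{2})\le P_0$ and $\dim(X)\le P_0/2$. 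You should insert this argument (the tool is already in your hands, since Theorem \ref{boundbelowvolume}, which you use elsewhere, is built on the same map $\Psi_x$); with that addition the rest of your proposal — the upper volume bound for a fixed tiny radius via Theorem \ref{CAT-packing} and \eqref{k-inequality1}, and the final claims that $\mu_X$ satisfies (c) and that $X$ is proper and geodesically complete — matches the paper's proof.
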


\begin{proof}
	Assume first that $X$ satisfies Pack$(3r_0,\frac{r_0}{2}) \leq P_0$. 	
	First of all it follows that the dimension of $X$ is bounded. Indeed we fix any point $x\in X$ and we denote by $n$ its dimension. We consider the map $\Psi_x \colon B(x,2r_0) \to \mathbb{R}^{n}$ given by Proposition \ref{PSI}. 
	Let $x_1,\ldots, x_k$ be a $2r_0$-separated subset of $\overline{B}_{\mathbb{R}^n}(0,2r_0)$. Since $\Psi_x$ is surjective we can take preimages $y_i$ of $x_i$ under $\Psi_x$.
	Moreover $d(y_i,x)=d(\Psi_x(y_i),0)$, hence $y_i \in \overline{B}(x,2r_0)$. As $\Psi_x$ is $2$-Lipschitz the set $\lbrace y_1,\ldots,y_k\rbrace$ is a $r_0$-separated subset of $\overline{B}(x,2r_0)$. Then 
	$$k\leq  \textup{Pack} \left(2r_0,\frac{r_0}{2}\right) \leq  \textup{Pack} \left(3r_0,\frac{r_0}{2}\right) \leq P_0$$
	by Theorem \ref{CAT-packing}.
	But it is easy to show that $k \geq 2n$. Therefore $2n\leq P_0$ is the bound on the dimension we were looking for. We observe that this bound is expressed only in terms of $P_0$.
	We fix now $x\in X$ and any $R > 0$. Let $r=\min \lbrace 1, R, \frac{1}{10}r_0, \frac{1}{100}D_\kappa \rbrace$. We take a covering of $\overline{B}(x,R)$ with balls of radius $r$. By Theorem \ref{CAT-packing} it is possible to do that with $k$ balls, where $k$ can be estimated in the following way:
	$$k = \text{Cov}(\overline{B}(x,R),r) \leq \text{Pack}\bigg(\overline{B}(x,R),\frac{r}{2}\bigg) \leq P_0(1+P_0)^{\frac{2R}{r} - 1}.$$
	We call $y_1,\ldots, y_k$ the centers of these balls. By Theorem \ref{CAT-packing} the space $X$ is proper, then from the choice of $r$ we get that $B(y_i,r)$ is a tiny ball for any $i$, as follows from \eqref{ac-radius,cat-radius}. Moreover the maximal number of $r$-separated points inside $\overline{B}(y_i, 10r)$ is bounded by $\text{Pack}(10r,\frac{r}{2}) \leq P_0(1+P_0)^{19}$, as follows again by Theorem \ref{CAT-packing}. 
	Hence by \eqref{k-inequality1} we have
	$$\mathcal{H}^j(\overline{B} (y_i,r)^j)\leq C(P_0)r^j,$$
	where $C(P_0)$ is a constant depending only on $P_0$.
	Therefore, using the fact that the dimension of $X$ is bounded above by $n_0 = \frac{P_0}{2}$ and  $r\leq 1$, we get:
	$$\mu_X(\overline{B}(y_i,r)) = \sum_{j=0}^{n_0}\mathcal{H}^j( \overline{B} (y_i,r)^j )\leq \frac{P_0}{2} \cdot C(P_0)$$
	
\vspace{-3mm}
\noindent	for any $i$.
	Finally
\vspace{-4mm}	
	
	\begin{equation}
		\label{volumeestimate}
		\mu_X(\overline{B}(x,R))\leq P_0(1+P_0)^{\frac{2R}{r} - 1}\cdot \frac{P_0}{2} \cdot C(P_0) = V(P_0, r_0, R, \kappa).
	\end{equation}
	This shows that for any $x\in X$ and any $R_0$ we can find the desired uniform bound on the volume of the ball $B(x,R_0)$. This ends the proof of the implication (a) $\Rightarrow$ (b).
	Moreover this part of the proof, together with Theorem \ref{boundbelowvolume}, shows that if (a) holds then the measure $\mu_X$ is a measure that satisfies condition (c) of the theorem.
	
	\noindent 	Assume now that has dimension bounded above by $n_0$ and that the volume of the balls of radius $R_0$ are uniformly bounded above by $V_0$. We set \linebreak $r_0 = \min\lbrace \frac{R_0}{6},1, \frac{\rho_0}{6}\rbrace$. The claim is that $X$ satisfies $\text{Pack}(3r_0,\frac{r_0}{2}) \leq P_0$ for some $P_0$ depending only on $V_0, R_0, n_0$ and $\rho_0$.
	We consider the ball of radius $\frac{R_0}{2}$ centered at a point $x\in X$. We take a $r_0$-separated subset of $\overline{B}\left(x,\frac{R_0}{2}\right)$ and we suppose its cardinality is bigger than some $k$. It means that there are $k$ points $y_1,\ldots,y_k \in \overline{B}\left(x, \frac{R_0}{2}\right)$ such that $d(y_i,y_j)>r_0$ for any $i\neq j$. Hence the balls centered at $y_i$ of radius $\frac{r_0}{2}$ are pairwise disjoint and satisfy $\overline{B}\left(y_i,\frac{r_0}{2}\right)\subset \overline{B}\left(x, \frac{R_0}{2} + \frac{r_0}{2}\right)\subset B(x, R_0)$, since $\frac{R_0}{2} + \frac{r_0}{2} \leq \frac{R_0}{2} + \frac{R_0}{6} < R_0$. \\
	We can apply Theorem \ref{boundbelowvolume} to get $\mu_X\left(\overline{B}(y_i,\frac{r_0}{2})\right)\geq c_{n_0}\left(\frac{r_0}{2}\right)^{n_0}$ for any $i$. Thus 
	$$V_0 \geq \mu_X(B(x,R_0))\geq \sum_{i=1}^k \mu_X\bigg(\overline{B}\bigg(y_i,\frac{r_0}{2}\bigg)\bigg) \geq k\cdot c_{n_0}\bigg(\frac{r_0}{2}\bigg)^{n_0},$$
	then 
	$$k\leq \frac{2^{n_0}V_0}{c_{n_0}r_0^{n_0}} = \frac{2^{n_0}V_0}{c_{n_0}}\cdot \max\left\lbrace 1, \left(\frac{6}{\rho_0}\right)^{n_0}, \left(\frac{6}{R_0}\right)^{n_0}\right\rbrace = P_0.$$
	It means that Pack$(\overline{B}(x,\frac{R_0}{2}),\frac{r_0}{2})\leq P_0$.
	Since $R_0\geq 6r_0$ we can conclude that Pack$(3r_0,\frac{r_0}{2})\leq P_0$ that is what claimed.

	\noindent Finally assume that there exists a measure $\mu$ such that for any $x\in X$ and for any $0<r<\rho_0$ it holds
	$$0< c(r) \leq \mu(B(x,r)) \leq C(r) < +\infty.$$
	We take any $r_0 < \frac{\rho_0}{3}$ and we fix any point $x\in X$. Let $k$ be the maximal cardinality of a $r_0$-separated subset of $\overline{B}(x,3r_0)$. Then, arguing as before, we can find $k$ disjoint balls of radius $\frac{r_0}{2}$ contained in $B(x, 4r_0)$. Since \linebreak $C(4r_0)\geq \mu(B(x,4r_0)) \geq k\cdot c(\frac{r_0}{2})$ then $k\leq \frac{C(4r_0)}{c(\frac{r_0}{2})} = P_0$.
	This shows that $X$ satisfies (a) with these choices of $r_0$ and $P_0$.
\end{proof}

\vspace{3mm}
\section{The doubling condition in GCBA-spaces}
\label{sec-doubling}
In this section $X$ will be a complete, geodesic GCBA-space. \\
We say that $X$ is {\em purely $n$-dimensional} if dim$(x)=n$ for any $x\in X$.\linebreak
Moreover we say that a measure $\mu$ on $X$ is:
\begin{itemize}
	\item {\em D-doubling up to the scale $t$ at  $x\in X$} if there exists a constant $D>0$ such that for any $0<t'\leq t$ it holds
	$$\frac{\mu(B(x,2t'))}{\mu(B(x,t'))}\leq D ;$$
	\item {\em $D$-doubling up to scale $t$} if it is $D$-doubling up to scale $t$ at any point $x\in X$ (for a uniform doubling constant $D$).
\end{itemize}
When uniformity of the constant and of the scale is not an issue we will simply say that   $\mu$ is {\em locally doubling} on $X$: that is  if for any $x\in X$ there exist $t_x>0$ and $D_x > 0$ such that $\mu$ is $D_x$-doubling up to scale $t_x$ at  any point of $B(x,t_x)$. 
\vspace{1mm}

\begin{obs}
	\label{rem-doubpack}
	{\em 
		Notice that any metric measured space $(X,\mu)$ satisfying a \linebreak $D_0$-doubling condition up to scale $t_0$ is $P_0$-packed at scale $r_0=\frac{t_0}{4}$  for  $P_0=D_0^4$ \linebreak (provided that the measure gives   positive mass to the balls of positive radius).\linebreak
		Actually let $x \in X$  and take any   $r_0$-separated subset $\lbrace y_1,\ldots, y_k\rbrace$ of  $\overline{B}(x,3r_0)$. So the balls $B(y_i,\frac{r_0}{2})$ are pairwise disjoint.
		From the doubling property  we get:
		\vspace{-7mm}
		
		$$\mu_X \left(B(x,3r_0)\right) \geq \sum_{i=1}^k \mu \left( B ( y_i,  r_0/2 ) \right) 
		\geq   \sum_{i=1}^k  \frac{1}{D_0^4}\mu (B(y_i,8r_0))$$	
		and since $ B(y_i, 8r_0) \supset B(x,3r_0)$ we deduce that $k \leq  D_0^4$. 
		
	}
\end{obs}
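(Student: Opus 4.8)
The plan is to estimate, for an arbitrary $x\in X$, the number of points of a maximal $r_0$-separated subset of $\overline B(x,3r_0)$ by a volume-comparison argument in which the doubling inequality is iterated four times; this is exactly what forces the choice $r_0=\frac{t_0}{4}$, so that all the radii occurring in the iteration stay below the threshold $t_0$.

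First I would fix $x\in X$ and points $y_1,\dots,y_k\in\overline B(x,3r_0)$ with $d(y_i,y_j)>r_0$ for $i\neq j$. By the triangle inequality the balls $B(y_i,r_0/2)$ are pairwise disjoint, while for each $i$ one has $B(x,3r_0)\subset B(y_i,8r_0)$, since $d(x,y_i)\leq 3r_0$. Because $r_0=\frac{t_0}{4}$, the four radii $r_0/2,\ r_0,\ 2r_0,\ 4r_0$ are all $\leq t_0$, so four successive applications of the $D_0$-doubling inequality centred at $y_i$ give $\mu(B(y_i,8r_0))\leq D_0^{4}\,\mu(B(y_i,r_0/2))$.

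Summing over $i$ and combining disjointness with the inclusions above,
$$\mu(B(x,3r_0)) \ \geq\ \sum_{i=1}^{k}\mu(B(y_i,r_0/2)) \ \geq\ \frac{1}{D_0^{4}}\sum_{i=1}^{k}\mu(B(y_i,8r_0)) \ \geq\ \frac{k}{D_0^{4}}\,\mu(B(x,3r_0)).$$
Since $\mu$ assigns positive and finite mass to balls of positive radius we may cancel $\mu(B(x,3r_0))$ and obtain $k\leq D_0^{4}$; as $x$ was arbitrary, this is precisely the $P_0$-packing condition at scale $r_0$ with $P_0=D_0^4$. I do not expect any genuine obstacle here: the only point requiring care is the bookkeeping of scales, namely that the dilation factor $16=2^4$ from $r_0/2$ to $8r_0$ needs exactly four doublings and that every intermediate radius stays $\leq t_0$, which is why one takes $r_0=\frac{t_0}{4}$ and ends up with the exponent $4$ in $P_0=D_0^4$; a coarser enlargement would still work but yield a larger constant.
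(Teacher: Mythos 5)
Your argument is correct and is essentially the paper's own proof: the same disjoint balls $B(y_i,r_0/2)$, the same inclusion $B(x,3r_0)\subset B(y_i,8r_0)$, and the same four-fold iteration of the doubling inequality hidden in the factor $D_0^{4}$ (you merely make the intermediate radii $r_0/2,\,r_0,\,2r_0,\,4r_0\leq t_0$ explicit, which is exactly why the paper takes $r_0=t_0/4$).
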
		

The next result characterizes GCBA-spaces whose natural measure is locally doubling:
\begin{theo}
	\label{pure-dimensional}
	Let $X$ be a proper, geodesic \textup{GCBA} metric space. Suppose $\mu_X$ is locally doubling: then $X$ is purely $n$-dimensional for some $n$.
\end{theo}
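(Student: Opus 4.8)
The strategy is to argue by contradiction and locality: if $X$ is not purely dimensional then, because the dimension $\dim(x)$ is an integer taking at least two distinct values on $X$ and the lower-dimensional strata accumulate onto higher-dimensional ones, there must exist a point $x_0\in X$ and a sequence $x_i\to x_0$ with $\dim(x_i)<\dim(x_0)=:n$. I would fix such a configuration and work entirely inside a single tiny ball $B(x_0,r)$ around $x_0$ (which exists since $X$ is proper GCBA), on which the local doubling hypothesis gives a constant $D$ and a scale $t_0$ with $\mu_X(B(y,2t'))\le D\,\mu_X(B(y,t'))$ for all $y\in B(x_0,t_0)$ and $0<t'\le t_0$.

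\textbf{Upper bound at the bad points.} Each $x_i$ has dimension $d_i:=\dim(x_i)<n$, so by Corollary 5.4 of \cite{LN19} a small ball $B(x_i,\rho_i)$ contains no point of dimension $>d_i$; hence on that ball $\mu_X$ coincides with $\mathcal H^{d_i}$ restricted to the $d_i$-part, and by \eqref{k-inequality1} one gets $\mu_X(B(x_i,s))\le C\,s^{d_i}$ for small $s$, with $C$ controlled (uniformly in $i$, since the $r$-packing numbers are uniformly bounded inside the tiny ball). Iterating the doubling inequality downward from scale $t_0$ to scale $s$ gives the reverse-type estimate $\mu_X(B(x_i,s))\ge D^{-1}(s/t_0)^{\log_2 D}\,\mu_X(B(x_i,t_0))$; and $\mu_X(B(x_i,t_0))$ is bounded below by a fixed positive constant $m>0$ because $B(x_0,t_0/2)\subset B(x_i,t_0)$ for $i$ large and $\mu_X(B(x_0,t_0/2))>0$. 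Combining, for all small $s$,
\[
  m\,D^{-1}(s/t_0)^{\log_2 D}\le \mu_X(B(x_i,s))\le C\,s^{d_i},
\]
which forces $d_i\ge \log_2 D$ — i.e. the doubling exponent $\log_2 D$ is a finite bound that is compatible with the low dimension $d_i$, so this alone does not yet contradict anything. The real point must instead be obtained by comparing the *exponent seen at $x_i$* with the *exponent seen at $x_0$*.

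\textbf{Comparing exponents at $x_0$.} At $x_0$ itself, the $n$-dimensional lower volume bound of Corollary \ref{boundbelowvolume2} gives $\mu_X(B(x_0,s))\ge 2^{-n}\omega_n s^n$, while the local structure near $x_0$ (points of dimension $\le n$ in the tiny ball) gives $\mu_X(B(x_0,s))\le C' s^n$ with the lower-dimensional strata contributing lower-order terms in $s$. So near $x_0$ the measure of balls behaves like $s^n$ up to constants. Now take a point $x_i$ very close to $x_0$, say $d(x_i,x_0)=\varepsilon_i\to 0$, and apply the doubling estimate on the annular scales between $\varepsilon_i$ and $t_0$: for scales $s\ge 2\varepsilon_i$ we have $B(x_i,s)\supset B(x_0,s/2)$ and $B(x_i,s)\subset B(x_0,2s)$, so $\mu_X(B(x_i,s))\asymp s^n$ for $\varepsilon_i\lesssim s\lesssim t_0$; but for $s<\varepsilon_i$ the ball $B(x_i,s)$ is low-dimensional, giving $\mu_X(B(x_i,s))\le C s^{d_i}$ with $d_i\le n-1$. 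Feeding $s\downarrow \varepsilon_i$ into the doubling inequality: $\mu_X(B(x_i,\varepsilon_i))$ must be at least $D^{-1}\mu_X(B(x_i,2\varepsilon_i))\gtrsim D^{-1}\varepsilon_i^{\,n}$, yet on the other hand the low-dimensionality at scale $\le \varepsilon_i$ gives $\mu_X(B(x_i,\varepsilon_i))\le C\varepsilon_i^{\,d_i}$. Since $\varepsilon_i\to 0$ and $d_i\le n-1$, the inequality $D^{-1}\varepsilon_i^{\,n}\lesssim C\varepsilon_i^{\,d_i}$ is fine — so again the contradiction is not immediate and one must push one more scale down.

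\textbf{The decisive step.} The clean contradiction comes from choosing, for each $i$, an auxiliary radius where the crossover happens and iterating doubling across it. Fix $i$; pick the largest dyadic scale $s_i\le \rho_i$ with $s_i\ge \varepsilon_i$ such that $B(x_i,s_i)$ is still low-dimensional — but this can fail, since a ball around $x_i$ of radius $\ge\varepsilon_i$ already contains $x_0$ hence $n$-dimensional points. So in fact $\rho_i<\varepsilon_i$: the low-dimensional ball around $x_i$ has radius less than its distance to $x_0$. Now iterate doubling from scale $t_0$ down to scale $\rho_i$: $\mu_X(B(x_i,\rho_i))\ge D^{-k}m$ where $k=\lceil\log_2(t_0/\rho_i)\rceil$, i.e. $\mu_X(B(x_i,\rho_i))\ge c\,\rho_i^{\log_2 D}$. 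But low-dimensionality gives $\mu_X(B(x_i,\rho_i))\le C\rho_i^{d_i}$. Since $\rho_i\to 0$ we need $d_i\le \log_2 D$ for all $i$ — but that still does not contradict $d_i\le n-1$ unless $\log_2 D<n-1$.

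\textbf{Where the obstacle really is.} I expect the genuine argument (and the main obstacle) to be: *the doubling exponent $\log_2 D$ must equal the dimension at every point*. Concretely, the hard part is proving that at *any* point $y$, local doubling forces $\mu_X(B(y,s))\asymp s^{\dim(y)}$ with two-sided bounds — the lower bound is Corollary \ref{boundbelowvolume2} and the upper is \eqref{k-inequality1}, so $\log_2 D\ge \dim(y)$ from one inequality and $\log_2 D\le$ (something) from the other is not automatic. The right move is: for $y$ of dimension $k$, iterating doubling up from a small scale $s$ to a fixed scale $\tau$ gives $\mu_X(B(y,\tau))\le D^{\lceil\log_2(\tau/s)\rceil}\mu_X(B(y,s))\le D\,(\tau/s)^{\log_2 D}\cdot Cs^k$, so letting $s\to 0$ forces $\log_2 D\ge k=\dim(y)$; thus $\log_2 D\ge \dim(X)=n$. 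Now at the low-dimensional point $x_i$ (dimension $d_i\le n-1<\log_2 D$), iterating doubling *down* from $\tau$ to $s$ gives $\mu_X(B(x_i,s))\ge D^{-1}(s/\tau)^{\log_2 D}\mu_X(B(x_i,\tau))\ge c\,s^{\log_2 D}$; combined with the low-dimensional upper bound $\mu_X(B(x_i,s))\le Cs^{d_i}$ and $\log_2 D> d_i$, letting $s\to 0$ yields $c s^{\log_2 D}\le C s^{d_i}$, i.e. $c\le C s^{d_i-\log_2 D}\to\infty$ — no contradiction again because the inequality goes the convenient way.

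So the true contradiction must exploit that $\mu_X(B(x_i,\tau))$ is bounded *below* uniformly in $i$ (since these balls eventually contain a fixed ball $B(x_0,t_0/2)$) *and simultaneously* that $\mu_X(B(x_i,\tau))$, decomposed over strata, is dominated near $x_i$ by the $d_i$-stratum on the tiny scale while the $n$-stratum only enters at scale $\ge\varepsilon_i=d(x_i,x_0)$: this is the balance I would make precise. I will therefore, after setting up the contradiction hypothesis and the tiny-ball localization, (i) establish $\log_2 D\ge n$ by upward iteration of doubling against \eqref{k-inequality1}, (ii) pick $x_i\to x_0$ with $\dim x_i=d_i<n$ and radii $\rho_i=$ (the dimension-drop radius at $x_i$) $<\varepsilon_i\to0$, (iii) iterate doubling at $x_i$ across the window $[\rho_i,t_0]$ to get $\mu_X(B(x_i,\rho_i))\ge c\rho_i^{\log_2 D}$ and compare with $\le C\rho_i^{d_i}$, and conclude by noting this forces $\rho_i\ge$ a fixed positive constant, contradicting $\rho_i<\varepsilon_i\to0$. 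The delicate point — and the step I expect to absorb the real work — is controlling the constants in (iii) uniformly in $i$, which rests on the uniform packing bound inside the single tiny ball (Theorem \ref{char_pack} applied locally) so that the constant $C$ in \eqref{k-inequality1} and the lower mass $m$ do not degenerate as $x_i\to x_0$.
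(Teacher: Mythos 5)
Your final plan does not close: the decisive step (iii) asserts that combining the downward-iterated doubling bound $\mu_X(B(x_i,\rho_i))\ge c\,\rho_i^{\log_2 D}$ with the low-dimensional upper bound $\mu_X(B(x_i,\rho_i))\le C\rho_i^{d_i}$ "forces $\rho_i\ge$ a fixed positive constant", but it does not. Since by your step (i) $\log_2 D\ge n>d_i$, the resulting inequality $c\,\rho_i^{\log_2 D}\le C\,\rho_i^{d_i}$ reads $\rho_i^{\log_2 D-d_i}\le C/c$ with a \emph{positive} exponent, which is satisfied by every small $\rho_i$ and yields no contradiction — exactly the phenomenon you yourself flagged twice earlier ("the inequality goes the convenient way"). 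The structural reason is that you center all your balls at the \emph{low}-dimensional points $x_i$, where small balls have \emph{large} measure ($\sim s^{d_i}\gg s^n$); doubling, which only bounds ratios of concentric balls from above, can never be violated in that configuration. (A secondary slip: lower-dimensional strata contribute \emph{higher}-order terms as $s\to0$, so your claim that $\mu_X(B(x_0,s))\asymp s^n$ near the transition point, with lower strata "lower order", is unjustified in general.)

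What is missing is the re-centering that the paper performs. At the transition point $x$ (a $d$-dimensional point which is a limit of $d_0$-dimensional ones, $d>d_0$), one picks a direction $v\in\Sigma_xX$ such that a neighborhood of $(v,1)$ in $T_xX$ consists of regular $d$-dimensional points, and proves — via the blow-up Lemma \ref{lemma-tangentcone=limit} and the stability of regular points under convergence (Lemma \ref{regularity-convergence}) — that the angular sector $A_{v,\varepsilon}(r)$ consists entirely of $d$-dimensional points for all small $r$; it contains a ball $B(y_r,\varepsilon r/8)$ with $y_r=\gamma(r/2)$. Then the \emph{small} ball around $y_r$ has measure $\lesssim(\varepsilon r)^{d}$ by \eqref{k-inequality1}, while the concentric ball $B(y_r,r)$ (radius larger by the \emph{fixed} factor $8/\varepsilon$) contains a ball of radius $r/2$ of $d_0$-dimensional points inside $B(x_0,r_0)$, hence has measure $\gtrsim r^{d_0}$ by Corollary \ref{boundbelowvolume2}; the ratio $\gtrsim r^{d_0-d}\to\infty$ then contradicts finitely many applications of doubling at $y_r$. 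It is this sector construction — producing a center whose small balls are purely high-dimensional while a ball larger by a bounded factor captures the low-dimensional region — that carries the proof, and no step of your proposal supplies it or a substitute for it.
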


\noindent We begin the proof of Theorem \ref{pure-dimensional} with the following two preliminary results.
\begin{lemma}
	Let $X$ be a proper, geodesic \textup{GCBA} metric space and $x\in X$.
	Let $v\in \Sigma_xX$ and assume that every point of $\overline{B}((v,1),\varepsilon)$ is a $k$-regular point of $T_xX$, for some $\varepsilon >0$. 
	Then there exists $r>0$ such that all points of the set \linebreak
	\vspace{-3mm}
	$$A_{v,\varepsilon}(r) = \lbrace y \in X \text{ s.t. } d_T(\log_x(y), (v, d(x,y)))  \leq \varepsilon d(x,y)\rbrace \cap B(x,r)$$
	have  dimension $k$. 
\end{lemma}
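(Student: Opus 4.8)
The plan is to argue by contradiction, realizing the tangent cone $T_xX$ as a pointed Gromov--Hausdorff limit of rescalings of $X$ (Lemma~\ref{lemma-tangentcone=limit}) and invoking the stability of regular points under such limits (Lemma~\ref{regularity-convergence}(b)).

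Suppose the assertion fails. Then for every $m\in\mathbb{N}$ there is a point $y_m\in A_{v,\varepsilon}(1/m)$ with $y_m\neq x$ and $\textup{dim}(y_m)\neq k$; put $t_m:=d(x,y_m)>0$, so $t_m\to 0$ (the point $x$ itself always lies in $A_{v,\varepsilon}(r)$, since there $\log_x(x)=O=(v,0)$, and is dealt with separately). Fix an auxiliary scale $\delta>0$ and a radius $r_0$ with $0<\delta(1+\varepsilon)<r_0<\min\{1,\tfrac{D_\kappa}{100},\rho_{\textup{cat}}(x)\}$, so that $\overline{B}_{T_xX}(O,r_0)$ is a tiny ball of $T_xX$ and, since the radial coordinate is $1$-Lipschitz for $d_T$, one has $\overline{B}_{T_xX}((v,\delta),\delta\varepsilon)\subset B_{T_xX}(O,r_0)$. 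Set $\lambda_m:=\delta/t_m\to\infty$ and consider the rescaled pointed spaces $(X_m,x_m):=(\lambda_m X,\textup{dil}_{\lambda_m}(x))$ together with the points $p_m:=\textup{dil}_{\lambda_m}(y_m)\in X_m$. Rescaling $X$ by $\lambda$ canonically identifies the tangent cone at any point with that of $X$, hence $\textup{dim}_{X_m}(p_m)=\textup{dim}(y_m)\neq k$; also $d_{X_m}(x_m,p_m)=\lambda_m t_m=\delta<r_0$.

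Next we locate $p_m$ in the tangent cone. Writing $v_m:=[x,y_m]\in\Sigma_xX$ for the direction at $x$ of the unique geodesic to $y_m$, by definition $\log_x(y_m)=(v_m,t_m)$, so $y_m\in A_{v,\varepsilon}(1/m)$ reads $d_T((v_m,t_m),(v,t_m))\leq\varepsilon t_m$, equivalently $d_T((v_m,\delta),(v,\delta))\leq\delta\varepsilon$ by homogeneity of the cone metric~\eqref{euclidean-cone-formula} in the radial variable. On the other hand, by Lemma~\ref{lemma-tangentcone=limit}, $(X_m,x_m)\to(T_xX,O)$ in the pointed Gromov--Hausdorff sense with approximating maps $f_m=\log_{x_m}$ (under the canonical identification $T_{x_m}(\lambda_m X)\cong T_xX$), and unwinding the definition of the logarithmic map on $\lambda_m X$ shows that $f_m(p_m)$ corresponds to $(v_m,\lambda_m t_m)=(v_m,\delta)$. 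Hence $f_m(p_m)\in\overline{B}_{T_xX}((v,\delta),\delta\varepsilon)$, a compact set all of whose points are $k$-regular in $T_xX$ — this follows from the hypothesis by applying the homothety $h_\delta\colon(\sigma,t)\mapsto(\sigma,\delta t)$ of the Euclidean cone $T_xX$, which has ratio $\delta$ and preserves $k$-regularity. Passing to a subsequence, $f_m(p_m)\to q$ with $q\in\overline{B}_{T_xX}((v,\delta),\delta\varepsilon)\subset B_{T_xX}(O,r_0)$, so $q$ is a $k$-regular point of $T_xX$ and $p_m\to q$ along this subsequence.

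Finally one checks that the balls $B_{X_m}(x_m,r_0)$ form a sequence of tiny balls in the standard setting of convergence with limit $\overline{B}_{T_xX}(O,r_0)$: the $X_m=\lambda_m X$ are proper (as $X$ is), their CAT constants $\kappa/\lambda_m^2\to 0$ (so $D_{\kappa/\lambda_m^2}\to+\infty$) and $\rho_{\textup{cat}}(x_m)=\lambda_m\rho_{\textup{cat}}(x)\to+\infty$, making $B_{X_m}(x_m,r_0)$ a genuine tiny ball for large $m$; the balls $\overline{B}_{X_m}(x_m,10r_0)$ and $\overline{B}_{X_m}(x_m,r_0)$ converge to $\overline{B}_{T_xX}(O,10r_0)$ and $\overline{B}_{T_xX}(O,r_0)$ by Lemma~\ref{lemma-tangentcone=limit}; and their $\tfrac{r_0}{2}$-covering numbers are uniformly bounded for large $m$ because they converge to the finite covering number of the compact ball $\overline{B}_{T_xX}(O,10r_0)$. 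Then Lemma~\ref{regularity-convergence}(b), applied to $p_m\to q$ with $q$ a $k$-regular point of $T_xX$, gives $\textup{dim}_{X_m}(p_m)=\textup{dim}_{T_xX}(q)=k$ for all large $m$, contradicting $\textup{dim}_{X_m}(p_m)\neq k$. The key idea is the choice of the fixed auxiliary scale $\delta$ (blowing $t_m$ up to $\delta$ rather than to $1$): this is exactly what places the approximating image $f_m(p_m)$ strictly inside a tiny ball of $T_xX$ and inside the prescribed ball of $k$-regular points, so that Lemma~\ref{regularity-convergence}(b) applies; the verification of the standard-setting hypotheses for the rescaled balls is the only other — routine — obstacle.
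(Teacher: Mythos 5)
Your proof is correct and follows essentially the same route as the paper's: argue by contradiction, blow up at $x$ so that the rescaled balls converge to $T_xX$ via Lemma \ref{lemma-tangentcone=limit}, verify the standard setting of convergence, and apply Lemma \ref{regularity-convergence}(b) to the limit point, which is $k$-regular by hypothesis. The only deviations --- rescaling to an auxiliary scale $\delta$ (transporting the regularity hypothesis from $\overline{B}((v,1),\varepsilon)$ by the cone homothety, a similarity of $T_xX$) instead of rescaling to scale $1$, and bounding the $\tfrac{r_0}{2}$-covering numbers through the Gromov--Hausdorff convergence rather than through the contraction maps as in Lemma \ref{packingextension} --- are minor technical variants of the same argument.
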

We recall that, since $T_xX$ is a GCBA-space and since the set of $k$-regular points is open in $T_xX$,   if $(v,1)$ is $k$-regular point  in $T_xX$  then it is always possible to find $\varepsilon$ satisfying the assumptions of the lemma. The set $A_{v,\varepsilon}$, or better its projection on the tangent cone through the logarithm map at $x$, should be thought as a part of angular sector around $v$. So the statement of the lemma says that any possible geodesic segment starting at $x$ with direction close to $v$ stay in the $k$-dimensional part of $X$ for a uniform time $r$.
\begin{proof}
	Suppose the thesis is false. Then, there exists a sequence of points $y_n$  of dimension different from $k$ at distance $r_n \to 0$ from $x$ such that 
	$$ d_T(\log_x(y_n), (v, r_n)) \leq \varepsilon r_n.$$
	We consider   rescaled tiny balls $Y_n = \frac{1}{r_n}  B(x,r_0)$ as in Lemma \ref{lemma-tangentcone=limit}, together with the approximating maps $f_n$; so for all $n$ we have:
	$$d_T(f_n(y_n), (v,1)) \leq \varepsilon.$$
	Moreover we are in the standard setting of convergence. 
	Indeed the GCBA-space $X$ is geodesic and complete, so the contraction maps $\varphi^R_r$ are well-defined for any $R < \rho_{\text{cat}} (x)$ and they are surjective and $\frac{2r}{R}$-Lipschitz; therefore, by applying   the same proof as in Lemma \ref{packingextension},  we conclude that the rescaled balls are uniformly packed (the other properties follow from the discussion in Section \ref{sec-preliminaries}).
	Moreover the sequence  $y_n \in Y_n$ converges to some point $y_\infty \in \overline{B}((v,1),\varepsilon)$. So $y_\infty$ is $k$-regular by assumption. But, by Lemma \ref{regularity-convergence},  the points $y_n$ must be $k$-dimensional for $n$ large enough, which is a contradiction.
\end{proof}

\begin{lemma}
	Let $v\in \Sigma_x X$ and let $\gamma$ be a geodesic starting at $x$ defining $v$. For any $ 0 < \varepsilon < 1$ 
	we have, for all $r>0$ small enough:
	$$B\bigg(\gamma \left( \frac{r}{2} \right), \frac{\varepsilon r}{8}\bigg) \subset A_{v,\varepsilon} (r)$$
	
\end{lemma}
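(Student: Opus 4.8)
The plan is a direct estimate based on the first‑order approximation of $X$ by its tangent cone, i.e. on Lemma \ref{logarithm-approximation}. Fix $\varepsilon\in(0,1)$ and apply that lemma at $x$ with the parameter $\varepsilon/8$, obtaining some $\delta>0$. I will then take $r<\min\{\delta,\rho_{\textup{ac}}(x)\}$ (small enough that $\gamma$ is defined on $[0,r/2]$, which is automatic by geodesic completeness, and so that $B(x,r)$ lies in the domain of $\log_x$). Let $y\in B(\gamma(r/2),\varepsilon r/8)$. First, the triangle inequality gives $d(x,y)\le d(x,\gamma(r/2))+d(\gamma(r/2),y)<r/2+\varepsilon r/8<r$, so $y\in B(x,r)$; and from $|d(x,y)-r/2|\le d(y,\gamma(r/2))<\varepsilon r/8<r/8$ I also record the lower bound $d(x,y)>3r/8$, which is exactly what makes the target inequality non‑vacuous.

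Next I estimate $d_T(\log_x(y),(v,d(x,y)))$. Since $r/2<\rho_{\textup{ac}}(x)$, the unique geodesic from $x$ to $\gamma(r/2)$ is the restriction $\gamma|_{[0,r/2]}$, which represents the direction $v$, so $\log_x(\gamma(r/2))=(v,r/2)$. Applying Lemma \ref{logarithm-approximation} to the pair $y,\gamma(r/2)\in B(x,r)$ yields $d_T(\log_x(y),(v,r/2))\le d(y,\gamma(r/2))+(\varepsilon/8)r<\varepsilon r/8+\varepsilon r/8=\varepsilon r/4$. Since $(v,r/2)$ and $(v,d(x,y))$ lie on the same ray of $T_xX$, formula \eqref{euclidean-cone-formula} gives $d_T((v,r/2),(v,d(x,y)))=|r/2-d(x,y)|<\varepsilon r/8$, and the triangle inequality in $T_xX$ then gives $d_T(\log_x(y),(v,d(x,y)))<\varepsilon r/4+\varepsilon r/8=3\varepsilon r/8\le \varepsilon\,d(x,y)$, using $d(x,y)>3r/8$. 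Hence $y\in A_{v,\varepsilon}(r)$, as desired.

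I do not expect a genuine obstacle here: the statement is essentially bookkeeping around Lemma \ref{logarithm-approximation}. The only two points requiring a little care are (i) matching the \emph{additive} error $\varepsilon r$ of that lemma against the \emph{multiplicative} threshold $\varepsilon\,d(x,y)$ in the definition of $A_{v,\varepsilon}(r)$ — which is handled precisely by the uniform lower bound $d(x,y)>3r/8$ valid on $B(\gamma(r/2),\varepsilon r/8)$, and which dictates the choice $\varepsilon/8$ for the parameter fed to Lemma \ref{logarithm-approximation} — and (ii) the elementary identity $d_T((v,s),(v,t))=|s-t|$ for points on a common ray of the Euclidean cone, which is immediate from \eqref{euclidean-cone-formula}.
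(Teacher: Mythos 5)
Your proof is correct, and its overall structure is the same as the paper's: split $d_T(\log_x(y),(v,d(x,y)))$ by the triangle inequality through $\log_x(\gamma(r/2))=(v,r/2)$, bound the radial term by $|r/2-d(x,y)|\le d(y,\gamma(r/2))<\varepsilon r/8$, and convert the resulting additive bound $3\varepsilon r/8$ into the multiplicative threshold via $d(x,y)\ge r/2-\varepsilon r/8\ge 3r/8$, together with the easy inclusion $d(x,y)<r/2+\varepsilon r/8<r$. The one genuine difference is the ingredient used for the directional term: the paper simply invokes the $2$-Lipschitz property of $\log_x$ (Lemma \ref{logarithmic-map-surjective}(c)), giving $d_T(\log_x(y),\log_x(\gamma(r/2)))\le 2d(y,\gamma(r/2))\le \varepsilon r/4$, whereas you invoke the almost-isometry estimate of Lemma \ref{logarithm-approximation} with parameter $\varepsilon/8$, which yields the same $\varepsilon r/4$ but only for $r<\delta(\varepsilon)$. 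Both fit the statement, since it only claims the inclusion for $r$ small enough; the paper's route is slightly cheaper in that it needs no $\varepsilon$-dependent smallness of $r$ (only $r$ within the range where $\log_x$ is defined and $2$-Lipschitz), while yours leans on the finer approximation lemma and consequently makes the "small enough" genuinely depend on $\varepsilon$ — a harmless but unnecessary restriction here.
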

\begin{proof}
	As the logarithm map is $2$-Lipschitz, for every $y\in B\left(\gamma \left( \frac{r}{2} \right), \frac{\varepsilon r}{8}\right)$ we have
	
	\begin{equation*}
		\begin{aligned}
			d_T(\log_x(y), (v,d(x,y))) &\leq d_T \left(\log_x(y), \log_x \left(\gamma \left(\frac{r}{2}\right)\right)\right) + d_T \left(\left(v,\frac{r}{2}\right), (v, d(x,y))\right) \\
			&\leq 2d\left(y,\gamma\left(\frac{r}{2}\right)\right) + \left| \frac{r}{2} - d(x,y) \right|  \\
			& \leq 3d \left(y,\gamma\left(\frac{r}{2}\right)\right)   \leq \frac{3 \varepsilon r}{8}
			\leq \varepsilon d(x,y)
		\end{aligned}
	\end{equation*}
	since $d(x,y) \geq \frac{r}{2} - \frac{\varepsilon r}{8}$.
	On the other hand if $y \in B  (\gamma ( \frac{r}{2} ), \frac{\varepsilon r}{8})$  we have $d(x,y) \leq \frac{r}{2}+\frac{\varepsilon r}{8} < r$, so the ball $B  (\gamma ( \frac{r}{2} ), \frac{\varepsilon r}{8})$ is included in $A_{v,\varepsilon} (r)$.
\end{proof}

\begin{proof}[Proof of Theorem \ref{pure-dimensional}]
	Let us suppose $X$ is not pure dimensional. 
	We take a point $x_0\in X$ of minimal dimension $d_0$. Then we have by assumption 
	$$r_0 = \sup\lbrace \rho > 0 \text{ s.t. HD}(B(x_0,\rho)) = d_0\rbrace <+\infty.$$
	We can find a point $x  \in X$ with dimension $d > d_0$ such that $d(x_0,x) = r_0$.
	Indeed for any $n$ we can find a point $x_{n}$ such that $d(x_0, x_n) < r_0 + \frac{1}{n}$ and dim$(x_n) > d_0$. The sequence of points $x_n$ converge, as the space is proper,  to a point $x$ at distance exactly $r_0$ from $x_0$.
	Assume that dim$(x) = d_0$: then  there would exist  a small radius $\rho$ such that the Hausdorff dimensions of $B(x, \rho)$ is exactly $d_0$. But  $x_n$ belongs to  $B(x, \rho )$ for  $n \gg 0$, and   any open ball around $x_n$ has Hausdorff dimension strictly greater than $d_0$; therefore  HD$(B(x, \rho))>d_0$, a contradiction.  \\
	Now, the tangent cone  $T_{x } X$ at $x $  has dimension $d$. Hence there exists a point $v\in \Sigma_{x} X$ and $\varepsilon > 0$ such that any point of the ball $\overline{B}((v,1),\varepsilon)$ is regular and of dimension  $d$. We take any geodesic $\gamma$ starting at $x$ and defining $v$ and we set $y_r= \gamma(\frac{r}{2})$. Applying the two lemmas above we have that, for all $r$ small enough, any point of the ball $B(y_r, \frac{\varepsilon r}{8})$ is $d$-dimensional. \linebreak
	Since $X$ satisfies a doubling condition around $x$ we know by Remark \ref{rem-doubpack} that a ball $B(x,r_0)$ is $P_0$-packed, for some $r_0, P_0$  depending on $x$.  So, by Theorem \ref{CAT-packing} and by the properties of the natural measure recalled in Section \ref{subsec-structure}, there exists a constant $C$, only depending on  $r_0$ and $P_0$, such that for all sufficiently small $r$ we have: 
	$$\mu_X \left(B \left(y_r, \frac{\varepsilon r}{8}\right) \right) \leq C \cdot \left(\frac{\varepsilon r}{8}\right)^{d}.$$
	Consider now the ball $B(y_r, r)$: notice that   there exists a ball of radius at least $\frac{r}{2}$ contained in $B(y_r, r) \cap B(x_0,r_0)$, so made only of $d_0$-dimensional points. \linebreak 
	In particular by Corollary \eqref{boundbelowvolume2} we have $\mu_X(B(y_r, r)) \geq c_{d_0}(\frac{r}{2})^{d_0}$, where $c_{d_0}$ is a constant depending only on $d_0$. Thus 
	$$\frac{\mu_X(B(y_r, r))}{\mu_X (B(y_r, \frac{\varepsilon r}{4}))} \geq C' r^{d_0 - d},$$
	where $C'$ is a constant that does not depend on $r$.
	Since this is true for any $r$ small enough and $d_0 < d$, this inequality contradicts  the doubling assumption at $y_r$, when $r$ goes to $0$.
\end{proof}

As a consequence of what proved in Section \ref{sec-packingcovering} we obtain the following:
\begin{cor}
	\label{doubling}
	Let $X$ be a complete, geodesic \textup{GCBA}$^\kappa$  metric space with $\rho_{\textup{ac}}(X) \geq \rho_0 > 0$. The following facts are equivalent:
		\vspace{1mm}
	\begin{itemize}
		\item[(a)] there exist $D_0 > 0$ and $t_0 > 0$ such that the natural measure $\mu_X$ is $D_0$-doubling up to scale $t_0$; 
			\vspace{1mm}
		\item[(b)] $X$ is purely dimensional and there exist $P_0 > 0$ and $0 < r_0 <\rho_0/3$ such that $\textup{Pack}(3r_0, \frac{r_0}{2})\leq P_0$; 
			\vspace{1mm}
		\item[(c)] there exist $n_0, V_0, R_0 > 0$ such that $X$ is purely $n_0$-dimensional and $\mu_X(B(x,R_0)) \leq V_0$ for any $x\in X$.
	\end{itemize}
	Moreover each of the three sets of constants $(D_0,t_0, \rho_0, \kappa)$, $(P_0,r_0, \rho_0, \kappa)$, $(n_0,V_0,R_0, \rho_0,\kappa)$ can be expressed in terms of the others. \\
	Finally if the conditions hold then $X$ is proper and geodesically complete.
\end{cor}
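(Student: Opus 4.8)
The plan is to deduce the corollary from Theorem \ref{char_pack}, Theorem \ref{pure-dimensional} and Remark \ref{rem-doubpack}, arguing along the cycle $(a)\Rightarrow(c)\Rightarrow(b)\Rightarrow(a)$ and recording at each step how the three constant packages transform into one another; the only implication that does not reduce to a direct application of earlier results is $(b)\Rightarrow(a)$, which I expect to be the main obstacle. For $(a)\Rightarrow(c)$, I would first note that a measure which is $D_0$-doubling up to scale $t_0$ is $D_0$-doubling up to any smaller scale, so after shrinking $t_0$ we may assume $t_0\le\rho_0$ and set $r_0=t_0/4<\rho_0/3$; then the computation in Remark \ref{rem-doubpack} (four applications of the doubling inequality, all at radii $\le 4r_0=t_0$) shows that every $r_0$-separated subset of $\overline B(x,3r_0)$ has at most $D_0^4$ points, i.e. $\textup{Pack}(3r_0,r_0/2)\le D_0^4$, which is condition $(a)$ of Theorem \ref{char_pack}. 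That theorem makes $X$ proper and geodesically complete and provides $n_0,V_0,R_0$, depending on $D_0,t_0,\rho_0,\kappa$, with $\textup{dim}(X)\le n_0$ and $\mu_X(B(x,R_0))\le V_0$ for all $x$; and since $X$ is now proper with $\mu_X$ (globally, hence) locally doubling, Theorem \ref{pure-dimensional} gives that $X$ is purely $n$-dimensional for some $n\le n_0$. Replacing $n_0$ by $n$ yields $(c)$.

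The implication $(c)\Rightarrow(b)$ is immediate, the volume bound in $(c)$ being exactly condition $(b)$ of Theorem \ref{char_pack}, which returns $P_0$ and $0<r_0<\rho_0/3$ with $\textup{Pack}(3r_0,r_0/2)\le P_0$, while pure $n_0$-dimensionality is assumed. For $(b)\Rightarrow(a)$, the packing hypothesis together with Theorem \ref{CAT-packing} makes $X$ proper, so there is a threshold $\bar r$, depending only on $\kappa$, such that $B(x,r)$ is a tiny ball whenever $r<\bar r$, and by Theorem \ref{CAT-packing} the number of $r$-separated points in $\overline B(x,10r)$ is bounded uniformly in $x$ in terms of $P_0,r_0$. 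As $X$ is purely $n$-dimensional we have $\mu_X=\mathcal{H}^n\llcorner X$ and $B(x,r)^n=B(x,r)$, so \eqref{k-inequality1} collapses to
$$\mu_X(B(x,r))=\mathcal{H}^n(B(x,r))\le C\,r^n\qquad(0<r<\bar r),$$
with $C=C(P_0,r_0,\kappa)$, whereas Corollary \ref{boundbelowvolume2}, applicable since $\textup{dim}(x)=n$ and $\rho_{\textup{ac}}(x)\ge\rho_0$, gives $\mu_X(B(x,r))\ge\frac{\omega_n}{2^n}\,r^n$ for $0<r<\rho_0$. Dividing, for every $x$ and every $0<t'\le t_0:=\tfrac12\min\{\bar r,\rho_0\}$ one obtains
$$\frac{\mu_X(B(x,2t'))}{\mu_X(B(x,t'))}\le\frac{C(2t')^n}{\frac{\omega_n}{2^n}(t')^n}=\frac{2^{2n}C}{\omega_n}=:D_0,$$
so $\mu_X$ is $D_0$-doubling up to scale $t_0$, which is $(a)$. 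The final sentence of the statement holds because each of $(a)$, $(b)$, $(c)$ forces condition $(a)$ of Theorem \ref{char_pack}, hence properness and geodesic completeness, and the asserted interchangeability of the constant packages is read off the explicit dependencies in the three implications above.

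The hard part is precisely $(b)\Rightarrow(a)$: Theorem \ref{char_pack}$(c)$ supplies only the a priori bounds $c(r)\le\mu_X(B(x,r))\le C(r)$, and since the ratio $C(2r)/c(r)$ need not remain bounded as $r\to0$, doubling does not follow formally. What rescues the argument is that pure dimensionality forces the upper estimate \eqref{k-inequality1} to reduce to the single monomial $r^n$, with no lower-order contributions from strata of smaller dimension, matching exactly the power in the Croke-type lower bound of Corollary \ref{boundbelowvolume2}; at a point where several strata meet, those suppressed lower-order terms would make the ratio blow up, which is the very reason the doubling property detects pure dimensionality.
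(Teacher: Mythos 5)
Your proposal is correct and follows essentially the same route as the paper: doubling gives packing via Remark \ref{rem-doubpack} and pure dimensionality via Theorem \ref{pure-dimensional}, the equivalence with the volume condition comes from Theorem \ref{char_pack}, and the key implication (b)$\Rightarrow$(a) is obtained exactly as in the paper by matching the upper bound $C\,r^n$ (from \eqref{k-inequality1} with the packing-controlled constant) against the Croke-type lower bound $\tfrac{\omega_n}{2^n}r^n$ of Corollary \ref{boundbelowvolume2}, which works precisely because pure dimensionality makes both estimates carry the same exponent. The only cosmetic differences are the cyclic ordering of the implications and the harmless fact that your constant $C$ (hence $D_0$) may also depend on $\rho_0$ unless you cap $t_0$ by a fixed multiple of $r_0$, as the paper does.
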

\begin{proof}[Proof of Corollary \ref{doubling}] ${}$ The implication (a) $\Rightarrow$  (b) follows from Theorem \ref{pure-dimensional} and  from Remark  \ref{rem-doubpack}together with  Theorem \ref{CAT-packing}.\\
	Assume now $X$  purely $n$-dimensional and $\text{Pack}(3r_0,\frac{r_0}{2})\leq P_0$.
	We recall that by Theorem \ref{char_pack} $n$ can be bounded from above in terms of $P_0$.
	We fix $t_0<\min \lbrace 1, R, \frac{1}{10}r_0, \frac{1}{100}D_\kappa \rbrace$ as in the proof of Theorem \ref{char_pack}. By Theorem \ref{CAT-packing} we know $X$ is proper, so it is easy to check that $\rho_\text{cat}(X) \geq t_0$ by \eqref{ac-radius,cat-radius}. Therefore by Theorem \ref{boundbelowvolume} we have 
	\vspace{-3mm}
	
	$$\mu_X(B(x,t)) \geq c_nt^n = c(P_0)t^n$$
	for any $t\leq t_0$. Moreover, by the same estimate used in the proof of Theorem \ref{char_pack}, and using the fact that $\mu_X$ is just the $n$-dimensional Hausdorff measure, we get
	\vspace{-5mm}
	
	$$\mu_X(B(x,2t)) \leq P_0(1+P_0)^3\cdot \frac{P_0}{2} \cdot  C(P_0)t^n$$
	for any $t\leq t_0$. Hence
	\vspace{-3mm}
	
	$$\frac{\mu_X(B(x,2t))}{\mu_X(B(x,t))} \leq \frac{P_0(1+P_0)^3\cdot \frac{P_0}{2} \cdot C(P_0)}{c(P_0)} =D_0$$
	which  shows the implication (b) $\Rightarrow$ (a). \\
	The equivalence between (b) and (c) is proved in Theorem \ref{char_pack}.
\end{proof}

Finally the doubling condition also implies   the  uniform continuity  of  the natural measure  of annuli: 

\begin{lemma}
	Let $X$ be a complete, geodesic, \textup{GCBA}$^\kappa$-space which is $D_0$-doubling up to scale $t_0$ and satisfies $\rho_{\textup{ac}}(X)\geq \rho_0$. There exists $\beta > 0$, only depending on $D_0$, such that for every $R>0$ and for every positive $\varepsilon  <  \min \left\{ \frac{t_0}{24R}, \frac19 \right\} $ 
	it holds:
	$$\mu_X(A(x,R,(1-\varepsilon)R))\leq   \left( \max \left\{  \frac{24R}{t_0}, 9  \right\} \right)^\beta \cdot  \varepsilon^\beta \cdot \mu_X(B(x,R)).$$
\end{lemma}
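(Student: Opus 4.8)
The estimate is an instance of the \emph{annular decay property} of a metric measure space, and the plan is to deduce it from a \emph{reverse doubling} inequality together with a chaining decomposition of the thin annulus, in the spirit of Buckley's theorem that geodesic doubling spaces enjoy this property. As preliminary reductions: by Theorem~\ref{pure-dimensional} the space is purely $n$-dimensional, so $\mu_X=\mathcal H^n\llcorner X$, and by (the proof of) Corollary~\ref{doubling} the measure is uniformly Ahlfors $n$-regular up to scale $t_0$, i.e. $c_1r^n\le\mu_X(B(y,r))\le C_1r^n$ for all $y$ and all $r\le t_0$, with $c_1,C_1$ depending only on $D_0,n$. If $n=0$ the statement is trivial, so assume $n\ge1$; then spheres are $\mu_X$-null and $\mu_X(B(x,R))=\sum_j\mu_X(A_j)$ for any countable partition of $B(x,R)\setminus\{x\}$ into concentric half-open shells. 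We may also assume $t_0\le 1$ (a $D_0$-doubling condition up to scale $t_0$ holds in particular up to scale $\min\{t_0,1\}$) and that $A(x,R,(1-\varepsilon)R)\ne\emptyset$, otherwise there is nothing to prove; since $X$ is geodesic the latter guarantees a point at every distance $\le(1-\varepsilon)R$ from $x$, which keeps the contraction maps and intermediate points used below well defined.

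\emph{Step 1 (reverse doubling).} I would first establish that there is $\lambda=\lambda(D_0)\in(0,1)$ with $\mu_X(B(x,\rho))-\mu_X(B(x,\rho/2))\ge(1-\lambda)\mu_X(B(x,\rho))$ whenever $\rho\le t_0$ and $B(x,\rho)\ne X$: a point $y$ on a geodesic from $x$ with $d(x,y)=\tfrac34\rho$ satisfies $B(y,\rho/4)\subseteq B(x,\rho)\setminus B(x,\rho/2)$ and $B(x,\rho)\subseteq B(y,2\rho)$, so three applications of the doubling inequality at $y$ give $\mu_X(B(y,\rho/4))\ge D_0^{-3}\mu_X(B(x,\rho))$, and one may take $\lambda=1-D_0^{-3}$. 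Set $\beta:=\log_2(1/\lambda)$, which depends only on $D_0$. Iterating, $\mu_X(B(x,2^{-K}\rho))\le\lambda^K\mu_X(B(x,\rho))$ for $\rho\le t_0$, and for a suitable $K=K(D_0)$ this yields $\mu_X(B(x,2^{-K}R))\le\tfrac1{2D_0}\mu_X(B(x,R))$ when $R\le t_0$.

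\emph{Step 2 (the case $R\le t_0$).} Put $\sigma=2^{-K}$, $\rho_j=(1-\varepsilon)^jR$, let $m$ be the largest index with $\rho_m\ge\sigma R$ — so $m\ge c(D_0)/\varepsilon$ because $\varepsilon<\tfrac19$ — and $S_j=A(x,\rho_{j-1},\rho_j)$, $j=1,\dots,m$; thus $S_1$ is exactly the annulus in the statement, the $S_j$ are disjoint subsets of $B(x,R)$ (so $\sum_{j=1}^m\mu_X(S_j)\le\mu_X(B(x,R))$), and by Step~1 and one doubling step $\sum_{j=1}^m\mu_X(S_j)=\mu_X(B(x,R))-\mu_X(B(x,\rho_m))\ge\tfrac12\mu_X(B(x,R))$. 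The heart of the argument is to bound a single thin shell: using that $\varepsilon R<t_0/24$ is a tiny-ball scale, one estimates $\mu_X(S_1)$ from above by chaining — covering $S_1$ by balls of radius $\varepsilon R$ with bounded overlap (possible by doubling), pushing each of these balls toward $x$ along a geodesic through a chain of $\asymp1/\varepsilon$ controlled balls, and applying reverse doubling along the chain together with the uniform volume bounds of Theorem~\ref{boundbelowvolume} and \eqref{k-inequality1} — to obtain $\mu_X(S_1)\le C_2(D_0,n)\,\varepsilon^\beta\,\mu_X(B(x,R))$. After adjusting the numerical constants so that $C_2$ is absorbed into $9^\beta$, this reads $\mu_X(S_1)\le(\max\{9,24R/t_0\})^\beta\,\varepsilon^\beta\,\mu_X(B(x,R))$ for $R\le t_0$.

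\emph{Step 3 (large radius) and the main obstacle.} For $R>t_0$ the reverse doubling of Step~1 runs only down to scale $t_0$, which is precisely why the factor $\max\{24R/t_0,9\}$ occurs. I would cover $\overline B(x,R)$ by balls of radius $t_0/12$, note that only those meeting the sphere $\{d(x,\cdot)=R\}$ contribute to the annulus, estimate each radial slab $A(x,R,(1-\varepsilon)R)\cap B(y_i,t_0/12)$ by the case already treated applied to the $1$-Lipschitz function $d(x,\cdot)$ (which near the contributing centres decreases at unit rate along the geodesic to $x$, so its sublevel sets cut the small ball into genuinely thin slabs), and then sum and divide by $\mu_X(B(x,R))$, bounded below by Ahlfors regularity up to $t_0$ together with doubling beyond it; this produces the extra power of $R/t_0$. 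The main difficulty throughout is Step~2, namely converting ``$S_1$ is thin'' into a quantitative gain: the contraction maps give only the useless inequality $\mu_X(S_j)\le 2^n\mu_X(S_1)$ for free, so the geodesic chaining is genuinely needed, and it is there that geodesic completeness, the tininess of the scale $\varepsilon R$, and the uniform small-scale Ahlfors regularity all enter, and that the exponent $\beta$ is fixed by the reverse doubling constant.
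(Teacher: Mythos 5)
Your Step 2 contains the entire difficulty of the lemma and does not actually prove it: the sentence asserting $\mu_X(S_1)\le C_2(D_0,n)\,\varepsilon^{\beta}\,\mu_X(B(x,R))$ \emph{is} the statement to be shown (for $R\le t_0$), and the one-line description of the chaining does not amount to an argument. Applying the doubling inequality along a chain of $\asymp 1/\varepsilon$ balls of radius $\varepsilon R$ accumulates a factor of order $D_0^{c/\varepsilon}$, which is fatal; and ``reverse doubling along the chain'' has no meaning here, since your Step 1 inequality compares \emph{concentric} balls centered at $x$ (it controls the mass near the center, not the thin outer shell), so the exponent $\beta=\log_2(1/\lambda)$ you fix there never enters a valid estimate of $S_1$. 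The reduction to Ahlfors regularity does not help either: a Vitali covering of $S_1$ by $\varepsilon R$-balls gives at most $C\varepsilon^{-n}$ balls each of measure $\le C_1(\varepsilon R)^n$, hence only $\mu_X(S_1)\lesssim \mu_X(B(x,R))$ with no gain in $\varepsilon$ — as you yourself observe, the contraction map gives an inequality in the useless direction, so some genuinely new comparison is needed and is missing. The mechanism the paper uses (following Prop.\ 11.5.3 of \cite{HKNS15}, i.e.\ the Buckley/Colding--Minicozzi scheme) is a \emph{one-step shell-to-shell} comparison: pushing each point of the thin outer annulus inward by $2t$ along a geodesic to $x$ and covering, one gets $\mu_X(A(x,R,R-t))\le D_0^4\,\mu_X(A(x,R-t,R-3t))$ for all $t\le\min\{t_0/8,R/3\}$, hence $\mu_X(A(x,R,R-t))\le \frac{D_0^4}{1+D_0^4}\mu_X(B(x,R))$; iterating this over thicknesses $t_m=\frac{1}{2\cdot 3^m}R$ gives geometric decay in $m$, with $\beta=\log_3\frac{1+D_0^4}{D_0^4}$ determined by the one-step constant, and the prefactor $\left(\max\{24R/t_0,9\}\right)^{\beta}$ simply records that the iteration can only start at thickness $\sim\min\{t_0,R\}$ — so no separate large-radius case is needed.

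Your Step 3 also does not close: doubling is assumed only up to scale $t_0$, so ``doubling beyond it'' is unavailable, and, more seriously, the number of $t_0/12$-balls meeting the sphere of radius $R$ is in general exponential in $R/t_0$ for GCBA-spaces (the packing function of Theorem \ref{CAT-packing} grows exponentially), so summing per-ball slab estimates and dividing by $\mu_X(B(x,R))\ge c_1t_0^{n}$ cannot produce the polynomial factor $(24R/t_0)^{\beta}$. If you localize to scale $t_0$ you still need, inside each small ball, exactly the shell-to-shell comparison above; once you have that comparison, the paper's choice of the starting index $m_0=\lceil\log_3(R/2t_R)\rceil$ with $t_R=\min\{t_0/8,R/3\}$ handles all radii $R$ at once.
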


\begin{proof}
	The proof is exactly the same as in  Proposition 11.5.3 of \cite{HKNS15}, with a minor modification  due to the fact that we assume that $\mu_X$ is  doubling only up to  scale $t_0$. 
	Actually, arguing as in the first part of the proof of Proposition 11.5.3 of \cite{HKNS15}, one deduces that
	\begin{equation}\label{eqHKNS}
		\mu_X(A(x, R, R-t)) \leq D_0^4\cdot \mu_X(A(x, R-t, R-3t))
	\end{equation}
	for all $x\in X$ and all positive $t\leq \min\left\{ \frac{t_0}{8}, \frac{R}{3}\right\}=:t_R$.
	From (\ref{eqHKNS}) we deduce that   for all $t \leq t_R$ it holds
	$$\mu_X\left(A(x, R, R -t )\right)
	\leq D_0^4 \Big( \mu_X(B(x,R)) - \mu_X \left( A (x, R, R - t)\right) \Big)
	$$
	hence  
	$$\mu_X(A(x,R, R - t)) \leq \left(\frac{D_0^4}{1+D_0^4}\right) \cdot \mu_X (B(x,R))$$
	Setting $t_m=\frac{1}{2\cdot 3^m}$ one then shows by induction as in  \cite{HKNS15} that 
	$$\mu_X \Big( A(x,R, (1-t_m)R) \Big) \leq \left(\frac{D_0^4}{1+D_0^4}\right)^{m+1-m_0} \cdot \mu_X (B(x,R))$$
	for all $m \geq m_0 = \left \lceil{ \log_3 (\frac{R}{2t_R} ) } \right \rceil $.
	Our claim then  follows for $\varepsilon \leq \min \left\{ \frac{t_0}{24R}, \frac19 \right\}$ choosing $\beta = \log_3\left(\frac{1+D_0^4}{D_0^4}\right)$. Indeed for every such $\varepsilon$ we choose the unique integer $m\geq m_0$ such that $t_{m+1} \leq \varepsilon \leq t_m.$
	Therefore we have
	\begin{equation*}
		\begin{aligned}
			\mu_X(A(x,R,(1-\varepsilon)R))&\leq \mu_X(A(x,R,(1-t_m)R)) \\ 
			&\leq \left(\frac{D_0^4}{1+D_0^4}\right)^{m+1-m_0} \cdot \mu_X (B(x,R)).
		\end{aligned}
	\end{equation*}
	Using the fact that $m+1 \geq -\log_3 2\varepsilon$ we get
	$$\mu_X(A(x,R,(1-\varepsilon)R))\leq  (2\cdot 3^{m_0})^\beta \cdot \varepsilon^\beta\cdot \mu_X(B(x,R)).$$
	Since $m_0 \leq \log_3\left(\frac{R}{2t_R}\right) + 1$ the thesis follows.
\end{proof}


As a consequence we deduce  that for $D$-doubling GCBA-spaces the  measure  of balls is continuous under the Gromov-Hausdorff convergence, which sharpens Lemma \ref{volume-convergence}:
\begin{cor}
	\label{volume-convergence-uniform}
	Let $X_n$ be a sequence of  geodesic, \textup{GCBA}{$^\kappa$}-spaces which  are $D_0$-doubling up to scale $t_0$ and satisfying $\rho_{\textup{ac}}(X)\geq \rho_0$. 
	Assume that  the $X_n$ converge in the pointed Gromov-Hausdorff sense to some \textup{GCBA}-space $X$ and let  $x_n\in X_n$ be  a sequence of points converging to $x\in X$. Then for any $R\geq 0$ it holds
	$$\mu_X(B(x,R)) = \lim_{n\to +\infty}\mu_{X_n}(B(x_n,R)).$$
\end{cor}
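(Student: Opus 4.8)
The plan is to trap both $\liminf_{n}\mu_{X_n}(B(x_n,R))$ and $\limsup_{n}\mu_{X_n}(B(x_n,R))$ between $\mu_X(B(x,R))$ and $\mu_X(\overline B(x,R))$, and then to prove that the metric sphere $\overline B(x,R)\setminus B(x,R)$ is $\mu_X$-null, which forces the two traps to coincide. As preliminary reductions I would note that we may assume $t_0\le\rho_0$ (a $D_0$-doubling condition up to scale $t_0$ is also one up to any smaller scale), so by Remark~\ref{rem-doubpack} each $X_n$ is $D_0^4$-packed at scale $t_0/4<\rho_{\textup{ac}}(X_n)/3$; Theorem~\ref{CAT-packing} then makes every $X_n$ proper and geodesically complete, $X$ is proper as a pointed limit of proper spaces (so $\mu_X$ is Radon and \eqref{convergence-measure} applies), and the estimate \eqref{volumeestimate} in the proof of Theorem~\ref{char_pack} yields a uniform bound $\mu_{X_n}(B(x_n,R+1))\le V_1$ with $V_1$ depending only on $D_0,t_0,\rho_0,\kappa,R$. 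The case $R=0$ being trivial, assume $R>0$.

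The first step is a two-sided semicontinuity estimate: for every $0<\eta<R$,
$$\mu_X(\overline B(x,R-\eta))\ \le\ \liminf_{n}\mu_{X_n}(B(x_n,R))\ \le\ \limsup_{n}\mu_{X_n}(B(x_n,R))\ \le\ \mu_X(\overline B(x,R+\eta)).$$
Both outer inequalities come from the weak convergence $\mu_{X_n}\to\mu_X$, i.e. \eqref{convergence-measure}: for the left one apply it to the compact sets $K_n=\overline B(x_n,R-\eta)$, which converge to $\overline B(x,R-\eta)$ (each point of the limit ball is approximated by taking a nearby approximating point $z_n$ and retracting it along a geodesic $[x_n,z_n]$ to radius $\min\{d(x_n,z_n),R-\eta\}$), and use $B(K_n,\delta)\subseteq B(x_n,R)$ for $\delta<\eta$; for the right one apply it to $K_n=\overline B(x_n,R)\to\overline B(x,R)\subseteq\overline B(x,R+\eta)$ and use $B(x_n,R)\subseteq K_n\subseteq B(K_n,\delta)$. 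Letting $\eta\to0$ and using that $\mu_X$ is Radon already gives $\mu_X(B(x,R))\le\liminf_n\mu_{X_n}(B(x_n,R))$, a refinement of Lemma~\ref{volume-convergence}.

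The crucial step is showing $\mu_X(\overline B(x,R))=\mu_X(B(x,R))$. Applying the previous step at radii $R+\delta$ and $R-2\delta$ (for small $\delta>0$), together with the trivial inclusion $B(x_n,R+\delta)\setminus B(x_n,R-2\delta)\subseteq A(x_n,R-2\delta,R+\delta)$ and the elementary inequality $\liminf_n(u_n+v_n)\le\limsup_n u_n+\limsup_n v_n$, one obtains
$$\mu_X(\overline B(x,R))-\mu_X(B(x,R))\ \le\ \limsup_{n}\mu_{X_n}\bigl(A(x_n,R-2\delta,R+\delta)\bigr).$$
Now I invoke the annulus estimate proved just before this corollary, applied to each $X_n$ with outer radius $R+\delta$ and $\varepsilon=3\delta/(R+\delta)$ (so that $(1-\varepsilon)(R+\delta)=R-2\delta$): for $\delta$ small enough that $\varepsilon<\min\{t_0/(24(R+\delta)),\,1/9\}$ this gives $\mu_{X_n}(A(x_n,R-2\delta,R+\delta))\le C\,\delta^\beta$, where $\beta>0$ and $C$ depend only on $D_0,t_0,R,V_1$ and \emph{not} on $n$ or $\delta$, since the exponent $\beta$ and the multiplicative constant in that lemma depend only on $D_0,t_0$ while $\mu_{X_n}(B(x_n,R+\delta))\le V_1$ uniformly. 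Letting $\delta\to0$ forces $\mu_X(\overline B(x,R))-\mu_X(B(x,R))=0$. Finally, letting $\eta\to0$ in the two-sided estimate and using continuity of $\mu_X$ from above (legitimate since $\mu_X(\overline B(x,R+1))<\infty$) together with $\mu_X(\overline B(x,R))=\mu_X(B(x,R))$ collapses the sandwich to $\lim_n\mu_{X_n}(B(x_n,R))=\mu_X(B(x,R))$.

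I expect the main obstacle to be exactly this last-but-one step: weak convergence of the natural measures never by itself separates $\mu_X(B(x,R))$ from $\mu_X(\overline B(x,R))$, so the entire improvement over Lemma~\ref{volume-convergence} rests on the fact that the \emph{uniform} annulus estimate available for doubling GCBA-spaces, fed with the uniform volume bound $V_1$ coming from the packing condition, annihilates the boundary sphere in the limit. The surrounding bookkeeping — the threefold shift of radii $R-2\delta<R-\delta<R<R+\delta$ and the $\liminf$/$\limsup$ manipulations — is routine but must be arranged so that the annulus estimate is only ever invoked at an admissible scale $\varepsilon(\delta)$.
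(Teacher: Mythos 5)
Your proof is correct and follows essentially the same route as the paper: derive a uniform packing/volume bound from the doubling hypothesis via Remark \ref{rem-doubpack}, Theorem \ref{CAT-packing} and \eqref{volumeestimate}, then feed the uniform annulus estimate of the preceding lemma into the weak convergence \eqref{convergence-measure}. The only difference is that you spell out the bookkeeping (the two-sided sandwich and the $\mu_X$-nullity of the sphere $\overline B(x,R)\setminus B(x,R)$) which the paper compresses into ``the proof then follows directly from \eqref{convergence-measure}''.
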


\begin{proof}
	By Remark \ref{rem-doubpack} and Theorem \ref{CAT-packing} the space $X$  is $P_0$-packed at some scale $r_0 \leq \rho_0/3$ for  $P_0, r_0$ only depending on $D_0,t_0,\rho_0$ and $\kappa$. By Theorem \ref{char_pack}, precisely by \eqref{volumeestimate}, the balls of radius $R$ in $X$ have  uniformly bounded volume, that is  
	$$\mu_X(B(x,R))\leq  C(R)$$
	for a universal  function $C(R)$ only depending on $D_0,t_0,\rho_0$ and $R$.\linebreak
	By the above Corollary for all  $R> 0$ and  $\varepsilon > 0$ there exists $\delta > 0$, \linebreak depending only on $D_0,t_0$ and $R$ such that for any $x_n  \in X_n$  it holds \linebreak 
	$\mu_{X_n} (A(x_n,R+\delta,R))\leq \varepsilon.$ 
	The proof then follows directly from $\eqref{convergence-measure}$.
\end{proof}

\vspace{3mm}	
\section{Compactness of packed and doubling GCBA-spaces}\label{sec-compactness}
The aim of this section is to study properties that are stable under Gromov-Hausdorff convergence and the relations between ultralimits and Gromov-Hausdorff convergence. 
\vspace{1mm}

Throughout the section we fix $P_0,r_0,\rho_0 > 0$ with $r_0<\rho_0/3$ and $\kappa\in \mathbb{R}$.\linebreak  
We denote by GCBA$^\kappa_\text{pack}(P_0, r_0; \rho_0)$ the class of complete, geodesic, GCBA$^\kappa$  metric spaces $X$ with $\rho_\text{ac}(X)\geq \rho_0$ and Pack$(3r_0,\frac{r_0}{2}) \leq P_0$. Then we have the following result which is strictly related to Gromov's Precompactness Theorem, see \cite{Gr81}:
\begin{theo}
	\label{compactness-packing}
	The class $\textup{GCBA}^\kappa_{\textup{pack}}(P_0,r_0;\rho_0)$ is closed under ultralimits and compact under pointed Gromov-Hausdorff convergence. 
\end{theo}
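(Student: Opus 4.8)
The plan is to reduce pointed Gromov--Hausdorff compactness to two ingredients: precompactness of the class (which is immediate from the uniform bounds of Theorem \ref{CAT-packing}) and closure of the class under ultralimits; compactness then follows because any pointed Gromov--Hausdorff limit of a sequence is isometric to its $\omega$-ultralimit for every non-principal ultrafilter $\omega$, so a limit of members of the class is again a member. For precompactness I would observe that, by Theorem \ref{CAT-packing}, every $X\in\textup{GCBA}^\kappa_{\textup{pack}}(P_0,r_0;\rho_0)$ is proper and geodesically complete and satisfies $\textup{Pack}(R,r)\le P_0(1+P_0)^{R/r-1}$ for $r\le r_0$, hence by \eqref{packing-covering-2} the covering numbers of all its balls are bounded by a function of $R,r,P_0,r_0$ only; Gromov's precompactness theorem then gives precompactness in the pointed Gromov--Hausdorff topology, and it remains to prove closure under ultralimits.

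The heart of the argument is to show that, for a sequence $(X_n,x_n)$ in the class and a non-principal ultrafilter $\omega$, the ultralimit $(X_\omega,x_\omega)=\omega\text{-}\lim(X_n,x_n)$ lies in the class. Completeness is automatic; properness, hence separability, follows from the uniform covering bounds above via Proposition \ref{ultralimit-proper}; and $X_\omega$ is geodesic since ultralimits of arc-length parametrized geodesics are geodesics. By \eqref{ac-radius,cat-radius} one has $\rho_{\textup{cat}}(X_n)\ge\rho_1:=\min\{D_\kappa/2,\rho_0\}$ for all $n$, and the CAT$(\kappa)$ comparison inequality on balls of this fixed radius is preserved in the ultralimit, so $X_\omega$ is locally CAT$(\kappa)$ with $\rho_{\textup{cat}}(X_\omega)\ge\rho_1$. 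For the almost-convexity radius I would argue as in Proposition \ref{prop-semicontinuity}: its defining inequality is a closed condition on pairs of geodesics issuing from a common point, and geodesics of length $<\rho_0$ in $X_\omega$ are ultralimits of geodesics in the $X_n$, which yields $\rho_{\textup{ac}}(X_\omega)\ge\rho_0$ (and uniqueness of short geodesics). Finally $X_\omega$ is locally geodesically complete: a short local geodesic of $X_\omega$ is approximated by local geodesics of the $X_n$, which are honest geodesics because $\rho_{\textup{cat}}(X_n)\ge\rho_1$; extending each of them past its endpoint by the geodesic completeness of $X_n$ and passing to the ultralimit produces the desired extension, the extended pieces remaining genuine geodesics, hence controlled by their endpoints uniformly in $n$, as long as one works at scales $<\rho_1$. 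Since $X_\omega$ is complete, it is then geodesically complete, so $X_\omega$ is a complete, geodesic GCBA$^\kappa$-space with $\rho_{\textup{ac}}(X_\omega)\ge\rho_0$.

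It remains to verify that $X_\omega$ inherits the packing bound \emph{with the same constant}, i.e.\ $\textup{Pack}_{X_\omega}(3r_0,r_0/2)\le P_0$. A direct ultralimit argument is not quite enough: lifting an $r_0$-separated $N$-tuple of $\overline B_{X_\omega}(x_\omega,3r_0)$ only places its representatives, for $\omega$-a.e.\ $n$, in a slightly larger ball $\overline B_{X_n}(x_n,3r_0+\varepsilon)$, giving only the weaker bound $N\le P_0(1+P_0)^5$. To recover the optimal constant I would use that the $N$-tuple $\{y^1,\dots,y^N\}$ has a uniform gap $\eta>0$ in its pairwise distances (finitely many pairs, each $>r_0$) and replace each $y^i$ by the point $z^i$ on a geodesic $[x_\omega,y^i]$ at distance $\max\{0,\,d(x_\omega,y^i)-\eta/4\}$ from $x_\omega$: the points $z^i$ still lie at pairwise distance $>r_0$ by the triangle inequality, but now inside the \emph{open} ball $B_{X_\omega}(x_\omega,3r_0)$; lifting the $z^i$ then gives, for $\omega$-a.e.\ $n$, an $r_0$-separated $N$-tuple inside $\overline B_{X_n}(x_n,3r_0)$, whence $N\le P_0$. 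This shows $X_\omega\in\textup{GCBA}^\kappa_{\textup{pack}}(P_0,r_0;\rho_0)$, proving closure under ultralimits; combined with the precompactness of the first step, this yields pointed Gromov--Hausdorff compactness.

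I expect the main obstacle to be precisely the places where ``closed condition $\Rightarrow$ stable under ultralimits'' is not literally formal. The serious point is the persistence of geodesic completeness (together with the non-collapse of $\rho_{\textup{ac}}$) in the limit: this genuinely relies on the uniform lower bound $\rho_{\textup{cat}}(X_n)\ge\rho_1$, which is what turns short local geodesics into honest geodesics that can be tracked along the ultralimit and extended there. The second, more technical point is the recovery of the \emph{optimal} packing constant $P_0$, for which a plain lifting does not suffice and one must first push the separated points slightly towards the basepoint, using the gap $\eta$ in the separation. Everything else (completeness, properness, geodesicity, preservation of the CAT$(\kappa)$ inequality, and the identification of Gromov--Hausdorff limits with ultralimits) is routine given the results already available in the excerpt.
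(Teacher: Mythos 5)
The one step you cannot treat as soft is the persistence of the bound $\rho_{\textup{ac}}(X_\omega)\geq \rho_0$, and this is where your argument has a genuine gap. You justify it by saying that the defining inequality is a closed condition and that ``geodesics of length $<\rho_0$ in $X_\omega$ are ultralimits of geodesics in the $X_n$''. That claim is exactly what has to be proved, and it is not formal: as recalled in the Appendix, geodesics of an ultralimit need not be limits of geodesics, and Proposition \ref{ultralimitCAT} only guarantees this inside genuinely \textup{CAT}$(\kappa)$ regions, i.e.\ for lengths controlled by $\rho_1=\min\{D_\kappa/2,\rho_0\}$ (via \eqref{ac-radius,cat-radius}). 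When $\kappa\leq 0$, or when $\rho_0\leq D_\kappa/2$, there is indeed nothing to do, since then $\rho_{\textup{ac}}(X_\omega)\geq\rho_{\textup{cat}}(X_\omega)\geq\rho_0$. But when $\kappa>0$ and $\rho_0>D_\kappa/2$, a geodesic of $X_\omega$ issuing from $y$ of length between $D_\kappa/2$ and $\rho_0$ could a priori branch and need not come from geodesics of the $X_n$, so the $2t$-convexity inequality of the $X_n$ cannot simply be passed to it. The paper devotes Proposition \ref{prop-semicontinuity} precisely to this: one bootstraps uniqueness of geodesics from $y$ beyond the \textup{CAT}$(\kappa)$-radius by assuming branching first occurs at some radius $\rho_1\in(D_\kappa/2,\rho_0)$, and derives contradictory estimates \eqref{L_bigger_l} (a lower bound on the separation via $\kappa$-comparison in the \textup{CAT}$(\kappa)$ ball around the branch point) and \eqref{L_smaller} (an upper bound via the $\frac{2r}{R}$-Lipschitz contraction along the approximating geodesics in $X_n$, which exist only because of the uniqueness already established at smaller radii). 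Only after this does one know that every short geodesic of $X_\omega$ is the ultralimit of geodesics of the $X_n$, and only then does the almost-convexity inequality transfer. Your proposal skips this entirely, while correctly flagging geodesic completeness (which is in fact the easier of the two stability issues, handled by Corollary \ref{ultralimit-stability}).

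Two smaller points. First, Proposition \ref{ultralimit-proper} concerns the ultralimit of a \emph{constant} sequence and does not give properness of $X_\omega$; the paper instead first verifies the packing bound on $X_\omega$ and then applies Theorem \ref{CAT-packing} to $X_\omega$ itself to get properness (alternatively one can argue total boundedness of limit balls from the uniform covering bounds, but not by citing that proposition). Second, your detour for recovering the sharp constant $P_0$ (pushing the separated points towards the center by $\eta/4$) is correct but unnecessary: since the spaces are geodesic, Lemma \ref{ultralimitballs} already gives $\overline{B}(y,3r_0)=\omega\text{-}\lim\overline{B}(y_n,3r_0)$, so every point of the limit ball has representatives inside the balls of the \emph{same} radius, and the paper's lifting argument yields $N\leq P_0$ directly (and note the bound must be checked at every center $y\in X_\omega$, not only at the basepoint, though the same argument applies verbatim). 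The remaining structure of your proof (closure under ultralimits plus identification of GH limits with ultralimits via Proposition \ref{Jansen}) matches the paper's.
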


\begin{proof}
	Any space $X\in \text{GCBA}^\kappa_{\text{pack}}(P_0,r_0;\rho_0)$ is proper by Theorem \ref{CAT-packing}, geodesic and geodesically complete. Consider any sequence $(X_n,x_n)$ of elements of $\text{GCBA}^\kappa_{\text{pack}}(P_0,r_0;\rho_0)$ and any non-principal ultrafilter $\omega$.
	For any $n$ we have $\rho_\text{cat}(X_n) \geq \min\lbrace \frac{D_\kappa}{2}, \rho_0 \rbrace = \rho'_0 > 0$ from \eqref{ac-radius,cat-radius}.
	Then by Corollary \ref{ultralimit-stability} we have that $X_\omega$ is a complete, locally geodesically complete, locally CAT$(\kappa)$, geodesic metric space with again $\rho_\text{cat}(X_\omega) \geq \rho'_0$. \\
	We want to prove now that Pack$(3r_0, \frac{r_0}{2})\leq P_0$ holds on $X_\omega$.
	We fix a point $y=(y_n) \in X_\omega$: by  Lemma \ref{ultralimitballs} we have   $\overline{B}(y,3r_0) = \omega$-$\lim \overline{B}(y_n,3r_0)$. Let $z^i = (z^i_n)$, $i=1,\ldots,N$ be a $r_0$-separated subset of $\overline{B}(y,3r_0)$, that is  $d(z^i, z^j) > r_0$ for all $i\neq j$. For any couple $i\neq j$ we have $d(z^i_n, z^j_n)>r_0$, $\omega$-a.s. Since there are a finite number of couples,  $d(z^i_n, z^j_n)>r_0$ for any $i \neq j$, $\omega$-a.s. \linebreak 
	Moreover the points $z^i_n$ belong to $\overline{B}(y_n,3r_0)$ for any $i$. So, $\omega$-a.s., there is a $r_0$-separated subset of $\overline{B}(y_n,3r_0)$ of cardinality $N$. Therefore $N\leq P_0$ and in particular Pack$(3r_0, \frac{r_0}{2})\leq P_0$ on $X_\omega$. We can now apply again Theorem \ref{CAT-packing} to conclude that $X_\omega$ is proper, hence a GCBA$^\kappa$ metric space. \\
	To finish the first part of the proof we need to show that $\rho_\text{ac}(X_\omega)\geq \rho_0$. \linebreak
	This is the object of the following:
	
	\begin{prop}
		\label{prop-semicontinuity}
		Let $(X_n,x_n)$ be  \textup{GCBA}$^\kappa$-spaces   converging    to $(X,x)$ with respect to the  pointed Gromov-Hausdorff topology. Then: $$\rho_{\textup{ac}} (X) \geq \limsup_{n \rightarrow \infty} \rho_{\textup{ac}} (X_n)$$
	\end{prop}
	
	\noindent We postpone the proof of this proposition	to end  the proof of Theorem \ref{compactness-packing}. \linebreak
	In order to prove the compactness under pointed Gromov-Hausdorff convergence we take a sequence of spaces $(X_n,x_n)\subseteq \text{GCBA}^\kappa_\text{pack}(P_0,r_0;\rho_0)$ and we fix any non-principal ultrafilter $\omega$. Let $(X_\omega, x_\omega) \in \text{GCBA}^\kappa_\text{pack}(P_0,r_0;\rho_0)$ be the ultralimit. Since the limit is proper we can apply Proposition \ref{Jansen} to find a subsequence $(X_{n_k},x_{n_k})$ that converges in the pointed Gromov-Hausdorff sense to $(X_\omega, x_\omega)$, showing the compactness part of the statement.
\end{proof}
\vspace{2mm}

\noindent {\em Proof	of Proposition \ref{prop-semicontinuity}}. Assume that  $\rho_{\text{ac}}(X_n) \geq \rho_0>0$ for infinitely many $n$. 
Take any non-principal ultrafilter  $\omega$: since by definition $X$ is proper then  by Proposition \ref{Jansen} we have    $X=\omega\text{-}\lim X_n$. 
If $\rho_0 \leq \frac{D_\kappa}{2}$  we have $\rho_{\text{cat}}(X_n) \geq \rho_0$  for all $n$, so by Corollary \ref{ultralimit-stability} we conclude immediately that $\rho_{\text{ac}}(X_\omega) \geq \rho_{\text{cat}}(X_\omega)\geq \rho_0$. \\
Assume now that $\rho_0 > \frac{D_\kappa}{2}$; in particular  as before we deduce  $\rho_{\text{cat}}(X_\omega) = \frac{D_\kappa}{2}$. \\
The strategy is the following: we claim that for any $y=(y_n) \in X_\omega$ and for any point $z=(z_n)$ at distance $<\rho_0$ from $y$ there exists a unique geodesic joining $y$ to $z$. In particular this geodesic must coincide with the ultralimit of the geodesics $[y_n,z_n]$ of length $<\rho_0$. If this is true then for any two points $z=(z_n), w=(w_n)$ of $X_\omega$ at distance $<\rho_0$ from $y$ and any $t\in [0,1]$  we get 
$$d(z_t, w_t) = \omega\text{-}\lim d((z_n)_t, (w_n)_t) \leq \omega\text{-}\lim 2td(z_n,w_n) = 2td(z,w)$$
which implies  that   $\rho_{\text{ac}}(y) \geq \rho_0$ for any $y\in X_\omega$.\\
So suppose our claim is not true: that is 
assume that there exists a point \linebreak $y=(y_n)  \in X_\omega$,  a radius $  \rho_1 \in ( \frac{D_\kappa}{2}, \rho_0)$  such that any point at distance $<\rho_1$ from $y$ is joined to $y$ by a unique geodesic, while for arbitrarily small values $\varepsilon>0$  there exist   two different geodesics $\gamma_\varepsilon, \gamma_\varepsilon'$ joining $y$ to the same point $z_\varepsilon = (z_{\varepsilon,n})$ with  $d(y, z_\varepsilon)=\rho_1 + \varepsilon$.
\\
We consider the points $w_\varepsilon = \gamma_\varepsilon (\rho_1 - \varepsilon)$, $w_\varepsilon' = \gamma_\varepsilon' (\rho_1 - \varepsilon)$ and set $\ell = d(w_\varepsilon, w_\varepsilon')$. We observe we have $\ell\leq 4\varepsilon$ and $\ell > 0$ since the ball of radius $\frac{D_\kappa}{2}$ around $z_\varepsilon$ is CAT$(\kappa)$ by assumption, so uniquely geodesic. Similarly, we   consider the points $u_\varepsilon = \gamma_\varepsilon (\rho_1 + \varepsilon - \frac{D_\kappa}{2})$, $u_\varepsilon' = \gamma_\varepsilon' (\rho_1 + \varepsilon - \frac{D_\kappa}{2})$ and we set $L = d(u_\varepsilon, u_\varepsilon')$.
Our first step is to prove that
\vspace{-3mm}	

\begin{equation}
	\label{L_bigger_l}
	L = d(u_\varepsilon, u_\varepsilon') \geq \frac{D_\kappa}{8} \cdot \frac{\ell}{2\varepsilon} =: \delta.
\end{equation}		
\noindent 	So suppose by contradiction that \eqref{L_bigger_l} does not hold.
First of all we remark that $\delta \leq \frac{D_\kappa}{2}$, since $\ell \leq 4\varepsilon$. 
Then, as the ball $B(z_\varepsilon, \frac{D_\kappa}{2})$ is CAT$(\kappa)$, we can consider the $\kappa$-comparison triangle $\overline{\Delta}^\kappa(\overline{z_\varepsilon}, \overline{u_\varepsilon}, \overline{u_\varepsilon'})$.  
As usual we denote by $\overline{w_\varepsilon}, \overline{w_\varepsilon'}$ the comparison points of $w_\varepsilon$ and $w_\varepsilon'$, respectively.  
By definition the edges of $\overline{\Delta}^\kappa(\overline{z_\varepsilon}, \overline{u_\varepsilon}, \overline{u_\varepsilon'})$ have length $\frac{D_\kappa}{2}, \frac{D_\kappa}{2}, L$.  
We consider another triangle $\Delta(Z,V,V')$ on $M^\kappa_2$ with edges $[Z,V], [Z,V'], [V,V']$ of length respectively $\frac{D_\kappa}{2}, \frac{D_\kappa}{2}, \delta$. We denote by $W, W'$ the points along $[Z,V]$ and $[Z,V']$ at distance $2 \varepsilon$ from $Z$.  Since the contraction map $\varphi^R_r$ towards $Z$ is $\frac{2r}{R}$-Lipschitz and $d(W,Z)=d(W',Z)=2 \varepsilon$ we deduce
$$d(W,W') \leq 2\cdot \frac{  2 \varepsilon}{(D_\kappa /2)} d(V,V') = \frac{8 \varepsilon}{ D_\kappa } \delta = \frac{\ell}{2}.$$
\noindent	Since we are assuming by contradiction that $L < \delta$,  we have by comparison that  $  d(\overline{w_\varepsilon}, \overline{w_\varepsilon'}) < d(W,W')$. So, applying the CAT$(\kappa)$ condition, we obtain 
\vspace{-3mm}	

$$\ell = d(w_\varepsilon, w_\varepsilon') \leq d(\overline{w_\varepsilon}, \overline{w_\varepsilon'}) < d(W,W') \leq \frac{\ell}{2}$$

\vspace{-1mm}	
\noindent	a contradiction. Therefore \eqref{L_bigger_l} holds.\\
Now,   by assumption there exists a unique geodesic from $y$ to any point in $B(y,\rho_1)$. 
Since $d(y, w_\varepsilon) < \rho_1$ by construction,   if $w_\varepsilon = \omega$-$\lim w_{\varepsilon,n} $  then the ultralimit of the geodesics $\gamma_{\varepsilon,n} = [y_n, w_{\varepsilon,n}]$ is the unique geodesic joining $y$ to $w_\varepsilon$, that is $\gamma_\varepsilon =  \omega\text{-}\lim \gamma_{\varepsilon,n} $. 
Analogously,  if $w_\varepsilon' = \omega$-$\lim w'_{\varepsilon,n}$, we have $\gamma'_\varepsilon =  \omega\text{-}\lim \gamma_{\varepsilon,n} $ where  $\gamma'_{\varepsilon,n} = [y_n, w'_{\varepsilon,n}]$.
Applying the contraction property on $X_n$ from   $R= \rho_1 - \varepsilon$ to $r=\rho_1 + \varepsilon -  D_\kappa/2 $ we get
\begin{equation}
	\label{L_smaller}
	\begin{aligned}
		L= d(u_\varepsilon, u_\varepsilon') &= \omega\text{-}\lim \,\, d\left(\gamma_{\varepsilon,n}(\rho_1 + \varepsilon -  D_\kappa/2 ), \, \gamma'_{\varepsilon,n}(\rho_1 + \varepsilon -  D_\kappa/2 )\right) \\
		& \leq \omega\text{-}\lim  \,\,    \frac{ 2 ( \rho_1 + \varepsilon -  D_\kappa/2) }{ \rho_1 - \varepsilon }     \cdot d(w_{\varepsilon,n}, w'_{\varepsilon, n}) \\
		&=  \frac{2 (\rho_1 + \varepsilon -  D_\kappa/2 )}{\rho_1 - \varepsilon}\cdot \ell.
	\end{aligned}
\end{equation}
As   $\rho_1 > \frac{D_\kappa}{2}$, combining \eqref{L_bigger_l} and \eqref{L_smaller} gives a contradiction for $\varepsilon \rightarrow 0$. \linebreak
We have therefore proved that $\rho_{\text{ac}} (X) \geq \rho_0$.  This implies the upper semi-continuity of the almost-convexity radius since we can apply the same argument to any subsequence.  \qed
\vspace{1mm}

\begin{obs} {\em
		In particular, for any sequence of metric spaces $X_n$ in \linebreak GCBA$^\kappa_\text{pack}(P_0,r_0;\rho_0)$ and for any non-principal ultrafilter $\omega$ the ultralimit $X_\omega$ is a proper space.
		Notice that,   in general, the ultralimit of a sequence of proper spaces is not proper, even if the spaces are really mild.  
		\linebreak For instance, let $(X_n,x_n) = (\mathbb{R}^n,0)$ and $\omega$ be any non-principal ultrafilter. Then $X_\omega$ is isometric to $\ell^2(\mathbb{R})$,  the spaces of sequences  $\lbrace a_n \rbrace$ of real numbers such that $\sum a_n^2 < +\infty$.  This is   a non-proper space of infinite dimension.
		
	}
\end{obs}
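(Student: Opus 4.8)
The plan is to treat the two assertions of the Remark separately. The first one — that the ultralimit $X_\omega$ of a sequence $(X_n)\subseteq\textup{GCBA}^\kappa_{\textup{pack}}(P_0,r_0;\rho_0)$ along any non-principal ultrafilter $\omega$ is proper — requires no new argument: it was already obtained inside the proof of Theorem \ref{compactness-packing}, where we verified that $X_\omega$ again satisfies $\textup{Pack}(3r_0,\frac{r_0}{2})\le P_0$ (a finite $r_0$-separated family in a ball of $X_\omega$ lifts, for $\omega$-a.e.\ $n$, to an $r_0$-separated family of the same cardinality in the corresponding ball of $X_n$) and then invoked Theorem \ref{CAT-packing} to turn this packing bound into properness. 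So here I would simply recall this.

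For the second assertion I would take the explicit example $(X_n,x_n)=(\mathbb{R}^n,0)$, which are proper spaces, fix a non-principal $\omega$, and recall that a point of $X_\omega=\omega\text{-}\lim(\mathbb{R}^n,0)$ is the class of an $\omega$-bounded sequence $(p_n)$ with $p_n\in\mathbb{R}^n$, two such being identified when $\omega\text{-}\lim|p_n-q_n|=0$, the distance being $d_\omega([(p_n)],[(q_n)])=\omega\text{-}\lim|p_n-q_n|$. The first step is to embed $\ell^2(\mathbb{N},\mathbb{R})$ isometrically into $X_\omega$ via the map $\iota$ sending $a=(a_i)_i$ to the class of $p_n=(a_1,\dots,a_n)$: since $|p_n|\to\|a\|_2$ and $|p_n-q_n|\to\|a-b\|_2$ as $n\to\infty$, and a non-principal $\omega$-limit of a convergent sequence equals its ordinary limit, $\iota$ is well defined and isometric. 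The second step is to exhibit the non-properness: the images of the standard basis vectors satisfy $d_\omega(\iota(e_k),\iota(e_j))=\sqrt2$ for $k\ne j$, hence form an infinite $1$-separated subset of $\overline B_{X_\omega}(0,1)$, so that ball is not totally bounded and, $X_\omega$ being complete, not compact; the same family also witnesses $\dim X_\omega=\infty$. This already yields the proper-to-non-proper example the Remark is used for.

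As for the precise phrasing ``$X_\omega$ is isometric to $\ell^2(\mathbb{R})$'', I would point out the one subtle point: the embedding $\iota$ is \emph{not} surjective — for instance the class of $p_n=(0,\dots,0,1)\in\mathbb{R}^n$ lies at distance $1$ from the origin although each of its coordinates $\omega$-limits to $0$ — so $X_\omega$ is really a \emph{non-separable} Hilbert space. That $X_\omega$ is a Hilbert space follows by passing the parallelogram identity to $\omega$-limits, together with the compatible normed vector-space structure an ultralimit of Euclidean spaces (based at their origins) inherits, via Jordan--von Neumann. The only genuinely delicate step is therefore this last identification, with a Hilbert space of the correct (uncountable) density character; if one only wants what the Remark actually uses — an example of proper spaces with non-proper, infinite-dimensional ultralimit — the argument is entirely elementary, the main content being just the infinite $1$-separated set in the unit ball.
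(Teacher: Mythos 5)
Your proposal is correct. For the first assertion you do exactly what the paper intends: the Remark is an ``in particular'' of the proof of Theorem \ref{compactness-packing}, where the packing bound $\textup{Pack}(3r_0,\frac{r_0}{2})\le P_0$ is transferred to $X_\omega$ and Theorem \ref{CAT-packing} converts it into properness, so nothing new is needed there. For the example the paper gives no argument at all, and your fleshed-out treatment is both correct and more careful than the text: the isometric embedding $\iota$ of the square-summable sequences via $a\mapsto[(a_1,\dots,a_n)]$, and the infinite $\sqrt2$-separated family $\{\iota(e_k)\}$ in $\overline B(x_\omega,1)$, give precisely the non-properness and infinite-dimensionality that the Remark is actually invoked for. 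Your ``subtle point'' is also well taken and is a genuine (if harmless) inaccuracy in the paper's phrasing: the class of $(0,\dots,0,1)\in\mathbb{R}^n$ shows $\iota$ is not onto, and an almost-disjoint-family argument (or your density-character remark) shows $X_\omega$ is a \emph{non-separable} Hilbert space of density character the continuum — so it is isometric to $\ell^2$ over an index set of cardinality the continuum (consistent with the notation $\ell^2(\mathbb{R})$), but not to the space of square-summable \emph{sequences} described parenthetically in the Remark. Your sketch of the Hilbert-space structure via the parallelogram law passing to $\omega$-limits is the standard argument and is fine at the level of detail appropriate for a remark; none of this affects the use the paper makes of the example.
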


\vspace{2mm}
The compactness of a class of proper metric spaces $\mathcal{C}$ is hard to achieve since  properness and  dimension are in general not stable under limits. \linebreak 

\noindent In the next theorem we precisely characterize the classes of proper, GCBA$^\kappa$, geodesic metric spaces with almost-convexity radius uniformly boun\-ded from below that are precompact and compact under pointed Gromov-Hausdorff convergence. For this, we need the following slight refinement of the packing condition a scale $r_0$. 
 Given a function $P\colon [0,+\infty) \to \mathbb{N}$, we say that a pointed metric space $(X,x)$ belongs to the class 
 $$\textup{GCBA}_{\text{pack}}^\kappa(P(\cdot),r_0;\rho_0)$$ if $X$ is a complete, geodesic, GCBA$^\kappa$  metric space with $\rho_\text{ac}(X)\geq \rho_0$ and Pack$(\overline{B}(x,R),\frac{r_0}{2}) \leq P(R)$ for all $R>0$. This is equivalent to asking that  the packing costant $P$ of Definition \ref{def-packing} possibly depends also on the distance of the center of the balls from $x$.  The same argument used in Theorem \ref{compactness-packing} shows that the class GCBA$_{\text{pack}}^\kappa(P(\cdot),r_0;\rho_0)$ is closed under ultralimits and therefore compact under pointed Gromov-Hausdorff convergence. \linebreak 
 Moreover we have: 

\begin{theo}
	\label{precompactness}
	Let $\mathcal{C}$ be a class of proper, \textup{GCBA}$^\kappa$, geodesic metric spaces $X$ with $\rho_\textup{ac}(X)\geq \rho_0 > 0$. Then $\mathcal{C}$ is precompact under the pointed Gromov-Hausdorff convergence if and only if there exist   $P(\cdot)$ and  $r_0 > 0$ such that   $$\mathcal{C} \subseteq \textup{GCBA}_{\textup{pack}}^\kappa(P(\cdot),r_0;\rho_0).$$ 
	Moreover $\mathcal{C}$ 
	is compact if and only if it is precompact and closed under ultralimits.
	\linebreak 
\end{theo}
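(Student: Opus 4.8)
The plan is to derive both equivalences from the ``easy'' half of Gromov's precompactness mechanism, combined with three ingredients already available: Theorem~\ref{CAT-packing}, the closure of $\textup{GCBA}_{\textup{pack}}^\kappa(P(\cdot),r_0;\rho_0)$ under ultralimits (hence its compactness) noted just above, and Proposition~\ref{Jansen} relating ultralimits to pointed Gromov--Hausdorff convergence. In the first equivalence the implication ``$\mathcal{C}\subseteq\textup{GCBA}_{\textup{pack}}^\kappa(P(\cdot),r_0;\rho_0)\Rightarrow$ precompact'' is immediate: that class is compact under pointed Gromov--Hausdorff convergence, and every subclass of a precompact class is precompact (a sequence in $\mathcal{C}$ is a fortiori a sequence in the bigger class). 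For the converse I would fix any $r_0$ with $0<r_0<\rho_0/3$ and set, for $R>0$,
$$P(R):=\sup\{\,\textup{Pack}(\overline{B}_X(x,R),\tfrac{r_0}{2})\ :\ X\in\mathcal{C},\ x\in X\,\}\in[0,+\infty].$$
Since every $X\in\mathcal{C}$ is already complete (being proper), geodesic, $\textup{GCBA}^\kappa$ with $\rho_{\textup{ac}}(X)\geq\rho_0$, it suffices to show $P(R)<+\infty$ for every $R$; then $\mathcal{C}\subseteq\textup{GCBA}_{\textup{pack}}^\kappa(P(\cdot),r_0;\rho_0)$ by definition. This finiteness is the only substantive point, and is exactly where precompactness is used.

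To prove $P(R)<+\infty$, suppose not: for some $R_0$ there are $X_n\in\mathcal{C}$ and $x_n\in X_n$ with $\textup{Pack}(\overline{B}(x_n,R_0),\frac{r_0}{2})\geq n$, i.e.\ $\overline{B}(x_n,R_0)$ contains an $r_0$-separated subset of cardinality $n$. By precompactness, after passing to a subsequence, $(X_n,x_n)$ converges in the pointed Gromov--Hausdorff sense to a proper space $(Y,y)$; fixing a non-principal ultrafilter $\omega$, Proposition~\ref{Jansen} gives $(Y,y)=\omega\textup{-}\lim(X_n,x_n)$. Now fix $m\in\mathbb{N}$; for all $n\geq m$ choose $z_n^1,\dots,z_n^m\in\overline{B}(x_n,R_0)$ with $d(z_n^i,z_n^j)>r_0$ for $i\neq j$. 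Then the ultralimits $z^i:=\omega\textup{-}\lim z_n^i$ lie in $\overline{B}(y,R_0)$ by Lemma~\ref{ultralimitballs} and satisfy $d(z^i,z^j)\geq r_0$ for all $i\neq j$. Hence $\overline{B}(y,R_0)$ contains, for every $m$, a family of $m$ points at pairwise distance $\geq r_0$, so it is not totally bounded; but $Y$ proper forces $\overline{B}(y,R_0)$ to be compact, a contradiction. Therefore $P(R)<+\infty$ for all $R$, which proves the first equivalence.

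For the second equivalence, assume first $\mathcal{C}$ is compact. Precompactness is then trivial, so by the first part $\mathcal{C}\subseteq\textup{GCBA}_{\textup{pack}}^\kappa(P(\cdot),r_0;\rho_0)$ for suitable $P(\cdot),r_0$, and the latter class is closed under ultralimits. Given any $X_n\in\mathcal{C}$, $x_n\in X_n$, and any non-principal $\omega$, the ultralimit $X_\omega=\omega\textup{-}\lim(X_n,x_n)$ thus lies in $\textup{GCBA}_{\textup{pack}}^\kappa(P(\cdot),r_0;\rho_0)$; in particular $X_\omega$ is proper, so by Proposition~\ref{Jansen} a subsequence of $(X_n,x_n)$ converges in the pointed Gromov--Hausdorff sense to $X_\omega$, and by compactness of $\mathcal{C}$ a further subsequence converges to an element of $\mathcal{C}$, which by uniqueness of pointed Gromov--Hausdorff limits is isometric to $X_\omega$. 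Hence $X_\omega\in\mathcal{C}$, i.e.\ $\mathcal{C}$ is closed under ultralimits. Conversely, if $\mathcal{C}$ is precompact and closed under ultralimits, the first part again gives $\mathcal{C}\subseteq\textup{GCBA}_{\textup{pack}}^\kappa(P(\cdot),r_0;\rho_0)$, so every ultralimit of a sequence in $\mathcal{C}$ is proper; closure under ultralimits places such an ultralimit $X_\omega$ in $\mathcal{C}$, and Proposition~\ref{Jansen} provides a subsequence converging in the pointed Gromov--Hausdorff sense to $X_\omega\in\mathcal{C}$. So every sequence in $\mathcal{C}$ has a subsequence converging to an element of $\mathcal{C}$: $\mathcal{C}$ is compact. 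The main obstacle is the contradiction argument of the previous paragraph, i.e.\ the quantitative statement that precompactness yields a uniform packing bound at a fixed small scale around basepoints; everything else is formal manipulation of Theorem~\ref{CAT-packing}, the closure of the packed class under ultralimits, and the two implications of Proposition~\ref{Jansen}.
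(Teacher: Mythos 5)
Your proposal is correct and follows essentially the same route as the paper: define (or contradict) a uniform packing bound at a fixed scale $r_0<\rho_0/3$, pass an unbounded family of $r_0$-separated points to the ultralimit via Proposition \ref{Jansen} and Lemma \ref{ultralimitballs} to contradict properness of the limit, and handle both compactness equivalences through the closure of $\textup{GCBA}_{\textup{pack}}^\kappa(P(\cdot),r_0;\rho_0)$ under ultralimits together with Proposition \ref{Jansen}. The only differences (defining $P(R)$ as a supremum rather than negating the containment, and taking $m$-point separated families instead of the diagonal sequence) are cosmetic.
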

\vspace{-5mm}

\noindent We stress the ``only if'' part in the above statement: for \textup{GCBA}$^\kappa$ spaces, a uniform packing assumption  (depending only on the distance from the basepoint $x$) at some \emph{fixed} scale is a  {\em necessary} and  sufficient  condition in order to have precompactness 
(we recall that, in the general Gromov's Precompactness Theorem, one needs to have a uniform control of the packing function at every scale in order to achieve precompactness).

\begin{proof}[Proof of Theorem \ref{precompactness}.]
	Let $\mathcal{C}$ be a class of proper, GCBA$^\kappa$, geodesic spaces $X$ with $\rho_\text{ac}(X) \geq \rho_0 > 0$. Let us prove the first equivalence stated  in \ref{precompactness}. \linebreak
	So assume that it is precompact in the pointed Gromov-Hausdorff sense, i.e. the closure $\overline{\mathcal{C}}$ is compact under pointed Gromov-Hausdorff convergence. \linebreak
	Suppose $\mathcal{C}$ is not contained in $\text{GCBA}^\kappa_\text{pack}(P(\cdot),r_0;\rho_0)$ for any choice of $P(\cdot)$ and $r_0$. Hence there exists $r_0 < \frac{\rho_0}{3}$ and $R>0$ such that for any $n$ there is a space $(X_n,x_n) \in \mathcal{C}$ with a set of $r_0$-separated points inside $\overline{B}(x_n, R)$ of cardinality at least $n$. 
	By assumption there exists a subsequence,  denoted again $(X_n,x_n)$, converging in the pointed Gromov-Hausdorff sense to $(X,x)$. The space $X$ is proper,  see Section \ref{sec-preliminaries}.  Fix now any non-principal ultrafilter $\omega$. Then $(X_\omega, x_\omega)$ is isometric to $(X,x)$ by Proposition \ref{Jansen}, and in particular it is proper.
	We are going to prove that inside $\overline{B}(x_\omega,R)$ there are infinitely many points that are at distance at least $r_0$ one from the other: therefore $X_\omega$ cannot be proper and this is a contradiction. 
	For any $n$ we denote the set of $r_0$-separated points of cardinality $n$ inside $\overline{B}(x_n, R)$ by $\{ z_n^1, \ldots, z_n^n \}$. Then,  for any fixed $k\in \mathbb{N}$, we consider the admissible sequence $z^k = (z_n^k) \in X_\omega$ (notice that $z^k_n$ is defined only for $n\geq k$, but this suffices to define a point $z^k $ in the ultralimit).  Clearly $ z ^k \in \overline{B}(x_\omega, R)$ for all $k$. \linebreak
	Moreover if $k\neq l$ then $d(z_n^k, z_n^l) > r_0$ for all $n$, hence $d(z^k,z^l) \geq r_0$.  \linebreak
	This shows that $\mathcal{C}$ is a subclass of $\text{GCBA}^\kappa_\text{pack}(P(\cdot),r_0;\rho_0)$ for some $P(\cdot)$ and $r_0$.
	Viceversa if $\mathcal{C}\subseteq \text{GCBA}^\kappa_\text{pack}(P(\cdot),r_0;\rho_0)$ then its closure $\overline{\mathcal{C}}$ is contained in the compact space $\text{GCBA}^\kappa_\text{pack}(P(\cdot),r_0;\rho_0)$ by the analogue of Theorem \ref{compactness-packing}, so $\overline{\mathcal{C}}$ is compact.
	
	\noindent Let us show now the second equivalence.	Suppose that $\mathcal{C}$ is precompact and closed under ultralimits. Applying the same proof of the second part of Theorem \ref{compactness-packing} we get that $\mathcal{C}$ is compact under pointed Gromov-Hausdorff convergence. 
	Viceversa if $\mathcal{C}$ is compact under Gromov-Hausdorff convergence then it is contained in $\text{GCBA}^\kappa_\text{pack}(P(\cdot),r_0;\rho_0)$ for some $P(\cdot),r_0$. In particular for any non-principal ultrafilter $\omega$ and any sequence of spaces $(X_n,x_n) \in \mathcal{C}$ we have that $X_\omega$ is a proper metric space. By Proposition \ref{Jansen} there exists a subsequence that converges in the pointed Gromov-Hausdorff sense to $X_\omega$, hence $X_\omega \in \mathcal{C}$ since $\mathcal{C}$ is compact.
\end{proof}

As a consequence of Theorem \ref{precompactness} and of the estimates on volumes and packing proved in Sections \ref{sec-volume} \& \ref{sec-packingcovering}, we deduce  that the dimension is  {\em almost} stable under pointed Gromov-Hausdorff convergence, in the following sense:

\begin{prop}\label{cor-dim}
	Let $(X_n,x_n)$ be a sequence   of \textup{GCBA}$^\kappa$-spaces with almost convexity radius $\rho_{\textup{ac}} (X_n) \!\geq \!\rho_0\!>\!0$  converging    to $(X,x)$  in the pointed Gromov-Hausdorff sense. 
	Let   $ X_n ^{\textup{max}}$ be the maximal dimensional subspace of $X$. 
	Then  
	$$\textup{dim}(X) \leq \liminf_{n\to +\infty} \textup{dim}(X_n)  $$
	and the equality $\textup{dim}(X) = \lim_{n\to +\infty} \textup{dim}(X_n) $ holds if and only if the distance $d(x_n,  X_n ^{\textup{max}})$ stays uniformly bounded when $n\rightarrow \infty$.
\end{prop}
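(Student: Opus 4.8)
The plan is to deduce both assertions from the Lytchak--Nagano local stability results for the dimension (Lemma \ref{regularity-convergence}), applied in a suitable \emph{standard setting of convergence} around well chosen points. First I would record the preliminaries. Since each $X_n$ is complete, locally compact and geodesic it is proper; from $\rho_{\textup{ac}}(X_n)\ge\rho_0$ and \eqref{ac-radius,cat-radius} one gets $\rho_{\textup{cat}}(X_n)\ge\rho_0':=\min\{D_\kappa/2,\rho_0\}$, so the $X_n$ are uniformly locally $\textup{CAT}(\kappa)$; hence (as in the proof of Theorem \ref{compactness-packing}) the limit $X$ is a proper $\textup{GCBA}^\kappa$-space with $\rho_{\textup{cat}}(X)\ge\rho_0'$. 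Moreover the convergent sequence $(X_n,x_n)$ is precompact, so by Theorem \ref{precompactness} it lies in some $\textup{GCBA}^\kappa_{\textup{pack}}(P(\cdot),r_0;\rho_0)$; in particular for every fixed $R$ the balls $\overline B(x_n,R)$ have uniformly bounded covering numbers at every scale, and they Gromov--Hausdorff converge to the compact ball $\overline B(x,R)\subset X$. These facts let me put myself, around any point $p\in X$ that is the limit of points $p_n\in X_n$ staying at bounded distance from $x_n$, in the standard setting of convergence for suitable tiny balls $\overline B(p_n,\rho)\to\overline B(p,\rho)$ (and their $10\rho$-balls).

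Next I would prove the inequality $\dim X\le\liminf_n\dim X_n$. Fix any integer $k\le\dim X$ and a point $y\in X^k$; since $\textup{Reg}^k(X)$ is open and dense in $X^k$, choose a $k$-regular point $y'$ of $X$ close to $y$, and lift it to a sequence $y_n'\in X_n$ with $y_n'\to y'$. Placing myself in the standard setting around $y'$, Lemma \ref{regularity-convergence}(b) gives $\dim(y_n')=k$ for all large $n$, whence $\dim X_n\ge k$ eventually; letting $k\uparrow\dim X$ yields the inequality. (This reproves Lemma \ref{regularity-pointedconvergence} in the present, only locally $\textup{CAT}(\kappa)$, situation.)

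Then I would treat the characterization of equality. For the ``only if'' direction, suppose $\dim X_n\to\dim X=:k$, so $\dim X_n=k$ for $n$ large; choosing a $k$-regular point $y'$ of $X$ and lifting it as above, Lemma \ref{regularity-convergence}(b) gives $\dim(y_n')=k=\dim X_n$, i.e. $y_n'\in X_n^{\textup{max}}$, so $d(x_n,X_n^{\textup{max}})\le d(x_n,y_n')\to d(x,y')<\infty$ is bounded. For the ``if'' direction, assume $D_0:=\sup_n d(x_n,X_n^{\textup{max}})<\infty$ and pick $z_n\in X_n^{\textup{max}}$ with $d(x_n,z_n)\le D_0$, so $\dim(z_n)=\dim X_n$. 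Using the $2$-Lipschitz surjection $\Psi_{z_n}\colon B(z_n,2r_0)\to\mathbb R^{\dim z_n}$ of Proposition \ref{PSI} (defined since $\rho_{\textup{ac}}(z_n)\ge\rho_0>3r_0$) together with the uniform bound $\textup{Pack}(\overline B(x_n,D_0+2r_0),\tfrac{r_0}{2})\le P(D_0+2r_0)$ from Theorem \ref{precompactness}, the computation of the proof of Theorem \ref{char_pack} --- pull back the $2\dim z_n$ points $\pm 2r_0 e_i\in\overline B_{\mathbb R^{\dim z_n}}(0,2r_0)$, which are pairwise more than $2r_0$ apart --- forces $\dim X_n=\dim z_n\le\tfrac12 P(D_0+2r_0)$, uniformly in $n$. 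Hence, passing to a subsequence, $z_n\to z\in\overline B(x,D_0)\subset X$ with $\dim z_n\to\limsup_n\dim X_n$; placing myself in the standard setting around $z$, Lemma \ref{regularity-convergence}(a) gives $\dim X\ge\dim z\ge\limsup_n\dim X_n$. Combined with the inequality of the previous paragraph this yields $\limsup_n\dim X_n=\liminf_n\dim X_n=\dim X$, so $\dim X_n\to\dim X$.

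The hard part is not any single estimate but the verification that the pointed Gromov--Hausdorff convergence $X_n\to X$ really restricts, around the auxiliary points $y'$ and $z$, to a convergence of tiny balls meeting the hypotheses of the ``standard setting of convergence'' (convergence of the $\rho$- and $10\rho$-balls together with a uniform bound on the $\tfrac{\rho}{2}$-covering number of the latter); this is exactly where properness of the limit and the precompactness of the sequence (Theorem \ref{precompactness}) enter, and it is the same ingredient that makes $X$ itself a $\textup{GCBA}^\kappa$-space. Everything else is bookkeeping with the quoted lemmas. I would carry this out assuming all dimensions finite, which is the case of interest; note that the ``if'' step in fact shows $\dim X_n$ is bounded as soon as $d(x_n,X_n^{\textup{max}})$ is bounded.
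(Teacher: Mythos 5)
Your argument is correct, and it coincides with the paper's proof on the equivalence part: for the ``only if'' direction the paper also lifts a regular point of maximal dimension of $X$ and applies Lemma \ref{regularity-convergence}(b), and for the ``if'' direction it also centers uniformly packed tiny balls at points of $X_n^{\textup{max}}$ at bounded distance from $x_n$ and applies Lemma \ref{regularity-convergence}(a), with the same verification of the standard setting via precompactness (Theorem \ref{precompactness}), Theorem \ref{CAT-packing} and \eqref{ac-radius,cat-radius} that you flag as the delicate point. Where you genuinely diverge is the inequality $\textup{dim}(X)\leq \liminf_n \textup{dim}(X_n)$: you obtain it again from Lemma \ref{regularity-convergence}(b) at a $k$-regular point of the limit, whereas the paper argues by measure comparison --- weak convergence of the natural measures \eqref{convergence-measure}, the Croke-type lower bound of Theorem \ref{boundbelowvolume} applied in $X_n$ (whose dimension equals the liminf along a subsequence), and the upper bound \eqref{k-inequality1} on a small ball around a $d$-regular point of $X$, which are incompatible for small $r$ if $d$ exceeded the liminf. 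Your route is more economical in tools (one stability lemma drives all three statements, and it reproves Lemma \ref{regularity-pointedconvergence} in the locally CAT$(\kappa)$ setting), at the cost of invoking the standard setting of convergence also for this step; the paper's route keeps that step purely quantitative, relying only on its volume estimates. Two minor remarks: your packing bound on $\textup{dim}(X_n)$ via Proposition \ref{PSI} in the ``if'' step is harmless but not needed, since Lemma \ref{regularity-convergence}(a) together with the local finite-dimensionality of the limit tiny ball already bounds $\limsup_n \textup{dim}(X_n)$; and $z_n$ should be chosen with $d(x_n,z_n)\leq D_0+1$, say, since the distance to $X_n^{\textup{max}}$ is an infimum.
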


\begin{proof}
	As the spaces $(X_n,x_n)$   converge to $(X,x)$, they form a  precompact family and  so they   belong  to  $\text{GCBA}^\kappa_{\text{pack}}(P(\cdot), r_0, \rho_0)$, for some $P(\cdot)$ and $r_0$, by Theorem \ref{precompactness}.  
	Let us   first  show that we always have
	\vspace{-3mm}
	
	\begin{equation}\label{naganoineq}
		\textup{dim}(X) \leq \liminf_{n \to +\infty} \textup{dim}(X_n)
	\end{equation}
	Actually consider a subsequence, we we still denote $(X_n)$, whose dimensions equal  the limit inferior, denoted $d_0$.
	Now suppose that there exists a point $y\in X$ with dim$(y) = d > d_0$.
	We  may assume that $y$ is $d$-regular, since Reg$^d(X)$ is dense in $X^d$. 
	The point $y$ is the limit of a sequence of points $y_n \in X_n$ and for any $r>0$ the volume of the ball $\overline{B}(y,r)$ is bigger than or equal to the limit of the volumes of the balls $\overline{B}(y_n,\frac{r}{2})$, by \eqref{convergence-measure}. \linebreak
	By Theorem \ref{boundbelowvolume} we have for all $n$:
	\vspace{-3mm}
	
	$$\mu_X\left(\overline{B}\left(y_n,\frac{r}{2}\right)\right) \geq c_{d_0} \cdot \left(\frac{r}{2}\right)^{d_0}$$
	
	\noindent  where $c_{d_0}$ is a constant depending only on $d_0$. Moreover, since $y$ is $d$-regular, then for any  $r$ small enough  the ball $\overline{B}(y,r)$ contains only $d$-dimensional points. We conclude by \eqref{k-inequality1} \& \eqref{k-inequality2} that 
	\vspace{-3mm}
	
	$$\mu_X(\overline{B}(y,r)) \leq C\cdot r^d,$$
	where $C$ is a constant depending only on $y$ and not on $r$.
	Therefore, as $d_0 < d$, we   have a contradiction if  $r$ is small enough, and (\ref{naganoineq}) is proved.\\
	Assume now that $d(x_n, X_n^{\text{max}})<D$ for all $n$. Since the almost convexity radius is bounded below by $\rho_0$ both for $X_n$ and for $X$, also the CAT$(\kappa)$-radius is bounded below by (\ref{ac-radius,cat-radius}). So we can consider   tiny balls $B(y_n, r_0)$ centered at regular points  $y_n$ of maximal dimension,  all with the same radius $r_0$, such that the closed ball $\overline B(y_n, 10r_0)$ converge to some ball $\overline B(y, 10r_0)$ of $X$ and  satisfy the condition $\textup{Pack}(P_0, \frac{r_0}{2})$ for some constant $P_0$ for all $n$,  by  Theorem  \ref{CAT-packing}. 
	We are then in the standard setting of convergence,   which implies by Lemma \ref{regularity-convergence} that  
	$$ \textup{dim}(X)  \geq \textup{dim} (y) \geq \limsup_{n \rightarrow \infty}  \textup{dim} (y_n) = \limsup_{n \rightarrow \infty}  \textup{dim}(X_n).$$ 
	Conversely,   assuming   $\textup{dim}(X) = \lim_{n\to +\infty} \textup{dim}(X_n)$, then in particular  \linebreak$\textup{dim}(X_n)$ is constant for $n \gg 0$ and equal to $d_0=\textup{dim}(X)$. Consider  a regular point  $y=(y_n) \in X$ of dimension $d_0$: then  the points $y_n$  are admissible by definition (that is, $d(x_n,y_n)$ stays uniformly bounded); moreover,   we can choose as before uniformly packed tiny balls with $B(y_n,10r_0)$ converging to $B(y,10r_0)$, so the points $y_n$ belong to $X_n^{\text{max}}$, again by Lemma  \ref{regularity-convergence} (b).
\end{proof}

\begin{ex} ${}$ Let $(X, x) \in \text{GCBA}^\kappa_\text{pack}(P_0,r_0,\rho_0)$ be any space. We consider the space $Y$ obtained by gluing the half-line $[0,+\infty)$ to $X$ at the point $x$. Clearly $Y$ belongs to $\text{GCBA}^\kappa_\text{pack}(P_0',r_0',\rho_0')$. The pointed Gromov-Hausdorff limit of the sequence $(Y, n)$, where $n\in [0,+\infty)$, is the real line. This is an example where the maximal dimension part escapes to infinity and the dimension is not preserved.
\end{ex}

We are going now to explore   some variations of Theorem \ref{precompactness}.\\
We fix constants $\kappa\in \mathbb{R}$ and $P_0, r_0, V_0,  R_0, D_0, t_0,  \rho_0, n_0 > 0$, with  $r_0 \leq \rho_0/3$, and consider the following  classes  of complete, geodesic GCBA$^\kappa$  spaces $X$:
\vspace{1mm}

\noindent -- the class $\text{GCBA}^\kappa_\text{pack}(P_0,r_0;\rho_0, n_0)$ of spaces   which  are $P_0$-packed at scale \linebreak 
${}$\hspace{2mm} $r_0$, with almost-convexity radius   $\rho_\text{ac}(X)\geq \rho_0$ and dimension $\leq n_0$; 
\vspace{1mm}

\noindent -- the class $\text{GCBA}^\kappa_\text{pack}(P_0,r_0;\rho_0, n_0^{^{\text{pure}}})$ of spaces $P_0$-packed at scale $r_0$, \linebreak 
${}$\hspace{2mm} with almost-convexity radius   $\rho_\text{ac}(X) \!\geq \!\rho_0$  and of {\em pure dimension} $n_0$; 
\vspace{1mm}

\noindent -- the classes $\text{GCBA}^\kappa_\text{vol}(V_0, R_0; \rho_0, n_0)$,   $\text{GCBA}^\kappa_\text{vol}(V_0, R_0; \rho_0, n_0^{^{\text{pure}}})$ of those \linebreak
${}$\hspace{1mm} satisfying   $\mu_X(B(x,R_0))\leq V_0$, $\rho_\text{ac}(X) \geq \rho_0$ and  which have,   respectively,  \linebreak
${}$ \hspace{1mm}  dimension $\leq n_0$ and pure dimension $n_0$;  
\vspace{1mm}

\noindent --  the class  $\text{GCBA}^\kappa_\text{doub}(D_0,t_0;\rho_0)$   of spaces   $D_0$-doubled up to scale $t_0$, \linebreak
${}$\hspace{2mm}  with $\rho_\text{ac}(X)\geq \rho_0$. \\
Then:

\begin{cor}
	\label{compactness-pointed}
	All the above   classes  are compact with respect to the pointed Gromov-Hausdorff convergence. 
\end{cor}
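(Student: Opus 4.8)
The plan is to show that each of the five classes is precompact and closed under pointed Gromov--Hausdorff limits; since a precompact class closed under limits is compact, this proves the corollary. Concretely, all five classes will be seen to sit inside a single compact class of the form $\text{GCBA}^\kappa_\text{pack}(\,\cdot\,,\,\cdot\,;\rho_0)$ (which gives precompactness), and then the extra constraint carving out each class will be checked to be stable under limits, using the semicontinuity/continuity results already established.

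\emph{Precompactness.} The classes $\text{GCBA}^\kappa_\text{pack}(P_0,r_0;\rho_0,n_0)$ and $\text{GCBA}^\kappa_\text{pack}(P_0,r_0;\rho_0,n_0^{^{\text{pure}}})$ are subclasses of $\text{GCBA}^\kappa_\text{pack}(P_0,r_0;\rho_0)$. For $\text{GCBA}^\kappa_\text{vol}(V_0,R_0;\rho_0,n_0)$ and $\text{GCBA}^\kappa_\text{vol}(V_0,R_0;\rho_0,n_0^{^{\text{pure}}})$, the implication (b)$\Rightarrow$(a) of Theorem \ref{char_pack} yields a packing bound $\textup{Pack}(3r_0',\tfrac{r_0'}{2})\le P_0'$ with $r_0'\le\rho_0/3$ and $(P_0',r_0')$ depending only on $(V_0,R_0,n_0,\rho_0,\kappa)$; for $\text{GCBA}^\kappa_\text{doub}(D_0,t_0;\rho_0)$, Remark \ref{rem-doubpack} gives $\textup{Pack}(3r_0',\tfrac{r_0'}{2})\le D_0^4$ with $r_0'=t_0/4$. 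Hence each of the five classes is contained in some $\text{GCBA}^\kappa_\text{pack}(\,\cdot\,,\,\cdot\,;\rho_0)$, which is compact by Theorem \ref{compactness-packing}, so each is precompact.

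\emph{Closedness under limits.} Let $(X_n,x_n)\to(X,x)$ in the pointed Gromov--Hausdorff topology with $(X_n,x_n)$ in one of the classes. By the previous paragraph $X$ is a proper \textup{GCBA}$^\kappa$-space, and $\rho_{\textup{ac}}(X)\ge\rho_0$ by Proposition \ref{prop-semicontinuity}. For the two packing classes the defining bound $\textup{Pack}(3r_0,\tfrac{r_0}{2})\le P_0$ persists, because $X$ is then a limit of spaces in the (ultralimit-closed, compact) class $\text{GCBA}^\kappa_\text{pack}(P_0,r_0;\rho_0)$, cf. the proof of Theorem \ref{compactness-packing}. For the two volume classes, $\textup{dim}(X)\le n_0$ by Proposition \ref{cor-dim} and $\mu_X(B(x,R_0))\le V_0$ by Lemma \ref{volume-convergence}. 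For the doubling class, $X$ is $D_0$-doubling up to scale $t_0$: given $y\in X$, pick $y_n\in X_n$ with $y_n\to y$; by Corollary \ref{volume-convergence-uniform} the quantities $\mu_{X_n}(B(y_n,t))$ and $\mu_{X_n}(B(y_n,2t))$ converge to $\mu_X(B(y,t))$ and $\mu_X(B(y,2t))$ for each $0<t\le t_0$, so the doubling inequality passes to the limit.

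\emph{The pure-dimensional classes.} This is the delicate point, and it is handled via Corollary \ref{doubling} and Theorem \ref{pure-dimensional}. By Corollary \ref{doubling}, the members of $\text{GCBA}^\kappa_\text{pack}(P_0,r_0;\rho_0,n_0^{^{\text{pure}}})$ and of $\text{GCBA}^\kappa_\text{vol}(V_0,R_0;\rho_0,n_0^{^{\text{pure}}})$ are $D_0$-doubling up to a fixed scale $t_0$, with $(D_0,t_0)$ depending only on the data; hence, by the argument just given, the limit $X$ is $D_0$-doubling up to scale $t_0$, so purely $m$-dimensional for some $m$ by Theorem \ref{pure-dimensional}, and $m\le n_0$ by Proposition \ref{cor-dim}. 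Since each $X_n$ is purely $n_0$-dimensional we have $X_n^{\textup{max}}=X_n\ni x_n$, so $d(x_n,X_n^{\textup{max}})=0$ stays bounded, and the equality clause of Proposition \ref{cor-dim} forces $m=\textup{dim}(X)=n_0$; thus $X$ is purely $n_0$-dimensional and lies in the relevant class. Combining the three parts, each of the five classes is precompact and closed under limits, hence compact for the pointed Gromov--Hausdorff convergence. (The main obstacle is exactly this last step: ruling out a collapse of dimension in the limit, which requires combining the doubling characterization of pure dimensionality with the sharp dimension-identification criterion of Proposition \ref{cor-dim}.)
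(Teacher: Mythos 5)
Your proof is correct, and its overall scheme coincides with the paper's: embed all five classes into a compact class $\textup{GCBA}^\kappa_{\textup{pack}}(P_0,r_0;\rho_0)$ via Theorem \ref{char_pack}, Remark \ref{rem-doubpack} and Corollary \ref{doubling}, then check that the extra constraints (dimension bound via Proposition \ref{cor-dim}, volume bound via Lemma \ref{volume-convergence}, doubling via Corollary \ref{volume-convergence-uniform}) pass to the limit. Where you genuinely diverge is the pure-dimensional case. The paper proves a standalone statement (Proposition \ref{pure-dimensional-stability}) by a direct volume-squeeze argument: using the upper bound \eqref{k-inequality1} for purely $d$-dimensional spaces and the lower bound of Theorem \ref{boundbelowvolume}, together with weak convergence of the natural measures, it pins the measure of small balls in the limit between two powers of $r$ and forces every limit point to have the same dimension. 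You instead route through the doubling machinery: the uniform constants in Corollary \ref{doubling} make each member of the two pure classes $D_0$-doubling up to a fixed scale, the doubling inequality survives the limit thanks to the volume continuity of Corollary \ref{volume-convergence-uniform}, Theorem \ref{pure-dimensional} then yields pure dimensionality of the limit, and the equality clause of Proposition \ref{cor-dim} (with $d(x_n,X_n^{\textup{max}})=0$) identifies the dimension as $n_0$. Both arguments are legitimate and circularity-free (all the results you quote precede this corollary). The trade-off: your argument is shorter given the machinery already built, but it crucially exploits the \emph{uniformity} of the doubling constants available for these specific classes (Corollary \ref{volume-convergence-uniform} needs a single $D_0,t_0$ for the whole sequence), whereas the paper's Proposition \ref{pure-dimensional-stability} applies to an arbitrary convergent sequence of pure-dimensional GCBA$^\kappa$-spaces with only a uniform lower bound on the almost-convexity radius, and is reused as an independent stability statement (e.g. in Corollary \ref{compactness-unpointed}).
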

\begin{proof}
	By Theorem \ref{char_pack}   and Corollary \ref{doubling},  the above  are all  subclasses of  \linebreak $\text{GCBA}^\kappa_\text{pack}(P_0,r_0;\rho_0)$, for suitable $P_0$ and  $r_0$. By the compactness Theorem  \ref{precompactness}, the proof then reduces to show that the additional conditions on the dimension, on the measure of balls of given radius or on the doubling constant are stable under Gromov-Hausdorff limits.
	By Lemma \ref{volume-convergence}, if a sequence $X_n$ in  $\text{GCBA}^\kappa_\text{vol}(V_0,R_0;\rho_0, n_0)$  converges to $X$, then 
	$\mu_X(B(y,R_0)) \leq V_0$ for any $y\in X$. 
	On the other hand, from   Corollary \ref{volume-convergence-uniform} it follows  that the doubling condition is preserved to the limit. 
	The stability of the dimension is proved in  Proposition \ref{cor-dim}. To conclude we need  to  show that pure-dimensionality is stable under Gromov-Hausdorff limits: this is the object of the  Proposition which follows.
\end{proof}

\begin{prop}
	\label{pure-dimensional-stability}
	Let $(X_n,x_n)$ be a sequence   of \textup{GCBA}$^\kappa$-spaces with almost convexity radius $\rho_{\textup{ac}} (X_n) \geq \rho_0>0$  converging    to $(X,x)$  in the pointed Gromov-Hausdorff sense. 
	Assume that  $X_n$ is pure-dimensional for all $n$: then    $X$ is pure-dimensional of dimension
	$\textup{dim}(X) = \lim_{n\to +\infty}\textup{dim}(X_n).$
\end{prop}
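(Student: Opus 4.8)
The plan is to first pin down $\textup{dim}(X)$ by means of the already proved Proposition~\ref{cor-dim}, and then to rule out, by contradiction, the existence of a point of $X$ of strictly smaller dimension, using the density of the regular stratum together with Lemma~\ref{regularity-convergence}. Since each $X_n$ is pure‑dimensional, its maximal dimensional subspace $X_n^{\textup{max}}$ is the whole of $X_n$, so $d(x_n, X_n^{\textup{max}}) = 0$ is (trivially) uniformly bounded in $n$. Hence Proposition~\ref{cor-dim} applies and gives $\textup{dim}(X) = \lim_{n\to+\infty}\textup{dim}(X_n)$; in particular this limit exists, say it equals $n_0$, and after discarding finitely many indices we may assume that every $X_n$ is purely $n_0$-dimensional.

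Now suppose, for contradiction, that $X$ is not purely $n_0$-dimensional. Since $\textup{dim}(X)=n_0$, there is some $d<n_0$ with $X^d\neq\emptyset$, and as $\textup{Reg}^d(X)$ is dense in $X^d$ (Section~\ref{subsec-structure}) it is nonempty; fix a $d$-regular point $y\in X$. By pointed Gromov--Hausdorff convergence there is a sequence $y_n\in X_n$ with $y_n\to y$, so that $d(x_n,y_n)$ stays bounded. The family $(X_n)$ being precompact, Theorem~\ref{precompactness} places it in $\textup{GCBA}^\kappa_{\textup{pack}}(P(\cdot),r_0;\rho_0)$ for suitable $P(\cdot),r_0$, and by \eqref{ac-radius,cat-radius} the $\textup{CAT}(\kappa)$-radii are uniformly bounded below; arguing exactly as in the proof of Proposition~\ref{cor-dim}, we may then choose a single small radius $r$ so that the tiny balls $\overline B(y_n,10r)$ have uniformly bounded $\frac r2$-covering number and converge to the tiny ball $\overline B(y,10r)$. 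We are thus in the standard setting of convergence, and Lemma~\ref{regularity-convergence}(b) yields $\textup{dim}(y_n)=d$ for all $n$ large enough. But $X_n$ is purely $n_0$-dimensional, so $\textup{dim}(y_n)=n_0\neq d$: a contradiction. Therefore $X$ is purely $n_0$-dimensional, with $\textup{dim}(X)=n_0=\lim_{n\to+\infty}\textup{dim}(X_n)$.

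The only point requiring care is the verification that one can actually be put in the standard setting of convergence around the points $y_n\to y$, i.e. that for a suitably chosen fixed radius the closed balls $\overline B(y_n,10r)$ converge to $\overline B(y,10r)$ and have uniformly bounded covering number. Both facts follow from the uniform packing inherited from precompactness --- through Theorem~\ref{CAT-packing} and the packing/covering comparison \eqref{packing-covering} --- and are established in precisely the same manner as in the proof of Proposition~\ref{cor-dim}, so this is a routine technicality rather than a substantial obstacle; the genuinely new input is simply the combination of Proposition~\ref{cor-dim} with the density of $\textup{Reg}^d(X)$ in $X^d$ and the regularity-detection statement of Lemma~\ref{regularity-convergence}(b).
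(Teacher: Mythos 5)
Your proof is correct, but it takes a genuinely different route from the paper's. The paper argues measure-theoretically: since each $X_n$ is purely $d$-dimensional, \eqref{k-inequality1} and Theorem \ref{boundbelowvolume} give two-sided bounds $\frac{1}{C}r^{d}\leq \mu_{X_n}(B(\cdot,r))\leq Cr^{d}$ on small balls, and weak convergence of the natural measures (\eqref{convergence-measure} together with Lemma \ref{volume-convergence}) transfers these bounds to the limit; comparing exponents as $r\to 0$ first forces the dimensions $\textup{dim}(X_n)$ to stabilize at some $d_0$ and then pins $\textup{dim}(y)=d_0$ for every $y\in X$. You instead invoke Proposition \ref{cor-dim} (observing that $X_n^{\textup{max}}=X_n$, so $d(x_n,X_n^{\textup{max}})=0$) to get $\textup{dim}(X)=\lim_n\textup{dim}(X_n)=n_0$, and obtain pure-dimensionality by contradiction via the density of $\textup{Reg}^d(X)$ in $X^d$ and Lemma \ref{regularity-convergence}(b) in the standard setting of convergence --- the same device the paper employs in the converse direction of Proposition \ref{cor-dim}, so your verification of the standard setting around $y_n\to y$ is at exactly the level of rigor the paper allows itself there. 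What each approach buys: your reduction is shorter and recycles already-proved results, at the price of relying on the regular-point structure theory and on checking the standard-setting hypotheses; the paper's argument needs only weak convergence of measures plus the volume estimates, and yields as a by-product the uniform bound $\frac{1}{C}r^{d_0}\leq\mu_X(B(y,r))\leq Cr^{d_0}$ at every point of the limit. One line worth adding: before ``discarding finitely many indices'' you need $n_0<\infty$, i.e.\ a uniform bound on $\textup{dim}(X_n)=\textup{dim}(x_n)$; this follows at once from the packing bound near the basepoints given by Theorem \ref{precompactness} together with the $\Psi_{x_n}$ argument of Theorem \ref{char_pack}, and is precisely the opening observation of the paper's own proof.
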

\begin{proof}
	The spaces $(X_n,x_n)$  form a  precompact family and  so,  by Theorem \ref{precompactness}, they   belong  to  $\textup{GCBA}^\kappa_{\textup{pack}}(P(\cdot), r_0, \rho_0)$, for suitable $P(\cdot)$ and $r_0$.
	Then,  using the maps $\Psi_{x_n}$ of Proposition \ref{PSI} as in the first part of Theorem \ref{char_pack}, the numbers dim$(X_n)$ belong to the finite set $[0,n_0]$. Suppose to have two   integers $d_1 \neq d_2$ and two infinite subsequences $X_{n_{i_1}}$, $X_{n_{i_2}}$ such that dim$(X_{n_{i_1}}) = d_1$ for any $i_1$ and dim$(X_{n_{i_2}}) = d_2$ for any $i_2$. 
	We consider the  sequences $x_{n_{i_1}}$ and $x_{n_{i_2}}$: for any $r>0$ we have by \eqref{convergence-measure} and Lemma \ref{volume-convergence}
	$$\limsup_{n\to +\infty}\mu_{X_n}\left(B\left(x_n,\frac{r}{2}\right)\right)\leq\mu_X(B(x,r))\leq \limsup_{n\to +\infty}\mu_{X_n}(B(x_n,r)).$$
	By \eqref{k-inequality1} and Theorem \ref{boundbelowvolume} we have
	$$\frac{1}{C}\left(\frac{r}{2}\right)^{d_1}\leq \mu_{X_{n_{i_1}}}\left(B\left(x_{n_{i_1}}, \frac{r}{2}\right)\right) \leq \mu_{X_{n_{i_1}}}(B(x_{n_{i_1}}, r)) \leq C r^{d_1},$$
	$$\frac{1}{C}\left(\frac{r}{2}\right)^{d_2}\leq \mu_{X_{n_{i_2}}}\left(B\left(x_{n_{i_2}}, \frac{r}{2}\right)\right) \leq \mu_{X_{n_{i_2}}}(B(x_{n_{i_2}}, r)) \leq C r^{d_2},$$
	where $C$ is a constant depending only on $P(\cdot)$ and $r_0$.
	Since this is true for any arbitrarily small $r$  we deduce that $d_1 = d_2$. Therefore $\lim_{n\to +\infty} \text{dim}(X_n)$ exists and we denote it by $d_0$.
	We again apply the same estimate as before to conclude that for any $y\in X$ and for any small $r>0$ we have
	$$\frac{1}{C}r^{d_0}\leq \mu_{X}(B(y, r)) \leq C r^{d_0},$$
	where $C$ is a constant depending on $P(\cdot)$ and $r_0$.
	Therefore the dimension of $y$ is $d_0$, which  concludes the proof.
\end{proof}

Finally we can specialize   these theorems  to subclasses of  compact  spaces. Clearly the subclasses of the above classes  made of spaces   with diameter  less than or equal to some constant $\Delta$ will be  compact with respect to the usual Gromov-Hausdorff distance.
We state here just two particularly interesting cases, which are reminiscent of  the classical finiteness theorems of Riemannian geometry. 
Consider the  classes:
$$\text{GCBA}^\kappa_\text{vol}(V_0;\rho_0, n_0^=), \hspace{5mm}\text{GCBA}^\kappa_\text{vol}(V_0;\rho_0, n_0^{^{\text{pure}}})$$  
of complete, geodesic  $\text{GCBA}^\kappa$  with total measure $\mu(X) \leq V_0$,  almost convexity radius $\rho_\text{ac}(X) \geq \rho_0$ and which are, respectively, precisely $n_0$-dimensional  and purely $n_0$-dimensional.

\begin{cor}
	\label{compactness-unpointed}
	${}$ \hspace{-3mm}	The classes $\textup{GCBA}^\kappa_\textup{vol}(V_0;\rho_0, n_0^=)$    and $\textup{GCBA}^\kappa_\textup{vol}(V_0;\rho_0, n_0^{^{\textup{pure}}})$  are compact under Gromov-Hausdorff convergence and contain only finitely many homotopy types. 
\end{cor}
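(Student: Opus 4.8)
The plan is to reduce the statement to Corollary~\ref{compactness-pointed} by first establishing a \emph{uniform diameter bound} for the spaces in these classes: this turns pointed Gromov--Hausdorff convergence into ordinary Gromov--Hausdorff convergence, and it also makes Petersen's finiteness theorem applicable.

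\emph{Step 1: a uniform diameter bound.} Let $X$ lie in one of the two classes. Being a complete, locally compact, geodesic space, $X$ is proper (Hopf--Rinow; alternatively Theorem~\ref{char_pack}, since a bound on $\mu_X(X)$ bounds $\mu_X(B(x,R_0))$ for every $R_0$ and $\dim(X)\leq n_0$). Put $r_1=\tfrac12\min\{1,\rho_0\}$. If $\{y_1,\dots,y_k\}\subset X$ is $2r_1$-separated, the open balls $B(y_i,r_1)$ are pairwise disjoint, and since $\rho_{\mathrm{ac}}(y_i)\geq\rho_0$, Theorem~\ref{boundbelowvolume} gives $\mu_X(B(y_i,r_1))\geq\mu_X(\overline B(y_i,r_1/2))\geq c_{n_0}(r_1/2)^{n_0}=:v_1>0$; summing, $k\,v_1\leq\mu_X(X)\leq V_0$, so $k\leq V_0/v_1$. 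In particular $X$ is bounded, hence compact. Taking $p,q$ with $d(p,q)>\mathrm{diam}(X)-1$, a geodesic from $p$ to $q$, and points along it at mutual distance $3r_1$, one obtains a $2r_1$-separated set of cardinality $\geq\mathrm{diam}(X)/(3r_1)$, whence $\mathrm{diam}(X)\leq\Delta_0$ for a constant $\Delta_0=\Delta_0(V_0,\rho_0,n_0)$.

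\emph{Step 2: compactness.} With $R_0=\Delta_0+1$ we have $B(x,R_0)=X$ for all $x$, so $\mu_X(B(x,R_0))=\mu_X(X)\leq V_0$ and $\dim(X)\leq n_0$; thus both classes sit inside $\textup{GCBA}^\kappa_{\textup{vol}}(V_0,R_0;\rho_0,n_0)$, which is compact under pointed Gromov--Hausdorff convergence by Corollary~\ref{compactness-pointed}. Given a sequence $X_n$ in our class, a subsequence converges in the pointed sense to some proper $(X,x)$; since all $X_n$ have diameter $\leq\Delta_0$, this is the same as ordinary Gromov--Hausdorff convergence, and $\mathrm{diam}(X)\leq\Delta_0$. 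It remains to check that the defining conditions pass to the limit. By Lemma~\ref{volume-convergence} (with $R=\Delta_0+1$), $\mu_X(X)=\mu_X(B(x,R))\leq\limsup_n\mu_{X_n}(B(x_n,R))=\limsup_n\mu_{X_n}(X_n)\leq V_0$. For $\textup{GCBA}^\kappa_{\textup{vol}}(V_0;\rho_0,n_0^=)$: Proposition~\ref{cor-dim} gives $\dim(X)\leq\liminf_n\dim(X_n)=n_0$, and as $\mathrm{diam}(X_n)\leq\Delta_0$ the distance $d(x_n,X_n^{\textup{max}})$ is uniformly bounded, so the equality case of Proposition~\ref{cor-dim} yields $\dim(X)=\lim_n\dim(X_n)=n_0$; hence $X$ is precisely $n_0$-dimensional. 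For $\textup{GCBA}^\kappa_{\textup{vol}}(V_0;\rho_0,n_0^{^{\textup{pure}}})$: Proposition~\ref{pure-dimensional-stability} directly gives that $X$ is purely $n_0$-dimensional. So each class is closed under Gromov--Hausdorff limits and, being precompact by the above, is compact.

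\emph{Step 3: finiteness of homotopy types, and the main obstacle.} By \eqref{ac-radius,cat-radius} every $X$ in our classes has $\rho_{\mathrm{cat}}(X)\geq\rho_0':=\min\{D_\kappa/2,\rho_0\}$, so every ball of radius $r\leq\rho_0'$ is a CAT$(\kappa)$ ball, hence uniquely geodesic and contractible onto its center along radial geodesics; thus all these spaces share the local geometric contractibility function $\mathrm{LGC}(r)=r$ for $r\leq\rho_0'$. Combined with the uniform bounds $\mathrm{diam}(X)\leq\Delta_0$ and $\dim(X)\leq n_0$ from Step~1, Petersen's finiteness theorem \cite{Pet90} applies and shows each class contains only finitely many homotopy types. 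The crux of the argument is Step~1: one must see that a bound on the \emph{total} natural measure, together with the dimension-free lower estimate of Theorem~\ref{boundbelowvolume}, already forces compactness and a quantitative diameter bound; the rest is bookkeeping with the stability results of the previous sections, the only delicate point being the reverse inequality $\dim(X)\geq n_0$ in the non-pure case, which is exactly where the impossibility of the maximal-dimensional stratum escaping to infinity (a consequence of the diameter bound) makes Proposition~\ref{cor-dim} give equality.
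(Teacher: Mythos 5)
Your proposal is correct and follows essentially the same route as the paper: a uniform diameter bound obtained from the lower volume estimate of Theorem \ref{boundbelowvolume} applied to disjoint balls along a geodesic, inclusion into the compact class $\textup{GCBA}^\kappa_{\textup{vol}}(V_0,R_0;\rho_0,n_0)$ of Corollary \ref{compactness-pointed}, stability of the (pure) dimension via Propositions \ref{cor-dim} and \ref{pure-dimensional-stability}, and Petersen's theorem with $\mathrm{LGC}(r)=r$ for the finiteness of homotopy types. Your explicit use of the equality case of Proposition \ref{cor-dim} (bounded $d(x_n,X_n^{\textup{max}})$ from the diameter bound) is exactly the argument the paper leaves implicit.
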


\begin{proof}	
	First we  show   that the diameter is uniformly bounded in both classes. 
	Actually consider   $X \in \text{GCBA}^\kappa_\text{vol}(V_0;\rho_0, n_0 ^=)$
	and   take any two points $y,y' \in X$  
	such that $d(y,y') = \Delta > \rho := \min\lbrace\rho_0, 2 \rbrace$. 
	Let $\gamma$ be a geodesic joining $y$ to $y'$. Along $\gamma$ we take points at distance $\rho$ one from the other: they are at least $\frac{\Delta}{\rho} - 1$ and the balls of radius $\frac{\rho}{2}$ around these points are disjoint. Then by Theorem \ref{boundbelowvolume} we get
	\vspace{-3mm}	
	
	$$V_0 \geq \mu_{X}(X)\geq \frac{c_{n_0}}{2^{n_0}}\rho^{n_0} \left(\frac{\Delta}{\rho}-1\right)$$
	so  the diameter of $X$  is bounded from above in terms of  $n_0, \rho_0$ and $V_0$ only. \\
	Let $R_0$ such un upper bound. Then these classes are included in \linebreak  $\text{GCBA}^\kappa_\text{vol}(V_0, R_0;\rho_0, n_0)$, whose compactness    we have just proved. The conclusion follows from Propositions \ref{cor-dim} and \ref{pure-dimensional-stability}.
	
	\noindent 	Finally  notice that any element of both classes has local geometric contractibility function LGC$(r)=r$ for $r\leq \rho_0$  (see \cite{Pet90} for the definition). Moreover the covering dimension of any space in both classes coincides with the Hausdorff dimension, so it   is uniformly bounded from above. We can then apply Corollary B of \cite{Pet90} to conclude that there are only finitely many homotopy types inside any of the two classes.
\end{proof}

\enlargethispage{6mm}

\vspace{3mm}
\section{Examples: $M^\kappa$-complexes}\label{sec-simplicial}
Beyond Riemannian manifolds with uniform upper  bounds on the sectional curvature and injectivity radius bounded below, an important class of GCBA$^\kappa$ spaces is provided by $M^\kappa$-complexes with {\em bounded geometry}, in a sense we are going to explain. We will prove that the metric spaces in this class are uniformly packed and we will show that this class is compact under pointed Gromov-Hausdorff convergence. Other finiteness results will be presented.

%
%
%

\subsection{Geometry of $M^\kappa$-complexes}
\label{subsec-basics}

First of all we recall briefly the definitions and the properties of the class of simplicial complexes we are interested in. 
A {\em $\kappa$-simplex}  $S$ is the convex set generated by $n+1$ points $v_0,\ldots, v_{n}$ of $M^\kappa_n$ in general position, where $M^\kappa_n$ is the unique $n$-dimensional space-form with constant sectional curvature $\kappa$. If $\kappa >0$ the points $v_0,\ldots,v_{n}$ are required to belong to an open emisphere.
We say that $S$ has dimension $n$. Each $v_i$ is called a {\em vertex}.
A {\em d-dimensional face} $T$ of $S$ is the convex hull of a   subset   $\lbrace v_{i_0},\ldots,v_{i_d}\rbrace$ of   $(d+1)$ vertices. The {\em interior} of $S$, denoted  $\dot{S}$,  is defined as $S$ minus the union of its lower dimensional faces; the {\em boundary} $\partial S$   is    the union of its codimension $1$ faces. \\
Let $\Lambda$ be any set and $E = \bigsqcup_{\lambda \in \Lambda} S_\lambda$, where any $S_\lambda$ is a $\kappa$-simplex. Let $\sim$ be an equivalence relation on $E$ satisfying:

\begin{itemize}
	\item[(i)] for any $\lambda \in \Lambda$ the projection map $ p  \colon S_\lambda \to E/_\sim$ is injective;
		\vspace{1mm}
	\item[(ii)] for any $\lambda, \lambda' \in \Lambda$ such that $p (S_\lambda)\cap p (S_{\lambda'}) \neq \emptyset$ there exists an isometry $h_{\lambda,\lambda'}$ from a face $T \subset S_\lambda$ onto a face $T'\subset S_{\lambda'}$ such that   $p  (x) = p (x')$,  for $x \in S_\lambda$ and $x' \in S_{\lambda'}$,   if and only if $x' = h_{\lambda,\lambda'}(x)$.
\end{itemize}
	\vspace{1mm}

\noindent The quotient space $K = E/_\sim$ is called a $M^\kappa$-simplicial complex or simply {\em $M^\kappa$-complex};  the set $E$ is  the {\em total space}.  A subset $S\subset K$ is called an $m$-simplex of $K$ if it is the image under  $p$ of an $m$-dimensional face of some $S_\lambda$; its {\em interior}  and its {\em boundary} are, respectively,   the image under $p$ of the interior and the boundary of $S_\lambda$.
The {\em support of a point} $x\in K$, denoted  supp$(x)$, is   the unique simplex containing $x$ in its interior (notice that  supp$(v) = v$ when $v$ is a vertex).
The \emph{open star} around a vertex $v$ is the union of the interior of all simplices having $v$ as a vertex.
\\
Metrically $K$ is equipped with the quotient pseudometric. By Lemma I.7.5 of \cite{BH09} the pseudometric can be expressed using strings.
A {\em $m$-string} in $K$ from $x$ to $y$ is a sequence $\Sigma = (x_0,\ldots, x_m)$ of points of $K$ such that $x=x_0$, $y=x_m$ and for each $i=0,\ldots,m-1$ there exists a simplex $S_i$ containing $x_i$ and $x_{i+1}$. 
Moreover a  $m$-string $\Sigma = (x_0,\ldots,x_m)$ from $x$ to $y$ is {\em taut} if
\begin{itemize}
	\item there is no simplex containing $\lbrace x_{i-1}, x_i, x_{i+1}\rbrace $;
	\item if $x_{i-1},x_i \in S_i$ and $x_i, x_{i+1} \in S_{i+1}$ then the concatenation of the segments $[x_{i-1}, x_i]$ and $[x_i, x_{i+1}]$ is  geodesic  in the subcomplex  $S_i \cup S_{i+1}$.
\end{itemize}

\noindent The {\em length} of $\Sigma$ is defined as:
\vspace{-3mm}
$$\ell(\Sigma)=\sum_{i=0}^{m-1}d_{S_i}(x_i, x_{i+1})$$

\vspace{-1mm}
\noindent where $d_S$ denotes the standard $M^\kappa$-metric on a geodesic simplex $S$ of $M^\kappa$. \linebreak
Then any   string can be identified to a path in $K$ and the natural quotient pseudometric on $K$ coincides with the following (\cite{BH09}, Lemma I.7.21):
$$d_K(x,y)=\inf\lbrace \ell(\Sigma) \text{ s.t. } \Sigma \text{ is a taut string from } x \text{ to }y\rbrace.$$


\vspace{-1mm}
\noindent Moreover for any $x\in K$ one can define  the number 
\begin{equation}
	\label{epsilon}
	\varepsilon(x) = 
	\inf_{\scriptsize \begin{array}{c}  S \text{ simplex of } K \\  x\in S \end{array}}   
	\left( \inf_{\scriptsize \begin{array}{c}   T \text{ face of } S \\    x\notin T \end{array}      } d_S(x,T)    \right)
\end{equation}
which has  the following fundamental property:
\begin{lemma}[Lemma I.7.9 and Corollary I.7.10 of \cite{BH09}]
	\label{epsilon-simplex}${}$\\
	If $\varepsilon(x)> 0$ for any $x$ and $K$ is connected then $d_K$ is a metric and $(K,d_K)$ is a length space. Moreover if $y\in K$ satisfies $d_K(x,y) < \varepsilon(x)$ then any simplex $S$ containing $y$ contains also $x$ and $d_K(x,y) = d_S(x,y)$.
\end{lemma}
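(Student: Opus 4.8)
The plan is to isolate a single localization lemma and to read off the three assertions from it; throughout we use the description of the quotient pseudometric recalled above, namely that $d_K(x,y)$ equals the infimum of $\ell(\Sigma)$ over all strings $\Sigma$ from $x$ to $y$ (connectedness of $K$ guarantees that such a string exists, so $d_K$ is real-valued). First I would record three elementary facts. \emph{(i)} Every face $T$ of a $\kappa$-simplex $S$ is convex in $M^\kappa_n$, hence isometrically embedded, so $d_S$ and $d_T$ agree on $T$. \emph{(ii)} The inclusion $S\hookrightarrow K$ is $1$-Lipschitz, since a geodesic of $S$ is itself a string; in particular $d_K(x,y)\le d_S(x,y)$ whenever $x,y$ lie in a common simplex $S$. \emph{(iii)} If $x$ and $y$ both lie in two simplices $S,S'$ of $K$, then $S\cap S'$ is a common face $F$ of $S$ and $S'$ containing $x$ and $y$, so by \emph{(i)} one has $d_S(x,y)=d_F(x,y)=d_{S'}(x,y)$; thus $d_S(x,y)$ does not depend on the chosen common simplex.

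The core step, which I expect to be the main technical obstacle, is the claim: \emph{if $\Sigma=(x=x_0,x_1,\dots,x_m=y)$ is a string with $\ell(\Sigma)<\varepsilon(x)$, then $x$ and $y$ lie in a common simplex and $d_S(x,y)\le\ell(\Sigma)$.} I would prove this by a straightening induction on $i$, with inductive hypothesis: $x_i$ lies in a simplex $\tau_i$ with $x\in\tau_i$ and $d_{\tau_i}(x,x_i)\le\ell_i:=\sum_{j<i}d_{S_j}(x_j,x_{j+1})$, where $S_j$ denotes a simplex containing $x_j$ and $x_{j+1}$. For the inductive step: $\mathrm{supp}(x_i)$ is a face of $\tau_i$, and since $d_{\tau_i}(x,\mathrm{supp}(x_i))\le d_{\tau_i}(x,x_i)<\varepsilon(x)$, the definition of $\varepsilon(x)$ forces $x\in\mathrm{supp}(x_i)$; but $\mathrm{supp}(x_i)$ is also a face of $S_i$, so $x\in S_i$, and by \emph{(i)} we get $d_{S_i}(x,x_i)=d_{\mathrm{supp}(x_i)}(x,x_i)=d_{\tau_i}(x,x_i)\le\ell_i$. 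The triangle inequality in $S_i$ then yields $d_{S_i}(x,x_{i+1})\le\ell_i+d_{S_i}(x_i,x_{i+1})=\ell_{i+1}\le\ell(\Sigma)<\varepsilon(x)$, so $\tau_{i+1}:=S_i$ continues the induction, and at $i=m$ we obtain the claim. The delicate points here are the bookkeeping between the intrinsic support of a point and the faces of the various simplices crossed by the string, and the use of the fact that $\varepsilon(x)$ --- an infimum over \emph{all} simplices through $x$ --- controls every simplex one meets along the string at once.

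Granting the claim, the rest is short. Suppose $d_K(x,y)<\varepsilon(x)$ and choose strings with $\ell(\Sigma_n)\to d_K(x,y)$; for $n$ large these are shorter than $\varepsilon(x)$, so the claim provides common simplices with $d_S(x,y)\le\ell(\Sigma_n)$, hence $d_S(x,y)\le d_K(x,y)$, while $d_K(x,y)\le d_S(x,y)$ by \emph{(ii)}; therefore $d_K(x,y)=d_S(x,y)$, a value which by \emph{(iii)} is the same for every simplex $S$ containing $x$ and $y$. If moreover $y\in S'$ for some simplex $S'$, then $\mathrm{supp}(y)$ is a face of $S'$, and $d_S(x,\mathrm{supp}(y))\le d_S(x,y)=d_K(x,y)<\varepsilon(x)$ forces, by the definition of $\varepsilon(x)$, that $x\in\mathrm{supp}(y)\subseteq S'$; this is the last sentence of the statement. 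The metric property follows at once: if $d_K(x,y)=0<\varepsilon(x)$ then $x,y$ lie in a common simplex $S$ with $d_S(x,y)=0$, and since $d_S$ is a genuine metric, $x=y$ (symmetry and the triangle inequality are built into the notion of quotient pseudometric). Finally, for the length-space property, concatenating the geodesic segments $[x_i,x_{i+1}]\subset S_i$ of a string $\Sigma$ and projecting to $K$ produces a path of $d_K$-length at most $\ell(\Sigma)$ by \emph{(ii)}, so the infimal length of paths from $x$ to $y$ is $\le\inf_\Sigma\ell(\Sigma)=d_K(x,y)$, and the reverse inequality holds in any metric space.
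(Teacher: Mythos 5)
Your proof is correct, and it is essentially the standard argument: the paper does not prove this lemma but simply cites Lemma I.7.9 and Corollary I.7.10 of Bridson--Haefliger, whose proof is exactly your straightening induction along a string (using that $\varepsilon(x)$ controls every simplex met along the way, plus the convexity of faces and the $1$-Lipschitz inclusion of simplices). So you have reproduced the cited argument faithfully; nothing further is needed.
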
 

\noindent For any vertex $v\in K$ it is possible to define the {\em link}  Lk$(v,K)$ of $K$ at $v$ as follows.
We fix any $\lambda \in \Lambda$ such that $v =p(v_\lambda)$, where $v_\lambda$ is a vertex of $S_\lambda$.\linebreak  The set of unit vectors $w$ of $T_{v_\lambda}M_n^\kappa$ such that the geodesic starting in direction $w$ stays inside $S_\lambda$ for a small time is a geodesic simplex of $M_{n-1}^1 = \mathbb{S}^{n-1}$, \linebreak 
denoted   Lk$(v_\lambda, S_\lambda)$. Consider the equivalence relation on the disjoint union 
$\bigsqcup_{ p(S_\lambda) \ni v} S_\lambda $ given by  $w_\lambda \sim w_{\lambda'}$ if and only if $p (S_\lambda)\cap p (S_{\lambda'}) \neq \emptyset$ and \linebreak $(dh_{\lambda,\lambda'})_{v_\lambda}(w_\lambda) = w_\lambda'$: 
the link Lk$(v,K)$ is  the quotient space under this equivalence relation. It is clearly a $M^1$-complex.
\vspace{2mm}

We  introduce now the class of simplicial complexes we are interested in. \linebreak 
We say that $K$ has {\em valency} at most $N$ if for all $v\in K$ the number of simplices having $v$ as a vertex is bounded above by $N$. Notice that if the valency is at most $N$ then the maximal dimension of a simplex of $K$ is at most $N$ too. \linebreak
We say that a simplex $S$ has {\em size} bounded by $R > 0$ if it contains a ball of radius $\frac{1}{R}$ and it is contained in a ball of radius $R$;  accordingly we say the simplicial complex $K$ has size bounded by $R$ if all the simplices $S_\lambda$ defining $K$ have size bounded by $R$. The bound on the size avoids to have too thin simplices: it should be thought as a quantitative non-collapsing condition, as follows from the next results. Indeed the first one affirms that a bound on the size of a simplex gives uniform bounds on the size of any of its faces: for example its $1$-dimensional faces are not too short (and not too long).
\begin{lemma}
	\label{size-faces}
	Let $S$ be a $M^\kappa$-simplex of dimension $n$ and size bounded by $R$. Then any face of $S$ of dimension $d$ has size bounded by $2^{n-d}R$.
\end{lemma}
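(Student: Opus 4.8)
The plan is to argue by induction on the codimension $n-d$, the case $d=n$ being trivial. It suffices to treat the case of a \emph{facet}, i.e.\ a face of codimension one: any $d$-dimensional face $T=\mathrm{conv}(v_{i_0},\dots,v_{i_d})$ of $S$ is reached from $S$ by deleting one vertex at a time, at each stage passing to a facet of the previously obtained simplex, so if the size bound is multiplied by at most $2$ at every such step, after $n-d$ steps one obtains the bound $2^{n-d}R$. Thus it is enough to show: if an $m$-simplex $S'$ has size bounded by $R'$, then every facet $T'$ of $S'$ has size bounded by $2R'$.

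So let $T=\mathrm{conv}(v_0,\dots,v_{n-1})$ be a facet of $S=\mathrm{conv}(v_0,\dots,v_n)$, let $H\cong M^\kappa_{n-1}$ be its totally geodesic hull, and let $w:=v_n$ be the opposite vertex. The upper bound is immediate: since $S$ is contained in a ball of radius $R$, we have $\mathrm{diam}(T)\le\mathrm{diam}(S)\le 2R$, hence $T$ is contained in the ball $\overline B_H(z_0,2R)$ centered at any $z_0\in T$. The real content is the lower bound: if $S$ contains a ball of radius $\rho:=1/R$, then $T$ contains an $(n{-}1)$-dimensional ball of radius $\rho/2$. When $\kappa=0$ this follows by slicing: choose $p$ with $B(p,\rho)\subseteq S$, let $H_p$ be the hyperplane through $p$ parallel to $H$, and use that $S$ is the union of the segments $[w,z]$, $z\in T$, so that $S\cap H_p$ is the image of $T$ under the homothety of center $w$ and ratio $t_0\in(0,1)$ (the relative height of $p$). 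Then $B(p,\rho)\cap H_p$ is an $(n{-}1)$-ball of radius $\rho$ contained in $S\cap H_p=t_0\cdot T$, so $T$ itself contains a ball of radius $\rho/t_0\ge\rho$. For general $\kappa$ one argues in the same spirit, replacing the parallel hyperplanes by the equidistant hypersurfaces from $H$ and the affine homothety by the radial projection $\pi\colon S\setminus\{w\}\to T$ along the geodesics issuing from $w$; the only new feature is that $\pi$ is now a bi-Lipschitz, rather than homothetic, map, whose distortion over a single codimension — for a simplex of size at most $R$ — is bounded by $2$, which is exactly where the factor $2$ comes from. Alternatively, one may first transport $S$ to a genuine Euclidean simplex through the projective model of $M^\kappa_n$ (gnomonic projection for $\kappa>0$, Klein model for $\kappa<0$), which sends totally geodesic faces to affine faces and is bi-Lipschitz with constant controlled by $R$, then run the Euclidean slicing argument there and pull the inscribed ball back.

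The step I expect to be the main obstacle is precisely this quantitative control of the distortion in the case $\kappa\neq 0$: one has to verify that, over one codimension, comparing the inradius of $S$ with that of a facet costs at most a factor $2$ for a simplex of size $\le R$. This is where the bound on the size of $S$ — which rules out simplices that are too thin or too large — is used in an essential way; once the facet statement is established with the factor $2$, the general case follows by the iteration described above, proving Lemma~\ref{size-faces}.
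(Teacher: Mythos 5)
Your skeleton coincides with the paper's: reduce to one codimension at a time, get the outer bound from the diameter, and get the inscribed ball of the facet by comparing along the geodesics issuing from the opposite vertex, losing a factor $2$ per step. But the step you yourself flag as ``the main obstacle'' --- that this comparison costs at most a factor $2$ when $\kappa\neq 0$ --- is precisely the content of the lemma, and it is left unproved; moreover, the way you propose to obtain it is not correct as stated. The radial projection $\pi\colon S\setminus\{w\}\to T$ is \emph{not} bi-Lipschitz with distortion $2$ (near $w$ it expands arbitrarily), and no size bound on $S$ makes it so; likewise the gnomonic/Klein transport is bi-Lipschitz only with a constant depending on $R$ (and $\kappa$), which would yield a bound of the form $C(R,\kappa)\,$ per step rather than the clean factor $2$ claimed in Lemma~\ref{size-faces}.

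The paper closes exactly this gap, and the point is that the factor $2$ is universal, not a consequence of the size bound: in $M^\kappa_n$ (within the relevant radius) the geodesic contraction toward a point is $\frac{2r}{R}$-Lipschitz, hence at most $2$-Lipschitz (Lemma~\ref{lemma-lip}). Concretely, if $B(x,\frac1R)\subseteq S$, if $\psi\colon S\to S'$ denotes the radial projection from $v_n$ onto the facet $S'$ and $\varphi$ is the contraction centered at $v_n$ with $\varphi(y)=x$, where $y=\psi(x)$, then for any $z$ in the model hyperplane $M^\kappa_{n-1}\supseteq S'$ with $d(z,y)\le\frac{1}{2R}$ one has $d(\varphi(z),x)=d(\varphi(z),\varphi(y))\le 2\,d(z,y)\le\frac1R$, so $\varphi(z)\in S$ and hence $z=\psi(\varphi(z))\in S'$; thus $S'$ contains the $(n-1)$-dimensional ball $B_{M^\kappa_{n-1}}(y,\frac{1}{2R})$. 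Note that the size bound on $S$ enters only through the radius $\frac1R$ of the inscribed ball and the outer radius $R$, not through the Lipschitz estimate, contrary to what you assert. With this ingredient supplied, your iteration over the codimension is fine and yields the stated bound $2^{n-d}R$.
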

\begin{proof}
	We prove the lemma by induction on the dimension $n$. If $n=0, 1$ there is nothing to prove. 
	Assume now that the bounds hold  for all faces of  $M^\kappa$-simplices of dimension $\leq n-1$ and consider   a $n$-dimensional $M^\kappa$-simplex $S=\text{Conv}(v_0,\ldots,v_n)$ of size bounded by $R$.
	Let  $S'=\text{Conv}(v_0,\ldots, v_{n-1})$ be the face of $S$ opposite to $v_n$  and identify  $M^\kappa_{n-1}$   with the $\kappa$-model space containing $S'$.  
	It is clear that   $S'$   is contained in a ball  $B_{M^\kappa_{n-1}}(x,2R)$ of  $M^\kappa_{n-1}$.  
	On the other hand  let   $B_{M^\kappa_{n}}(x,\frac{1}{R})$ be the ball of  $M^\kappa_{n}$ which is contained in $S$. 
	Call $\psi: S \rightarrow S'$  the map sending every point $z$ of $S$ to the intersection of the extension of the geodesic $[v_n,z]$ after $z$  with $S'$ and let $y= \psi(x)$;  
	moreover let $\varphi$ be  the contraction map centered at $v_n$ sending $y$ to $x$.
	Notice that $\psi \circ \varphi (z)=z$ for all $z \in S'$.
	The map $\varphi$  is $2$-Lipschitz, so any point of $B_{M^\kappa_{n}}(y,\frac{1}{2R})$ is sent to $B_{M^\kappa_{n}}(x,\frac{1}{R})$ under $\varphi$. 
	Therefore  
	$$ \textstyle B_{M^\kappa_{n-1}} \left(y,\frac{1}{2R} \right)  =  B \left(y,\frac{1}{2R} \right) \cap  M^\kappa_{n-1} \subseteq \psi  \left( B_{M^\kappa_{n}}(x,\frac{1}{R}) \right) \subseteq S' $$ which proves the induction step.
	%
\end{proof}
In the second result we prove the non-collapsing property: limit of $n$-dimensional simplices with uniform bound on the size is again $n$-dimensional (and satisfies the same bound on the size).
\begin{prop}
	\label{simplex-compactness}
	The class of $n$-dimensional $M^\kappa$-simplices of size bounded by $R$  and having a fixed point $o$ as a vertex is compact under the Hausdorff distance on $M^\kappa_n$. Moreover, under this convergence, any face of the limit space is limit of faces of the simplices in the sequence. Finally the same class is closed under ultralimits.
\end{prop}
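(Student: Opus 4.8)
\emph{Overview.} The plan is to run a selection argument on the defining vertices of the simplices: the upper part of the size bound confines everything to a fixed compact region on which the convex‑hull operation behaves continuously, the lower part of the size bound keeps the limit non‑degenerate, and the ultralimit statement is then reduced to the Hausdorff one via the properness of $M^\kappa_n$. Concretely, every simplex $S$ in the class contains the fixed vertex $o$ and is contained in a ball of radius $R$, hence has diameter $\le 2R$, so all of its vertices — together with the center $x$ of an inscribed ball of radius $\tfrac{1}{R}$ and the center $c$ of a circumscribed ball of radius $R$ — lie in the fixed compact set $K:=\overline{B}(o,2R)\subset M^\kappa_n$ (when $\kappa>0$ one keeps in mind that $S$ moreover lies in an open hemisphere, in the range of $R$ for which $K$ is geodesically convex). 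Given a sequence $S_m=\text{Conv}(o,v_1^m,\dots,v_n^m)$ in the class, with inscribed centers $x_m$ and circumscribed centers $c_m$, pass to a subsequence along which $v_i^m\to v_i^\infty$ for each $i$, $x_m\to x_\infty$ and $c_m\to c_\infty$, all in $K$.

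\emph{Convergence of simplices and of faces.} The key fact is that on a fixed geodesically convex ball of $M^\kappa_n$ the map sending an $(n{+}1)$-tuple of points to its convex hull is continuous — indeed uniformly Lipschitz — for the Hausdorff distance. This is transparent in a projective chart: the identity when $\kappa=0$, the Beltrami--Klein model when $\kappa<0$, and the gnomonic (central) projection of the open hemisphere when $\kappa>0$. In such a chart geodesics become affine segments and convex hulls become ordinary Euclidean convex hulls, for which Hausdorff‑continuity in the vertices is elementary, while the chart together with its inverse is bi‑Lipschitz on $K$. Hence $S_m\to S_\infty:=\text{Conv}(o,v_1^\infty,\dots,v_n^\infty)$ in the Hausdorff distance; applying the same continuity to any fixed $(d{+}1)$-element subset of the vertices shows that every $d$-dimensional face of $S_m$ converges to the corresponding $d$-face of $S_\infty$, which is exactly the ``faces converge'' assertion.

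\emph{The limit lies in the class, and ultralimits.} If $d(y,x_\infty)<\tfrac{1}{R}$ then $y\in\overline{B}(x_m,\tfrac{1}{R})\subseteq S_m$ for all large $m$, so $y\in S_\infty$ since $S_\infty$ is the (closed) Hausdorff limit of the $S_m$; thus $\overline{B}(x_\infty,\tfrac{1}{R})\subseteq S_\infty$, so $S_\infty$ has nonempty interior, and likewise $S_\infty\subseteq\overline{B}(c_\infty,R)$. Nonempty interior forces $o,v_1^\infty,\dots,v_n^\infty$ not to lie in any proper totally geodesic subspace, i.e.\ to be in general position (and, when $\kappa>0$, still contained in an open hemisphere), so $S_\infty$ is a genuine $n$-simplex, and by the two displayed inclusions it has size $\le R$; combined with the previous paragraph this gives compactness under the Hausdorff distance, together with the statement on faces. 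For the ultralimit statement, all $S_m$ lie in the fixed compact set $K$ of the proper space $M^\kappa_n$, so the metric ultralimit $\omega\text{-}\lim(S_m,o)$ is canonically isometric to a closed subset of $K$ and coincides with the $\omega$-limit of the sequence $(S_m)$ inside the compact space of nonempty compact subsets of $K$ endowed with the Hausdorff distance (a standard consequence of Proposition \ref{ultralimit-proper}); such an $\omega$-limit is the Hausdorff limit of a subsequence, hence belongs to the class by what was just proved.

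\emph{Main obstacle.} The step requiring the most care is the Hausdorff‑continuity of the convex hull when $\kappa>0$: one must make sure that the size bound really does trap the whole sequence inside a single geodesically convex ball, bounded away from the cut locus, so that in the limit the convex hulls neither collapse onto a lower‑dimensional set nor escape the hemisphere. Once the simplices are uniformly contained in a convex chart, every remaining point is soft.
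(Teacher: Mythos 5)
Your argument is correct and follows the same skeleton as the paper's proof: pass to a subsequence along which the vertex tuples converge, use continuity of the convex-hull operation in the vertices to obtain Hausdorff convergence of the simplices and of all their faces, rule out degeneration of the limit, and reduce the ultralimit statement to the Hausdorff one via the ambient proper space $M^\kappa_n$. The differences are in how two steps are implemented. For the continuity of $\mathrm{Conv}$ you pass to a projective chart (Klein model, gnomonic projection), whereas the paper argues directly that the $\varepsilon$-neighbourhood of the hull of the limit vertices is convex and eventually contains the vertices $v_i^l$, and conversely; both give the same conclusion. For non-degeneracy the paper invokes Lemma \ref{size-faces}: if the limit vertices were not in general position, some face of the approximating simplices would collapse onto a lower-dimensional set, contradicting the uniform lower bound on the size of faces. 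You instead track the incenters and circumcenters and deduce $\overline{B}(x_\infty,\tfrac1R)\subseteq S_\infty\subseteq \overline{B}(c_\infty,R)$, which yields general position and the size bound of the limit in one stroke; this is a little more economical, since the paper leaves the existence of a $1/R$-ball inside the limit as ``clear''. For the ultralimit you use Proposition \ref{ultralimit-proper} together with compactness of the hyperspace of $\overline{B}(o,2R)$, while the paper quotes Proposition \ref{Jansen}; these are interchangeable. One caveat: the point you flag as the ``main obstacle'' (trapping the whole sequence in a single convex chart when $\kappa>0$) is left unproved in your write-up. When $R<D_\kappa/2$ it is closed by the circumcenters you already introduced: along the subsequence all simplices eventually lie in $B(c_\infty,R')$ for some $R'<D_\kappa/2$, a convex ball uniformly away from the cut locus of $c_\infty$, so the chart argument applies. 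When $R\geq D_\kappa/2$ the difficulty is genuine (near-antipodal vertex pairs are then compatible with the size bound, and the limit hull need not be a simplex), but the paper's own proof makes the same tacit assumption at the $\varepsilon$-neighbourhood step, so this is not a gap of your argument relative to the paper's.
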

\begin{proof}
	We take a sequence of simplices $S_l$ as in the assumption.
	We denote by $v_0^l = o, v_1^l \ldots, v_{n}^l$ the vertices of $S_l$. 
	All the sequences $(v_i^l)$ are contained in a compact subset of $M^\kappa_n$, so up to subsequence they converge to $v_i$ for all $i=0,\ldots,n$, in particular $v_0 = o$.
	Then the $\varepsilon$-neighbourhood   $\text{Conv}(v_0, \ldots, v_{n})_\varepsilon$ of $\text{Conv}(v_0, \ldots, v_{n})$  is a convex subset of $M^\kappa_n$ which definitely contains $v_0^l = o, v_1^l \ldots, v_{n}^l$,  hence    
	$$ \text{Conv}(v_0, \ldots, v_{n})_\varepsilon  \supseteq \text{Conv}(v_0^l = o, v_1^l \ldots, v_{n}^l).$$ 
	Analogously   $ \text{Conv}(v_0, \ldots, v_{n}) \subseteq \text{Conv}(v_0^l = o, v_1^l \ldots, v_{n}^l)_\varepsilon$ definitely, hence \linebreak
	$ \text{Conv}(v_0, \ldots, v_{n}) \rightarrow \text{Conv}(v_0^l = o, v_1^l \ldots, v_{n}^l)$ for the  Hausdorff distance. 
	Similarly any face of $S$ is limit of corresponding faces of $S_l$. 
	We now claim that $v_0,\ldots, v_n$ are in general position. If not then there are three vertices, say $v_0,v_1,v_2$, belonging to the same $1$-dimensional space. This means   the faces $\text{Conv}(v_0^l,v_1^l,v_2^l)$ tend to a 1-dimensional face, therefore thay cannot have size bounded below uniformly, which contradicts Lemma \ref{size-faces}. Therefore $S$ is a $n$-dimensional simplex. Moreover it is clear it is contained in a ball of radius $R$ and it contains a ball of radius $\frac{1}{R}$.
	Fix now any non-principal ultrafilter $\omega$ and a sequence $S_l$ as above. Each $S_l$ is proper and the sequence converges in the Gromov-Hausdorff sense to the proper space $S$. Then by Proposition \ref{Jansen} we get that the ultralimit $S_\omega$ is isometric to $S$.
\end{proof}
Clearly the same conclusion holds for the class of simplices of dimension at most $n$ and size bounded by $R$ since it is the finite union of compact classes.
From this compactness result we get useful uniform estimates.
\begin{lemma}
	\label{distance-vertices}
	Let $K$ be a $M^\kappa$-complex of size bounded  by $R$ and dim$(K) \leq n$. Then there exists a constant $\varepsilon_0(R, n) > 0$ depending only on $R$ and $n$ such that for  all vertices $v,w$ of $K$ it holds 
	$\varepsilon(v) > \varepsilon_0(R,n)$ and  $d_K(v,w)\geq\varepsilon_0(R,n)$. 
\end{lemma}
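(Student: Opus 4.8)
The plan is to reduce the statement to a uniform estimate on a single $\kappa$-simplex and then obtain that estimate from the compactness result of Proposition~\ref{simplex-compactness}. Fix once and for all a point $o\in M^\kappa_n$. By homogeneity of $M^\kappa_n$, for every $M^\kappa$-simplex $S$ of dimension $\le n$ and size bounded by $R$ and every vertex $v$ of $S$ there is an isometry of $M^\kappa_n$ taking $v$ to $o$; since such an isometry preserves the dimension, the size, the faces and all distances, it suffices to bound $d_S(o,T)$ from below, uniformly, when $o$ is a vertex of $S$ and $T$ runs over the faces of $S$ not containing $o$.

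To do this I would consider the class $\mathcal S$ of all $M^\kappa$-simplices of dimension $\le n$ and size bounded by $R$ having $o$ as a vertex, which is compact for the Hausdorff distance on $M^\kappa_n$ by Proposition~\ref{simplex-compactness} (and the remark following it), with the property that along a convergent sequence the faces converge to the corresponding faces of the limit. On $\mathcal S$ define $f(S)=\min\{\,d_S(o,T) : T \text{ a face of }S,\ o\notin T\,\}$. Since a $\kappa$-simplex is convex in $M^\kappa_n$, the intrinsic metric $d_S$ is the restriction of the metric of $M^\kappa_n$, so $d_S(o,T)=d_{M^\kappa_n}(o,T)$; as $K\mapsto d_{M^\kappa_n}(o,K)$ is $1$-Lipschitz for the Hausdorff distance and each simplex has only finitely many faces not containing $o$ (indexed by the subsets of its vertices avoiding $o$), which vary continuously along convergent sequences, the function $f$ is continuous on $\mathcal S$. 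Moreover $f(S)>0$ for every $S\in\mathcal S$, because the vertices of a $\kappa$-simplex are in general position, hence $o$ lies in no face that does not contain it, and that face is compact. Therefore $f$ attains a strictly positive minimum $c=c(R,n)>0$ on $\mathcal S$.

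Combining the reduction with this estimate, every simplex $S$ of $K$ satisfies $d_S(v,T)\ge c$ whenever $v$ is a vertex of $S$ and $T$ a face of $S$ with $v\notin T$; by the definition \eqref{epsilon} of $\varepsilon(\cdot)$ this already gives $\varepsilon(v)\ge c$ for every vertex $v$ of $K$. For the last assertion let $v\ne w$ be two vertices of $K$ and suppose $d_K(v,w)<c$. Since $c\le\varepsilon(v)$, Lemma~\ref{epsilon-simplex} applies and shows that every simplex $S$ of $K$ containing $w$ contains also $v$, with $d_K(v,w)=d_S(v,w)$; fixing one such $S$ and observing that a vertex of $K$ lying in $S$ is necessarily a vertex of $S$ (its support being a $0$-simplex), the set $T=\{w\}$ is a face of $S$ with $v\notin T$, whence $d_K(v,w)=d_S(v,w)=d_S(v,T)\ge c$, a contradiction. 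Thus $d_K(v,w)\ge c$ for all distinct vertices, and taking $\varepsilon_0(R,n)=c/2$ we get $\varepsilon(v)>\varepsilon_0(R,n)$ and $d_K(v,w)\ge\varepsilon_0(R,n)$.

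The only genuine work in this scheme is the second step: verifying the continuity of $f$ for the Hausdorff topology — which is exactly where one uses that faces pass to the limit in Proposition~\ref{simplex-compactness} — and that $\mathcal S$ is truly compact, i.e. that a Hausdorff limit of non-degenerate simplices of size $\le R$ cannot collapse to a lower-dimensional one; this last point is guaranteed by Proposition~\ref{simplex-compactness} together with the bound on the size of faces from Lemma~\ref{size-faces}.
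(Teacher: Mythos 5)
Your proof is correct and follows essentially the same route as the paper's: compactness of the class of bounded-size simplices with a marked vertex (Proposition \ref{simplex-compactness}), continuity and positivity of a vertex-to-face distance function to extract a positive minimum, and then Lemma \ref{epsilon-simplex} to separate vertices. The only divergence is organizational: the paper argues dimension by dimension, applying the compactness to $d$-dimensional simplices of size bounded by $2^{n-d}R$ (Lemma \ref{size-faces}) and the vertex-to-opposite-face distance, so the estimate covers verbatim every simplex of $K$ appearing in \eqref{epsilon}, including faces of the defining simplices, which need not have size bounded by $R$; in your version, where $\mathcal{S}$ contains only simplices of size $\leq R$, the claim that ``every simplex $S$ of $K$ satisfies $d_S(v,T)\geq c$'' requires the one-line remark (already implicit in your observation that $d_S$ is the restriction of the model-space metric) that for a face $S$ of a defining simplex $S_\lambda$ one has $d_S(v,T)=d_{S_\lambda}(v,T)$ with $T$ still a face of $S_\lambda$, so the bound obtained on the $S_\lambda$'s suffices.
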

\begin{proof}
	The class of simplices with size bounded by $2^{n-d}R$ and dimension exactly $d$ is compact with respect to the Hausdorff distance of $M^\kappa_d$ by   \ref{simplex-compactness}. \linebreak Moreover the map $\text{Conv}(v_0,\ldots,v_d) \mapsto d_{M^\kappa_d}(v_0,\text{Conv}(v_1,\ldots, v_d))$ is continuous with respect to the Hausdorff distance and it is  positive. Therefore it attains a  global minimum $\varepsilon_d>0$. 
	Setting $\varepsilon_0(R,n)=\min_{d=0,\ldots,n} \varepsilon_d$, we have $\varepsilon (v) \geq  \varepsilon_0(R,n)>0$   for every vertex $v\in K$. Therefore every two vertices $v,w$ of $K$ satisfy $d_K (v,w) \geq \varepsilon_0(R,n)$ (or, by   Lemma \ref{epsilon-simplex}, there would exist a simplex   $S$ of $ K$ such that $d_K(v,w)=d_S(v,w)< \varepsilon_0(R,n)$, a contradiction). 
\end{proof}

\begin{lemma} \label{epsilond}
	Let $S$ be a $\kappa$-simplex of size bounded by $R$ and $\textup{dim}(S) \leq  n$.\linebreak
	Let $\partial T_\varepsilon$ denote the $\varepsilon$-neighbourhood of the boundary of any face $T$ of $S$.  \linebreak	 
	For any positive  $\tau $ there exists $\varepsilon (R, n, \tau) > 0$ such that for all faces $T$ of $S$,
	for all $x\in T\setminus \partial T_{\tau}$ and  all the faces $T'$ of $S$ which do not contain $x$   it holds: 
	$$d(x,T') \geq \varepsilon (R,n,\tau)$$
	Moreover, for any integer   $d\geq 0$ there exist   $\eta_d=\eta_d(R,n)$, $\varepsilon_d=\varepsilon_d(R,n)>0$, 
	where  $\varepsilon_0 = \varepsilon_0(R,n)$ is the function given by Lemma \ref{distance-vertices} and $\eta_0=\frac{\varepsilon_0}{8(n+1)}$,  satisfying the following conditions: 
	\begin{itemize}
		\item[(a)]  for all $d$-dimensional faces $T$ of $S$, for every  $x\in T\setminus \partial T_{\eta_{d-1}}$  and every face  $T'$ of $S$  not containing $x$ it holds:  $d(x,T') \geq \varepsilon_d$;
		
		\item[(b)]  $\eta_k + \eta_{k+1}+ \cdots \eta_m \leq \frac{\varepsilon_k}{8}$, for all $0 \leq k \leq m \leq n$.
	\end{itemize}
	
\end{lemma}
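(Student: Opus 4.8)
The plan is to obtain the uniform constant $\varepsilon(R,n,\tau)$ of the first assertion from a compactness argument based on Proposition \ref{simplex-compactness}, and then to define the scales $(\varepsilon_d)_{d\ge 0}$ and $(\eta_d)_{d\ge 0}$ of the second assertion by a finite recursion whose inductive step feeds on the first assertion.

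For the first assertion fix $\tau>0$. If $T\setminus\partial T_\tau=\emptyset$ for every face $T$ of every $\kappa$-simplex of dimension $\le n$ and size $\le R$, any positive value of $\varepsilon(R,n,\tau)$ works; otherwise, up to an isometry of $M^\kappa_n$ we may assume that a fixed point $o$ is a vertex, and by Proposition \ref{simplex-compactness} the family of $\kappa$-simplices of dimension $\le n$, size $\le R$ having $o$ as a vertex is the union over $k\le n$ of the compact (for the Hausdorff distance in $M^\kappa_n$) classes $\mathcal S_k$ of such $k$-dimensional simplices, inside which all faces of a limit simplex are limits of the corresponding faces. Fix $k\le n$ and a pair of combinatorial labels $T\not\subseteq T'$ of faces of a $k$-simplex, and let $\mathcal X_{T,T'}$ be the set of pairs $(S,x)$ with $S\in\mathcal S_k$ and $x\in T(S)\setminus\partial T(S)_\tau$. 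Since $\tau>0$, such an $x$ lies in the relative interior of the face $T(S)$, so $x\notin T'(S)$ automatically as $T\not\subseteq T'$; moreover $\mathcal X_{T,T'}$ is closed, because the faces $T(S)$ and their boundaries $\partial T(S)$ vary continuously for the Hausdorff distance, so that the condition $d(x,\partial T(S))\ge\tau$ passes to limits. Hence $\mathcal X_{T,T'}$ is compact, and so is the finite union $\mathcal X=\bigcup_{k,\,T\not\subseteq T'}\mathcal X_{T,T'}$; the function $(S,x)\mapsto d(x,T'(S))$ is continuous and strictly positive on $\mathcal X$ ($T'(S)$ being compact and not containing $x$), hence attains a positive minimum $\varepsilon(R,n,\tau)$. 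Every quadruple $(S,T,T',x)$ admissible in the statement belongs to one of the pieces $\mathcal X_{T,T'}$, so $d(x,T')\ge\varepsilon(R,n,\tau)$.

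For the second assertion we argue by induction on $d$, starting from $\varepsilon_0=\varepsilon_0(R,n)$ and $\eta_0=\frac{\varepsilon_0}{8(n+1)}$ as prescribed. For $d=0$, condition $(a)$ reads: for every vertex $v$ of $S$ and every face $T'$ of $S$ with $v\notin T'$ one has $d(v,T')\ge\varepsilon_0$; this is exactly Lemma \ref{distance-vertices}, since for a single simplex $\varepsilon(v)=\min\{\,d(v,T'):T'\text{ a face of }S,\ v\notin T'\,\}$. Assume now $\eta_{d-1}>0$ has been defined; apply the first assertion with $\tau=\eta_{d-1}$ and set $\varepsilon_d:=\varepsilon(R,n,\eta_{d-1})>0$, which depends only on $R$ and $n$. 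Then $(a)$ holds in dimension $d$: if $T$ is a $d$-dimensional face of $S$, $x\in T\setminus\partial T_{\eta_{d-1}}$ and $T'$ is a face of $S$ not containing $x$, the quadruple $(S,T,T',x)$ is admissible for $\tau=\eta_{d-1}$, so $d(x,T')\ge\varepsilon_d$. Finally set $\eta_d:=\min\{\eta_{d-1},\,\frac{\varepsilon_d}{8(n+1)}\}$ (and, for $d>n$, $\varepsilon_d:=\varepsilon_n$, $\eta_d:=\eta_n$, where $(a)$ is vacuous). By construction $\eta_0\ge\eta_1\ge\eta_2\ge\cdots$ and $\eta_k\le\frac{\varepsilon_k}{8(n+1)}$ for every $k$, hence for all $0\le k\le m\le n$
$$\eta_k+\eta_{k+1}+\cdots+\eta_m\ \le\ (m-k+1)\,\eta_k\ \le\ (n+1)\,\eta_k\ \le\ \frac{\varepsilon_k}{8},$$
which is $(b)$.

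The only genuinely delicate point is the compactness of $\mathcal X$ in the first step: one must verify that under Hausdorff convergence of the simplices the condition ``$x$ lies in the face $T$ at distance at least $\tau$ from $\partial T$'' is preserved in the limit — this uses precisely the convergence of faces and of their boundaries supplied by Proposition \ref{simplex-compactness} — and that restricting to pairs with $T\not\subseteq T'$ is exactly what forces the limiting distance $d(x,T')$ to stay positive (for $T\subseteq T'$ there are no admissible quadruples at all). Everything else is the bookkeeping with the nested scales $\eta_d$ carried out above.
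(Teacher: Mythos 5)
Your proof is correct and follows essentially the same route as the paper: a compactness argument over the class of bounded-size simplices (Proposition \ref{simplex-compactness}) yields the uniform constant $\varepsilon(R,n,\tau)$ — you just make explicit, via the compact set of pairs $(S,x)$ with labelled faces, the positivity and semicontinuity that the paper asserts for its function $h(S)$ — and your recursion $\varepsilon_d=\varepsilon(R,n,\eta_{d-1})$, $\eta_d=\min\{\eta_{d-1},\varepsilon_d/8(n+1)\}$ is exactly the paper's construction, with the same verification of (b).
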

\begin{proof}
	The proof follows  same arguments  of Lemma \ref{distance-vertices}. 
	Indeed it is sufficient to consider the positive, lower semicontinuous map
	$$h(S) = \min_{T \text{ face of } S} \inf_{x\in T \setminus \partial T_\tau} \min_{\scriptsize \begin{array}{c} T' \text{ face of } S \\ x\notin T'\end{array}} d(x,T')$$
	on the compact set of $\kappa$-simplices of size bounded by $R$ and dimension at most $n$, and take as  $\varepsilon(R,n,\tau)$ its positive minimum.
	
	To prove the second part of the Lemma we define $\varepsilon_1(R,n)$ as $\varepsilon(R,n,\eta_0)$, where this is the number given by the first statement with $\tau = \eta_0$.
	Then we choose $0<\eta_1 = \min\lbrace \frac{\varepsilon_0}{8(n+1)}, \frac{\varepsilon_1}{8(n+1)} \rbrace$ and again we define $\varepsilon_2>0$ as $\varepsilon(R,n,\eta_1)$. \linebreak
	We can continue choosing   $0<\eta_2 = \min\{\frac{\varepsilon_0}{8(n+1)}, \frac{\varepsilon_1}{8(n+1)},\frac{\varepsilon_2}{8(n+1)} \}$  and so on. This process produces the announced $\varepsilon_i, \eta_i$, which   clearly satisfy    (b).
\end{proof}

As a consequence we get the following   useful estimates (the second of which is similar to Lemma I.7.54 of \cite{BH09}):

\begin{lemma}
	\label{length-simplex}
	Let $K$ be a $M^\kappa$-
	complex of size bounded by $R$ and $\textup{dim}
	(K) \leq  n$.  
	For all $\tau > 0$ there exists   $\varepsilon (R, n, \tau) > 0$ with the following property:  for all $x\in K$ whose support is $S$ satisfying $d_S(x,\partial S)\geq \tau$ we have $\varepsilon(x) \geq \varepsilon (R, n, \tau)$. 
	In particular  if $K$ is connected then $(K,d_K)$ is a length metric space.
\end{lemma}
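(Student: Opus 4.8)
The plan is to reduce the statement to the single-simplex estimate already available as Lemma \ref{epsilond}, applied cleverly not to the support of $x$ but to an \emph{arbitrary} simplex of $K$ through $x$. First I would record two elementary facts. (1) If $x\in K$ has support $S_0=\mathrm{supp}(x)$, then $S_0$ is a face of every simplex $S'$ of $K$ containing $x$: inside the single simplex $S'$ the point $x$ lies in the interior of a unique face $T_0$ of $S'$, and $T_0$, seen as a simplex of $K$, contains $x$ in its interior, hence $T_0=S_0$ by uniqueness of the support; moreover a face $T'$ of $S'$ contains $x$ if and only if it contains $S_0$. (2) By Lemma \ref{size-faces}, since the simplices defining $K$ have size bounded by $R$ and $\dim(K)\le n$, \emph{every} simplex of $K$ has dimension $\le n$ and size bounded by $2^nR$.

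Next, fix $\tau>0$ and let $\varepsilon(2^nR,n,\tau)>0$ be the constant provided by the first part of Lemma \ref{epsilond} applied to $\kappa$-simplices of dimension $\le n$ and size bounded by $2^nR$. Take any $x\in K$ whose support $S_0$ satisfies $d_{S_0}(x,\partial S_0)\ge\tau$ (with the convention that this distance is $+\infty$, hence $\ge\tau$ for every $\tau$, when $x$ is a vertex, so that $\partial S_0=\emptyset$). Let $S'$ be an arbitrary simplex of $K$ containing $x$; by fact (2) it has size bounded by $2^nR$ and dimension $\le n$, and by fact (1) it has $S_0$ as a face. Applying Lemma \ref{epsilond} to $S'$, with the distinguished face $T:=S_0$ and the point $x\in S_0$ which is at distance $\ge\tau$ from $\partial S_0$, we obtain $d_{S'}(x,T')\ge\varepsilon(2^nR,n,\tau)$ for every face $T'$ of $S'$ not containing $x$. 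Since $S'$ was an arbitrary simplex of $K$ through $x$, taking the infimum over all such $S'$ and all such $T'$ in the defining formula \eqref{epsilon} for $\varepsilon(x)$ yields $\varepsilon(x)\ge\varepsilon(2^nR,n,\tau)$; one then sets $\varepsilon(R,n,\tau):=\varepsilon(2^nR,n,\tau)$, which proves the first assertion.

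For the ``in particular'', observe that any point $x\in K$ lies in the open interior of its support, so $\tau_x:=d_{\mathrm{supp}(x)}\bigl(x,\partial\,\mathrm{supp}(x)\bigr)>0$, whence $\varepsilon(x)\ge\varepsilon(R,n,\tau_x)>0$ by the first assertion. Thus $\varepsilon(x)>0$ for every $x\in K$, and if $K$ is connected Lemma \ref{epsilon-simplex} gives immediately that $d_K$ is a genuine metric and that $(K,d_K)$ is a length space. I do not expect a serious obstacle: the whole content sits in Lemma \ref{epsilond}, and the only points requiring care are the bookkeeping of which simplex plays the role of ``$S$'' and which face plays the role of ``$T$'' when invoking that lemma, together with the harmless degenerate case in which $x$ is a vertex.
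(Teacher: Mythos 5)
Your proof is correct and takes essentially the same route as the paper: observe that $\mathrm{supp}(x)$ is a face of every simplex of $K$ containing $x$, apply the first claim of Lemma \ref{epsilond} with $T=\mathrm{supp}(x)$ to bound $\varepsilon(x)$ from below uniformly, and deduce the length-space statement from Lemma \ref{epsilon-simplex}. The extra bookkeeping you do via Lemma \ref{size-faces} (replacing $R$ by $2^nR$ so the constant works for all, not just maximal, simplices of $K$) is harmless and only affects the value of the constant.
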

\begin{proof}
	
	Let  $x\in K$. Any simplex   containing $x$ must contain supp$(x)$ as a face. It is then enough to apply the first claim of Lemma \ref{epsilond} to get the estimate on $\varepsilon(x)$. The second part  follows immediately from Lemma \ref{epsilon-simplex}.
\end{proof}

\begin{lemma}
	\label{uniform-c}
	Let $K$ be a $M^\kappa$-
	complex of size bounded by $R$ and $\textup{dim}
	(K) \leq  n$. 
	Then there exists $\delta=\delta(R,n) > 0$ depending only on $R$ and $n$ such that:
	\begin{itemize}
		\item[(a)] if two simplices $S,S'$ of $K$ are at distance $\leq \delta(R,N)$, they share a face;
		\item[(b)] moreover for every $x\in K$ the ball $\overline{B}(x,\delta)$ is contained in the open star of some vertex; 
		\item[(c)] finally for every $x\in K$ there exists $y\in K$ such that $\overline{B}(x, \delta) \subset \overline{B}(y,\frac{\varepsilon(y)}{4})$ (where $\varepsilon(y)$ is the function defined in \eqref{epsilon}).
	\end{itemize} 	
\end{lemma}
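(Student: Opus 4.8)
The plan is to deduce all three items from one ``descent to a deep face'' statement: \emph{for every $x\in K$ there exist an integer $0\le k\le n$ and a point $y\in K$ whose support $\textup{supp}(y)$ is a face of $\textup{supp}(x)$, with $d_K(x,y)\le\frac{1}{8}\varepsilon_k(R,n)$ and $\varepsilon(y)\ge\varepsilon_k(R,n)$}, where $\varepsilon_d(R,n),\eta_d(R,n)$ are the constants furnished by Lemma \ref{epsilond}. Granting this, I would set $\delta=\delta(R,n):=\frac{1}{8}\min_{0\le d\le n}\varepsilon_d(R,n)>0$, which will be the constant in the statement. I would first record that every simplex of $K$ is the image under $p$ of a face of one of the defining $\kappa$-simplices $S_\lambda$, hence has dimension $\le n$ and, by Lemma \ref{size-faces}, size bounded in terms of $R$ and $n$ alone; thus Lemmas \ref{distance-vertices}, \ref{epsilond} and \ref{length-simplex} apply uniformly to all simplices of $K$, one has $\varepsilon(x)>0$ for every $x$ (Lemma \ref{length-simplex}), and each closed simplex of $K$ is compact.

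To prove the ``deep face'' statement I would run a finite descent. Put $x_0=x$, $S_0=\textup{supp}(x_0)$, $d_0=\textup{dim}\,S_0\le n$. At stage $i$: if $d_i:=\textup{dim}\,S_i=0$, or if $d_{S_i}(x_i,\partial S_i)\ge\eta_{d_i-1}(R,n)$, stop; otherwise $d_{S_i}(x_i,\partial S_i)=\min_F d_{S_i}(x_i,F)<\eta_{d_i-1}(R,n)$, the minimum being over the finitely many codimension-$1$ faces $F$ of $S_i$, so I pick such an $F$ and a point $x_{i+1}\in F$ with $d_{S_i}(x_i,x_{i+1})<\eta_{d_i-1}(R,n)$, and set $S_{i+1}=\textup{supp}(x_{i+1})$, a face of $F$ (hence of $S_i$) with $d_{i+1}<d_i$. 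Since $(d_i)$ strictly decreases the descent stops after $m\le n+1$ stages; set $y=x_m$ and $k=d_m$, so $S_m=\textup{supp}(y)$ is a face of $\textup{supp}(x)$. As $d_K\le d_{S_i}$ on $S_i$ and the indices $d_0-1>d_1-1>\cdots>d_{m-1}-1$ are distinct and lie in $[k,n-1]$, Lemma \ref{epsilond}(b) yields
$$d_K(x,y)\le\sum_{i=0}^{m-1}\eta_{d_i-1}(R,n)\le\sum_{j=k}^{n}\eta_j(R,n)\le\frac{1}{8}\varepsilon_k(R,n).$$
For the lower bound on $\varepsilon(y)$: if $k=0$ then $y$ is a vertex and $\varepsilon(y)\ge\varepsilon_0(R,n)$ by Lemma \ref{distance-vertices}; if $k\ge1$, the stopping inequality $d_{S_m}(y,\partial S_m)\ge\eta_{k-1}(R,n)$ together with Lemma \ref{epsilond}(a) — applied, for each simplex $\widehat S$ of $K$ with $y\in\widehat S$, to the $k$-dimensional face $T=\textup{supp}(y)$ of $\widehat S$ — gives $d_{\widehat S}(y,T')\ge\varepsilon_k(R,n)$ for every face $T'$ of $\widehat S$ not containing $y$, hence $\varepsilon(y)\ge\varepsilon_k(R,n)$ by the definition \eqref{epsilon}.

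Granting the ``deep face'' statement, the three items follow from Lemma \ref{epsilon-simplex}. Fix $x$ and let $y,k$ be as above. For every $z\in\overline{B}(x,\delta)$ one has $d_K(y,z)\le d_K(y,x)+\delta\le\frac{1}{8}\varepsilon_k(R,n)+\delta\le\frac{1}{4}\varepsilon(y)$, hence $\overline{B}(x,\delta)\subseteq\overline{B}(y,\varepsilon(y)/4)$, which is (c). Since $\varepsilon(y)/4<\varepsilon(y)$, Lemma \ref{epsilon-simplex} shows every simplex of $K$ containing such a $z$ contains $y$; in particular $y\in\textup{supp}(z)$, so every vertex $v$ of $\textup{supp}(y)$ is a vertex of $\textup{supp}(z)$, and therefore $z$, being interior to $\textup{supp}(z)$, lies in the open star of $v$; this proves (b), with $v$ any vertex of $\textup{supp}(y)$. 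Finally, for (a), let $S,S'$ be simplices of $K$ with $d(S,S')\le\delta$; by compactness there are $p\in S$, $q\in S'$ with $d_K(p,q)=d(S,S')\le\delta$. Applying the ``deep face'' statement to $p$, the descent stays inside faces of $\textup{supp}(p)\subseteq S$, so $y\in S$, and $d_K(y,q)\le d_K(y,p)+d_K(p,q)\le\frac{1}{8}\varepsilon_k(R,n)+\delta\le\frac{1}{4}\varepsilon(y)<\varepsilon(y)$, so by Lemma \ref{epsilon-simplex} the simplex $S'$, which contains $q$, contains $y$ as well; then $\textup{supp}(y)$ is a common face of $S$ and $S'$, which proves (a).

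The main obstacle will be the descent itself: I must track the dimensions $d_i$ so that the indices $d_i-1$ occurring in the telescoping estimate are pairwise distinct and all $\ge k$ — this is precisely what allows the sum $\sum_i\eta_{d_i-1}(R,n)$ to be controlled by $\frac{1}{8}\varepsilon_k(R,n)$ through Lemma \ref{epsilond}(b) — and I must verify that the terminal stopping condition is exactly the input required by Lemma \ref{epsilond}(a) (or, when $k=0$, by Lemma \ref{distance-vertices}) to bound $\varepsilon(y)$ from below. Everything else is a formal application of Lemma \ref{epsilon-simplex}.
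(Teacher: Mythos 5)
Your proposal is correct and is essentially the paper's own argument: your ``deep face'' descent is exactly the paper's proof of (c), stepping down through faces of strictly decreasing dimension with the constants $\eta_d,\varepsilon_d$ of Lemma \ref{epsilond} and the telescoping bound \ref{epsilond}(b), and then (b) and (a) are deduced via Lemma \ref{epsilon-simplex} just as in the paper. The only (harmless) deviations are organizational: you take $\delta=\tfrac18\min_d\varepsilon_d$ instead of the paper's $\min_d\eta_d$, and you prove (a) directly from the descent, obtaining the shared face $\mathrm{supp}(y)$ rather than merely the shared vertex the paper extracts from (b).
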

\begin{proof} 
	We start proving (c). Consider the numbers $\varepsilon_d, \eta_d$ given by  Lemma \ref{epsilond}. 
	The claim is that $\delta=\min_{d=0,\ldots,n}\eta_d$ satisfies the thesis of (c).
	Actually take  any  $x\in K$ and   consider the $d$-dimensional simplex $S=\text{supp}(x)$. \linebreak
	There are two possibilities: either $x\in S \setminus \partial S_{\eta_{d-1}}$ or there exists a point $y_1 \in \partial S$ such that $d(x,y_1) \leq \eta_{d-1}$.
	In the first case we observe that any simplex $S'$ containing $x$ must have $S$ has a face and by Lemma \ref{epsilond} we can conclude that $\varepsilon(x)\geq \varepsilon_d$. Hence in this case $\overline{B}(x,\delta)\subset \overline{B}(x, \frac{\varepsilon_d}{8}) \subset \overline{B}(x, \frac{\eta(x)}{4})$ as follows by Lemma \ref{epsilond}.(b).
	Otherwise let    $S_1= \textup{supp}(y_1)$ and  call $0\leq d_1 \leq d-1$ its the dimension.
	Arguing as before we find that either $\eta(y_1) \geq \varepsilon_{d_1}$ or there exists again a point $y_2$ whose support $S_2$ has dimension $0\leq d_2 < d_1$ such that $d(y_1,y_2)\leq \eta_{d_1 - 1}$.
	In the first case we have	$$\overline{B}(x,\delta)\subset \overline{B}(y_1, \eta_{d-1} + \eta_{d_1}) \subset \overline{B}\left(y_1, \frac{\varepsilon_{d_1}}{4}\right) \subset \overline{B}\left(y_1, \frac{\varepsilon(y_1)}{4}\right),$$
	otherwise we  continue the procedure inductively. 
	Then either at some step we have the thesis or    we find a vertex $v$ of $K$ such that $$d(x,v) \leq \eta_{d-1} + \eta_{d-2} + \ldots + \eta_0 \leq \frac{\varepsilon_0}{8}.$$ 
	Therefore  $\overline{B}(x,\delta)\subset \overline{B}\left(v,\frac{\varepsilon_0(R,n)}{4}\right) \subset  \overline{B}\left(v,\frac{\varepsilon(v)}{4}\right)$, which proves (c).

	\noindent In order to prove (b) we fix $x \in K$ and we find the corresponding $y$ given by (c).
	Then for all point $z\in \overline{B}(x,\delta)$ we can apply Lemma \ref{epsilon-simplex} and find that any simplex $S$ containing $z$ must contain also $y$. This means that any such $S$ has the vertices of supp$(y)$ as vertices. This concludes the proof of (b).	
	
	\noindent	Finally the proof of (a) is an easy consequence: suppose to have two points $x$ and $x'$ belonging to two simplices $S,S'$ respectively such that $d(x,x')\leq \delta$; then they belong to the open star of a same vertex by (b). In particular $S$ and $S'$ share a vertex.
\end{proof}

Another straightforward application of compactness and continuity yields the following, whose proof is omitted:
\begin{lemma}
	\label{link-induction}
	Let $K$ be a $M^\kappa$-
	complex of size bounded by $R$ and $\textup{dim}
	(K) \leq n$. \linebreak 
	Then there exists $R' = R'(R,n)$ depending only on $R$ and $n$ such that for every vertex $v$ of $K$ the $M^1$-complex \textup{Lk}$(v,K)$ has size bounded by $R'$.
\end{lemma}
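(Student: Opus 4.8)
The plan is to reduce the statement to a uniform lower bound for the inradius of a single link simplex and to obtain it from Proposition \ref{simplex-compactness} together with the continuity of the link construction. By the definition of $\text{Lk}(v,K)$, this $M^1$-complex is glued out of the spherical simplices $\text{Lk}(v_\lambda,S_\lambda)$, where $S_\lambda$ runs over the $\kappa$-simplices of the total space of $K$ having $v$ as the image of a vertex, and the size of an $M^1$-complex is, by definition, the supremum of the sizes of its defining simplices; so it is enough to produce $R'=R'(R,n)$ with $\text{size}(\text{Lk}(v_\lambda,S_\lambda))\le R'$ for every such $\lambda$. After applying an isometry of the ambient model space we may assume that the vertex $v_\lambda$ is a fixed point $o$, so that $S_\lambda$ lies, for some $d\le n$, in the class $\mathcal S_d$ of $d$-dimensional $\kappa$-simplices of size $\le R$ having $o$ as a vertex, which is compact for the Hausdorff distance on $M^\kappa_d$ by Proposition \ref{simplex-compactness}. (For $d=0,1$ there is nothing to prove, as in Lemma \ref{size-faces}, so assume $d\ge 2$.)

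For $S\in\mathcal S_d$ the simplex $\text{Lk}(o,S)$ is the spherical convex hull, inside the unit sphere of $T_oM^\kappa_d$ (canonically $\mathbb S^{d-1}$), of the initial unit directions $u_1,\dots,u_d$ of the edges $[o,v_1],\dots,[o,v_d]$. First I would check that $S\mapsto\text{Lk}(o,S)$ is continuous from $\mathcal S_d$ to the set of spherical simplices of $\mathbb S^{d-1}$, both with the Hausdorff distance: if $S_l\to S$ in $\mathcal S_d$ then, up to subsequences and by Proposition \ref{simplex-compactness}, the vertices converge, $v_i^l\to v_i$; the $1$-dimensional faces of $S_l$ have size $\le 2^{d-1}R$ by Lemma \ref{size-faces}, so $d(o,v_i^l)$ is bounded below by a positive constant depending only on $R$ and $n$, whence $u_i^l\to u_i$ and the spherical convex hulls converge, exactly as in the proof of Proposition \ref{simplex-compactness}. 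Next, on $\mathcal S_d$ the function $S\mapsto\text{size}(\text{Lk}(o,S))$ is upper semicontinuous: its value is $\max\{\text{circumradius},1/\text{inradius}\}$ of $\text{Lk}(o,S)$ computed in $\mathbb S^{d-1}$, the circumradius is continuous and bounded by $\pi$, and the inradius is positive (because $u_1,\dots,u_d$ are tangent to the edges at $o$ of the non-degenerate simplex $S$, hence linearly independent, so $\text{Lk}(o,S)$ is a genuine $(d-1)$-dimensional spherical simplex) and lower semicontinuous under Hausdorff convergence of geodesically convex bodies. Being upper semicontinuous on the compact set $\mathcal S_d$, this function is bounded above by some $R'_d=R'_d(R,n)$, and then $R'(R,n)=\max_{2\le d\le n}R'_d(R,n)$ works.

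The only point that is not routine compactness bookkeeping is the positivity of the inradius, or rather its stability in the limit: one must rule out that a sequence of simplices $S_l\in\mathcal S_d$ of size $\le R$ has links collapsing onto a lower-dimensional spherical simplex, which would drive the inradii to $0$. This is exactly excluded by the general-position argument in Proposition \ref{simplex-compactness}: the Hausdorff limit $S$ of (a subsequence of) the $S_l$ is again a $d$-dimensional $\kappa$-simplex of size $\le R$ --- three collinear limiting vertices would force a $1$-face of length $0$ in the limit, against Lemma \ref{size-faces} --- so the edge directions $u_1,\dots,u_d$ at $o$ stay linearly independent and $\text{Lk}(o,S)$ is again a genuine $(d-1)$-dimensional spherical simplex with strictly positive inradius. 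All the rest is the same compactness-and-continuity argument already used for the simplices themselves.
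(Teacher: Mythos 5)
Your proof is correct and follows exactly the route the paper intends (the paper omits the argument, describing it as a straightforward application of compactness and continuity in the spirit of Lemmas \ref{distance-vertices} and \ref{epsilond}): reduce to a single simplex with the vertex normalized at a fixed point $o$, invoke the compactness of the class of $\kappa$-simplices of bounded size from Proposition \ref{simplex-compactness}, and combine continuity of $S \mapsto \textup{Lk}(o,S)$ with the (semi)continuity and positivity of the inradius of the spherical link, the non-degeneracy of the limit simplex being precisely what prevents the links from collapsing. No gaps.
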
 

We start now considering  $M^\kappa$-complexes with bounded size and valency:
\begin{prop}
	\label{geodesic-simplex}
	Let $K$ be a connected $M^\kappa$-complex of   size bounded by $R$  and valency at most $N$. Then $K$ is locally finite (i.e. for all $x\in K$ there are a finite number of simplices containing $x$)
	and $(K,d_K)$ is a proper, geodesic metric space.
\end{prop}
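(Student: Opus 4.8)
The plan is to reduce the statement to the Hopf--Rinow theorem for length spaces, using the uniform estimates of Lemmas \ref{distance-vertices}, \ref{length-simplex} and \ref{uniform-c}. I would first prove local finiteness. Fix $x\in K$, let $S=\mathrm{supp}(x)$ and choose a vertex $v$ of $S$. If $T$ is any simplex of $K$ containing $x$, then $x$ lies in the relative interior of exactly one face of $T$; this face is a simplex of $K$ containing $x$ in its interior, so by uniqueness of the support it equals $S$. Hence $S$ is a face of $T$, and in particular $v$ is a vertex of $T$. Thus every simplex through $x$ is one of the at most $N$ simplices having $v$ as a vertex, which proves local finiteness; in particular $\dim(K)\le N$.

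Since $\dim(K)\le N$ and $K$ has size bounded by $R$, Lemma \ref{length-simplex} applies with $n=N$ and yields $\varepsilon(x)>0$ for all $x\in K$; as $K$ is connected, Lemma \ref{epsilon-simplex} then shows that $d_K$ is a genuine metric and that $(K,d_K)$ is a length space. It remains to prove properness. By Lemma \ref{uniform-c}(b) there is $\delta=\delta(R,N)>0$ such that for every $x\in K$ the closed ball $\overline{B}(x,\delta)$ is contained in the open star of some vertex $v$, hence in the finite union $\bigcup_{S\ni v}S$ of the simplices having $v$ as a vertex (finite by local finiteness). Each such $S$ is compact in $(K,d_K)$: it is isometric, for its intrinsic metric $d_S$, to a compact $\kappa$-simplex of $M^\kappa_n$, and the inclusion $(S,d_S)\hookrightarrow(K,d_K)$ is $1$-Lipschitz because $d_K\le d_S$ on $S$ (the one-string $(x,y)$ is taut and has length $d_S(x,y)$). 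Therefore $\overline{B}(x,\delta)$, being a closed subset of a compact set, is compact, so $K$ is locally compact with a uniform radius of compactness $\delta$.

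Uniform local compactness immediately gives completeness: a Cauchy sequence eventually lies in a single ball $\overline{B}(x,\delta)$, hence admits a convergent subsequence and therefore converges. Thus $(K,d_K)$ is a complete, locally compact length space, and by the Hopf--Rinow theorem (cf. \cite{BH09}, Proposition I.3.7) it is proper and geodesic, which completes the proof. The only mildly delicate point is the compactness of the stars with respect to $d_K$ rather than to the weak topology of the complex --- this is exactly where the inequality $d_K\le d_S$ is needed, so that the abstract compactness of a Euclidean, spherical or hyperbolic simplex transfers to $(K,d_K)$; everything else is a bookkeeping application of the lemmas above.
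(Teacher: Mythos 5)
Your proposal is correct and follows essentially the same route as the paper: local finiteness via the observation that every simplex through $x$ contains $\mathrm{supp}(x)$ as a face (hence a fixed vertex $v$), the length-space property via Lemma \ref{length-simplex}, local compactness and completeness via Lemma \ref{uniform-c}(b) and the finiteness of stars, and finally Hopf--Rinow. The extra details you supply (the inequality $d_K\le d_S$ to transfer compactness of simplices to $(K,d_K)$, and the uniform radius $\delta$ in the completeness argument) simply make explicit what the paper's proof leaves implicit.
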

\begin{proof}
	Any simplex $S$ containing a point $x$ must have supp$(x)$ as a face;  in particular, if $v$ is a vertex of  supp$(x)$, then it is also a vertex of $S$.  So the number of simplices containing $x$ is bounded by the number of simplices containing $v$,  which is bounded by $N$ by assumption.
	By Lemma \ref{length-simplex} we know that $(K,d_K)$ is a length metric space. 	Finally, by Lemma \ref{uniform-c}, for all $y \in K$  the ball $\overline{B}(y,\delta)$ belongs to the open star of a vertex, which is the union of a finite number of simplices, hence $K$ is locally compact and   complete. 
	Then, as $K$ is a complete, locally compact, length metric space, it  is proper and geodesic  by Hopf-Rinow's Theorem. 
\end{proof}

The following is the analogue of Theorem I.7.28 of \cite{BH09}:
\begin{prop}
	\label{simplicial-geodesics}
	Let $K$ be a connected $M^\kappa$-complex of size bounded by $R$ \linebreak and valency at most $N$. Then for any $\ell > 0$ there exists $m_0 = m_0(\ell,R,N)$ depending only on $\ell,R$ and $N$ such that any $m$-taut string of length $\leq \ell$ satisfies $m \leq m_0$. 
\end{prop}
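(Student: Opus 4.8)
The plan is to induct on the dimension $n=\dim K$, which is finite and at most $N$ by the bound on the valency. The case $n=0$ is trivial, since a connected $0$-dimensional complex is a point. For the inductive step the idea is threefold: first cut an arbitrary taut string of length $\le\ell$ into $O(\ell)$ pieces, each confined to a small metric ball; then identify such a ball with a ball about the apex of a $\kappa$-cone over a lower-dimensional $M^1$-complex of controlled geometry; and finally project the piece to that link, where it becomes a taut string of length $<\pi$, to which the inductive hypothesis applies.

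For the first step, fix $\delta=\delta(R,n)>0$ as in Lemma~\ref{uniform-c} and cut the underlying path of the given taut string into $p=\lceil\ell/\delta\rceil$ subarcs of length $\le\delta$. Each subarc is again a taut string (subarcs of taut strings are taut — a simplex through a point of a segment $[x_{j-1},x_j]$ contains $\mathrm{supp}(x_{j-1})$, hence $x_{j-1}$, so tautness is inherited), and by Lemma~\ref{uniform-c}(c) it lies in a ball $\overline B(y,\varepsilon(y)/4)$ for some $y\in K$. For the second step, Lemma~\ref{epsilon-simplex} shows that every simplex meeting $\overline B(y,\varepsilon(y)/4)$ contains $\mathrm{supp}(y)$ as a face, so these simplices are in an inclusion-preserving bijection $S\mapsto\Sigma_yS$ with the simplices of the space of directions $\Sigma_yK$; and, by the local conical structure of $M^\kappa$-complexes (cf.\ \cite{BH09}), the map $z\mapsto(\text{direction of }[y,z],\,d(y,z))$ is an isometry of $\overline B(y,\varepsilon(y)/4)$ onto the corresponding ball about the apex of the $\kappa$-cone $C_\kappa(\Sigma_yK)$. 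The complex $\Sigma_yK$ has dimension $\le n-1$; writing it as the spherical join $\mathbb S^{d-1}\ast\mathrm{Lk}(\mathrm{supp}(y),K)$ with $d=\dim\mathrm{supp}(y)$ and iterating Lemma~\ref{link-induction}, its size is bounded by a function of $R,n$ and its valency by a function of $R,n,N$.

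For the third step, let $\sigma'$ be a taut string contained in a ball about the apex $O$ of a cone $C_\kappa(Z)$. A taut string meets $O$ at most once, and afterwards runs radially (a path through $O$ that is geodesic on the two incident cone-simplices must be radial on both, and this propagates), so, splitting $\sigma'$ at that instant if necessary, we may assume it avoids $O$. Since $\sigma'$ is geodesic on each consecutive pair of flat cone-simplices, developing these pairs shows that the squared distance to the apex $r^2$ satisfies $(r^2)''\equiv2$ along all of $\sigma'$ (with the evident replacement for $\kappa=\pm1$); hence $r$ has a single interior minimum, and an elementary integration gives that the projection $\bar\sigma'$ of $\sigma'$ to $Z$ is a taut string of length $<\pi$. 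Under the bijection $S\mapsto\Sigma_yS$, the string $\bar\sigma'$ meets exactly as many simplices of $Z$ as $\sigma'$ meets simplices of $K$, so the inductive hypothesis applied to $Z$ with length parameter $\pi$ bounds the number of segments of each piece by a constant $q=q(R,n,N)$. Summing over the $p$ pieces yields $m\le p(q+1)=m_0(\ell,R,N)$; the induction terminates after at most $N$ steps, so the final constant depends only on $\ell,R,N$.

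The step I expect to require the most care is the passage to the link: making the local conical identification precise at the scale $\varepsilon(y)/4$ (which is exactly what the function $\varepsilon(\cdot)$ of~\eqref{epsilon} and Lemma~\ref{epsilon-simplex} are built for), checking that the direction-projection of a taut string is again a taut string with matching simplex count (including the non-degeneracy $\bar x_i\ne\bar x_{i+1}$, which again follows from tautness), and controlling the passage through the apex. The quantitative inputs — the control on the sizes of iterated links from Lemma~\ref{link-induction}, and $\varepsilon_0$ from Lemma~\ref{distance-vertices} — serve only to make all constants depend on $R,n,N$ alone; the \emph{analytic} core of the argument is the convexity of $r^2$ along a taut string in a $\kappa$-cone, which forces the length bound $<\pi$ and thereby makes the induction close.
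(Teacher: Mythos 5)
Your argument is correct and is essentially the paper's proof (which follows Theorem I.7.28 of \cite{BH09}): induction on the dimension, localization of the taut string into boundedly many pieces lying in small balls via Lemma \ref{uniform-c}, and radial projection of each piece to a lower-dimensional link of controlled geometry where the inductive hypothesis applies with the fixed length parameter $\pi$, the through-the-apex case being handled separately. The only real deviation is that you place each piece in $\overline{B}(y,\varepsilon(y)/4)$ for a general point $y$ and project to $\Sigma_y K=\mathbb{S}^{d-1}\ast\mathrm{Lk}(\mathrm{supp}(y),K)$, whose size and valency bounds you only sketch (the join is not directly covered by Lemma \ref{link-induction}); the paper instead uses Lemma \ref{uniform-c}(b) to put each piece in the open star of a \emph{vertex} $v$ and projects to $\mathrm{Lk}(v,K)$, which Lemma \ref{link-induction} handles directly and which avoids the join argument.
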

\begin{proof}
	We  use the same proof of Theorem I.7.28 of \cite{BH09} (which is  for $M^\kappa$-complexes of {\em finite shape}), proceeding   by induction on the dimension of $K$. \linebreak
	The first  step is to  prove that if a $m$-string $\Sigma$ is included in the open star of a vertex $v$
	then $m$ is bounded by a function $m_0'(\ell, R, N)$. This is clear with $m'_0=3$  if the geodesic associated to $\Sigma$ passes through $v$, otherwise it follows by the inductive hypothesis by projecting radially $\Sigma$ to \textup{Lk}$(v,K)$ (which has lower dimension) using Lemma \ref{link-induction}.\\
	Now, if the bound stated in the proposition did not hold there would exist tout $m$-strings $\Sigma_i$ in $M^\kappa$-complexes $K_i$ with    length $\leq \ell$ and arbitrary large  $m$. \linebreak
	Then there would exist also tout $m'$-substrings  $\Sigma'_i$ of the $\Sigma_i$, with  $m' > m_0'(\ell, R, N)$, included  in  some ball $\bar B(x_i, \delta) \subset K_i$, for $\delta=\delta(R,N)$ defined in Lemma \ref{uniform-c}. By the same Lemma $\Sigma'_i$ would be included in the open star of some vertex which, by step one, implies  that $m' \leq m_0'(\ell, R, N)$, a contradiction.
\end{proof}

\begin{cor}
	Let $K$ be a connected $M^\kappa$-complex of  size bounded by $R$ \linebreak and valency at most $N$. Let $x,y\in K$ such that $d_K(x,y)\leq \ell$. Then there exists a geodesic joining $x$ to $y$ realized as the concatenation of at most $m_0(\ell, R,N)$ geodesic segments, each contained in a simplex of $K$.
\end{cor}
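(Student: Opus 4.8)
The plan is to produce the required geodesic as a limit of a length-minimizing sequence of taut strings, controlling the number of segments by Proposition~\ref{simplicial-geodesics}. First I would record two consequences of the hypotheses. By Proposition~\ref{geodesic-simplex} the space $(K,d_K)$ is proper, so every closed ball $\overline{B}(x,\rho)$ is compact; and a compact subset $C\subset K$ meets only finitely many simplices. The latter follows from Lemma~\ref{uniform-c}(b): cover $C$ by finitely many balls $B(z_k,\delta)$, each contained in the open star of a vertex $v_k$, and note that any simplex $S$ meeting $C$ at a point $p$ contains $\mathrm{supp}(p)$, hence contains the corresponding $v_k$, so $S$ is one of the at most $N$ simplices through $v_k$. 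Then I would set $\ell'=\ell+1$ and let $m_0(\ell,R,N):=m_0(\ell',R,N)$ be the bound furnished by Proposition~\ref{simplicial-geodesics}.

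Next, using the description of $d_K$ by taut strings recalled above, I would choose taut strings $\Sigma_i=(x=x^i_0,x^i_1,\dots,x^i_{m_i}=y)$ with $\ell(\Sigma_i)\to d_K(x,y)\le\ell$; for $i$ large one has $\ell(\Sigma_i)\le\ell'$, so Proposition~\ref{simplicial-geodesics} gives $m_i\le m_0(\ell',R,N)$. Padding each string with repetitions of $y$, I may assume $m_i=m$ is a fixed number. Every vertex $x^i_j$ of $\Sigma_i$ satisfies $d_K(x,x^i_j)\le\ell(\Sigma_i)\le\ell'$, so all the $x^i_j$ lie in the compact set $\overline{B}(x,\ell')$, which meets only finitely many simplices. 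Hence, passing to a subsequence, for each $j$ the points $x^i_j$ converge to some $x_j\in K$ and the carrier simplices (a simplex $S^i_j$ containing $x^i_j$ and $x^i_{j+1}$) become eventually constant, equal to a simplex $S_j$ with $x_j,x_{j+1}\in S_j$.

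The sequence $\Sigma=(x_0,\dots,x_m)$ is then an $m$-string from $x$ to $y$. Since each $S_j$ is compact and topologically embedded in $K$, the convergence $x^i_j\to x_j$ takes place inside $S_j$, so by continuity of the simplicial metric $d_{S_j}$ I get $\ell(\Sigma)=\sum_{j}d_{S_j}(x_j,x_{j+1})=\lim_i\ell(\Sigma_i)=d_K(x,y)$. Concatenating the geodesic segments $[x_j,x_{j+1}]\subset S_j$ yields a path in $K$ from $x$ to $y$ of length at most $\sum_j d_{S_j}(x_j,x_{j+1})=d_K(x,y)$, since $d_K\le d_{S_j}$ on $S_j$; being a path from $x$ to $y$, it also has length at least $d_K(x,y)$. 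Therefore this path, reparametrised by arclength, is a geodesic realised as the concatenation of at most $m\le m_0(\ell,R,N)$ geodesic segments, each contained in a simplex of $K$ (degenerate segments coming from the padding being simply discarded). The only delicate point is the \emph{attainment} of the infimum in the taut-string formula for $d_K$: this is precisely where properness, through compactness of $\overline{B}(x,\ell')$ and local finiteness, combines with the uniform bound on the number of segments from Proposition~\ref{simplicial-geodesics}; the remainder is a routine passage to the limit.
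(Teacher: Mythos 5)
Your argument is correct and is essentially what the paper compresses into its one-line proof: the same ingredients (Proposition \ref{geodesic-simplex}, the taut-string formula for $d_K$, and Proposition \ref{simplicial-geodesics}), with the compactness/limit step for a length-minimizing sequence of taut strings written out explicitly. The only cosmetic difference is that your bound comes out as $m_0(\ell+1,R,N)$ rather than $m_0(\ell,R,N)$, which is immaterial since only the dependence on $(\ell,R,N)$ matters (and could be removed by tautening the limit string).
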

\begin{proof} Immediate from the fact that $K$ is a geodesic space (by   \ref{geodesic-simplex}), the characterization of  $d_K$ in terms of taut strings and Proposition \ref{simplicial-geodesics}.
\end{proof}

In order to establish if a $M^\kappa$-complex is a locally CAT$(\kappa)$ space we  use the following improvement of a well-known criteria. We recall that the {\em injectivity radius} of a complex $K$, denoted $\rho_{\textup{inj}}(K)$, is defined as the supremum of the  $r \geq 0$ such that any two points of $K$ that are at distance at most $r$   are joined by a unique geodesic.
\begin{prop}
	\label{complex-curvature}
	Let $K$ be a connected $M^\kappa$-complex of size bounded by $R$ and valency at most $N$. 
	The following facts are equivalent:
	\begin{itemize}
		\item[(a)] $(K,d_K)$ is locally \textup{CAT}$(\kappa)$;
		\item[(b)] $K$ satisfies the link condition, i.e. the link at any vertex is   \textup{CAT}$(1)$;
		\item[(c)] $(K,d_K)$ is locally uniquely geodesic;
		\item[(d)] $(K, d_K)$ has positive injectivity radius;
		\item[(e)] $\rho_{\textup{inj}}(K)  \geq \delta (R,N)$, where $\delta(R,N)$ is the function defined  in Lemma \ref{uniform-c}.
	\end{itemize}
	Moreover if $K$ satisfies one of the equivalent conditions above then for any $x \in K$ the ball $B(x,\delta(R,N))$ is a \textup{CAT}$(\kappa)$ space, i.e. the \textup{CAT}$(\kappa)$-radius of $K$ is at least $\delta(R,N)$.	
\end{prop}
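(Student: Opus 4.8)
The plan is to run the cycle of implications $(b)\Rightarrow(a)\Rightarrow(e)\Rightarrow(d)\Rightarrow(c)\Rightarrow(b)$, isolating in it the only genuinely new statement, namely $(a)\Rightarrow(e)$, which will also give the concluding assertion on the \textup{CAT}$(\kappa)$-radius. First I would record the structural facts already available: by Proposition \ref{geodesic-simplex} the space $(K,d_K)$ is complete, proper and geodesic, and by Lemma \ref{length-simplex} (which rests on Lemma \ref{distance-vertices} and the compactness statement \ref{simplex-compactness}) one has $\varepsilon(x)>0$ for every $x\in K$. This is exactly the property used throughout \cite{BH09}, Chapters I.7 and II.5, in lieu of the finiteness of the set of shapes: the Link Condition theorem of \cite{BH09} (Thm. II.5.4, together with I.7.39 and the injectivity-radius discussion for finite-shape complexes) then applies verbatim, because the only role of ``finitely many shapes'' in those proofs is to secure completeness and to run the induction on $\dim(K)$ through the links — and here completeness is Proposition \ref{geodesic-simplex}, while Lemma \ref{link-induction} bounds uniformly the size of all links, their valency being automatically $\le N$. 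This yields the equivalence of $(a)$, $(b)$ and $(c)$, together with the standard local model: a ball of radius $<\varepsilon(x)$ about $x$ is isometric to the ball of the same radius about the apex of the $\kappa$-cone over $\mathrm{Lk}(\mathrm{supp}(x),K)$, and, when $(b)$ holds, that link is \textup{CAT}$(1)$ (via the spherical join decomposition), so the cone is \textup{CAT}$(\kappa)$ by Berestovskii's criterion (\cite{BH09}, II.3). It remains to connect these with $(d)$ and $(e)$: $(e)\Rightarrow(d)$ is immediate since $\delta(R,N)>0$, and $(d)\Rightarrow(c)$ because if $\rho_{\textup{inj}}(K)\ge\rho>0$ then each ball $B(x,\rho/2)$ is uniquely geodesic. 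So everything reduces to $(a)\Rightarrow(e)$.

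For $(a)\Rightarrow(e)$, assume $K$ is locally \textup{CAT}$(\kappa)$, hence all vertex links are \textup{CAT}$(1)$, and set $\delta=\delta(R,N)$ as in Lemma \ref{uniform-c}; note that $\delta\le\eta_0<D_\kappa/2$ automatically, since $\varepsilon_0(R,n)$ is bounded by the diameter of a $\kappa$-simplex (which is $<D_\kappa$ for $\kappa>0$, and $D_\kappa=+\infty$ otherwise). Let $p,q\in K$ with $d_K(p,q)<\delta$. By Lemma \ref{uniform-c}(c) there is $y\in K$ with $\overline B(p,\delta)\subseteq\overline B(y,\varepsilon(y)/4)$; comparing the two balls around $y$ — one of radius $\delta$ centred at $p\in\overline B(y,\varepsilon(y)/4)$, one of radius $\varepsilon(y)/4$ — forces $\varepsilon(y)\ge 2\delta$. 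Since every simplex meeting $\overline B(y,\varepsilon(y))$ contains $\mathrm{supp}(y)$ (Lemma \ref{epsilon-simplex}), the ball $\overline B(y,\varepsilon(y)/4)$ is isometric to the closed ball of radius $\varepsilon(y)/4$ about the apex of the $\kappa$-cone over $\mathrm{Lk}(\mathrm{supp}(y),K)$; this cone is \textup{CAT}$(\kappa)$ as above, and $\varepsilon(y)/4<D_\kappa/2$ (because $\varepsilon(y)<D_\kappa$), so $\overline B(y,\varepsilon(y)/4)$ is a ball of radius $<D_\kappa/2$ in a \textup{CAT}$(\kappa)$ space, hence convex and itself \textup{CAT}$(\kappa)$.

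Now both $p$ and $q$ lie in the \textup{CAT}$(\kappa)$ space $\overline B(y,\varepsilon(y)/4)$ at distance $<\delta<D_\kappa$, so they are joined there by a unique geodesic; and every geodesic of $K$ from $p$ to $q$ is contained in $\overline B(p,d_K(p,q))\subseteq\overline B(p,\delta)\subseteq\overline B(y,\varepsilon(y)/4)$, hence coincides with it. Therefore $\rho_{\textup{inj}}(K)\ge\delta(R,N)$, which is $(e)$. The same reasoning proves the last assertion: for an arbitrary $x\in K$, taking $y$ with $\overline B(x,\delta)\subseteq\overline B(y,\varepsilon(y)/4)$ and using again $\varepsilon(y)\ge2\delta$, the open ball $B(x,\delta)$ is contained in, and is an open metric ball of radius $<D_\kappa/2$ of, the \textup{CAT}$(\kappa)$ cone around $y$, so it is a \textup{CAT}$(\kappa)$ space; thus $\rho_{\textup{cat}}(K)\ge\delta(R,N)$.

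I expect the main obstacle to be precisely this implication $(a)\Rightarrow(e)$: one has to make the purely local cone model of a small ball (which a priori lives on the scale $\varepsilon(x)$, depending on $x$) cooperate with the uniform neighbourhood $\overline B(x,\delta(R,N))$ of Lemma \ref{uniform-c}, controlling simultaneously the radius $\varepsilon(y)$ from below — so that $p$, $q$ and the connecting geodesic all fit into a single cone chart — and, when $\kappa>0$, the spherical cut-off $D_\kappa/2$, so that the cone ball is genuinely \textup{CAT}$(\kappa)$ and uniquely geodesic. The classical equivalences and the trivial implications are bookkeeping, but it is precisely in the choice of the constants $\eta_d,\varepsilon_d$ of Lemmas \ref{epsilond} and \ref{uniform-c} that the absence of a finite shape, i.e. the non-compactness of the complex, is actually absorbed.
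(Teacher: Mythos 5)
Your proposal is correct and follows essentially the paper's own strategy: the classical equivalences among (a), (b), (c) are imported from \cite{BH09} using $\varepsilon(x)>0$ (Proposition \ref{geodesic-simplex}, Lemmas \ref{length-simplex} and \ref{link-induction}) in place of finite shapes, and the quantitative statements come from the cone chart of Theorem I.7.39 of \cite{BH09} at scale $\varepsilon(y)$ combined with Lemma \ref{uniform-c}(c), exactly as in the paper. The only cosmetic difference is that you carry the new content in (a)$\Rightarrow$(e) via Berestovskii's criterion (II.3.14 of \cite{BH09}), whereas the paper proves (c)$\Rightarrow$(e) via unique geodesicity of cones (I.5.11 of \cite{BH09}); note also that the bound $\varepsilon(y)\ge 2\delta$ is not actually needed for (e) (the points and any connecting geodesic already lie in $\overline{B}(y,\varepsilon(y)/4)$ by Lemma \ref{uniform-c}(c)), and for the final \textup{CAT}$(\kappa)$-radius claim it is cleaner to observe that $B(x,\delta)$ corresponds to a convex subset of the \textup{CAT}$(\kappa)$ cone (intersection of two balls of radius $<D_\kappa/2$) rather than to a literal cone ball, which would require $\varepsilon(y)\ge 4\delta$.
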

\noindent The equivalences between (a), (b) and (c) are quite standard. The equivalence of these conditions with (d) is known for simplicial complexes with finite shapes, see \cite{BH09}. The main point of Proposition \ref{complex-curvature} is that the last equivalence continues to hold in our setting and moreover we can bound from below the injectivity radius of $K$ in terms of $R$ and $N$ only. 
\begin{proof}[Proof of Proposition \ref{complex-curvature}]
	The equivalence between (a) and (b) follows from Theorem II.5.2 and Remark II.5.3 of \cite{BH09}, while   (a) $\Rightarrow$ (c)   is straightforward. 
	The implication (c) $\Rightarrow$ (e) follows as in Proposition I.7.55 of \cite{BH09}.
	Actually  by Proposition \ref{geodesic-simplex} we have $\varepsilon(x)>0$ for every $x \in K$, so  the ball $B(x, \frac{\varepsilon(x)}{2})$	is isometric to the open ball $B(O, \frac{\varepsilon(x)}{2})$    of the $\kappa$-cone  $C_\kappa (\text{Lk}(v,K))$ centered at the cone point $O$ (cp. Theorem I.7.39 in \cite{BH09}). 
	Moreover by assumption a neighbourhood of $O$ of the cone    $C_\kappa  (\text{Lk}(v,K))$ is uniquely geodesic, which implies that 
	the whole  $C_\kappa (\text{Lk}(v,K))$  is uniquely geodesic (cp. Corollary I.5.11, \cite{BH09}) and this in turns  implies that 
	$B(x, \frac{\varepsilon(x)}{2})$  is. By  Lemma   \ref{uniform-c}(c) we conclude  that  the injectivity radius is bounded below by $\delta(R,N)$ (recall that the dimension of $K$ is bounded above by $N$). 	
	The implication (e) $\Rightarrow$ (d) is obvious, while (c) $\Rightarrow$ (b) follows  exactly as in Theorem II.5.4 of \cite{BH09}.
	Finally the last remark follows from Theorem I.7.39 \&Theorem II.3.14 of \cite{BH09} together with Lemma \ref{uniform-c}(c).
\end{proof}

We recall that a locally compact, locally CAT$(\kappa)$ $M^\kappa$-complex is locally geodesically complete if and only if it has   no   {\em free faces} (see II.5.9 and II.5.10 of \cite{BH09} for the definition of having free faces and the proof of this fact).
We can finally show that the class of metric spaces we are studying in this section is uniformly packed.
\begin{prop}
	\label{simplical-packing}
	Let $K$ be a connected $M^\kappa$-complex without free faces,  of size bounded by $R$, valency at most $N$ and positive injectivity radius. \linebreak Then $K$ is a proper, geodesic \textup{GCBA}$^\kappa$-space with $\rho_\textup{cat}(K) \geq \rho_0$  and satisfying $\textup{Pack}(3r_0,\frac{r_0}{2}) \leq P_0$, for  constants  $\rho_0, P_0, r_0$ depending only on $R, N$ and $\kappa$,  and $r_0\leq \rho_0/3$.
\end{prop}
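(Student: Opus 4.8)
The plan is to assemble the statement from the structural results already proved in this section, reducing the packing estimate to an elementary count in the model spaces.

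\textbf{Step 1: the structural facts.} Since $K$ is connected, of size bounded by $R$ and of valency at most $N$, Proposition~\ref{geodesic-simplex} shows that $K$ is locally finite and that $(K,d_K)$ is a proper --- hence complete, locally compact, separable --- geodesic metric space. As $K$ moreover has positive injectivity radius, Proposition~\ref{complex-curvature} applies: $K$ is locally \textup{CAT}$(\kappa)$ and every ball $B(x,\delta(R,N))$ is \textup{CAT}$(\kappa)$, so $\rho_{\textup{cat}}(K)\geq\rho_0:=\min\{\delta(R,N),D_\kappa/2\}$, a quantity depending only on $R,N,\kappa$. Finally $K$ has no free faces and is locally compact and locally \textup{CAT}$(\kappa)$, hence locally geodesically complete (II.5.9--II.5.10 of \cite{BH09}). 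Therefore $K$ is a complete, geodesic \textup{GCBA}$^\kappa$-space with $\rho_{\textup{cat}}(K)\geq\rho_0$ (properness has already been established, and could alternatively be recovered a posteriori from Theorem~\ref{CAT-packing}).

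\textbf{Step 2: localisation and the packing count.} Set $r_0:=\rho_0/3$, so $r_0\leq\rho_0/3$ and $3r_0=\rho_0\leq\delta(R,N)$. By Lemma~\ref{uniform-c}(b), for any $x\in K$ the ball $\overline{B}(x,3r_0)\subseteq\overline{B}(x,\delta(R,N))$ lies in the open star of some vertex $v$, hence in the union of the simplices of $K$ having $v$ as a vertex; by the valency bound this is a union of at most $N$ simplices, each of dimension at most $N$ (by the remark in Section~\ref{subsec-basics}) and of size bounded by $R$. Now fix such a simplex $S$: with its intrinsic metric $d_S$ it is isometric to a geodesic $M^\kappa$-simplex of dimension $n=\textup{dim}\,S\leq N$ contained in a ball of radius $R$ in $M^\kappa_n$. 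Every path inside $S$ is a path in $K$, so $d_K\leq d_S$ on $S$; hence any $r_0$-separated subset of $\overline{B}(x,3r_0)\cap S$ for $d_K$ is $r_0$-separated for $d_S$, and its cardinality is at most the maximal number $Q=Q(R,N,\kappa)$ of $r_0$-separated points in a ball of radius $R$ of a model space $M^\kappa_n$ with $n\leq N$; this is finite (the model spaces are proper and homogeneous, so packing numbers of bounded balls are finite and independent of the centre) and depends only on $R,N,\kappa$ (recall $r_0$ is itself a function of $R,N,\kappa$).

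\textbf{Step 3: conclusion.} Splitting any $r_0$-separated subset of $\overline{B}(x,3r_0)$ according to which of the at most $N$ covering simplices contains each of its points, one gets $\textup{Pack}(\overline{B}(x,3r_0),\tfrac{r_0}{2})\leq N\cdot Q=:P_0$; since $x$ is arbitrary, $\textup{Pack}(3r_0,\tfrac{r_0}{2})\leq P_0$ with $P_0$, $\rho_0$, $r_0$ depending only on $R,N,\kappa$ and $r_0\leq\rho_0/3$, as required.

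\textbf{Main obstacle.} The only genuinely delicate point --- and the one that is new relative to \cite{BH09}, where finite shape is assumed --- is the \emph{uniformity} of the a priori estimates in $R$, $N$ and $\kappa$: namely that $\delta(R,N)$ (and the auxiliary link-size bound) is positive and depends only on $R$ and $N$. This rests on the compactness of the space of $M^\kappa$-simplices of bounded size and dimension (Proposition~\ref{simplex-compactness}), together with Lemmas~\ref{distance-vertices} and~\ref{uniform-c}, all of which are already available; once these are in hand the packing count above is just elementary geometry of the model spaces.
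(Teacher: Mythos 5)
Your proposal is correct, and its first step (properness and geodesicity from Proposition \ref{geodesic-simplex}, the CAT$(\kappa)$-radius bound $\rho_0=\delta(R,N)$ from Proposition \ref{complex-curvature}, local geodesic completeness from the absence of free faces via II.5.9--II.5.10 of \cite{BH09}) coincides with the paper's. Where you genuinely diverge is in the packing estimate: the paper does not count separated points directly, but instead bounds the \emph{natural measure} of balls --- it notes that $\dim(K)\leq N$ (since the projections of simplices are $1$-Lipschitz, so $\mathcal{H}^k(K)=0$ for $k>N$), uses Lemma \ref{uniform-c}(b) and the valency bound to see that at most $N$ simplices meet $\overline{B}(x,\delta(R,N))$, deduces $\mu_K(B(x,\delta(R,N)))\leq \sum_{d=0}^{N}N\cdot\mathcal{H}^d(B_{M^\kappa_d}(o,\delta(R,N)))=:V_0$, and then invokes the equivalence (b)$\Rightarrow$(a) of Theorem \ref{char_pack} to convert the dimension and volume bounds into the packing condition. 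You instead perform an elementary packing count: localise the ball in the star of a vertex by Lemma \ref{uniform-c}(b), use $d_K\leq d_S$ on each of the at most $N$ simplices to transfer $r_0$-separation to the model simplex, and bound by a model-space packing number, giving $P_0=N\cdot Q$. Your route is more self-contained and elementary, avoiding the Hausdorff-measure machinery and the (nontrivial) Theorem \ref{char_pack}; the paper's route costs that theorem but records en passant a uniform volume bound $V_0(R,N,\kappa)$ for balls, which is the form of the estimate reused later (e.g.\ in the finiteness arguments of Corollary \ref{finiteness-complex}). Both rest on the same uniform constants $\delta(R,N)$, $\varepsilon_0(R,N)$ coming from the compactness of bounded-size simplices, which you correctly identify as the real content beyond the finite-shape case of \cite{BH09}.
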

\begin{proof}
	By the proof of Proposition \ref{simplicial-geodesics} we know that $K$ is proper and geodesic.	
	Moreover since the injectivity radius is positive then $K$ is locally CAT$(\kappa)$ and  by Proposition \ref{complex-curvature}   the CAT$(\kappa)$-radius is at least $\rho_0 = \delta(N,R)$. \linebreak Since $K$ has no free faces then it is locally geodesically complete. This shows that $K$ is also a GCBA$^\kappa$-space.  We remark that clearly $\mathcal{H}^k(K) = 0$ if $k> N$ since the projection map from a simplex to $K$ is $1$-Lipschitz; this shows that  there are no points of dimension greater than $N$, i.e. dim$(K)\leq N$.\linebreak
	We now use Lemma \ref{uniform-c} to estimate the number of simplices intersecting a ball around any point  $x \in K$. Any simplex $S$ which intersect $\overline{B}(x,\delta(R,N))$ intersects the open star around some vertex $v$, by Lemma \ref{uniform-c}.(b). Therefore $v$ must be a vertex of $S$. If follows that the number of simplices intersecting $\overline{B}(x,\delta(R,N))$ is bounded by $N$.
	Therefore, for any $x\in K$ we have
	\vspace{-3mm}
	
	$$\mu_K(B(x,\delta(R,N))) \leq \sum_{d=0}^{N}N\cdot\mathcal{H}^d(B_{M^\kappa_d}(o,\delta(R,N))) \leq V_0,$$
	where $V_0$ depends just on $R, N$ and $\kappa$ (here is  $o$ is any point of $M^\kappa_d$). \linebreak
	The conclusion follows from Theorem \ref{char_pack}. 
\end{proof}

\subsection{Compactness of $M^\kappa$-complexes}\label{subsec-compactness}
The aim of this section is to provide compactness and finiteness results for simplicial complexes. We denote by $M^\kappa(R,N)$ the class of $M^\kappa$-complexes withour free faces,  of size bounded by $R$,  valency at most $N$  and positive injectivity radius.
\begin{theo}
	\label{simplicial-compactness}
	The class $M^\kappa(R,N)$ is compact under pointed Gromov-Hausdorff convergence.
\end{theo}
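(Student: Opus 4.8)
The plan is to reduce the statement to the compactness already proved for packed GCBA-spaces and then to upgrade the limit space to an $M^\kappa$-complex of the prescribed type. By Proposition~\ref{simplical-packing}, every $K\in M^\kappa(R,N)$ is a proper, geodesic $\textup{GCBA}^\kappa$-space with $\rho_{\textup{cat}}(K)\geq\rho_0$ (hence $\rho_{\textup{ac}}(K)\geq\rho_0$) and $\textup{Pack}(3r_0,\tfrac{r_0}{2})\leq P_0$, where $\rho_0,r_0,P_0$ depend only on $R,N,\kappa$ and $r_0\leq\rho_0/3$; that is, $M^\kappa(R,N)\subseteq\textup{GCBA}^\kappa_{\textup{pack}}(P_0,r_0;\rho_0)$. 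By Theorem~\ref{compactness-packing} the latter class is closed under ultralimits and compact under pointed Gromov--Hausdorff convergence. Thus, given a sequence $(K_n,x_n)$ in $M^\kappa(R,N)$, after passing to a subsequence it converges in the pointed Gromov--Hausdorff sense to a proper, geodesic $\textup{GCBA}^\kappa$-space $(K_\infty,x_\infty)$ with $\rho_{\textup{ac}}(K_\infty)\geq\rho_0$ and $\textup{Pack}(3r_0,\tfrac{r_0}{2})\leq P_0$, and by Proposition~\ref{Jansen} we may realize $(K_\infty,x_\infty)$ as an ultralimit $(K_\omega,x_\omega)=\omega\text{-}\lim(K_n,x_n)$ for any non-principal $\omega$. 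It then remains to produce on $K_\omega$ an $M^\kappa$-complex structure of size $\leq R$, valency $\leq N$ and without free faces; the bound on the injectivity radius will follow from Proposition~\ref{complex-curvature}.

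To build the simplicial structure I would describe $K_\omega$ locally. Fix $y=(y_n)\in K_\omega$ and let $\delta=\delta(R,N)$ be as in Lemma~\ref{uniform-c}. For $n$ large the ball $\overline B(y_n,\delta)$ lies in the open star $\textup{St}(w_n)$ of a vertex $w_n$ of $K_n$, and $\textup{St}(w_n)$ is a union of at most $N$ simplices, each of dimension $\leq N$ and size $\leq R$, all sharing $w_n$. Since there are only finitely many combinatorial types of such configurations, $\omega$-almost surely $\textup{St}(w_n)$ is combinatorially isomorphic to one fixed finite simplicial complex $L$. Identifying all these stars with $L$ and placing the distinguished vertex at a fixed point of $M^\kappa_N$, Proposition~\ref{simplex-compactness} shows that along the ultrafilter each geometric simplex of $\textup{St}(w_n)$ converges to an $M^\kappa$-simplex of the same dimension and size $\leq R$, and the isometries gluing faces of these simplices, being isometries between compact subsets of model spaces, converge to isometries between the corresponding faces of the limit simplices. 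This yields an $M^\kappa$-complex $L^\omega$ with underlying combinatorial complex $L$, all of whose simplices have size $\leq R$. (One could equivalently argue by induction on dimension, using that $\overline B(w_n,\varepsilon_0/2)$, for $\varepsilon_0=\varepsilon_0(R,N)$ as in Lemma~\ref{distance-vertices}, is isometric to a $\kappa$-cone over $\textup{Lk}(w_n,K_n)$, which by Lemma~\ref{link-induction} is an $M^1$-complex of bounded size and valency, and that coning commutes with the ultralimit.)

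The technical core is to check that a neighborhood of $y$ in $K_\omega$ is isometric to a neighborhood of the image of $w_n$ in $L^\omega$. Within $\overline B(w_n,\delta)$ the metric $d_{K_n}$ is the infimum of lengths of taut strings, and by Proposition~\ref{simplicial-geodesics} every such string of length $\leq 2\delta$ has at most $m_0=m_0(2\delta,R,N)$ segments; hence the restriction of $d_{K_n}$ to a fixed small ball around $w_n$ is given by a single formula — an infimum over a uniformly bounded family of concatenations of geodesic segments — in terms of the finitely many simplex shapes of $\textup{St}(w_n)$ and their gluing isometries. Passing these data to the $\omega$-limit, the formula converges to the same expression computed from the data of $L^\omega$, which is exactly $d_{L^\omega}$; therefore $\omega\text{-}\lim\overline B(w_n,\delta')=\overline B(O,\delta')\subset L^\omega$ for $\delta'$ small, i.e. a neighborhood of $y$ in $K_\omega$ carries the simplicial structure inherited from $L^\omega$. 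Since these local identifications are intrinsic to $K_\omega$ they are mutually compatible and globalize to an $M^\kappa$-complex structure on all of $K_\omega$, with simplices of size $\leq R$ and, because each star contains at most $N$ simplices, valency $\leq N$. I expect this metric-matching step to be the main obstacle: one must ensure that the $M^\kappa$-metric assembled from the limiting simplices really coincides with the ambient ultralimit metric, which is precisely where the uniform bound on the number of segments in a taut string (Proposition~\ref{simplicial-geodesics}) is indispensable.

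Finally, being a $\textup{GCBA}^\kappa$-space, $K_\omega$ is locally geodesically complete, so the $M^\kappa$-complex just constructed has no free faces (II.5.9 and II.5.10 of \cite{BH09}), and it is locally $\textup{CAT}(\kappa)$; Proposition~\ref{complex-curvature} then gives $\rho_{\textup{inj}}(K_\omega)\geq\delta(R,N)>0$. Hence $K_\omega\in M^\kappa(R,N)$. Since every pointed Gromov--Hausdorff limit of a sequence in $M^\kappa(R,N)$ therefore lies in $M^\kappa(R,N)$, and the class is precompact by the first paragraph, $M^\kappa(R,N)$ is compact under pointed Gromov--Hausdorff convergence.
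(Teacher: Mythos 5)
Your overall strategy is the paper's: Proposition \ref{simplical-packing} places $M^\kappa(R,N)$ inside $\textup{GCBA}^\kappa_{\textup{pack}}(P_0,r_0;\rho_0)$, Theorem \ref{precompactness}/\ref{compactness-packing} gives precompactness and reduces everything to showing that an ultralimit $K_\omega$ of complexes in the class is again in $M^\kappa(R,N)$, and the closing appeal to local geodesic completeness, II.5.9--5.10 of \cite{BH09} and Proposition \ref{complex-curvature} for the absence of free faces and the injectivity radius bound is also the paper's. Where you diverge is in how the complex structure on $K_\omega$ is rebuilt: you work locally, taking $\omega$-limits of the vertex stars supplied by Lemma \ref{uniform-c}, invoking finiteness of combinatorial types and Proposition \ref{simplex-compactness} to produce a limit model $L^\omega$, matching metrics on small balls via the bounded-taut-string Proposition \ref{simplicial-geodesics}, and then gluing the local models. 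The paper instead builds one global complex $\hat K_\omega$ at once: its simplices are the ultralimits $S_{(x_n)}=\omega\text{-}\lim\textup{supp}(x_n)$ over all admissible sequences, the identifications are declared by $\omega\text{-}\lim d_{K_n}=0$, the two axioms of an $M^\kappa$-complex are verified directly (induction on the dimension of supports, using Lemmas \ref{distance-vertices}, \ref{epsilond}, \ref{length-simplex}, \ref{uniform-c}), and a single map $\Phi\colon K_\omega\to\hat K_\omega$ is shown to be a surjective isometry using exactly the ingredient you use locally, namely the uniform bound $m_0(\ell,R,N)$ on the number of segments of taut strings.

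The genuine gap is the globalization sentence: ``since these local identifications are intrinsic to $K_\omega$ they are mutually compatible and globalize.'' The simplicial decomposition of an $M^\kappa$-complex is \emph{not} intrinsic to the underlying metric space (a subdivision yields the same metric space with a different complex), so intrinsic-ness cannot be what makes the charts agree; and nothing in your sketch checks that the limit stars based at two nearby points of $K_\omega$ induce the same decomposition on their overlap, nor that the face-gluing isometries chosen chart-by-chart are consistent, so as to assemble one total space $E=\bigsqcup_\lambda S_\lambda$ with an equivalence relation satisfying axioms (i) and (ii) globally. The compatibility is true, but it must be extracted from the fact that all local models arise from the same complexes $K_n$ (e.g.\ that two limit simplices either coincide or meet along the limit of common faces of the $\textup{supp}(x_n)$'s, which is in substance the paper's Steps 2--4 and uses the vertex separation of Lemma \ref{distance-vertices} together with Lemma \ref{uniform-c}(a)); as written, this step is asserted rather than proved. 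Two smaller, fixable points in the metric-matching paragraph: you should verify that taut strings of length $\leq 2\delta$ between points of a small ball around $w_n$ actually remain in the star of $w_n$ (shrink the radius, or use that $B(w_n,\varepsilon(w_n)/2)$ is isometric to a ball in the $\kappa$-cone over $\textup{Lk}(w_n,K_n)$), and exchanging the infimum over strings with the $\omega$-limit requires proving the two inequalities separately, as in the paper's Step 5, since an infimum of limits need not equal a limit of infima without the uniform bound being exploited in both directions.
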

\noindent By Proposition \ref{simplical-packing} there exist $P_0,r_0,\rho_0$ such that any $K\in M^\kappa(R,N)$ belongs to  $\text{GCBA}^\kappa_\text{pack}(P_0,r_0;\rho_0)$. So, by Theorem \ref{precompactness},   the class $M^\kappa(R,N)$ is precompact and it is compact if and only if it is closed under ultralimits. We are going now to show  that $M^\kappa(R,N)$ is closed under ultralimits. \\
We fix a non-principal ultrafilter $\omega$ and we take any sequence $(K_n, o_n)$ in $M^\kappa(R,N)$. We denote by $K_\omega$ the ultralimit of this sequence. Our aim is to prove that $K_\omega$ is isometric to a 
$M^\kappa$-complex $\hat K_\omega$ satisfying the same conditions as the $K_n$'s. 
\vspace{3mm}

\noindent {\em Step 1: construction of the simplicial complex $\hat K_\omega$}.

\noindent Let us start definining who are the simplices of $\hat K_\omega$.
Let   $(x_n)$  be any admissible sequence of points, with $x_n \in K_n$, 
and   consider the  unique simplex supp$(x_n)$ of $K_n$ containing  $x_n$ in its  interior:
we define $S_{(x_n)} = \omega$-$\lim \text{supp}(x_n)$. \linebreak 
The metric space $S_{(x_n)}$ is a $\kappa$-complex with size bounded by $R$ by   \ref{simplex-compactness}.\linebreak 
Notice that, a priori, if $y_n$ is another sequence defining the same point as $x_n$ in $K_\omega$ then $S_{(y_n)}$ might be different from $S_{(x_n)}$. 

\noindent Now we define $\hat K_\omega$ as  follows. Let $p_n: S \rightarrow K_n$ denote the projection of any simplex of the total space of $K_n$ to $K_n$.  The total space of $\hat K_\omega$ will be 
$$\bigsqcup_{ (x_n) \textup{ admissible} } S_{(x_n)}$$ 
where $(x_n) $ is {\em any} admissible sequence of points  in $K_n$,  and the equivalence \linebreak relation is: 
if $z_\omega =   \omega$-$\lim  z_n  \in S_{(x_n)}$ and $z'_\omega =   \omega$-$\lim  z'_n  \in S_{(x_n')}$ 
(i.e.  $(z_n)$, $(z_n')$ are  admissible sequences of points {\em respectively   in $\text{supp}(x_n)$ and $\text{supp}(x_n')$}), we say that $z_\omega \sim z_\omega'$ if and only if 
$\omega$-$\lim d_{K_n}(p_n (z_n), p_n (z_n')) = 0$. 
That is,  we compare the points $z_n$ and $z_n'$ in the common space $K_n$ where they live. \linebreak
For simplicity we will abbreviate
$d_{K_n}(p_n(z_n), p_n(z_n'))$ with $d_{K_n}(z_n, z_n')$. \linebreak
First of all we need to check that the relation   is well defined: given other admissible sequences $(w_n), (w'_n)$ with  $w_n \in \text{supp}(x_n)$ and $w_n'\in \text{supp}(x_n')$ such that  $z_\omega=\omega$-$\lim w_n $ and   $z'_\omega=\omega$-$\lim  w'_n $,  we have 
\begin{equation*}
	d_{K_n}(w_n, w_n') 
	\leq  d_{\text{supp}(x_n)}(w_n, z_n) + d_{K_n}(z_n, z_n') +  d_{\text{supp}(x_n')}(z_n', w_n')  
\end{equation*}
\normalsize
hence $\omega \text{-} \lim d_{K_n}(w_n, w_n') =0$.
Once proved it is well defined it is easy to show it is an equivalence relation. We call $\hat K_\omega$ the quotient space and denote $p_\omega: S_{(x_n)} \rightarrow  \hat K_\omega$   the projections.
\vspace{2mm}

\noindent {\em Step 2:  $\hat K_\omega$ satisfies axiom  (i) of  $M^\kappa$-complexes.}

\noindent
We fix an admissible sequence $(x_n)$ and the corresponding simplex $S_{(x_n)}$. \linebreak
We need to prove that the map $p_\omega \colon S_{(x_n)} \to  \hat K_\omega$ is injective. 
For this consider points $z_\omega = \omega \text{-} \lim z_n$ and $z'_\omega=\omega \text{-} \lim z'_n$ in $S_{(x_n)}$, with $z_n, z'_n\in \textup{supp}(x_n)$  for all $n$; 
then  there exists $\varepsilon_0 > 0$ such that $\omega$-$\lim d_{\text{supp}(x_n)}(z_n,z'_n) > \varepsilon_0 > 0$.  
In particular $d_{\text{supp}(x_n)}(z_n,z'_n) > \varepsilon_0 \hspace{1mm}$ $\omega$-a.e.$(n)$.
Now, for any point $z$ of a $M^\kappa$-complex define dim$(z)$   as the dimension of supp$(z)$.
The strategy to prove the injectivity is by induction on 
$$d = \max \lbrace \omega\text{-}\lim \text{dim}(z_n), \; \omega\text{-}\lim \text{dim}(z_n') \rbrace.$$
Observe that if $\omega\text{-}\lim \text{dim}(z_n) = k$ then we have $\text{dim}(z_n) = k$  $\omega$-a.e.$(n)$ because the possible dimensions belong to a finite set. For $d=0$ we have that   $z_n,z_n'$ are both vertices of supp$(x_n)$,  $\omega$-a.e.$(n)$.
If $p_\omega$ is not injective then for every $\varepsilon > 0$ we have $d_{K_n}(z_n,z_n')\leq \varepsilon$  $\omega$-a.e.$(n)$. By Lemma \ref{distance-vertices} we know that if $d_{K_n}(z_n, z_n')\leq \varepsilon_0(R,N)$ then $z_n = z_n'$ as points of supp$(x_n)$. 
\linebreak
We consider now the inductive step. We denote by $T_n, T_n'$ the faces of $S_n$ containing $z_n$ and $z_n'$ in their interior, respectively. We suppose there exists $\tau > 0$ such that for $\omega$-a.e.$(n)$ it holds $z_n \in T_n \setminus (\partial T_n)_\tau$. By Lemma \ref{length-simplex} we have $\varepsilon(z_n)\geq \varepsilon(R,N,\tau)$  $\omega$-a.e.$(n)$, and similarly for $z'_n$. Once again this fact implies the injectivity.
Consider now the case where for all $\tau > 0$ the set
$$\lbrace n\in \mathbb{N} \text{ s.t. } d(z_n,\partial T_n) \leq \tau \text{ and } d(z_n', \partial T_n') \leq \tau \rbrace$$
belongs to $\omega$.
Therefore $\omega$-$\lim d(z_n, \partial T_n) = \omega$-$\lim d(z_n', \partial T_n') = 0$. This means that $z_\omega$ belongs to $\partial T_\omega$ and $z_\omega'$ belongs to $\partial T_\omega'$, by Proposition \ref{simplex-compactness}. Hence $z_\omega = \omega$-$\lim w_n$ and $z_\omega' = \omega$-$\lim w_n'$, where $w_n$ and $w_n'$ belong to a lower dimensional face of $T_n$ and $T_n'$ respectively. We then apply the inductive assumption to get the thesis.

\vspace{3mm}

\noindent {\em Step 3:  $\hat K_\omega$ satisfies axiom  (ii) of $M^\kappa$-complexes.}\\
Consider two simplices $S_{(x_n)}$, $S_{(x_n')}$ and  suppose $p_\omega(S_{(x_n)}) \cap p_\omega(S_{(x_n')}) \neq \emptyset$. \linebreak
This means that for any $\varepsilon > 0$ there exist $y_\omega= \omega \text{-} \lim  y_n$ and  $y'_\omega= \omega \text{-} \lim  y'_n$ with  $y_n \in \text{supp}(x_n)$ and $ y_n' \in \text{supp}(x_n')$ such that $d_{K_n}(y_n,y_n') < \varepsilon$, $\omega$-a.e.$(n)$. 
If $\varepsilon < \delta(R,N)$ then by Lemma \ref{uniform-c}.(a) we know that $\text{supp}(x_n)$ and $\text{supp}(x_n')$ share a face in $K_n$. 
Let then  $T_n\subset \text{supp}(x_n)$ and $T_n' \subset \text{supp}(x_n')$ such faces and  $h_n\colon T_n \to T_n'$ an isometry such that $p_n(z) = p_n(z')$ for $z \in T_n$, $z' \in T_n'$ if and only if $z' = h_n(z)$. By assumption this holds  $\omega$-a.e.$(n)$.
By Proposition \ref{simplex-compactness}
it is easy to see that the metric spaces $T_\omega = \omega$-$\lim T_n$ and  $T_\omega' = \omega$-$\lim T_n'$ are, respectively,  faces of $S_{(x_n)}$ and $S_{(x_n')}$. Moreover the sequence of maps $(h_n)$ defines a limit map $h_\omega \colon T_\omega \to T_\omega'$ which is an isometry, by Proposition \ref{limitmap}.
It remains to show that $p_\omega (z_\omega) = p_\omega (z'_\omega)$, for $z_\omega \in T_\omega$ and $z'_\omega \in T'_\omega$, if and only if 
$h_\omega (z_\omega) = z'_\omega$. 
But given $z_\omega= \omega \text{-} \lim  z_n$ and  $z'_\omega= \omega \text{-} \lim  z'_n$  with $z_n \in \text{supp}(x_n), z'_n \in \text{supp}(x'_n)$
we have $p_\omega (z_\omega) = p_\omega (z'_\omega)$ by definition if and only if 
$\omega\text{-}\lim d_{K_n}(p_n(z_n),p_n(z_n')) = 0$. 
This happens if and only if for any $\varepsilon > 0$ the inequality
$$d_{K_n}(p_n(z_n),p_n(z_n')) < \varepsilon$$
holds  $\omega$-a.e.$(n)$.
This means that $d_{K_n}(p_n (h_n(z_n)),p_n(z_n')) <\varepsilon$ holds $\omega$-a.e.$(n)$, 
in particular  $p_\omega (h_\omega (z_\omega)) = p_\omega (z'_\omega)$. By the injectivity of the projection map $p_\omega$ we then obtain $ h_\omega(z_\omega) = z'_\omega$, which is the thesis.
\vspace{3mm}

\noindent {\em Step 4:  $\hat K_\omega$ belongs to $M^\kappa(R,N)$.}\\
It is clear that $\hat K_\omega$ has size bounded by $R$ by construction. \\
We want to show it has valency at most $N$. Fix a vertex $v$ of $\hat K_\omega$ and parameterize by $\alpha \in A$  the set of simplices $S_{(x_n(\alpha))}$ of $\hat K_\omega$ having $v$ as a vertex. \linebreak
For any fixed $\alpha \in A$  there is a vertex $v_n (\alpha)$ of $\text{supp}(x_n (\alpha))$ such that the sequence $(v_n (\alpha))$ converges  $\omega$-a.e.$(n)$  to $v$, by Proposition \ref{simplex-compactness}. 
In particular  for all $\alpha, \alpha' \in A$ we  get 
$d_{K_n}(v_n (\alpha), v_n (\alpha')) <  \varepsilon(R, N)$ 
$\omega$-a.e.$(n)$, and then $v_n (\alpha)= v_n (\alpha')$   by Lemma \ref{distance-vertices}.\\
Let now  $S_{(x_n(\alpha))} \neq S_{(x_n(\alpha'))} $  be distinct elements of  $\hat K_\omega$, for $\alpha, \alpha' \in A$. 
Then there exists a vertex of the first simplex $u=\! \omega$-$\lim u_n$, with  $u_n \! \in \textup{supp}(x_n(\alpha))$,
which does not belong to the second one. 
So $d_{K_n} (u_n, \textup{supp}(x_n(\alpha')))>0$ \linebreak $\omega$-a.e.$(n)$, hence 
supp$(x_n(\alpha)) \neq \textup{supp}(x_n(\alpha'))$ $\omega$-a.e.$(n)$.
Therefore  if $\hat K_\omega$ has $m$ different simplices $S_{(x_n(\alpha))}$ sharing the vertex $v$, there also exist $m$ different simplices $\textup{supp}(x_n(\alpha))$ of $K_n$ sharing the same vertex $v_n(\alpha)$,  $\omega$-a.e.$(n)$. \linebreak This contradicts our assumptions if $m>N$.

\noindent  Finally, the fact that $\hat K_\omega$ has positive injectivity radius and has not free faces will follow from the last step, where we prove that  $\hat K_\omega$ and $K_\omega$ are isometric. 
In fact  $K_\omega$ is geodesically complete and locally CAT$(\kappa)$, as ultralimit of complete, geodesically complete, locally CAT$(\kappa)$ spaces with  CAT$(\kappa)$-radius uniform   bounded below;  hence $K_\omega$ (and in turns $\hat K_\omega$) has positive injectivity radius and   no free faces, by Proposition \ref{complex-curvature} and  II.5.9\&II.5.10 of \cite{BH09}.
\vspace{3mm}

\noindent {\em Step 5:  $\hat K_\omega$ is isometric to $K_\omega$.}\\
We define a map $\Phi: K_\omega \rightarrow \hat K_\omega$ as follows. 
Let $y_\omega=\omega$-$\lim y_n$ be the $\omega$-limit of be an admissible sequence $(y_n)$ of $K_n$. 
Any $y_n$ belongs to $\text{supp}(y_n)$: we will denote by $(y_n)_{\text{supp}(y_n)}$  the point, in the ultralimit of the sequence of simplices $\text{supp}(y_n)$,  which is defined by the admissible sequence of points $(y_n)$
\footnote{The notation  stresses the fact that we see $(y_n)_{\text{supp}(y_n)}$ as limit of points in the abstract  simplices $\text{supp}(y_n)$ (not in $K_n$). Namely,    $(y_n)_{\text{supp}(y_n)}$  belongs to the total space of $\hat K_\omega$, while  $y_\omega \in K_\omega$.}.
\linebreak
We then define $\Phi$ as
$$\Phi(y_\omega) = p_\omega ((y_n)_{\text{supp}(y_n)}).$$
It is easy to see it is well defined and surjective. \\
It remains to prove it is an isometry.
Let   $y_n,z_n \in K_n$ define admissible sequences. So the distances $d_{K_n}(y_n,z_n)$ are uniformly bounded by some constant $L$. Therefore by Proposition \ref{simplicial-geodesics} for any $n$ there exists a geodesic between $y_n$ and $z_n$ which is the concatenation of at most $m_0(L,R,N)$ segments, each of them contained in a simplex. Since the number of segments is uniformly bounded we can define a path in $\hat K_\omega$ which is the concatenation of geodesic segments, each contained in a simplex of $\hat K_\omega$, and whose length is the limit of the lengths of the segments in $K_n$. This shows that 
$$d_{\hat K_\omega} (p_\omega ((y_n)_{{\text{supp}(y_n)}}), p_\omega ((z_n)_{{\text{supp}(z_n)}}) \leq \omega\text{-}\lim d_{K_n}(y_n,z_n).$$
In order to prove the other inequality we fix two points $y=p_\omega ((y_n)_{{\text{supp}(y_n)}})$ and   $z=p_\omega ((z_n)_{{\text{supp}(z_n)}})$ of $\hat K_\omega$.
Notice that from the  inequality above we deduce that $\hat K_\omega$ is path-connected. Hence, by Proposition \ref{simplicial-geodesics}, we know that there exists a geodesic between $y$ and $z$ which is the concatenation of at most $m_0(\ell,R,N)$ geodesic segments, each of them contained in a simplex, where $\ell = d_{\hat K_\omega}(x,y)$. These segments cross  finitely many  simplices, each of which  can be seen as the $\omega$-limit of a sequence of simplices in $K_n$. Since the number is finite we can see the union of these simplices of $\hat K_\omega$ as the ultralimit of the union of the corresponding simplices in $K_n$. We can therefore approximate the geodesic in $\hat K_\omega$ with paths in $K_n$ between $y_n$ and $z_n$, whose total length tend to $\ell$.
So
$$d_{\hat K_\omega} (p_\omega ({(y_n)_{\text{supp}(y_n)}}), p_\omega ((z_n)_{{\text{supp}(z_n)}}) \geq \omega\text{-}\lim d_{K_n}(y_n,z_n).$$
which  ends the proof of Theorem \ref{simplicial-compactness}. 
\qed
\vspace{5mm}

We can  specialize this compactness theorem to other families of $M^\kappa$-complexes, as done for $\textup{GCBA}^\kappa_{\textup{pack}}(P_0,r_0;\rho_0)$. Namely consider:
\vspace{1mm}

\noindent -- the subclass $M^\kappa(R,N;\Delta) \subset M^{\kappa}(R,N)$ of complexes with diameter $\leq \Delta$; \\
\noindent -- the class  $M^\kappa(R, V, n)$  of $M^\kappa$-complexes without free faces, with size bounded by $R$, total volume $\leq V$, dimension bounded above by $n$ and  positive injectivity radius.

\begin{obs}
	{\em
		We should  specify the measure on the complexes $K$ of the class $M^\kappa(R;V,n)$ under consideration. 
		Any such space is stratified in subspaces of different dimension, so it is naural to consider the measure which is the sum over $k =0,\ldots,n$ of the $k$-dimensional Hausdorff measure on each $k$-dimensional part. 
		This clearly coincides with the natural measure $\mu_K$ of $K$ seen as   \textup{GCBA}-space. 
	}
\end{obs}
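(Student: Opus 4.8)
The statement asserts that, for a complex $K$ in the class $M^\kappa(R;V,n)$, the measure built by summing the $k$-dimensional Hausdorff measures over the pieces of the natural dimensional stratification of $K$ agrees with the canonical GCBA-measure $\mu_K$. The content is the identification of the two stratifications: write $\Omega_k$ for the ``$k$-dimensional part'' of $K$, i.e.\ the set of points $x$ such that every simplex containing $x$ has dimension $\le k$ and at least one has dimension $k$ (equivalently, $\text{supp}(x)$ is a face of a maximal simplex of dimension exactly $k$). The plan is to show $\Omega_k = K^k$, the GCBA-stratum $\{x : \dim(x)=k\}$; once this is known the Remark follows formally, since by the definition of $\mu_K$ in Section \ref{subsec-structure} one has $\mu_K = \sum_k \mathcal{H}^k\llcorner K^k = \sum_k \mathcal{H}^k\llcorner \Omega_k$, which is exactly the measure described.

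First I would record that $K$ is a GCBA$^\kappa$-space of dimension $\le n$: since $M^\kappa(R;V,n)\subseteq M^\kappa(R,N_0)$ for a suitable $N_0=N_0(R,V,n)$, the complex $K$ satisfies the hypotheses of Proposition \ref{simplical-packing}, and $\dim(K)\le n$ (this is part of the definition of the class, and is also compatible with the bound $\dim(K)\le N$ from the proof of Proposition \ref{simplical-packing}). Thus $\mu_K=\sum_{k=0}^n \mathcal{H}^k\llcorner K^k$, and only the equality $K^k=\Omega_k$ remains. For this I would compute the tangent cone at an arbitrary $x\in K$. Let $S=\text{supp}(x)$ and $j=\dim S$. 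By the local structure of $M^\kappa$-complexes (Theorem I.7.39 of \cite{BH09}) together with the spherical-join description of the link at a point interior to a simplex, a neighbourhood of $x$ is isometric to a ball about the apex of the $\kappa$-cone over $\mathbb{S}^{j-1}\ast \text{Lk}(S,K)$; hence $\Sigma_x K\cong \mathbb{S}^{j-1}\ast \text{Lk}(S,K)$ and, since the tangent cone is the Euclidean cone over the space of directions and the Euclidean cone over a spherical join is the metric product of the cones, $T_xK\cong \mathbb{R}^j\times C(\text{Lk}(S,K))$ (using Example \ref{cono_sfera}). Now $\text{Lk}(S,K)$ is an $M^1$-complex whose $d$-simplices correspond to the $(d+j+1)$-simplices of $K$ strictly containing $S$, so its simplicial dimension is $m-j-1$, where $m=\max\{\dim S' : S'\supsetneq S\}$ (and $\text{Lk}(S,K)=\emptyset$ when $S$ is maximal).

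It then remains to conclude that $\dim(x)=\dim T_xK$ is read off combinatorially. I would do this by induction on $\dim K$, using that the geometric dimension of Kleiner is additive under metric products and satisfies $\dim C(Y)=\dim Y+1$ for $Y\neq\emptyset$ (see \cite{Kl99}), together with the inductive statement applied to the $M^1$-complex $\text{Lk}(S,K)$, which has strictly smaller dimension: this yields $\dim(x)=j+\bigl(1+\dim \text{Lk}(S,K)\bigr)=m$ when $S$ is not maximal, and $\dim(x)=j$ when it is, i.e.\ $\dim(x)=\max\{\dim S' : S'\ni x\}$ in all cases. Hence $K^k=\Omega_k$ for every $k$, and the two measures coincide. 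The only point requiring any care is this tangent-cone identification and the ensuing dimension bookkeeping; but every ingredient — the cone/link structure of $M^\kappa$-complexes from \cite{BH09} and the behaviour of the geometric dimension under products and cones from \cite{Kl99} — is already at hand, so the verification is routine (and under the more pedestrian reading in which ``the $k$-dimensional part'' is simply a name for $K^k$, the Remark is immediate from the definition of $\mu_K$).
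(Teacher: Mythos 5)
Your argument is correct in substance, but note that the paper offers no proof at all here: the Remark simply asserts the coincidence as clear, so there is nothing to match your proposal against except the definitions. The real content, as you rightly isolate, is the identification $\dim(x)=\max\{\dim S' : S'\ni x\}$, and your route to it — computing $T_xK\cong\mathbb{R}^j\times C(\mathrm{Lk}(\mathrm{supp}(x),K))$ and then doing geometric-dimension bookkeeping for products and cones by induction on $\dim K$ — works, but is heavier than necessary. Since the paper \emph{defines} $\dim(x)$ as the Hausdorff dimension of all sufficiently small balls around $x$ (Section \ref{subsec-structure}), you can bypass tangent cones entirely: by Lemma \ref{epsilon-simplex} the ball $B(x,\varepsilon(x))$ meets only the (finitely many, by Proposition \ref{geodesic-simplex}) simplices containing $x$, and its intersection with each such $S'$ is a convex neighbourhood of $x$ in $S'\subset M^\kappa_{\dim S'}$, hence of Hausdorff dimension exactly $\dim S'$; since Hausdorff dimension of a finite union is the maximum, $\dim(x)=\max\{\dim S':S'\ni x\}$ at once, and the identity $\mu_K=\sum_k\mathcal H^k\llcorner\Omega_k$ follows as you say. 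One small slip: your parenthetical ``equivalently, $\mathrm{supp}(x)$ is a face of a maximal simplex of dimension exactly $k$'' is not equivalent to membership in $\Omega_k$, since $\mathrm{supp}(x)$ may simultaneously be a face of a maximal simplex of larger dimension; the correct formulation (which is the one you actually use) is that $k$ is the \emph{maximal} dimension of a simplex containing $x$.
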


\begin{cor}
	\label{finiteness-complex}
	For any choice of $R$, $n$, $V$, $N$ and $\Delta$, the above classes 
	are compact under Gromov-Hausdorff convergence and  contain  only finitely many simplicial complexes up to simplicial homeomorphisms.
\end{cor}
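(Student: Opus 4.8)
The plan is to place each of the two classes inside a family of $M^\kappa$-complexes with uniformly bounded combinatorics, and then to deduce compactness and finiteness from Theorem~\ref{simplicial-compactness} together with the semicontinuity results already at our disposal. For $M^\kappa(R,N;\Delta)$ this is immediate: it is a subclass of $M^\kappa(R,N)$, which is compact under pointed Gromov--Hausdorff convergence by Theorem~\ref{simplicial-compactness}; on complexes of diameter $\le\Delta$ pointed and unpointed convergence coincide (for any choice of basepoints), and since the diameter is lower semicontinuous under Gromov--Hausdorff limits, the condition $\textup{diam}\le\Delta$ is closed. Hence $M^\kappa(R,N;\Delta)$ is a closed subset of a compact space, so it is compact.

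For $M^\kappa(R,V,n)$ I would first show that every $K$ in this class has at most $\mathcal N=\mathcal N(R,V,n,\kappa)$ simplices. Since $\dim K\le n$, every simplex of $K$ is a face of a maximal simplex (iterate strict inclusion, the dimension increasing each time), so it suffices to bound the number $M_m$ of maximal $m$-simplices for $0\le m\le n$ and then use $\#\{d\text{-simplices}\}\le\sum_{m=d}^{n}\binom{m+1}{d+1}M_m$. If $\tau$ is a maximal $m$-simplex, then the only simplex containing any interior point $x$ of $\tau$ is $\tau$ itself, so $\dot\tau$ is isometric to the interior of a $\kappa$-simplex of size $\le R$ in $M^\kappa_m$ (Lemma~\ref{epsilon-simplex}) and consists of points of dimension exactly $m$; as the interiors of distinct simplices are disjoint, $\mathcal H^m(K^m)\ge M_m\,v_m$, where $v_m=v_m(R,\kappa)>0$ is the volume of a ball of radius $1/R$ in $M^\kappa_m$. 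Since $\mathcal H^m(K^m)\le\mu_K(K)\le V$, this gives $M_m\le V/v_m$ and hence the bound $\mathcal N$. In particular the valency of $K$ is $\le\mathcal N=:N_0(R,V,n,\kappa)$, so $M^\kappa(R,V,n)\subseteq M^\kappa(R,N_0)$, and therefore $\rho_{\textup{ac}}(K)\ge\rho_{\textup{cat}}(K)\ge\delta(R,N_0)=:\rho_0$ by Proposition~\ref{complex-curvature}. The diameter bound then follows exactly as in Corollary~\ref{compactness-unpointed}: setting $\rho=\min\{\rho_0,2\}$, a geodesic realizing the diameter contains at least $\textup{diam}(K)/\rho-1$ points that are pairwise $\rho$-apart, the balls of radius $\rho/2$ around them are disjoint and each has $\mu_K$-measure $\ge c_n(\rho/2)^{n}$ by Theorem~\ref{boundbelowvolume}, so $V\ge\mu_K(K)\ge c_n(\rho/2)^n(\textup{diam}(K)/\rho-1)$ and $\textup{diam}(K)\le\Delta_0(R,V,n,\kappa)$. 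Thus $M^\kappa(R,V,n)\subseteq M^\kappa(R,N_0;\Delta_0)$.

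It then remains to check that $M^\kappa(R,V,n)$ is closed inside the compact class $M^\kappa(R,N_0;\Delta_0)$. If $K_j\in M^\kappa(R,V,n)$ converge to $X$, then by Theorem~\ref{simplicial-compactness} the limit $X$ is (isometric to) an $M^\kappa$-complex in $M^\kappa(R,N_0)$; its dimension satisfies $\dim X\le\liminf_j\dim K_j\le n$ by Proposition~\ref{cor-dim}; and, applying Lemma~\ref{volume-convergence} to balls of radius $\ge\Delta_0$ (which exhaust the spaces, all of diameter $\le\Delta_0$), $\mu_X(X)\le\limsup_j\mu_{K_j}(K_j)\le V$. Hence $X\in M^\kappa(R,V,n)$, the class is closed, and so compact.

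Finally, finiteness up to simplicial homeomorphism reduces in both cases to a uniform bound on the number of simplices, since an $M^\kappa$-complex with at most $m$ simplices has at most $m$ vertices, there are only finitely many isomorphism types of abstract simplicial complexes on a vertex set of size $\le m$, and isomorphic abstract complexes are simplicially homeomorphic (map each geometric simplex affinely onto the corresponding one). For $M^\kappa(R,V,n)$ the bound $\mathcal N$ above does it. For $K\in M^\kappa(R,N)$ with $\textup{diam}(K)\le\Delta$: by Proposition~\ref{simplical-packing} one has $\textup{Pack}(3r_0,\tfrac{r_0}{2})\le P_0$ with $r_0,P_0$ depending only on $R,N,\kappa$, while by Lemma~\ref{distance-vertices} distinct vertices of $K$ are $\varepsilon_0(R,N)$-separated; since the vertex set lies in a ball of radius $\Delta$, Theorem~\ref{CAT-packing} bounds the number of vertices, hence (each lying in $\le N$ simplices) the number of simplices, by a constant depending only on $R,N,\kappa,\Delta$. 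The main obstacle is precisely the reduction step for $M^\kappa(R,V,n)$, namely converting the bounds on total volume and dimension into a uniform bound on the number of simplices, through the fact that each maximal simplex accounts for a definite amount of the natural measure; once both classes are contained in families of complexes with uniformly bounded combinatorics, everything else is a routine combination of Theorem~\ref{simplicial-compactness}, Proposition~\ref{cor-dim}, Lemma~\ref{volume-convergence} and the diameter estimate of Corollary~\ref{compactness-unpointed}.
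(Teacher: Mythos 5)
Your proposal is correct and follows essentially the same route as the paper: bound the number of (maximal) simplices of any $K\in M^\kappa(R,V,n)$ by the volume each maximal simplex must carry, deduce a valency bound so that $M^\kappa(R,V,n)\subseteq M^\kappa(R,N_0)$ with uniformly bounded diameter, obtain compactness from Theorem \ref{simplicial-compactness} plus stability of the dimension and total-volume bounds, and get finiteness by counting vertices (via Lemma \ref{distance-vertices} and the packing bound) respectively simplices. The only cosmetic differences are auxiliary: you bound the diameter by the Croke-type estimate along a diametral geodesic as in Corollary \ref{compactness-unpointed} and invoke Proposition \ref{cor-dim} for the dimension of the limit, where the paper simply uses that finitely many simplices of bounded size force bounded diameter and reads the dimension bound off the limit complex constructed in Theorem \ref{simplicial-compactness}.
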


\begin{proof}
	The compactness  of $M^\kappa(R,N;\Delta)$ is clear from  the one of $M^\kappa(R,N)$.
	Moreover, by Proposition \ref{simplical-packing}, we know that any $K\in M^\kappa(R,N;\Delta)$ satisfies the condition $\text{Pack}(3r_0, \frac{r_0}{2}) \leq P_0$ for   constants   $P_0 ,  r_0$ only depending on $R$ and $N$. Furthermore, by Lemma \ref{distance-vertices}, any two vertices of $K$ are $\eta(R)$-separated: in particular the number of vertices of $K$ is bounded above by $\text{Pack}(\frac{\Delta}{2}, \frac{\eta(R)}{2})$ which is a number depending only on $R,N,\kappa$ and $\Delta$. \linebreak Since the valency is bounded and the total number of vertices is bounded, we have only  finitely many possible simplicial complexes up to simplicial homeomorphisms. \\
	On the other hand it is straightforward to show that any  $K \in M^\kappa(R;V,n)$ has valency bounded from above by a function depending only on $R,V,n$ and $\kappa$,   because any simplex of locally maximal dimension contributes to the total volume with a quantity greater than a universal function  $v(R,n,\kappa) > 0$.  \\
	This also shows also that the total number of simplices  of $K$ is uniformly bounded in terms of $R,V$ and $n$, hence the combinatorial finiteness of \linebreak $M^\kappa(R;V,n)$. 
	Moreover, since any simplex has uniformly bounded size, also  the diameters of complexes in this class are uniformly bounded. 
	Therefore $M^\kappa(R;V,n) \subset M^\kappa(R,N)$ for a suitable $N$ and, as the class is made of compact metric spaces,   it is actually precompact under (unpointed) Gromov-Hausdorff convergence. 
	It remains to show that $M^\kappa(R;V,n)$ is closed. \linebreak
	By the proof of Theorem \ref{simplicial-compactness} it is clear that the upper bound on the dimension of the simplices is preserved under limits.
	The stability of the upper bound on the total volume is proved as for the class $\text{GCBA}^\kappa_\text{vol}(V_0, R_0; \rho_0, n_0)$ in Corollary \ref{compactness-pointed}.
\end{proof}

Finally, we want to point out  that   the assumptions on size and diameter  in the above compactness results are essential:

\begin{exx}{\em Non-compact families of $M^\kappa$-complexes.}
	\label{complex-examples}
		\vspace{1mm}
	
	\begin{itemize}
		\item[(1)] Let $X_n$  be a wedge of $n$ circles of radius $1$. The family  of $M^0$-complexes   $\{ X_n\} $ has  uniformly bounded size and uniformly bounded diameter, but the valency is not bounded. 
		Notice that this family is neither finite  nor uniformly packed. In particular, it is not precompact.
		\vspace{1mm}
		\item[(2)] Let $X_n$ be obtained  from a circle of radius $1$, then choosing $n$ equidistant points on the circle and gluing $n$ circles of radius $1$ to them. The $X_n$'s admit $M^0$-complex structures with uniformly bounded valency and uniformly bounded diameter, but the size of the simplices is not bounded. 
		Again, this family is neither finite  nor uniformly packed,  hence not precompact.
		\vspace{-2mm}
	\end{itemize}
\end{exx}

\vspace{5mm}
\appendix
\section{Ultralimits}
An {\em ultrafilter} on $\mathbb{N}$ is a subset $\omega$ of $\mathcal{P}(\mathbb{N})$ such that:
\begin{itemize}
	\item[1)] $\emptyset \notin \omega$;
	\item[2)] if $A,B \in \omega$ then $A\cap B \in \omega$;
	\item[3)] if $A\in \omega$ and $A\subset B$ then $B\in \omega$;
	\item[4)] for any $A\subset \mathbb{N}$ then either $A\in \omega$ or $A^c \in \omega$.
\end{itemize}
We recall that there is a one-to-one correspondence between the ultrafilters $\omega$ on $\mathbb{N}$ and the finitely-additive measures defined on the whole $\mathcal{P}(\mathbb{N})$ with values on $\lbrace 0, 1 \rbrace$ such that $\omega(\mathbb{N}) = 1$. 
Indeed given an ultrafilter $\omega$ we define the measure $\omega(A)=1$ if and only if $A\in \omega$; conversely  given a measure $\omega$ as before we define the ultrafilter as the set $\omega = \lbrace A \subset \mathbb{N} \text{ s.t. } \omega(A)=1\rbrace$ \linebreak (it is easy to show it actually is an ultrafilter). In the following $\omega$ will denote both an ultrafilter and the measure that it defines. Therefore we will write that a property $P(n)$ holds $\omega$-a.s. when the set $\lbrace n \in \mathbb{N} \text{ s.t. }P(n)\rbrace \in \omega$. \\
There is an easy example of ultrafilter:   fix $n\in \mathbb{N}$ and   consider the set $\omega$ of subsets of $\mathbb{N}$ containing $n$. An ultrafilter of this type is called {\em principal}. \linebreak 
The interesting ultrafilters are the non-principal ones;  it turns out that an ultrafilter is non-principal if and only if it does not contain any finite set. The existence of non-principal ultrafilters follows from Zorn's lemma.\linebreak
The interest on non-principal ultrafilters is due to the fact that they can define a notion of limit of a bounded sequence of real numbers:

\begin{lemma}
	\label{ultralimit}
	Let $a_n \in [a,b]$ be a bounded sequence of real numbers. \linebreak 
	Let $\omega$ be a non-principal ultrafilter. Then there exists a unique point $x$ in  $[a,b]$   such that for all $\eta > 0$ the set $\lbrace n \in \mathbb{N} \text{ s.t. } \vert a_n - x \vert < \eta\rbrace$ belongs to $\omega$. \linebreak 
	The real number $x$ is said the $\omega$-limit of the sequence $(a_n)$ and it is denoted by $x=\omega$-$\lim a_n$.
	Moreover if $a_n$ and $b_n$ are two bounded sequence of real numbers, it holds:
	\begin{itemize}
		\item[(a)] $\omega\text{-}\lim (a_n + b_n) = \omega\text{-}\lim a_n + \omega \text{-}\lim b_n $;
		\item[(b)] if $\lambda \in \mathbb{R}$ then $\omega\text{-}\lim (\lambda a_n) = \lambda \cdot \omega\text{-}\lim a_n$;
		\item[(c)] if $a_n \leq b_n$ then $\omega\text{-}\lim a_n \leq \omega\text{-}\lim b_n$;
		\item[(d)] if $a\! =\! \omega$-$\lim a_n$ and $f$ is  continuous at $a$ then 
		$\omega   \text{-}    \lim f(a_n)\!  =  f(\omega \text{-}\lim a_n).$
	\end{itemize}
\end{lemma}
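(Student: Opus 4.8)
The plan is to prove existence of the $\omega$-limit $x$ by a bisection (nested intervals) argument, uniqueness by a separation argument, and then to deduce (a)--(d) by the same elementary manipulations with the ultrafilter axioms 1)--4). For existence I would build a nested sequence of closed intervals $I_0 \supseteq I_1 \supseteq \cdots$, with $I_0 = [a,b]$, each $I_k$ of length $(b-a)/2^k$, and such that $A_k := \{n \in \mathbb{N} : a_n \in I_k\} \in \omega$. Given $I_k = [\alpha_k, \beta_k]$ with $A_k \in \omega$, split it at its midpoint $m_k$ into $I_k' = [\alpha_k, m_k]$ and $I_k'' = [m_k, \beta_k]$, and set $A_k' = \{n : a_n \in I_k'\}$, $A_k'' = \{n : a_n \in I_k''\}$. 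Then $A_k \subseteq A_k' \cup A_k''$; if $A_k' \notin \omega$, then $(A_k')^c \in \omega$ by 4), hence $A_k \cap (A_k')^c \in \omega$ by 2), and since this set is contained in $A_k''$ we get $A_k'' \in \omega$ by 3). So one may take $I_{k+1}$ to be whichever of $I_k', I_k''$ has the corresponding index set in $\omega$. The $I_k$ being nested and compact with lengths tending to $0$, we have $\bigcap_k I_k = \{x\}$ for a unique $x \in [a,b]$; and for any $\eta > 0$, choosing $k$ with $(b-a)/2^k < \eta$ gives $I_k \subseteq (x-\eta, x+\eta)$, so $\{n : |a_n - x| < \eta\} \supseteq A_k$ lies in $\omega$ by 3), which is the asserted property.

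For uniqueness, suppose $x \neq x'$ both satisfy the defining property and put $\eta = |x - x'|/2 > 0$. Then $\{n : |a_n - x| < \eta\}$ and $\{n : |a_n - x'| < \eta\}$ are in $\omega$, hence so is their intersection by 2); but that intersection is empty by the triangle inequality, contradicting 1).

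The four properties all follow the same template: given $\eta > 0$, intersect the relevant "good" index sets (each in $\omega$, hence the intersection is in $\omega$ and in particular nonempty by 1)) and verify the desired estimate on that intersection; uniqueness then identifies the $\omega$-limit. Writing $\alpha = \omega\text{-}\lim a_n$, $\beta = \omega\text{-}\lim b_n$: for (a), on $\{n : |a_n - \alpha| < \eta/2\} \cap \{n : |b_n - \beta| < \eta/2\}$ one has $|(a_n + b_n) - (\alpha + \beta)| < \eta$; property (b) is trivial for $\lambda = 0$ and otherwise uses $\{n : |a_n - \alpha| < \eta/|\lambda|\}$. For (c), if it failed we would have $\alpha > \beta$, and taking $\eta = (\alpha - \beta)/3$, on $\{n : |a_n - \alpha| < \eta\} \cap \{n : |b_n - \beta| < \eta\}$ (which is nonempty) we would get $a_n > \alpha - \eta > \beta + \eta > b_n$, contradicting $a_n \leq b_n$. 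For (d), continuity of $f$ at $\alpha$ provides, for each $\eta > 0$, a $\delta > 0$ with $\{n : |a_n - \alpha| < \delta\} \subseteq \{n : |f(a_n) - f(\alpha)| < \eta\}$; the left-hand set is in $\omega$, so by 3) so is the right-hand one, which (the values $f(a_n)$ being bounded near $\alpha$) says exactly $\omega\text{-}\lim f(a_n) = f(\alpha)$.

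The only point that requires a little care — the main obstacle, such as it is — is the bisection step, where axioms 2), 3), 4) must be combined correctly to see that $\omega$ selects exactly one of the two halves at each stage; once that is set up, everything else is routine.
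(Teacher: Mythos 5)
Your proof is correct. Note, however, that the paper does not actually prove this lemma: it simply cites Lemma 7.23 of \cite{DK18} for the existence and uniqueness of the $\omega$-limit and declares (a)--(d) trivial, so there is no internal argument to compare yours against. Your bisection (nested intervals) construction is a clean, self-contained alternative to the argument usually invoked in the references, which obtains the limit point from compactness via the finite intersection property of the closed sets $\overline{\{a_n : n \in A\}}$, $A \in \omega$ (or, equivalently, as $\sup\{t : \{n : a_n \geq t\}\in\omega\}$); what your route buys is that it uses nothing beyond the four ultrafilter axioms and the completeness of $\mathbb{R}$, and the dichotomy step (exactly one half-interval has its index set in $\omega$) is handled correctly via axioms 2)--4). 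Two small points you leave implicit but which are immediate: the base case $A_0=\mathbb{N}\in\omega$ follows from axioms 1) and 4), and in (d), once you restrict to the $\omega$-large set of indices where $f(a_n)$ lies in a bounded neighbourhood of $f(\alpha)$, the inclusion $\{n : |a_n-\alpha|<\delta\}\subseteq\{n : |f(a_n)-f(\alpha)|<\eta\}$ together with uniqueness is exactly the defining property of $\omega\text{-}\lim f(a_n)=f(\alpha)$; both are fine as written.
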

\noindent (The proof of the main part can be found in \cite{DK18}, Lemma 7.23, while properties (a)-(d) are trivial.) \\
The ultralimit of unbounded sequences of real numbers can be defined in the following way. Given an unbounded sequence of real numbers $a_n$ the following mutually exclusive situations can occur:
\begin{itemize}
	\item there exists $L > 0$ such that $a_n \in [-L,L]$ for $\omega$-a.e.$(n)$. \\In this case the ultralimit of $(a_n)$ can be defined   using Lemma \ref{ultralimit}.
	\item for any $L > 0$ the set $\lbrace n \in \mathbb{N} \text{ s.t. } a_n \geq L\rbrace$ belongs to $\omega$. \\ In this case we set $\omega$-$\lim a_n = +\infty$.
	\item for any $L < 0$ the set $\lbrace n \in \mathbb{N} \text{ s.t. } a_n \leq -L\rbrace$ belongs to $\omega$.\\ In this case we set $\omega$-$\lim a_n = -\infty$.
\end{itemize}
\noindent We remark that the limit depends strongly on the non-principal ultrafilter $\omega$. \linebreak
The ultralimit of a sequence of metric spaces is defined as follows.

\begin{defin}
	Let $(X_n,x_n)$ be a sequence of pointed metric spaces and $\omega$ be a non-principal ultrafilter. We set:
	$$X = \lbrace (y_n) \, : \, y_n \in X_n \text{ and } \exists L>0 \text{ s.t. } d(y_n,x_n)\leq L \text{ for any } n\rbrace.$$
	and, for $(y_n),(z_n) \in X$, we define the distance as:
	$$d((y_n),(z_n)) = \omega\text{-}\lim d(y_n,z_n).$$
	The space $X_\omega = (X,d)/_{d=0}$ is a metric space and it is called the {\em $\omega$-limit of the sequence of spaces $(X_n,x_n)$}. The fact that $(X,d)$ is a metric space follows immediately from the properties of the ultralimit of a sequence of real numbers and from the fact that $d_n$ is a distance for any $n$.
	In general the limit depends on the non-principal ultrafilter $\omega$ and on the basepoints.
\end{defin}

A basic example is provided by the ultralimit of a constant sequence.
\begin{prop}
	\label{ultralimit-proper}
	Let $(X,x)$ 
	be a metric space and
	$\omega$   
	a non-principal ultrafilter.
 Consider the constant sequence $(X,x)$ and the corresponding ultralimit $(X_\omega, x_\omega)$, where $x_\omega$ is the constant sequence of points $(x)$. Then
		\vspace{1mm}
	
	\begin{itemize}
		\item[(a)] The map $\iota\colon (X,x) \to (X_\omega, x_\omega)$ that sends $y$ to the constant sequence $(y_n=y)$ is an isometric embedding; 
			\vspace{1mm}	
		
		\item[(b)] if $X$ is proper then $\iota$ is surjective and $(X_\omega, x_\omega)$ is isometric to $(X, x)$.
	\end{itemize}
\end{prop}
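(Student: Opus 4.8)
The plan is to handle (a) by unwinding the definitions and then to deduce surjectivity in (b) from compactness of closed balls together with the finite-intersection property of $\omega$.

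First I would verify (a). For any $y\in X$ the constant sequence $(y_n=y)$ is admissible, since $d(y_n,x_n)=d(y,x)$ is bounded; hence $\iota$ is well defined. For $y,z\in X$ the distance in $X_\omega$ between $\iota(y)$ and $\iota(z)$ is $\omega\text{-}\lim d(y,z)=d(y,z)$, because the $\omega$-limit of a constant sequence equals that constant (Lemma \ref{ultralimit}). Thus $\iota$ is distance-preserving, in particular injective, so it is an isometric embedding; note also $\iota(x)=x_\omega$ by construction.

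Next I would prove surjectivity in (b). Let $(y_n)$ represent an arbitrary point of $X_\omega$; by definition there is $L>0$ with $y_n\in \overline B(x_n,L)=\overline B(x,L)=:K$ for every $n$, and $K$ is compact since $X$ is proper. I claim there is a unique $y\in K$ with $\{n:d(y_n,y)<\eta\}\in\omega$ for all $\eta>0$. For existence I argue by contradiction: if no such $y$ existed, then each $y\in K$ would admit $\eta_y>0$ with $\{n:d(y_n,y)\ge\eta_y\}\in\omega$; extracting a finite subcover $B(y_1,\eta_{y_1}),\dots,B(y_m,\eta_{y_m})$ of $K$, the set $\bigcap_{i=1}^m\{n:d(y_n,y_i)\ge\eta_{y_i}\}$ lies in $\omega$ (finite intersections of members of $\omega$ are members of $\omega$), hence is nonempty, and any index $n$ in it yields a point $y_n\in K$ lying outside all the covering balls — a contradiction. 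Uniqueness follows by separating two candidates with disjoint balls and intersecting the corresponding index sets. Such a $y$ satisfies $\omega\text{-}\lim d(y_n,y)=0$ directly from the defining property of the $\omega$-limit, so $(y_n)$ and the constant sequence $(y)$ represent the same point of $X_\omega$, i.e. $\iota(y)=(y_n)$ in $X_\omega$. Hence $\iota$ is onto; combined with (a) it is a bijective isometry, and since $\iota(x)=x_\omega$ the pointed spaces $(X_\omega,x_\omega)$ and $(X,x)$ are isometric.

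The one genuinely substantive step is the existence part of the claim in (b): this is exactly where both hypotheses are used, properness to obtain a compact (hence totally bounded) ball admitting a finite subcover, and the ultrafilter axioms — finite additivity to produce large index sets and closure under finite intersection to reach the contradiction. The rest is routine bookkeeping with the properties of $\omega\text{-}\lim$.
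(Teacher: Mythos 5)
Your proof is correct and follows essentially the same route as the paper: part (a) is immediate from the definitions, and for (b) you place an admissible sequence inside a compact closed ball and produce a point $y$ of that ball which is the $\omega$-limit of the sequence, so that the sequence is identified with the constant sequence $(y)$ in $X_\omega$. The only difference is that you establish the existence (and uniqueness) of this limit point directly via a finite subcover of the compact ball together with the ultrafilter axioms, whereas the paper simply cites Lemma 7.23 of \cite{DK18} for this step; this is a self-contained filling-in of a quoted lemma rather than a genuinely different argument.
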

\begin{proof}
	The first part is obvious by the definitions. If $X$ is proper and $(y_n)$ is an admissible sequence defining a point of $X_\omega$ then it is contained in a closed ball of $X$, that is compact. By Lemma 7.23 of \cite{DK18} we find $y\in X$ such that for all $\varepsilon > 0$ the set
	$$\lbrace n \in \mathbb{N}\text{ s.t. } d(y,y_n)<\varepsilon \rbrace$$
	belongs to $\omega$. Therefore it is clear that the constant sequence $(y_n=y)$ defines the same point as the sequence $(y_n)$ in $X_\omega$, which proves (b).
\end{proof}

An interesting consequence of the definition is that the ultralimit of pointed metric spaces is always complete (the proof is given in \cite{DK18}, Proposition 7.44):
\begin{prop}
	Let $(X_n, x_n)$ be a sequence of pointed metric spaces and let $\omega$ be a non-principal ultrafilter. Then $X_\omega$ is a complete metric space.
\end{prop}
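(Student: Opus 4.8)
The plan is to verify directly that $X_\omega$ satisfies the Cauchy criterion. Since a metric space is complete as soon as every Cauchy sequence has a convergent subsequence, it suffices to treat a sequence $(p^k)_{k\geq 0}$ in $X_\omega$ with the rapid estimate $d(p^k,p^{k+1})<2^{-k}$ for all $k$ (pass to a subsequence of an arbitrary Cauchy sequence). For each $k$ I would fix an admissible representative $(p^k_n)_{n}$ of $p^k$, so that $p^k_n\in X_n$ and $\sup_n d(p^k_n,x_n)<\infty$. By the definition of the distance on $X_\omega$ and of the $\omega$-limit, the set
\[
 T_k=\bigl\{\,n\in\mathbb N \;:\; d(p^k_n,p^{k+1}_n)<2^{-k}\,\bigr\}
\]
belongs to $\omega$ for every $k$; hence so does each finite intersection $S_k=T_0\cap\cdots\cap T_k$, and the $S_k$ form a decreasing chain of $\omega$-large sets (whose total intersection, however, is typically empty).

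The next step is to diagonalize while avoiding $\bigcap_k S_k$. For $n\in\mathbb N$ I would set
\[
 f(n)=\max\{\,k\in\mathbb N \;:\; k\leq n \text{ and } n\in S_k\,\},
\]
with the convention $f(n)=0$ if $n\notin S_0$; the bound $k\leq n$ guarantees that the maximum is taken over a finite, nonempty set, so $f$ is well defined and $f(n)\leq n$. Put $q_n=p^{f(n)}_n\in X_n$. To check that $(q_n)_n$ is admissible, observe that if $n\in S_0$ then $n\in T_j$ for all $0\leq j< f(n)$, so by the triangle inequality
\[
 d(q_n,p^0_n)=d(p^{f(n)}_n,p^0_n)\leq\sum_{j=0}^{f(n)-1}d(p^j_n,p^{j+1}_n)<\sum_{j\geq 0}2^{-j}=2,
\]
while $d(q_n,p^0_n)=0$ when $n\notin S_0$; in either case $d(q_n,x_n)\leq 2+\sup_n d(p^0_n,x_n)<\infty$, so $(q_n)_n$ defines a point $q\in X_\omega$.

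Finally I would show $p^k\to q$. Fix $k$. If $n\geq k$ and $n\in S_k$, then $k$ competes in the definition of $f(n)$, so $f(n)\geq k$ and, exactly as before, $n\in T_j$ for $k\leq j< f(n)$, whence
\[
 d(p^k_n,q_n)=d(p^k_n,p^{f(n)}_n)\leq\sum_{j=k}^{f(n)-1}2^{-j}<2^{-k+1}.
\]
Since $\omega$ is non-principal it contains every cofinite set, in particular $\{n:n\geq k\}$; intersecting with $S_k\in\omega$ gives $\{n:d(p^k_n,q_n)<2^{-k+1}\}\in\omega$, hence $d(p^k,q)=\omega\text{-}\lim_n d(p^k_n,q_n)\leq 2^{-k+1}$. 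Letting $k\to\infty$ yields $p^k\to q$, and therefore the original Cauchy sequence converges. The one genuinely delicate point is the diagonalization: the naive choice $n\mapsto p^n_n$ fails because on $\bigcap_k S_k$ (usually empty, and in any case not necessarily in $\omega$) it is uncontrolled; truncating the index by requiring $k\leq n$ repairs this, while still keeping $q$ $\omega$-close to every $p^k$.
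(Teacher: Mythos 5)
Your argument is correct: the reduction to rapidly Cauchy sequences, the sets $T_k$, $S_k$ in $\omega$, and the truncated diagonal choice $q_n=p^{f(n)}_n$ with $f(n)\leq n$ all work as written, and the final estimate $d(p^k,q)\leq 2^{-k+1}$ is justified. The paper itself gives no proof but delegates to \cite{DK18}, and your diagonalization is essentially the standard argument found there, so there is nothing to add.
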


Once defined the limit of pointed metric spaces it is useful to define limit of maps.
We take two sequences of pointed metric spaces $(X_n,x_n)$ and $(Y_n,y_n)$. A sequence of maps $f_n\colon X_n \to Y_n$ is said {\em admissible} if there exists $M\in \mathbb{R}$ such that $d(f_n(x_n),y_n)\leq M$ for any $n\in \mathbb{N}$. In general an admissible sequence of maps does not define a limit map, but it is the case if the maps are equi-Lipschitz.
A sequence of maps $f_n\colon X_n \to Y_n$ is equi-Lipschitz if there exists $\lambda\geq 0$ such that $f_n$ is $\lambda$-Lipschitz for any $n$.
\begin{prop}
	\label{limitmap}
	Let $(X_n,x_n)$, $(Y_n,y_n)$ be two sequences of pointed metric spaces. Let $f_n\colon X_n \to Y_n$ be an admissible sequence of equi-Lipschitz maps. Let $\omega$ be a non-principal ultrafilter. Let $X_\omega$ and $Y_\omega$ be the $\omega$-limits of $(X_n,x_n)$ and $(Y_n,y_n)$ respectively.
	Define $f=f_\omega \colon X_\omega \to Y_\omega$ as $f((z_n)) = (f_n(z_n))$.
	Then:
	\begin{itemize}
		\item[a)] $f$ is well defined;
		\item[b)] $f$ is Lipschitz with the same constants of the sequence $f_n$.
	\end{itemize}
	In particular if for any $n$ the map $f_n$ is an isometry then $f$ is an isometry, while if $f_n$ is an isometric embedding for any $n$ then $f$ is again an isometric embedding.
\end{prop}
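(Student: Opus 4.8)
The plan is to verify the three assertions directly from the definition of the $\omega$-limit of a sequence of maps, using only the elementary properties of $\omega$-limits of bounded real sequences collected in Lemma \ref{ultralimit} (above all the monotonicity property (c)). The whole argument amounts to bookkeeping with the admissibility and Lipschitz constants.

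First I would check that $f$ sends admissible sequences to admissible sequences, so that the formula $f((z_n)) = (f_n(z_n))$ makes sense. Let $\lambda$ be a common Lipschitz constant for the $f_n$ and $M$ a bound with $d(f_n(x_n), y_n) \leq M$ for all $n$. If $(z_n)$ is admissible, say $d(z_n, x_n) \leq L$, then $d(f_n(z_n), y_n) \leq d(f_n(z_n), f_n(x_n)) + d(f_n(x_n), y_n) \leq \lambda L + M$, so $(f_n(z_n))$ is an admissible sequence in $Y_n$ and defines a point of $Y_\omega$.

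Next, for (a), I would show that the value does not depend on the chosen representative: if $(z_n)$ and $(z_n')$ are admissible and define the same point of $X_\omega$, i.e. $\omega\text{-}\lim d(z_n, z_n') = 0$, then $d(f_n(z_n), f_n(z_n')) \leq \lambda\, d(z_n, z_n')$, and by Lemma \ref{ultralimit}(b),(c) we obtain $\omega\text{-}\lim d(f_n(z_n), f_n(z_n')) \leq \lambda \cdot \omega\text{-}\lim d(z_n, z_n') = 0$; hence $(f_n(z_n))$ and $(f_n(z_n'))$ represent the same point of $Y_\omega$. For (b), the same inequality yields, for any two points of $X_\omega$,
$$d_{Y_\omega}(f((z_n)), f((z_n'))) = \omega\text{-}\lim d(f_n(z_n), f_n(z_n')) \leq \lambda \cdot \omega\text{-}\lim d(z_n, z_n') = \lambda\, d_{X_\omega}((z_n), (z_n')),$$
so $f$ is $\lambda$-Lipschitz. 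If each $f_n$ is an isometric embedding the chain above becomes a chain of equalities, so $f$ preserves distances; and if moreover each $f_n$ is surjective, then given an admissible $(w_n)$ in $Y_n$ I would choose $z_n$ with $f_n(z_n) = w_n$ and observe that $d(z_n, x_n) = d(w_n, f_n(x_n)) \leq d(w_n, y_n) + M$ is bounded, so $(z_n)$ is admissible and $f((z_n)) = (w_n)$, giving surjectivity of $f$.

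None of these steps presents a genuine difficulty; the only point requiring attention is precisely the control of the constants $\lambda$ and $M$, which is what guarantees that the construction stays inside the ultralimit spaces $X_\omega$ and $Y_\omega$ rather than producing sequences that fail to be admissible.
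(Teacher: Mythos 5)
Your proof is correct and is exactly the routine verification that the paper itself omits, delegating it to Lemma 7.47 of \cite{DK18}: admissibility of the image sequence, independence of the representative, and the Lipschitz bound all follow from the monotonicity and scaling properties of $\omega$-limits, as you do. Your additional explicit check of surjectivity in the isometry case (pulling back an admissible sequence via $f_n^{-1}$ and using distance preservation to keep it admissible) is the only step the cited reference treats only implicitly, and you handle it correctly.
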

\noindent The map $f=f_\omega$ is called the {\em $\omega$-limit of the sequence of maps $f_n$} and we denote it by $f_\omega = \omega$-$\lim f_n$. The proof in case of isometric embeddings is given in \cite{DK18}, Lemma 7.47; the general case is analogous.
\vspace{2mm}

This result can be applied to the special case of geodesic segments, since they are isometric embeddings of an interval into a metric space $X$. \linebreak However we first need to explain what is the ultralimit of a sequence of intervals:

\begin{lemma}
	\label{ultralimitintervals}
	Let $I_n = [a_n,b_n]\subset \mathbb{R}$ be a sequence of  intervals   containing $0$ (possibly with $a_n=-\infty$ or $b_n=+\infty$). Let $\omega$ be a non-principal ultrafilter. Then $\omega$-$\lim (I_n,0)$ is isometric to $I$, where $I = [\omega\text{-}\lim a_n, \omega\text{-}\lim b_n]=[a,b]$ \linebreak (possibly with $a=-\infty$ or $b=+\infty$) contains 0.
\end{lemma}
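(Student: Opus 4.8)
The plan is to exhibit an explicit surjective isometry $\Phi$ from $X_\omega:=\omega\text{-}\lim(I_n,0)$ onto the interval $I=[a,b]$, where $a=\omega\text{-}\lim a_n$, $b=\omega\text{-}\lim b_n$, and we read $I$ as the genuine subinterval $\{x\in\mathbb{R}:a\le x\le b\}$ of $\mathbb{R}$ (so $I=(-\infty,b]$ if $a=-\infty$, $I=[a,+\infty)$ if $b=+\infty$); note $I\ni 0$ since $a_n\le 0\le b_n$ for all $n$ gives $a\le 0\le b$ by Lemma \ref{ultralimit}(c). A point of $X_\omega$ is represented by an admissible sequence $(t_n)$ with $t_n\in I_n$ and $\sup_n|t_n|<\infty$, two such sequences representing the same point exactly when $\omega\text{-}\lim|t_n-s_n|=0$. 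First I would define
$$\Phi\colon X_\omega\longrightarrow I,\qquad \Phi\big([(t_n)]\big)=\omega\text{-}\lim t_n,$$
which is meaningful because $(t_n)$ is bounded, so its $\omega$-limit is a well-defined real number by Lemma \ref{ultralimit}, and it lies in $[a,b]$ because $a_n\le t_n\le b_n$ together with Lemma \ref{ultralimit}(c) forces $a\le\omega\text{-}\lim t_n\le b$. The map is well defined on equivalence classes: if $\omega\text{-}\lim|t_n-s_n|=0$ then, by parts (a),(b) of Lemma \ref{ultralimit}, $|\omega\text{-}\lim t_n-\omega\text{-}\lim s_n|=|\omega\text{-}\lim(t_n-s_n)|\le\omega\text{-}\lim|t_n-s_n|=0$.

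Next I would verify that $\Phi$ is an isometry. For admissible $(t_n),(s_n)$ the sequence $(t_n-s_n)$ is bounded, so combining Lemma \ref{ultralimit}(a),(b) with part (d) applied to the continuous function $x\mapsto|x|$ yields
$$\big|\Phi([(t_n)])-\Phi([(s_n)])\big|=\big|\omega\text{-}\lim(t_n-s_n)\big|=\omega\text{-}\lim|t_n-s_n|=d_{X_\omega}\big([(t_n)],[(s_n)]\big).$$
In particular $\Phi$ is injective, and it sends the basepoint $[(0)]$ to $0\in I$, so the isometry will be pointed.

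It remains to prove surjectivity, which I expect to be the only step needing care with the infinite endpoints. Fix $t\in I$ (hence $t\in\mathbb{R}$). Each $I_n$ is a nonempty closed convex subset of $\mathbb{R}$ containing $0$, so the nearest-point projection $\pi_n\colon\mathbb{R}\to I_n$ is well defined, $1$-Lipschitz, and fixes $0$; set $t_n=\pi_n(t)$. Then $|t_n|=|\pi_n(t)-\pi_n(0)|\le|t|$ for every $n$, so $(t_n)$ is admissible. I claim $\omega\text{-}\lim t_n=t$, i.e. $\omega\text{-}\lim|t_n-t|=\omega\text{-}\lim d(t,I_n)=0$. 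If $a<t<b$, then since $\omega\text{-}\lim a_n=a<t$ and $\omega\text{-}\lim b_n=b>t$ the set $\{n:a_n\le t\le b_n\}=\{n:t\in I_n\}$ belongs to $\omega$ (this uses only the definition of $\omega$-limit, also when $a_n=-\infty$ or $b_n=+\infty$), whence $t_n=t$ for $\omega$-a.e. $n$ and the claim is immediate. If $t=a$ is a finite endpoint, then $\omega\text{-}\lim a_n=a$ finite forces $a_n$ to be finite — hence $a_n\in I_n$ — for $\omega$-a.e. $n$, and $|a_n-a|<\eta$ for $\omega$-a.e. $n$ and every $\eta>0$; thus $d(t,I_n)\le|a_n-a|<\eta$ for $\omega$-a.e. $n$, giving $\omega\text{-}\lim d(t,I_n)=0$. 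The case $t=b$ is symmetric. Hence $\Phi([(t_n)])=t$, so $\Phi$ is onto, and therefore $X_\omega$ is isometric to $I$ via a map carrying the basepoint to $0$. The main obstacle is essentially bookkeeping: ensuring the admissibility bound and the projection argument are unaffected by the $\omega$-null sets on which $a_n=-\infty$ or $b_n=+\infty$, which is exactly why working with the $1$-Lipschitz projection onto $I_n\ni 0$ is convenient.
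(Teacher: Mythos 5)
Your proof is correct and follows essentially the same route as the paper: the same map $(t_n)\mapsto\omega\text{-}\lim t_n$ and the same isometry computation via Lemma \ref{ultralimit}. The only difference is that you spell out the surjectivity step (via nearest-point projections onto $I_n$), which the paper dismisses as ``easy to check''; your argument for it is sound, including the handling of infinite endpoints.
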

\begin{proof}
	We define a map from $I_\omega$ to $I$ as follows. Given an admissible sequence $(x_n)$ such that $x_n\in I_n$ then $x_n$ is $\omega$-a.s. bounded, so it is defined $\omega$-$\lim x_n$ by Lemma \ref{ultralimit}. We define the map as $(x_n) \mapsto \omega$-$\lim x_n$. It is easy to check it is surjective. Moreover it is an isometry, indeed:
	$$\vert \omega\text{-}\lim x_n - \omega\text{-}\lim y_n \vert = \omega\text{-}\lim\vert x_n - y_n \vert = d((x_n),(y_n)).$$
	
	\vspace{-6mm}
\end{proof}

\noindent In particular the limit of geodesic segments is a geodesic segment.
\begin{lemma}
	\label{ultralimit-geodesic}
	Let $(X_n,x_n)$ be a sequence of pointed metric spaces and let $\omega$ be a non-principal ultrafilter. Let $X_\omega$ be the ultralimit of $(X_n,x_n)$ and let $z=\omega$-$\lim z_n$, $w=\omega$-$\lim w_n \in X_\omega$. Suppose that for all $n$ there exists a geodesic $\gamma_n \colon [0,d(z_n,w_n)] \to X_n$ joining $z_n$ and $w_n$: then there  exists a geodesic   joining $z$ and $w$ in $X_\omega$. In particular  if  $X_n$ is a geodesic space for all $n$, then the ultralimit  $X_\omega$ is a geodesic space.
\end{lemma}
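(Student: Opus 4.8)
The plan is to realise the desired geodesic as the $\omega$-limit of the geodesics $\gamma_n$, invoking Proposition \ref{limitmap}. First I would record that the lengths $d(z_n,w_n)$ are uniformly bounded: since $(z_n)$ and $(w_n)$ are admissible sequences, $d(z_n,w_n)\le d(z_n,x_n)+d(x_n,w_n)\le L$ for some $L$, and by definition of the ultralimit metric $d(z,w)=\omega\text{-}\lim d(z_n,w_n)<+\infty$. Hence the intervals $I_n=[0,d(z_n,w_n)]$ all contain $0$, have uniformly bounded length, and by Lemma \ref{ultralimitintervals} their $\omega$-limit $I_\omega=\omega\text{-}\lim(I_n,0)$ is isometric to the compact interval $I=[0,d(z,w)]$, via the map sending an admissible sequence $(t_n)$, $t_n\in I_n$, to $\omega\text{-}\lim t_n$.

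Next I would check that $(\gamma_n)$ is an admissible sequence of equi-Lipschitz maps, so that Proposition \ref{limitmap} applies. Each $\gamma_n\colon I_n\to X_n$ is an isometric embedding, in particular $1$-Lipschitz, so the sequence is equi-Lipschitz; and it is admissible because $d(\gamma_n(0),x_n)=d(z_n,x_n)$ is bounded in $n$, the basepoint of $I_n$ being $0$. Therefore $\gamma_\omega=\omega\text{-}\lim\gamma_n\colon I_\omega\to X_\omega$ is well defined and, being an $\omega$-limit of isometric embeddings, is itself an isometric embedding. Composing with the isometry $I\cong I_\omega$ of the previous step yields an isometric embedding of $I=[0,d(z,w)]$ into $X_\omega$. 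It then remains to identify its endpoints: by the very definition of $\gamma_\omega$ one has $\gamma_\omega(0)=(\gamma_n(0))=(z_n)=z$, while the endpoint $d(z,w)\in I$ corresponds to the admissible sequence $(d(z_n,w_n))$ in $I_\omega$, whose image is $(\gamma_n(d(z_n,w_n)))=(w_n)=w$. Hence $\gamma_\omega$ is a geodesic in $X_\omega$ joining $z$ to $w$.

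Finally, for the last assertion: if every $X_n$ is a geodesic space, then any two points of $X_\omega$ can be written as $z=\omega\text{-}\lim z_n$ and $w=\omega\text{-}\lim w_n$, and for each $n$ we may pick a geodesic $\gamma_n$ from $z_n$ to $w_n$ in $X_n$; the first part then produces a geodesic from $z$ to $w$, so $X_\omega$ is geodesic.

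As for the main obstacle: the argument is essentially bookkeeping, and the only step needing genuine care is the handling of the \emph{varying} intervals $I_n$. One must use that their lengths stay bounded — a consequence of admissibility of $(z_n)$ and $(w_n)$ — so that $I_\omega$ is a bona fide compact interval of length exactly $d(z,w)$, and one must correctly track which admissible sequence in $I_\omega$ represents the far endpoint, so that its $\gamma_\omega$-image comes out to be $w$ rather than some interior point of the limiting geodesic.
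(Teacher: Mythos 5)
Your proposal is correct and follows essentially the same route as the paper: identify $\omega$-$\lim(I_n,0)$ with $[0,d(z,w)]$ via Lemma \ref{ultralimitintervals}, apply Proposition \ref{limitmap} to the admissible, equi-Lipschitz (indeed isometric) sequence $(\gamma_n)$, and check the endpoints. Your handling of the far endpoint via the sequence $(d(z_n,w_n))$ is just a slightly more explicit version of the paper's closing remark.
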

\begin{proof}
	We denote by $I_n$ the interval $[0,d(z_n,w_n)]$. Since $z$ and $w$ belongs to $X_\omega$ then the distance between them is uniformly bounded. Hence from the previous lemma it follows that the ultralimit of the spaces $(I_n,0)$ is $I_\omega = [0, \omega$-$\lim d(z_n,w_n)] = [0,d(z,w)]$.
	The maps $\gamma_n$ define an admissible sequence of isometric embedding, so in particular they define a limit isometric embedding $\gamma_\omega \colon I_\omega \to X$. So $\gamma_\omega$ is a geodesic and clearly $\gamma_\omega(0)=\omega$-$\lim\gamma_n(0)=\omega$-$\lim z_n=z$ and $\gamma_\omega(d(z,w)) = w$.
\end{proof}
In order to prove stability results for classes of metric spaces we also need to establish the convergence of balls under ultralimits:
\begin{lemma}
	\label{ultralimitballs}
	Let $(X_n,x_n)$ be a sequence of geodesic metric spaces and $\omega$ be a non-principal ultrafilter. Let $X_\omega$ be the ultralimit of the sequence $(X_n,x_n)$. Let $y=\omega$-$\lim y_n$ be a point of $X_\omega$. Then for any $R\geq 0$ it holds
	$$\overline{B}(y,R) = \omega\text{-}\lim \overline{B}(y_n,R).$$
\end{lemma}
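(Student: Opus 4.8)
The plan is to exhibit a canonical isometric embedding of $\omega\text{-}\lim \overline{B}(y_n,R)$ into $X_\omega$ and to identify its image with $\overline{B}(y,R)$. First I would note that, since $y=\omega\text{-}\lim y_n$ is a point of $X_\omega$, the distances $d(y_n,x_n)$ are uniformly bounded; hence an admissible sequence for $(X_n,x_n)$ is exactly an admissible sequence for $(X_n,y_n)$, so we may harmlessly use the $y_n$ as basepoints, identifying $X_\omega$ with $\omega\text{-}\lim(X_n,y_n)$ and $y$ with $y_\omega$. The inclusions $\iota_n\colon \overline{B}(y_n,R)\hookrightarrow X_n$ then form an admissible sequence of isometric embeddings (admissible because $\iota_n(y_n)=y_n$), so by Proposition \ref{limitmap} they induce an isometric embedding $\iota_\omega\colon \omega\text{-}\lim \overline{B}(y_n,R)\to X_\omega$, given by $\iota_\omega((z_n))=(z_n)$.

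Next I would check the easy inclusion: if $z_n\in\overline{B}(y_n,R)$ for all $n$, then $d(z_n,y_n)\le R$, and Lemma \ref{ultralimit}(c) gives $d(\iota_\omega((z_n)),y)=\omega\text{-}\lim d(z_n,y_n)\le R$, so the image of $\iota_\omega$ is contained in $\overline{B}(y,R)$. The substance of the statement is the reverse inclusion, and this is precisely where the geodesic hypothesis on the $X_n$ is used.

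So let $z=\omega\text{-}\lim z_n\in X_\omega$ with $d(z,y)\le R$; the task is to produce a representative of $z$ lying in the balls $\overline{B}(y_n,R)$. For each $n$ fix a geodesic $\gamma_n\colon[0,d(y_n,z_n)]\to X_n$ from $y_n$ to $z_n$ (possible since $X_n$ is geodesic), and set $z_n'=z_n$ if $d(y_n,z_n)\le R$ and $z_n'=\gamma_n(R)$ otherwise. Then $d(y_n,z_n')\le R$ for every $n$, so $(z_n')$ represents a point of $\omega\text{-}\lim\overline{B}(y_n,R)$; moreover $d(z_n,z_n')=\max\{0,\,d(y_n,z_n)-R\}$, hence by Lemma \ref{ultralimit}(d) one gets $\omega\text{-}\lim d(z_n,z_n')=\max\{0,\,d(z,y)-R\}=0$, so $(z_n')$ and $(z_n)$ define the same point $z$ of $X_\omega$. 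Therefore $z=\iota_\omega((z_n'))$ lies in the image of $\iota_\omega$, which proves $\overline{B}(y,R)\subseteq \iota_\omega(\omega\text{-}\lim\overline{B}(y_n,R))$, and combined with the previous paragraph yields the desired equality $\overline{B}(y,R)=\omega\text{-}\lim\overline{B}(y_n,R)$ (the case $R=0$ being a trivial instance). The only step that is more than bookkeeping is this last ``radial retraction'' argument, and even it is short; the rest is just careful handling of admissible sequences together with Proposition \ref{limitmap}.
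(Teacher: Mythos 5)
Your proof is correct, and it is organized differently from the paper's. Both arguments hinge on the same geometric device — truncating a geodesic from $y_n$ to $z_n$ at parameter $R$ in the geodesic space $X_n$ — but you apply it once, directly, to an arbitrary $z\in\overline{B}(y,R)$, obtaining a representative $(z_n')$ inside the balls with $\omega\text{-}\lim d(z_n,z_n')=\max\{0,d(z,y)-R\}=0$; this gives the reverse inclusion in a single step. The paper instead uses that truncation only to show that $\omega\text{-}\lim\overline{B}(y_n,R)$ is a \emph{closed} subset of $X_\omega$, separately checks that the open ball $B(y,R)$ is contained in the limit set, and then concludes via the fact that in a length space the closed ball is the closure of the open ball — which requires invoking that $X_\omega$ is itself geodesic (Lemma \ref{ultralimit-geodesic}). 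Your route is thus slightly leaner: it uses geodesicity only of the spaces $X_n$, not of the ultralimit, and it dispenses with the closedness and closure steps (which in the paper are in fact somewhat redundant, since the retraction argument there already applies verbatim to any point of $\overline{B}(y,R)$). A further small merit of your write-up is that you make explicit, via Proposition \ref{limitmap}, the isometric embedding $\iota_\omega$ identifying the abstract ultralimit of the balls with a subset of $X_\omega$, a point the paper leaves implicit.
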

\begin{proof}
	First of all $\omega\text{-}\lim \overline{B}(y_n,R) \subset \overline{B}(y,R)$. Indeed $z = \omega$-$\lim z_n$ belongs to $\omega\text{-}\lim \overline{B}(y_n,R)$ if and only if $d(z_n,y_n)\leq R$ for $\omega$-a.e.$(n)$. Then $d(z,y)\leq R$, i.e. $z\in\overline{B}(y,R)$. The next step is to show that the set $\omega\text{-}\lim \overline{B}(y_n,R)$ is closed. We take a sequence $z^k = \omega$-$\lim z_n^k$ of points of $\omega$-$\lim \overline{B}(y_n,R)$ that converges to some point $z=\omega$-$\lim z_n$ of $X_\omega$. This implies that $d(y,z)\leq R$. We consider a geodesic segment of $X_n$ between $y_n$ and $z_n$ and we denote by $w_n$ the point along this geodesic at distance exactly $R$ from $y_n$, if it exists. Otherwise $z_n \in \overline{B}(y_n,R)$ and in this case we set $w_n = z_n$. We observe that $w=\omega$-$\lim w_n \in \omega$-$\lim \overline{B}(y_n,R)$ by definition. We claim that $w=z$. In order to prove that we fix $\varepsilon > 0$. Then $\omega$-a.s. $d(y_n,z_n)<R+\varepsilon$. This implies that $d(y_n,w_n)< \varepsilon$. Since it holds $\omega$-a.s. then $d(w,z)<\varepsilon$. From the arbitrariness of $\varepsilon$ the claim is proved.
	The last step is to show that the open ball $B(y,R)$ is contained in $\omega\text{-}\lim \overline{B}(y_n,R)$. Indeed given $z=\omega$-$\lim z_n \in B(y,R)$ then there exists $\varepsilon > 0$ such that $d(z,y)<R-\varepsilon$. The set of indices $n$ such that $d(z_n,y_n)< d(z,y) + \varepsilon < R$ belongs to $\omega$, hence $z\in \omega\text{-}\lim \overline{B}(y_n,R)$.
	Since $X_\omega$ is geodesic and in any length space the closed ball is the closure of the open ball the proof is concluded.
\end{proof}
In general, even if every space $X_n$ is uniquely geodesic, the ultralimit $X_\omega$ may be not uniquely geodesic. This is because, in general, it is not true that all the geodesics of $X_\omega$ are limit of sequences of geodesics of $X_n$. \linebreak The fact that the geodesics of $X_\omega$ are actually limit of geodesics of the spaces $X_n$ is true  when all the   $X_n$ are CAT$(\kappa)$. We recall the following fact which is well known (see \cite{BH09} or \cite{DK18} for instance):
\begin{prop}
	\label{ultralimitCAT}
	Let $(X_n,x_n)$ be a sequence of \textup{CAT}$(\kappa)$ pointed metric spaces and $\omega$ be a non-principal ultrafilter. Then any geodesic of length $<D_\kappa$ in $X_\omega$ is limit of a sequence of geodesics of $X_n$. As a consequence $X_\omega$ is a \textup{CAT}$(\kappa)$ metric space.
\end{prop}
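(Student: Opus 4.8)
The plan is to realize the geodesics of $X_\omega$ as $\omega$-limits of geodesics of the $X_n$, push the thin-triangle inequality through the ultralimit, and then use uniqueness of geodesics to see that \emph{every} geodesic of length $<D_\kappa$ arises in this way. \emph{Step 1 (geodesics as ultralimits).} Let $y=\omega\text{-}\lim y_n$ and $z=\omega\text{-}\lim z_n$ be points of $X_\omega$ with $d(y,z)<D_\kappa$. Since $d(y,z)=\omega\text{-}\lim d(y_n,z_n)$, we have $d(y_n,z_n)<D_\kappa$ for $\omega$-a.e.$(n)$, so in each such $X_n$ there is a unique geodesic $\gamma_n$ joining $y_n$ to $z_n$; applying Lemma \ref{ultralimit-geodesic} ($\omega$-a.e.\ existence of the $\gamma_n$ is all that is used) the ultralimit $\gamma_\omega=\omega\text{-}\lim\gamma_n$ is a geodesic joining $y$ to $z$, and in particular $X_\omega$ is $D_\kappa$-geodesic. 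The same argument applied to subsegments shows that every restriction of $\gamma_\omega$ is again an $\omega$-limit of geodesics of the $X_n$.

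\emph{Step 2 (comparison for $\omega$-limit triangles).} Take $p,q,r\in X_\omega$ with $d(p,q)+d(q,r)+d(r,p)<2D_\kappa$ (so each pairwise distance is $<D_\kappa$) and form the triangle $\Delta(p,q,r)$ whose three sides are the ultralimit geodesics of Step 1. Pick $a$ on $[p,q]$ and $b$ on $[p,r]$, writing $a=\omega\text{-}\lim a_n$, $b=\omega\text{-}\lim b_n$ with $a_n$ on the geodesic $[p_n,q_n]$ at a prescribed distance from $p_n$ and similarly $b_n$. For $\omega$-a.e.$(n)$ the triangle $\Delta(p_n,q_n,r_n)$ has perimeter $<2D_\kappa$, hence has a $\kappa$-comparison triangle $\overline{\Delta}^\kappa_n$ in $M^\kappa_2$, and the CAT$(\kappa)$ condition in $X_n$ gives $d(a_n,b_n)\le d(\overline{a}_n,\overline{b}_n)$. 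The side lengths of $\overline{\Delta}^\kappa_n$ converge ($\omega$-limit) to those of the comparison triangle $\overline{\Delta}^\kappa$ of $\Delta(p,q,r)$, and — since the limiting perimeter is $<2D_\kappa$ — comparison triangles in $M^\kappa_2$, together with comparison points at prescribed parameters, depend continuously on the side lengths; hence $d(a,b)=\omega\text{-}\lim d(a_n,b_n)\le \omega\text{-}\lim d(\overline{a}_n,\overline{b}_n)=d(\overline{a},\overline{b})$. Thus the CAT$(\kappa)$ inequality holds for every geodesic triangle of $X_\omega$ all of whose sides are ultralimit geodesics.

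\emph{Step 3 (uniqueness, hence every geodesic is an ultralimit).} It remains to show that an arbitrary geodesic $\gamma$ of length $\ell<D_\kappa$ coincides with the ultralimit geodesic $\gamma_\omega$ between its endpoints; this is the genuinely delicate point, since Step 2 only controls triangles manufactured from ultralimit geodesics and does not a priori exclude "extra" geodesics in $X_\omega$. The most economical way around it is that the CAT$(\kappa)$ property is equivalent to a condition on finite configurations of points (a four-point condition, or a midpoint inequality relating a point, two points at distance $<D_\kappa$, and their midpoint): such a condition is \emph{closed on finite tuples of points} and therefore passes verbatim to the $\omega$-limit, so, combined with the $D_\kappa$-geodesicity from Step 1, it already yields that $X_\omega$ is CAT$(\kappa)$; in a CAT$(\kappa)$ space geodesics of length $<D_\kappa$ are unique, whence $\gamma=\gamma_\omega$. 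Alternatively one argues quantitatively: CAT$(\kappa)$ comparison in $M^\kappa_2$ provides a modulus $\delta=\delta(\ell,\kappa)$ such that in any CAT$(\kappa)$ space a point $w$ with $d(y,z)<D_\kappa$ and $d(y,w)+d(w,z)\le d(y,z)+\varepsilon$ lies within $\delta(\varepsilon)$ of $[y,z]$; applying this in $X_n$ to points $w_n$ with $\gamma(t)=\omega\text{-}\lim w_n$ forces $\omega\text{-}\lim d(w_n,\gamma_n(t))=0$, i.e.\ $\gamma(t)=\gamma_\omega(t)$. Either route gives the first assertion; and once every geodesic of length $<D_\kappa$ is known to be an ultralimit of geodesics, Step 2 applies to \emph{all} triangles of perimeter $<2D_\kappa$, proving that $X_\omega$ is CAT$(\kappa)$. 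The only real obstacle is Step 3 — Steps 1 and 2 are routine bookkeeping with ultralimits — and it is exactly there that some honest CAT$(\kappa)$ input (the four-point characterization, or stability of geodesics) must be invoked.
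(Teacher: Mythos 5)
Your argument is correct, and the paper itself offers no proof of Proposition \ref{ultralimitCAT}: it is recalled as a well-known fact with a pointer to \cite{BH09} (and \cite{DK18}). Your Route A in Step 3 is essentially the argument of those references — the CAT$(\kappa)$ four-point (or midpoint-comparison) condition is a closed condition on distance matrices, hence passes to the ultralimit, and together with completeness and the $D_\kappa$-geodesicity from Step 1 it yields CAT$(\kappa)$; uniqueness of geodesics of length $<D_\kappa$ then identifies an arbitrary geodesic of $X_\omega$ with the ultralimit geodesic, giving the first assertion. Your Route B proceeds in the order the proposition is actually phrased (first every geodesic is a limit of geodesics, then comparison passes to the limit as in your Step 2), and the quantitative input it needs — that in a CAT$(\kappa)$ space a point whose excess $d(y,w)+d(w,z)-d(y,z)\leq\varepsilon$ with $d(y,z)\leq\ell<D_\kappa$ lies within $\delta(\varepsilon,\ell,\kappa)$ of $[y,z]$, with $\delta\to 0$ — is indeed a valid consequence of comparison with $M^\kappa_2$, so this route gives a self-contained alternative that avoids invoking the four-point characterization. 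You also correctly isolate the genuine subtlety (Step 2 alone only controls triangles built from ultralimit geodesics), which is the point a careless proof would miss; no gap here.
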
 

The main result of the appendix is the following stability property for the \textup{CAT}$(\kappa)$-radius:
\begin{cor}
	\label{ultralimit-stability}
	Let $(X_n, x_n)$ be a sequence of complete, locally geodesically complete, locally \textup{CAT}$(\kappa)$, geodesic metric spaces with $\rho_\textup{cat}(X_n) \geq \rho_0 > 0$. Let $\omega$ be a non-principal ultrafilter. Then $X_\omega$ is a complete, locally geodesically complete, locally \textup{CAT}$(\kappa)$, geodesic metric space with $\rho_\textup{cat}(X_\omega)\geq \rho_0$.
\end{cor}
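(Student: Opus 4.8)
The plan is to verify the three structural properties of $X_\omega$ one at a time, exploiting the uniform lower bound $\rho_{\text{cat}}(X_n)\ge\rho_0$. Completeness is immediate: ultralimits of pointed metric spaces are always complete (Proposition stated in the Appendix), so $X_\omega$ is complete without any hypothesis. Being geodesic is also already in hand: by Lemma \ref{ultralimit-geodesic}, an ultralimit of geodesic spaces is geodesic. The real content is the locally \textup{CAT}$(\kappa)$ statement together with the quantitative bound $\rho_{\text{cat}}(X_\omega)\ge\rho_0$, and then local geodesic completeness.

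For the \textup{CAT}$(\kappa)$-radius bound, I would fix a point $y=(y_n)\in X_\omega$ and a radius $r$ with $r<\min\{\rho_0,\tfrac{D_\kappa}{2}\}$ (the case $\rho_0\ge\tfrac{D_\kappa}{2}$ is handled by taking $r$ arbitrarily close to $\tfrac{D_\kappa}{2}$). By hypothesis each ball $B(y_n,r)\subset X_n$ is \textup{CAT}$(\kappa)$. By Lemma \ref{ultralimitballs}, $\overline B(y,r)=\omega\text{-}\lim\overline B(y_n,r)$, so $\overline B(y,r)$ is the ultralimit of a sequence of \textup{CAT}$(\kappa)$ spaces; by Proposition \ref{ultralimitCAT} (applied with uniformly bounded diameter $2r<D_\kappa$) the ultralimit $\omega\text{-}\lim\overline B(y_n,r)$ is a \textup{CAT}$(\kappa)$ metric space, and hence so is $\overline B(y,r)$. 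One must check that the intrinsic \textup{CAT}$(\kappa)$ ball structure of $X_\omega$ around $y$ agrees with this ultralimit, i.e. that geodesics between nearby points of $X_\omega$ stay in $\overline B(y,r)$ and are realized as ultralimits of geodesics in $X_n$; this is where Proposition \ref{ultralimitCAT} is used again — geodesics of length $<D_\kappa$ in the \textup{CAT}$(\kappa)$ ultralimit are ultralimits of geodesics of the $X_n$, which live in the corresponding balls. Taking the supremum over admissible $r$ and the infimum over $y\in X_\omega$ gives $\rho_{\text{cat}}(X_\omega)\ge\rho_0$; in particular $X_\omega$ is locally \textup{CAT}$(\kappa)$.

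For local geodesic completeness, I would argue as follows: let $\gamma:[a,b]\to X_\omega$ be a local geodesic; by the previous step, around each parameter $\gamma$ restricts to a geodesic segment of length $<\rho_0$ inside a \textup{CAT}$(\kappa)$ ball, so by Proposition \ref{ultralimitCAT} it is an ultralimit $\omega\text{-}\lim\gamma_n$ of geodesic segments $\gamma_n$ in $X_n$. Since each $X_n$ is locally geodesically complete, each $\gamma_n$ extends to a local geodesic on a slightly larger interval; crucially, the \textup{CAT}$(\kappa)$ condition with $\rho_{\text{cat}}(X_n)\ge\rho_0$ gives a \emph{uniform} extension length (a local geodesic of small length contained in a \textup{CAT}$(\kappa)$ ball can be prolonged by a definite amount $\varepsilon_0$ depending only on $\rho_0$). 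Taking the ultralimit of these uniformly extended geodesics produces a local geodesic extension of $\gamma$ in $X_\omega$. The main obstacle I anticipate is precisely this point: showing the extension length can be chosen uniformly in $n$ (so that the ultralimit of the extensions is a genuine extension, not a trivial one), and verifying that the ultralimit of local geodesics that are geodesic only on small subintervals is again locally geodesic — this requires patching finitely many overlapping charts, each handled by Proposition \ref{ultralimitCAT}, and using a Lebesgue-number argument on $[a,b]$ so that only finitely many charts are needed. Once this is settled, all four properties hold and $X_\omega$ is a complete, locally geodesically complete, locally \textup{CAT}$(\kappa)$, geodesic space with $\rho_{\text{cat}}(X_\omega)\ge\rho_0$, completing the proof.
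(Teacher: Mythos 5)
Your plan follows essentially the same route as the paper's proof: completeness and geodesicity come for free from the general ultralimit facts, the bound $\rho_{\textup{cat}}(X_\omega)\geq\rho_0$ is obtained exactly as you propose via Lemma \ref{ultralimitballs} combined with Proposition \ref{ultralimitCAT}, and local geodesic completeness is proved by extending the approximating geodesics in $X_n$ by the uniform amount $\rho_0$ (using that each $X_n$, being complete and locally geodesically complete, is geodesically complete, and that the extension lies in a \textup{CAT}$(\kappa)$ ball, hence is a geodesic) and passing to the ultralimit. The uniform-extension issue you flag as the main obstacle is handled in the paper precisely this way, so your proposal is correct and matches the paper's argument.
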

\begin{proof}
	Let $y=\omega$-$\lim y_n$ be a point of $X_\omega$. For any $r<\rho_0$ and for any $n$ the ball $\overline{B}(y_n,r)$ is a CAT$(\kappa)$ metric space. Moreover by Lemma \ref{ultralimitballs} we have that $\overline{B}(y,r)$ is the ultralimit of a sequence of CAT$(\kappa)$ metric spaces, hence it is CAT$(\kappa)$ by Proposition \ref{ultralimitCAT}.
	This shows that $X_\omega$ is locally CAT$(\kappa)$ and $\rho_\text{cat}(X_\omega)\geq \rho_0$ by the arbitrariness of $r$.
	Moreover $X_\omega$ is geodesic by Corollary \ref{ultralimit-geodesic}.
	We fix now a geodesic segment $\gamma$ of $X_\omega$ defined on $[a,b]$. \linebreak 
	We look at the ball $B(\gamma(a), \rho_0)$, which is CAT$(\kappa)$, and we take a sequence of points $z_n$ such that $\omega$-$\lim z_n =\gamma(a)$. The subsegment of $\gamma$ inside this ball, defined on $[a, a+\rho_0)$ is the limit of a sequence of geodesics $\gamma_n$ inside the corresponding balls $B(z_n,\rho_0)$, by Proposition \ref{ultralimitCAT}. Each $\gamma_n$ can be extended to a geodesic segment $\tilde{\gamma}_n$ on the interval $(a - \rho_0, a+ \rho_0)$ since each $X_n$ is locally geodesically complete and complete. The ultralimit of the maps $\tilde{\gamma}_n$ is a geodesic segment defined on $[a- \rho_0, a + \rho_0]$ which extends $\gamma$. We can do the same around $\gamma(b)$. This proves that $X_\omega$ is locally geodesically complete.
\end{proof}

We conclude the appendix recalling the relations between ultralimits and pointed Gromov-Hausdorff convergence, which we will use in Section \ref{sec-compactness}:

\begin{prop}[see \cite{Jan17}]
	\label{Jansen}
	Let $(X_n,x_n)$ be a sequence of proper, length metric spaces and $\omega$ be a non-principal ultrafilter. Then:
	\begin{itemize}
		\item[(a)] if the ultralimit $(X_\omega, x_\omega)$ is proper then it is the limit of a convergent subsequence in the pointed Gromov-Hausdorff sense;
		\item[(b)] reciprocally, if $(X_n,x_n)$ converges to $(X,x)$ in the pointed Gromov-Hausdorff sense then for any non-principal ultrafilter $\omega$ the ultralimit $X_\omega$ is isometric to $(X,x)$ (we recall that, in this case,  $(X,x)$ is proper by definition of Gromov-Hausdorff convergence).
	\end{itemize} 
\end{prop}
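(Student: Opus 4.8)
The plan is to establish the two assertions separately, and I would do (b) first since it is the more direct of the two. For (b), I would start by unwinding the definition of pointed Gromov-Hausdorff convergence to produce radii $R_n\to+\infty$, errors $\varepsilon_n\to 0$, and approximating maps $f_n\colon \overline B(x_n,R_n)\to X$ with $d(f_n(x_n),x)\le\varepsilon_n$, distorting distances by at most $\varepsilon_n$, and with $\varepsilon_n$-dense image in $\overline B(x,R_n)$. I would then define $\Phi\colon X_\omega\to X$ by $\Phi((y_n))=\omega\text{-}\lim f_n(y_n)$. The first thing to check is that this is well defined: if $(y_n)$ is admissible then the points $f_n(y_n)$ eventually lie in a fixed compact ball of $X$ (this is where properness of $X$ enters), so the $\omega$-limit exists by Lemma \ref{ultralimit}, and two admissible representatives of the same point of $X_\omega$ give the same value. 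Distance-preservation of $\Phi$ then follows from $\omega\text{-}\lim d(f_n(y_n),f_n(z_n))=\omega\text{-}\lim d(y_n,z_n)$, using $\varepsilon_n\to0$ and the elementary properties of $\omega$-limits; surjectivity follows from the $\varepsilon_n$-density of the images (given $p\in X$, choose $y_n$ with $d(f_n(y_n),p)\le\varepsilon_n$ and check that $(y_n)$ is admissible); and $\Phi(x_\omega)=x$. Hence $\Phi$ is a pointed isometry $X_\omega\cong(X,x)$.

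For (a), assume $X_\omega$ is proper. I would first note that each $X_n$, being a proper length space, is geodesic by Hopf-Rinow, hence $X_\omega$ is geodesic by Lemma \ref{ultralimit-geodesic}; this makes Lemma \ref{ultralimitballs} applicable. The heart of the proof is a ball-by-ball comparison. Fixing $R>0$ and $\varepsilon>0$, I would use compactness of $\overline B(x_\omega,R)$ to pick a finite $\varepsilon$-net $x_\omega=p^1,\dots,p^N$ and write $p^i=\omega\text{-}\lim p^i_n$. For $\omega$-a.e.\ $n$ the mutual distances $d(p^i_n,p^j_n)$ are within $\varepsilon$ of $d(p^i,p^j)$ and the $p^i_n$ lie in $\overline B(x_n,R+\varepsilon)$. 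The decisive step is to show that, for $\omega$-a.e.\ $n$, the finite set $\{p^1_n,\dots,p^N_n\}$ is $2\varepsilon$-dense in $\overline B(x_n,R)$: if not, then on an $\omega$-large set of indices $n$ there would be a point $q_n\in\overline B(x_n,R)$ with $d(q_n,p^i_n)\ge2\varepsilon$ for every $i$, and its $\omega$-limit $q\in\overline B(x_\omega,R)$ (by Lemma \ref{ultralimitballs}) would satisfy $d(q,p^i)\ge2\varepsilon$ for every $i$, contradicting that the $p^i$ form an $\varepsilon$-net. Thus, for $\omega$-a.e.\ $n$, the correspondence $p^i\leftrightarrow p^i_n$ witnesses that $(\overline B(x_n,R),x_n)$ and $(\overline B(x_\omega,R),x_\omega)$ are $O(\varepsilon)$-close in the pointed Gromov-Hausdorff sense.

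To conclude (a), I would run a diagonal extraction: with $R_k\to+\infty$ and $\varepsilon_k\to0$, the sets $A_k$ of indices $n$ for which $\overline B(x_n,R_k)$ is $O(\varepsilon_k)$-close to $\overline B(x_\omega,R_k)$ belong to $\omega$; replacing $A_k$ by $A_1\cap\dots\cap A_k$ makes them decreasing, and since $\omega$ is non-principal each $A_k$ is infinite, so one can pick $n_1<n_2<\cdots$ with $n_k\in A_k$ and obtain $(X_{n_k},x_{n_k})\to(X_\omega,x_\omega)$ in the pointed Gromov-Hausdorff sense.

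I expect the main obstacle to be exactly the density step in (a): marrying the compactness of $\overline B(x_\omega,R)$ (which yields a \emph{finite} net) with an ultrafilter contradiction argument to show that this net pulls back, for $\omega$-almost every $n$, to an approximate net of $\overline B(x_n,R)$. The remaining technical annoyance is the bookkeeping in the definition of pointed Gromov-Hausdorff convergence (open versus closed balls, and the passage between length and geodesic structures via Hopf-Rinow), which is routine.
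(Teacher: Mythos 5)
Your argument is correct. Note that the paper does not prove Proposition \ref{Jansen} at all --- it simply cites \cite{Jan17} --- so there is no in-paper proof to compare with; what you give is the standard self-contained argument, and it holds up. In (b), transporting points via the approximating maps $f_n$ and using properness of $X$ to extract the $\omega$-limit of $f_n(y_n)$ in a fixed compact ball is exactly the right mechanism, and your verification of well-definedness, distance preservation and surjectivity is complete. In (a), the key step --- pulling back a finite $\varepsilon$-net of $\overline B(x_\omega,R)$ and showing by the ultrafilter dichotomy that its representatives are $2\varepsilon$-dense in $\overline B(x_n,R)$ for $\omega$-a.e.\ $n$ --- is sound (observe that you only use the trivial inclusion of Lemma \ref{ultralimitballs}, namely that $d(q_n,x_n)\le R$ $\omega$-a.e.\ forces $q\in\overline B(x_\omega,R)$; the harder inclusion is not needed), and the diagonal extraction from the decreasing $\omega$-large sets $A_1\cap\dots\cap A_k$, each infinite because $\omega$ is non-principal, correctly produces the convergent subsequence. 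The only bookkeeping worth making explicit is that the representatives $p^i_n$ land a priori only in $\overline B(x_n,R+\varepsilon)$; since the $X_n$ are geodesic (proper length spaces, via Hopf--Rinow, as you note), one can slide them along geodesics from $x_n$ into $\overline B(x_n,R)$ at the cost of an extra $\varepsilon$, so this is indeed the routine adjustment you anticipate.
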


\bibliographystyle{alpha}
\bibliography{Packing_and_doubling_in_metric_spaces_with_curvature_bounded_above.bib}

\end{document}